\begin{document}
\newcommand {\emptycomment}[1]{} 

\newcommand{\nc}{\newcommand}
\newcommand{\delete}[1]{}
\nc{\mfootnote}[1]{\footnote{#1}} 
\nc{\todo}[1]{\tred{To do:} #1}

\delete{
\nc{\mlabel}[1]{\label{#1}}  
\nc{\mcite}[1]{\cite{#1}}  
\nc{\mref}[1]{\ref{#1}}  
\nc{\meqref}[1]{\eqref{#1}} 
\nc{\bibitem}[1]{\bibitem{#1}} 
}

\nc{\mlabel}[1]{\label{#1}  
{\hfill \hspace{1cm}{\bf{{\ }\hfill(#1)}}}}
\nc{\mcite}[1]{\cite{#1}{{\bf{{\ }(#1)}}}}  
\nc{\mref}[1]{\ref{#1}{{\bf{{\ }(#1)}}}}  
\nc{\meqref}[1]{\eqref{#1}{{\bf{{\ }(#1)}}}} 
\nc{\mbibitem}[1]{\bibitem[\bf #1]{#1}} 


\nc{\tred}[1]{\textcolor{red}{#1}}
\nc{\tblue}[1]{\textcolor{blue}{#1}}
\nc{\tgreen}[1]{\textcolor{green}{#1}}
\nc{\tpurple}[1]{\textcolor{purple}{#1}}
\nc{\btred}[1]{\textcolor{red}{\bf #1}}
\nc{\btblue}[1]{\textcolor{blue}{\bf #1}}
\nc{\btgreen}[1]{\textcolor{green}{\bf #1}}
\nc{\btpurple}[1]{\textcolor{purple}{\bf #1}}

\nc{\cm}[1]{\textcolor{red}{Chengming:#1}}
\nc{\gl}[1]{\textcolor{blue}{Guilai:#1}}

\nc{\Witt}{invariant-c2c\xspace} 

\newtheorem{thm}{Theorem}[section]
\newtheorem{lem}[thm]{Lemma}
\newtheorem{cor}[thm]{Corollary}
\newtheorem{pro}[thm]{Proposition}
\newtheorem{conj}[thm]{Conjecture}
\theoremstyle{definition}
\newtheorem{defi}[thm]{Definition}
\newtheorem{ex}[thm]{Example}
\newtheorem{rmk}[thm]{Remark}
\newtheorem{pdef}[thm]{Proposition-Definition}
\newtheorem{condition}[thm]{Condition}



\title[A new approach to the bialgebra theory for relative Poisson algebras]{A new approach to the bialgebra theory for relative Poisson algebras}

\author{Guilai Liu}
\address{Chern Institute of Mathematics \& LPMC, Nankai University, Tianjin 300071, China}
\email{liugl@mail.nankai.edu.cn}

\author{Chengming Bai}
\address{Chern Institute of Mathematics \& LPMC, Nankai University, Tianjin 300071, China }
\email{baicm@nankai.edu.cn}


\begin{abstract}

It is natural to consider  extending the typical construction of
relative Poisson algebras from commutative differential algebras
to the context of bialgebras.
 The known bialgebra structures for relative Poisson
    algebras, namely relative Poisson
    bialgebras, are equivalent to Manin triples of relative Poisson
    algebras with respect to the symmetric bilinear forms which are invariant
    on both the commutative associative and Lie algebras. However, they are not
    consistent with commutative and cocommutative differential
    antisymmetric infinitesimal (ASI) bialgebras as the bialgebra
    structures for commutative differential algebras.
 Alternatively, with the invariance
    replaced by the commutative $2$-cocycles on the Lie algebras, the
    corresponding Manin triples of relative Poisson algebras are proposed, which are shown to be
    equivalent to certain bialgebra structures, namely relative PCA
    bialgebras. They serve as another approach to the bialgebra theory for
    relative Poisson algebras, which can be naturally constructed from
    commutative and cocommutative differential ASI bialgebras.
 The notion of the relative PCA Yang-Baxter equation
(RPCA-YBE) in a relative PCA algebra is introduced, whose
antisymmetric solutions give coboundary relative PCA bialgebras.
The notions of $\mathcal{O}$-operators of relative PCA algebras
and relative pre-PCA algebras are also introduced to give
antisymmetric solutions of the RPCA-YBE.

\end{abstract}

\subjclass[2020]{
17A36,  
17A40,  
17B38, 
17B63,  
17D25,  
37J39,   
53D17
}

\keywords{Relative Poisson algebra; commutative $2$-cocycle; Manin triple; bialgebra; classical Yang-Baxter equation;
$\mathcal{O}$-operator}

\maketitle


\tableofcontents

\allowdisplaybreaks

\section{Introduction}

Poisson algebras originate from the study of Poisson geometry
 \cite{BV1,Li77,Wei77}, which are closely related to a lot of
topics in mathematics and physics.
Recently, there is a generalization of Poisson algebras, namely a relative Poisson algebra that we shall study in this paper, where the Leibniz rule is presented in a modified type.
\begin{defi}
A \textbf{relative Poisson algebra} is a quadruple $(A,\cdot,
[-,-],P)$, where $(A,\cdot)$ is a commutative associative algebra,
$(A,[-,-])$ is a Lie algebra, $P$ is a derivation of both
$(A,\cdot)$ and $(A,[-,-])$, and the following relative Leibniz
rule is satisfied:
\begin{equation}\label{eq:GLR}
[z,x\cdot y]=[z,x]\cdot y+x\cdot[z,y]+x\cdot y\cdot P(z),\;\;\forall
x,y,z\in A.
\end{equation}
\end{defi}
Relative Poisson algebras first appeared as a graded form
\cite{Can} in the classification of simple Jordan superalgebras,
named Kantor series \cite{Kan,Kin}. There is also a one-to-one
correspondence between unital relative Poisson algebras in the
sense that the commutative associative algebras have units  and
Jacobi algebras \cite{Ago}, where the latter correspond to Jacobi
manifolds \cite{Kir, Lic}.


A (commutative) associative algebra $(A,\cdot)$ with  a derivation
$P$ is called  a {\bf (commutative) differential algebra}, which
is denoted by $(A,\cdot, P)$ or simply $(A,P)$ without confusion.
In this case, there is a Lie algebra structure on $A$ given by
\begin{equation}\label{eq:Witt Lie}
    [x,y]=x\cdot P(y)-P(x)\cdot y,\;\forall x,y\in A,
\end{equation}
which is called the \textbf{Witt type Lie algebra of $(A,\cdot, P)$} \cite{SXZ,XXu}. 
There is the following construction of relative Poisson algebras.

\begin{pro}\cite{RPA}\label{ex1.2}
Let $(A,\cdot, P)$ be a commutative differential algebra and
$(A,[-,-])$ be the corresponding Witt type Lie algebra defined by
\eqref{eq:Witt Lie}. Then $(A,\cdot,[-,-],P)$ is a relative
Poisson algebra.
\end{pro}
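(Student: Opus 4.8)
The plan is to verify directly the defining axioms of a relative Poisson algebra for the quadruple $(A,\cdot,[-,-],P)$. Since $(A,\cdot)$ being commutative associative and $P$ being a derivation of $(A,\cdot)$ are part of the hypothesis, three things remain to be checked: that $[-,-]$ defined by \eqref{eq:Witt Lie} is a Lie bracket, that $P$ is a derivation of $(A,[-,-])$, and that the relative Leibniz rule \eqref{eq:GLR} holds. Throughout, the only two tools needed are the Leibniz rule $P(x\cdot y)=P(x)\cdot y+x\cdot P(y)$ and the commutativity of $\cdot$, which is what makes the surviving terms cancel in pairs.

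First I would record the auxiliary identity
\begin{equation*}
P([x,y])=x\cdot P^2(y)-P^2(x)\cdot y,
\end{equation*}
obtained by applying $P$ to \eqref{eq:Witt Lie} and cancelling the cross term $P(x)\cdot P(y)$. This single computation immediately yields the derivation property of $P$ with respect to $[-,-]$: expanding $[P(x),y]+[x,P(y)]$ via \eqref{eq:Witt Lie} gives $P(x)\cdot P(y)-P^2(x)\cdot y+x\cdot P^2(y)-P(x)\cdot P(y)$, which is exactly $P([x,y])$. For the relative Leibniz rule, I would expand the left-hand side as $[z,x\cdot y]=z\cdot P(x\cdot y)-P(z)\cdot x\cdot y=z\cdot P(x)\cdot y+z\cdot x\cdot P(y)-P(z)\cdot x\cdot y$ using the Leibniz rule for $P$, and expand the right-hand side $[z,x]\cdot y+x\cdot[z,y]+x\cdot y\cdot P(z)$ similarly; after using commutativity to observe that $-x\cdot P(z)\cdot y+x\cdot y\cdot P(z)=0$, the two sides coincide term by term.

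The bulk of the work, and the only place where a genuine computation is unavoidable, is the Jacobi identity for $[-,-]$. Here I would substitute \eqref{eq:Witt Lie} together with the auxiliary identity for $P([x,y])$ into $[[x,y],z]$, obtaining a sum of four cubic monomials in $x,y,z$ and their $P$- and $P^2$-images; taking the cyclic sum over $(x,y,z)$, the monomials involving a single $P$ (such as $x\cdot P(y)\cdot P(z)$) cancel against each other in pairs by commutativity, and likewise the monomials involving a $P^2$ (such as $x\cdot P^2(y)\cdot z$ against $P^2(y)\cdot z\cdot x$) cancel, leaving $0$. I do not expect any conceptual obstacle here; the only care needed is organizing this cyclic bookkeeping so that every monomial is correctly matched with its commutativity partner.
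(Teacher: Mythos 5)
Your verification is correct and complete: the auxiliary identity $P([x,y])=x\cdot P^{2}(y)-P^{2}(x)\cdot y$, the derivation property, the relative Leibniz rule (with the cancellation $-x\cdot P(z)\cdot y+x\cdot y\cdot P(z)=0$), and the cyclic cancellation in the Jacobi identity all check out; the only quibble is terminological, since the monomials you call ``involving a single $P$'' in fact carry two first-order $P$-factors, as opposed to the $P^{2}$-monomials. The paper itself gives no proof of this proposition --- it is quoted from the reference \cite{RPA} --- so your direct axiom-by-axiom computation is exactly the expected argument and there is nothing to compare beyond that.
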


A bialgebra structure is a vector space equipped with both an
algebra structure and a coalgebra structure satisfying some
compatible conditions. Among the bialgebra
structures,  Lie bialgebras \cite{CP1,Dri} are closely related to
Poisson-Lie groups and play an important role in the
infinitesimalization of quantum groups. Another well-known bialgebra structures are antisymmetric
infinitesimal (ASI) bialgebras \cite{Agu2000, Agu2001, Agu2004,
Bai2010}  which are special infinitesimal bialgebras introduced by
Joni and Rota in order to provide an algebraic framework for the
calculus of divided differences \cite{JR}.
 These bialgebras have a common property that they are equivalently characterized in terms of the Manin triples
with respect to the (symmetric or antisymmetric) nondegenerate
bilinear forms satisfying certain conditions. For example, a Lie
bialgebra is equivalent to a Manin triple of Lie algebras with
respect to the symmetric invariant bilinear form. A
(commutative and cocommutative) ASI bialgebra is equivalent to a
double construction of a (commutative) Frobenius algebra which is
the associative algebra version of a Manin triple with respect to
the symmetric invariant bilinear form.

\delete{ More precisely, the property of Manin triples decides
different types of bialgebras. For example, a Manin triple of Lie
algebras with respect to the symmetric invariant bilinear form is
equivalent to a Lie bialgebra, whereas a Manin triple of Lie
algebras with respect to the symplectic form is equivalent to a
left-symmetric bialgebra [ref].}

 \delete{ , where there is a relative
Poisson algebra structure $(A\oplus A^{*},\cdot,[-,-],P+Q^{*})$ on
the direct sum of vector spaces $A\oplus A^{*}$ containing
$(A,\cdot_{A},[-,-]_{A},P)$ and
$(A^{*},\cdot_{A^{*}},[-,-]_{A^{*}},Q^{*})$ as relative Poisson
subalgebras,  and the natural nondegenerate symmetric bilinear
form $\mathcal{B}_{d}$ given by
\begin{equation*}
    \mathcal{B}_{d}(x+a^{*},y+b^{*})=\langle x, b^{*}\rangle+\langle a^{*},y\rangle,\;\forall x,y\in A, a^{*},b^{*}\in A^{*}
\end{equation*}
is invariant on both the commutative associative algebra $(A\oplus A^{*},\cdot)$ and the Lie algebra $(A\oplus A^{*},[-,-])$.}

It is natural to consider  extending the typical construction of
relative Poisson algebras from commutative differential algebras
given in Proposition~\ref{ex1.2} to the context of bialgebras. On
the one hand, the notion of a differential ASI bialgebra
\cite{LLB2023} was introduced as an equivalent structure of a
double construction of a differential Frobenius algebra $\big((A\oplus A^{*},P+Q^*), (A,P), (A^{*},Q^*)\big)$. On the other hand, in order to
construct Frobenius Jacobi algebras which reflect certain
Frobenius property, the notion of relative Poisson bialgebras was
introduced with a systematic study \cite{RPA}, generalizing the
notion of Poisson bialgebras given in \cite{NB}. In fact, a
(relative) Poisson bialgebra is equivalent to a Manin triple of
(relative) Poisson algebras with respect to the symmetric bilinear
form which is invariant on both the commutative associative and
Lie
algebras. 

Unfortunately, as the ``bialgebra version" of commutative
differential algebras, commutative and cocommutative differential
ASI bialgebras are not consistent with relative Poisson
bialgebras. Equivalently, a double construction of a commutative
differential Frobenius algebra is not consistent with a Manin
triple of relative Poisson algebras with respect to the symmetric
bilinear form which is invariant on both the commutative
associative and Lie algebras. Such a fact can be seen as follows.
Let $(A,\cdot,P)$ be a commutative differential algebra and
$\mathcal{B}$ be a nondegenerate symmetric bilinear form on $A$.
Suppose that $\mathcal{B}$
 is {\bf invariant} on $(A,\cdot)$ in the sense that $$\mathcal{B}(x\cdot
y,z)=\mathcal{B}(x,y\cdot z),\;\;\forall x,y,z\in A.$$
 If
$\mathcal{B}$ is also {\bf invariant} on the corresponding Witt type
algebra $(A,[-,-])$ defined by \eqref{eq:Witt Lie} in the sense
that $$\mathcal{B}([x,y],z)=\mathcal{B}(x,[y,z]),\;\; \forall
x,y,z\in
    A,$$
     then $(A,[-,-])$ is abelian, that is, the
    additional (strong) identity $P(x)\cdot y=x\cdot P(y)$ for all $x,y\in
    A$ holds. Consequently a commutative and cocommutative differential  ASI bialgebra  usually fails to give
a relative Poisson bialgebra.

Therefore one should ``adjust" the bialgebra theory for relative
Poisson algebras in order to be consistent with commutative and
cocommutative differential ASI bialgebras. With the same
assumption, note that by \cite{LB2022}, $\mathcal{B}$ is  a
{\bf commutative $2$-cocycle} \cite{Dzh} on the Lie algebra $(A,[-,-])$
in the sense that
\begin{equation}\label{eq:2-coc}
\mathcal{B}([x,y],z)+\mathcal{B}([y,z],x)+\mathcal{B}([z,x],y)=0,\;\;\forall
x,y,z\in A.
\end{equation}
Hence it is reasonable to consider  replacing the
condition of ``invariance" by the condition of being a commutative
$2$-cocycle on the Lie algebra. That is,  alternatively, we consider the Manin triples of relative
Poisson algebras with respect to the symmetric bilinear forms
which are invariant on the commutative associative algebras and
commutative $2$-cocycles on the Lie algebras. Accordingly, there
are the corresponding bialgebra structures  as a new approach to
the bialgebra theory for relative Poisson algebras. Moreover, the
new bialgebra structures can be obtained from commutative and
cocommutative differential ASI bialgebras as expected, giving the
bialgebra context of Proposition~\ref{ex1.2}.

Explicitly, as the first step, the notion of  relative PCA
algebras is introduced to interpret the relative Poisson algebras
equipped with nondegenerate symmetric  bilinear forms which are
invariant on the commutative associative algebras and commutative
$2$-cocycles on the Lie algebras. That is, there naturally exist
relative PCA algebra structures on the latter and conversely,
there is also a natural construction of the latter from the double
spaces of arbitrary relative PCA algebras. Moreover, a relative
PCA algebra is characterized as a relative Poisson algebra  with a
specific representation  in which an anti-pre-Lie algebra
introduced in \cite{LB2022} and a linear operator are involved. It
is noteworthy that there are two linear operators $P$ and $Q$ in
the definition of a relative PCA algebra, where $P$ is the
original derivation in the definition of the relative Poisson
algebra, and $Q$ is involved in the representation of the relative
Poisson algebra.

Then we introduce the notion  of a Manin triple of relative
Poisson algebras with respect to the symmetric bilinear form which
is invariant on the commutative associative algebra and a
commutative $2$-cocycle on the Lie algebra, and the notion of a
relative PCA bialgebra. Such two structures are shown to be
equivalent, that is, relative PCA bialgebras give a new approach
to the bialgebra theory for relative Poisson algebras. Moreover,
there is a natural construction of relative PCA bialgebras from
commutative and cocommutative differential ASI bialgebras,
extending Proposition~\ref{ex1.2} to the context of bialgebras.
The study of coboundary cases leads to the notion of the relative
PCA Yang-Baxter equation (RPCA-YBE) in a relative PCA algebra,
whose antisymmetric solutions in turn give rise to coboundary
relative PCA bialgebras. The notions of $\mathcal{O}$-operators of
relative PCA algebras and relative pre-PCA algebras are also
introduced to give antisymmetric solutions of the RPCA-YBE.

 The paper is organized as follows.
In Section \ref{sec2}, we introduce the notion of relative PCA
algebras and study their relationships with relative Poisson
algebras. \delete{In particular, relative PCA algebras serve as
the underlying algebra structures of relative Poisson algebras
with nondegenerate symmetric bilinear forms that are invariant on
the commutative associative algebras and commutative $2$-cocycles
on the Lie algebras.}Then we study representations and matched
pairs of relative PCA algebras. In Section \ref{sec3}, we
introduce the notion of a Manin triple of relative Poisson
algebras with respect to the symmetric bilinear form which is
invariant on the commutative associative algebra and a commutative
$2$-cocycle on the Lie algebra, which is shown to be equivalent to
a relative PCA bialgebra through specific matched pairs of
relative Poisson algebras. Moreover, relative PCA bialgebras are
naturally obtained from commutative and cocommutative differential ASI
bialgebras. In Section \ref{sec4}, we show that antisymmetric
solutions of the RPCA-YBE give rise to coboundary relative PCA
bialgebras. Such solutions are interpreted in terms of
$\mathcal{O}$-operators of relative PCA algebras and can be
obtained from relative pre-PCA algebras.

Throughout this paper,  unless otherwise specified, all the vector
spaces and algebras are finite-dimensional over an algebraically
closed field $\mathbb {K}$ of characteristic zero, although many
results and notions remain valid in the infinite-dimensional case.
For a vector space $A$ with a binary operation $\circ$, the linear maps
${\mathcal L}_\circ, {\mathcal R}_\circ: A\rightarrow {\rm
End}_{\mathbb K}(A)$ are respectively defined by
$${\mathcal L}_\circ(x)y:=x\circ y,\;\;{\mathcal R}_\circ(x)y:=y\circ x,\;\; \forall x,y\in A.
$$
In particular, when $(A,[-,-])$ is a Lie algebra, we let ${\rm ad}:
A\rightarrow {\rm End}_{\mathbb K}(A)$ with ${\rm ad}
(x)(y)=[x,y]$ for all $x,y\in A$ be the adjoint representation.

\section{Relative PCA algebras}\label{sec2}
In this section, we introduce the notion of a relative PCA
algebra, which generalizes the notion of a PCA algebra introduced
in \cite{NSP}. It can be  characterized as a relative Poisson
algebra with a specific representation, in which an anti-pre-Lie
algebra and a linear operator are involved. There is a
construction of relative PCA algebras from admissible commutative
differential algebras. We show that a relative Poisson algebra
with a nondegenerate symmetric  bilinear form which is invariant
on the commutative associative algebra and a commutative
    $2$-cocycle on the Lie algebra induces a relative PCA algebra. Conversely, there is a construction of the former from the double
space of an arbitrary relative PCA algebra. We also study
representations and matched pairs of relative PCA algebras.

\subsection{Relative PCA algebras and their relationships with relative Poisson algebras}\

We first recall the notion of representations of relative Poisson algebras.

\begin{defi}\label{de:rep}\cite{RPA}
Let $(A,\cdot,[-,-],P)$ be a relative Poisson algebra, $V$ be a
vector space and $\mu,\rho:A\rightarrow\mathrm{End}_{\mathbb K}
(V),\alpha:V\rightarrow V$ be linear maps. We say the quadruple
$(\mu,\rho,\alpha,V)$ is a \textbf{representation of
$(A,\cdot,[-,-],P)$} if the following conditions hold:
\begin{enumerate}
    \item $(\mu,V)$ is a representation of the commutative associative
algebra $(A,\cdot)$, that is,
\begin{equation*}
    \mu(x\cdot y)v=\mu(x)\mu(y)v,\;\;\forall x,y\in A;
\end{equation*}
\item $(\rho,V)$ is a representation of the Lie algebra $(A,[-,-])$, that is,
\begin{equation*}
    \rho([x,y])v=\rho(x)\rho(y)v-\rho(y)\rho(x)v,\;\;\forall x,y\in A;
\end{equation*}
\item the following equations hold:
\begin{eqnarray}
    \alpha\big(\mu(x)v\big)-\mu\big(P(x)\big)v-\mu(x)\alpha(v)&=&0,\label{eq:repRPA1}\\
    \alpha\big(\rho(x)v\big)-\rho\big(P(x)\big)v-\rho(x)\alpha(v)&=&0,\label{eq:repRPA2}\\
    \rho(y)\mu(x)v-\mu(x)\rho(y)v+\mu([x,y])v-\mu\big(x\cdot
P(y)\big)v&=&0,\label{eq:repRPA3}\\
    \rho(x\cdot y)v-\mu(x)\rho(y)v-\mu(y)\rho(x)v+\mu(x\cdot y)\alpha(v)&=&0,\;\;\forall x,y\in A,v\in V.\label{eq:repRPA4}
\end{eqnarray}
\end{enumerate}
\end{defi}

By taking $P=0,\alpha=0$, we recover the notion of a
representation of a Poisson algebra \cite{NB}.

\begin{pro} {\rm \cite{RPA}}\label{pro:semidirect}
Let  $(A,\cdot,[-,-],P)$ be a relative Poisson algebra, $V$ be a
vector space and  $\mu,\rho:A\rightarrow\mathrm{End}_{\mathbb
K} (V),\alpha:V\rightarrow V$ be linear maps. Then the quadruple
$(\mu,\rho,\alpha,V)$ is a  representation of  $(A,\cdot,[-,-],P)$
if and only if there is a relative Poisson algebra structure
$(A\oplus V,\cdot,[-,-],P+\alpha)$ on the direct sum $A\oplus V$
of vector spaces, where the binary operations are respectively given by
\begin{eqnarray}
\ (x+u)\cdot(y+v)&=&x\cdot y+\mu(x)v+\mu(y)u,\label{eq:sd,asso}\\
\ [x+u,y+v]&=&[x,y]+\rho(x)v-\rho(y)u,\;\;\forall x,y\in A,u,v\in V.
\end{eqnarray}
In this case, the relative Poisson algebra structure on $A\oplus V$ is denoted  by $(A\ltimes_{\mu,\rho}V,P+\alpha)$ and is called the {\bf semi-direct product relative Poisson algebra}.
\end{pro}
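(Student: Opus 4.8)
The plan is a direct component-by-component verification of the axioms of a relative Poisson algebra for the quadruple $(A\oplus V,\cdot,[-,-],P+\alpha)$, exploiting the fact that every element of $A\oplus V$ is uniquely of the form $x+u$ with $x\in A$ and $u\in V$. First I would record the immediate consequences of the defining formulas \eqref{eq:sd,asso}: for $u,v\in V$ one has $u\cdot v=0$ and $[u,v]=0$, so $V$ is both an abelian ideal of $(A\oplus V,\cdot)$ and an abelian ideal of $(A\oplus V,[-,-])$. Consequently, commutativity of $\cdot$ and antisymmetry of $[-,-]$ on $A\oplus V$ are automatic from the symmetry (resp.\ antisymmetry) of the formulas, and it suffices to test the remaining axioms on general arguments $x+u,\,y+v,\,z+w$ and then split each resulting identity into its $A$-component and its $V$-component.

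Next I would run through the four remaining axioms one at a time. Expanding associativity of $\cdot$ on $A\oplus V$, the $A$-component is precisely associativity of $(A,\cdot)$, while the $V$-component collapses (using $u\cdot v=0$) to $\mu(x\cdot y)v=\mu(x)\mu(y)v$, i.e.\ condition (1) of Definition~\ref{de:rep}. Expanding the Jacobi identity on $A\oplus V$, the $A$-component is the Jacobi identity of $(A,[-,-])$ and the $V$-component is $\rho([x,y])v=\rho(x)\rho(y)v-\rho(y)\rho(x)v$, i.e.\ condition (2). That $P+\alpha$ is a derivation of $(A\oplus V,\cdot)$ splits into ``$P$ is a derivation of $(A,\cdot)$'' ($A$-component) and \eqref{eq:repRPA1} ($V$-component); similarly, that $P+\alpha$ is a derivation of $(A\oplus V,[-,-])$ splits into ``$P$ is a derivation of $(A,[-,-])$'' and \eqref{eq:repRPA2}.

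The main bookkeeping step is the relative Leibniz rule \eqref{eq:GLR} for $(A\oplus V,\cdot,[-,-],P+\alpha)$. Writing out $[z+w,(x+u)\cdot(y+v)]$ and the three terms on the right-hand side of \eqref{eq:GLR} and using $P(z+w)=P(z)+\alpha(w)$, the $A$-component is exactly the relative Leibniz rule \eqref{eq:GLR} for $(A,\cdot,[-,-],P)$. The $V$-component, after replacing $\mu(P(z))\mu(x)$ by $\mu(P(z)\cdot x)$ via condition (1), decomposes further according to the three independent vectors $u,v,w$: the coefficient of $v$ gives $\rho(z)\mu(x)v-\mu(x)\rho(z)v-\mu([z,x])v-\mu(x\cdot P(z))v=0$, which is \eqref{eq:repRPA3} (after using $[x,z]=-[z,x]$ and $x\cdot P(z)=P(z)\cdot x$); the coefficient of $u$ gives the same identity with $x$ replaced by $y$; and the coefficient of $w$ gives \eqref{eq:repRPA4}. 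Thus all of conditions (1)--(3) of Definition~\ref{de:rep} appear exactly as the $V$-components of the relative Poisson algebra axioms for $A\oplus V$, while the $A$-components are automatically the relative Poisson algebra axioms for $A$, which hold by hypothesis. This yields both implications at once, and gives the claimed equivalence. The only real care required is the combinatorial organization of the Leibniz computation and the correct use of conditions (1) and (2) to identify $\mu(P(z)\cdot x)$, $\mu(x\cdot y)$-type expressions with composites of $\mu$ and $\rho$; there is no conceptual obstacle.
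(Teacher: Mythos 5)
Your verification is correct: the component-wise expansion is exactly the right argument, and your bookkeeping for the relative Leibniz rule (the $v$- and $u$-coefficients yielding \eqref{eq:repRPA3} after using antisymmetry and commutativity, and the $w$-coefficient yielding \eqref{eq:repRPA4}) checks out in both directions. The paper itself gives no proof of Proposition~\ref{pro:semidirect} — it is quoted from \cite{RPA} — and the direct split into $A$-components (the axioms for $A$) and $V$-components (the representation conditions of Definition~\ref{de:rep}) is precisely the standard argument used there, so there is nothing to add.
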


\begin{ex}
    Let $(A,\cdot,[-,-],P)$ be a relative Poisson algebra. Then $({\mathcal L}_{\cdot},\mathrm{ad},P,A)$ is a representation, 
    which is called the {\bf adjoint representation of $(A,\cdot,[-,-],P)$}.
\end{ex}

Let $A$ and $V$ be vector spaces and
$f:A\rightarrow\mathrm{End}_{\mathbb K} (V)$ be a linear map. We
set a linear map $f^{*}:A\rightarrow\mathrm{End}_{\mathbb K}
(V^{*})$ by
\begin{equation}
    \langle f^{*}(x)u^{*},v\rangle=-\langle u^{*},f(x)v\rangle,\;\;\forall x\in A, u^{*}\in V^{*},v\in
    V,
\end{equation}
where $\langle, \rangle$ is the ordinary pairing between $V^*$ and
$V$.

\begin{pro}{\rm \cite{RPA}}\label{pro:dual} Let $(\mu,\rho,\alpha,V)$ be a representation of a relative Poisson algebra $(A,\cdot$, $[-,-]$, $P)$. Then $(-\mu^{*},\rho^{*},-\alpha^{*},V^{*})$ is a representation of $(A,\cdot$, $[-,-]$, $P)$.
\end{pro}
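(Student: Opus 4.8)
The plan is to verify directly the four conditions of Definition~\ref{de:rep} for the quadruple $(-\mu^{*},\rho^{*},-\alpha^{*},V^{*})$, where $\alpha^{*}$ denotes the transpose of $\alpha$, i.e.\ $\langle\alpha^{*}u^{*},v\rangle=\langle u^{*},\alpha(v)\rangle$ (one can check that this is the only sign convention for which the statement holds). The working tool throughout is to test each required identity by pairing it with an arbitrary $v\in V$ and pushing all operators back to $V$ via $\langle\mu^{*}(x)u^{*},v\rangle=-\langle u^{*},\mu(x)v\rangle$, $\langle\rho^{*}(x)u^{*},v\rangle=-\langle u^{*},\rho(x)v\rangle$ and the transpose rule for $\alpha^{*}$; a direct check is the natural route here since the Manin-triple machinery is not yet available at this point.

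First I would dispose of conditions (1) and (2). That $(-\mu^{*},V^{*})$ is a representation of the commutative associative algebra $(A,\cdot)$ is immediate: pairing $-\mu^{*}(x\cdot y)u^{*}$ and $(-\mu^{*})(x)(-\mu^{*})(y)u^{*}=\mu^{*}(x)\mu^{*}(y)u^{*}$ against $v$ both give $\langle u^{*},\mu(x)\mu(y)v\rangle$, using $\mu(x)\mu(y)=\mu(x\cdot y)=\mu(y)\mu(x)$. That $(\rho^{*},V^{*})$ is a representation of the Lie algebra $(A,[-,-])$ is the usual dual (coadjoint-type) construction and is verified in the same fashion.

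Next I would check the analogues of \eqref{eq:repRPA1}, \eqref{eq:repRPA2} and \eqref{eq:repRPA3} for $(-\mu^{*},\rho^{*},-\alpha^{*})$. Each of these identities is essentially self-dual: after pairing with $v$ and transposing, the resulting expression coincides, up to an overall sign, with the left-hand side of the corresponding original equation evaluated at the same arguments, hence vanishes. For example, the analogue of \eqref{eq:repRPA3} becomes $\langle u^{*},\ \rho(y)\mu(x)v-\mu(x)\rho(y)v+\mu([x,y])v-\mu(x\cdot P(y))v\rangle$, which is zero by \eqref{eq:repRPA3}; and similarly for \eqref{eq:repRPA1} and \eqref{eq:repRPA2}.

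The main obstacle is the analogue of \eqref{eq:repRPA4}, which is \emph{not} self-dual. Its left-hand side, written out, is $\rho^{*}(x\cdot y)u^{*}+\mu^{*}(x)\rho^{*}(y)u^{*}+\mu^{*}(y)\rho^{*}(x)u^{*}+\mu^{*}(x\cdot y)\alpha^{*}(u^{*})$, and pairing with $v$ produces $\langle u^{*},\ -\rho(x\cdot y)v+\rho(y)\mu(x)v+\rho(x)\mu(y)v-\alpha(\mu(x\cdot y)v)\rangle$, in which the ``wrong-order'' products $\rho(y)\mu(x)v,\ \rho(x)\mu(y)v$ and the term $\alpha(\mu(x\cdot y)v)$ appear instead of $\mu(x)\rho(y)v,\ \mu(y)\rho(x)v,\ \mu(x\cdot y)\alpha(v)$. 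To finish I would: (i) apply \eqref{eq:repRPA3} with the pairs $(x,y)$ and $(y,x)$ and add, so that the $\mu([x,y])$-terms cancel and $\rho(y)\mu(x)v+\rho(x)\mu(y)v=\mu(x)\rho(y)v+\mu(y)\rho(x)v+\mu(x\cdot P(y))v+\mu(y\cdot P(x))v$; (ii) use that $P$ is a derivation of $(A,\cdot)$, hence $x\cdot P(y)+y\cdot P(x)=P(x\cdot y)$, to collapse the last two terms into $\mu(P(x\cdot y))v$; and (iii) apply \eqref{eq:repRPA1} with $x$ replaced by $x\cdot y$ to rewrite $\alpha(\mu(x\cdot y)v)=\mu(P(x\cdot y))v+\mu(x\cdot y)\alpha(v)$. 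The two $\mu(P(x\cdot y))v$-terms cancel, leaving $-\langle u^{*},\ \rho(x\cdot y)v-\mu(x)\rho(y)v-\mu(y)\rho(x)v+\mu(x\cdot y)\alpha(v)\rangle$, which is zero by \eqref{eq:repRPA4}. The only genuine subtlety is recognizing that one must feed $x\cdot y$ (not just $x$) into \eqref{eq:repRPA1} and invoke the Leibniz rule for $P$ for the cancellations to occur; everything else is bookkeeping, and this completes the verification.
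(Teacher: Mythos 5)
Your proof is correct. The paper itself gives no argument for this statement (it is quoted from \cite{RPA}), and your direct dualization is the standard verification: the pairings for conditions (1), (2) and the analogues of \eqref{eq:repRPA1}--\eqref{eq:repRPA3} are routine, and you correctly identify and resolve the only non-self-dual point, the analogue of \eqref{eq:repRPA4}, by adding \eqref{eq:repRPA3} for $(x,y)$ and $(y,x)$, using the Leibniz rule $x\cdot P(y)+y\cdot P(x)=P(x\cdot y)$, and applying \eqref{eq:repRPA1} at $x\cdot y$; the choice $-\alpha^{*}$ with $\alpha^{*}$ the plain transpose is also the one consistent with Proposition \ref{pro2.5}.
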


We recall the notion of an anti-pre-Lie algebra.

\begin{defi}\cite{LB2022}
    An {\bf anti-pre-Lie algebra} is a pair $(A,\circ)$, such that $A$ is a vector space with a binary operation  $\circ:A\otimes A\rightarrow A$,
    and the following equations hold:
    \begin{eqnarray}
       && x\circ(y\circ z)-y\circ(x\circ z)=[y,x]\circ z,\label{eq:apl1}\\
       && [x,y]\circ z+[y,z]\circ x+[z,x]\circ y=0,\;\forall x,y,z\in A,\label{eq:apl2}
    \end{eqnarray}
    where
    \begin{equation}\label{eq:sub-adj}
        [x,y]=x\circ y-y\circ x,\;\;\forall x,y\in A.
    \end{equation}
\end{defi}

Let $A$ be a vector space together with a binary operation
$\circ:A\otimes A\rightarrow A$. The pair $(A,\circ)$ is called a
{\bf Lie-admissible algebra} if $[-,-]$ given by
(\ref{eq:sub-adj}) defines a Lie algebra and in this case,
$(A,[-,-])$ is called the {\bf sub-adjacent Lie algebra} of
$(A,\circ)$. In particular,
the pair $(A,\circ)$ is an anti-pre-Lie algebra if and only if $(A,\circ)$ is a Lie-admissible algebra and  $(-\mathcal{L}_{\circ},A)$ is a representation of the sub-adjacent Lie algebra $(A,[-,-])$.

\begin{defi}\cite{NSP}
Let $(A,\cdot)$ be a commutative associative algebra and $(A,\circ)$ be an anti-pre-Lie algebra. Suppose that $(A,[-,-])$ is the sub-adjacent Lie algebra of
$(A,\circ)$. If $(A,\cdot,[-,-])$ is a Poisson algebra and the following equations hold:
\begin{eqnarray}
       (x\cdot y)\circ z&=&x\cdot(y\circ z)+y\cdot(x\circ z),\label{eq:PCA1}\\
            (x\circ y-y\circ x)\cdot z&=&y\cdot(x\circ z)-x\circ(y\cdot z),\;\forall x,y,z\in A,\label{eq:PCA2}
            \end{eqnarray}
then we say $(A,\cdot,\circ)$ is a  {\bf PCA algebra}.
\end{defi}

We have the following characterization of  PCA algebras.

\begin{pro}\cite{NSP}
Let $(A,\cdot,[-,-])$ be a Poisson algebra and $(A,\circ)$ be an anti-pre-Lie algebra whose sub-adjacent Lie algebra is $(A,[-,-])$. Then $(A,\cdot,\circ)$ is a PCA algebra if and only if
 $(\mathcal{L}_{\cdot},-\mathcal{L}_{\circ},A)$  {\rm(}or equivalently, $(-\mathcal{L}^{*}_{\cdot},-\mathcal{L}^{*}_{\circ},A^{*})${\rm)} is a representation of $(A,\cdot,[-,-])$.
\end{pro}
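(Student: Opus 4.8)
The plan is to unwind the definition of a PCA algebra directly against the conditions in Definition~\ref{de:rep} for a representation of the Poisson algebra $(A,\cdot,[-,-])$ (viewed as a relative Poisson algebra with $P=0$), applied to the quadruple $(\mu,\rho,\alpha,V)=(\mathcal{L}_{\cdot},-\mathcal{L}_{\circ},0,A)$. Since $(A,\cdot)$ is commutative associative, condition (1) of Definition~\ref{de:rep} is automatic: $\mathcal{L}_{\cdot}(x\cdot y)=\mathcal{L}_{\cdot}(x)\mathcal{L}_{\cdot}(y)$. Since $(A,\circ)$ is an anti-pre-Lie algebra whose sub-adjacent Lie algebra is $(A,[-,-])$, the remark just above the statement gives that $(-\mathcal{L}_{\circ},A)$ is a representation of $(A,[-,-])$, which is precisely condition (2). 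As $P=0$ and $\alpha=0$, the first two equations \eqref{eq:repRPA1}--\eqref{eq:repRPA2} hold trivially. So everything reduces to showing that \eqref{eq:repRPA3} and \eqref{eq:repRPA4}, specialized to $\mu=\mathcal{L}_{\cdot}$, $\rho=-\mathcal{L}_{\circ}$, $\alpha=0$, $P=0$, are equivalent to the two PCA identities \eqref{eq:PCA1}--\eqref{eq:PCA2}.

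Concretely, I would write out each of \eqref{eq:repRPA3} and \eqref{eq:repRPA4} as an identity of operators applied to an arbitrary $v\in A$, so they become identities in $A$ with three free variables $x,y,v$. For \eqref{eq:repRPA4}: $\rho(x\cdot y)v-\mu(x)\rho(y)v-\mu(y)\rho(x)v+\mu(x\cdot y)\alpha(v)=0$ becomes $-(x\cdot y)\circ v+x\cdot(y\circ v)+y\cdot(x\circ v)=0$, which is exactly \eqref{eq:PCA1} (with $z$ renamed $v$). For \eqref{eq:repRPA3}: $\rho(y)\mu(x)v-\mu(x)\rho(y)v+\mu([x,y])v-\mu(x\cdot P(y))v=0$ becomes $-y\circ(x\cdot v)+x\cdot(y\circ v)+[x,y]\cdot v=0$; using $[x,y]=x\circ y-y\circ x$ and rearranging, this is $(x\circ y-y\circ x)\cdot v = y\circ(x\cdot v)-x\cdot(y\circ v)$, which is \eqref{eq:PCA2} after interchanging the roles of $x$ and $y$ (and noting the defining antisymmetry of $[-,-]$). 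Thus the correspondence is a routine bookkeeping check, and I would present it as a short chain of equivalences. The ``or equivalently'' clause then follows immediately from Proposition~\ref{pro:dual}: dualizing the representation $(\mathcal{L}_{\cdot},-\mathcal{L}_{\circ},0,A)$ yields $(-\mathcal{L}_{\cdot}^{*},-\mathcal{L}_{\circ}^{*},0,A^{*})$ as a representation, and since the dualization is an involution up to sign, each direction transfers.

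The only mild subtlety — not really an obstacle — is matching the variable names and the sign conventions in \eqref{eq:PCA2}: one must be careful that $[x,y]=x\circ y-y\circ x$ is used consistently, and that the swap $x\leftrightarrow y$ needed to pass between \eqref{eq:repRPA3} and \eqref{eq:PCA2} is legitimate because the equation is required for all $x,y$. I expect no genuine difficulty; the content of the proposition is essentially that the two PCA axioms repackage the two nontrivial representation axioms once $P$ and $\alpha$ are set to zero, and the proof is a direct verification.
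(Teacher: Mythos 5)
Your proposal is correct: with $P=\alpha=0$ the representation conditions \eqref{eq:repRPA3} and \eqref{eq:repRPA4} for $(\mathcal{L}_{\cdot},-\mathcal{L}_{\circ},A)$ unwind exactly to \eqref{eq:PCA2} (after the legitimate swap $x\leftrightarrow y$) and \eqref{eq:PCA1}, the remaining conditions being automatic from associativity and the anti-pre-Lie axioms, and the dual statement follows from Proposition \ref{pro:dual} together with double dualization in finite dimension. The paper itself quotes this proposition from \cite{NSP} without proof, but your argument is essentially the same bookkeeping used in the paper's proof of the relative analogue, Proposition \ref{pro2.5}.
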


\begin{rmk}
The notion of a PCA algebra is introduced  by the initials of the
Poisson algebra, commutative associative algebra and anti-pre-Lie
algebra, in order to exhibit that the Poisson algebra admits
representations $(\mathcal{L}_{\cdot},-\mathcal{L}_{\circ},A)$ as
well as
$(-\mathcal{L}^{*}_{\cdot},-\mathcal{L}^{*}_{\circ},A^{*})$ from a
commutative associative algebra and an anti-pre-Lie algebra. More
precisely, in \cite{NSP} the original definition of a PCA algebra
is a commutative associative algebra $(A,\cdot)$ together with an
anti-pre-Lie algebra $(A,\circ)$ such that \eqref{eq:PCA1},
\eqref{eq:PCA2} and the following equation hold:
\begin{equation}\label{eq:PCA3}
    z\circ(x\cdot y)+z\cdot(x\circ y)+z\cdot (y\circ x)-2(x\cdot y)\circ z=0,\;\forall x,y,z\in A.
\end{equation}
In fact, if \eqref{eq:PCA1} and
\eqref{eq:PCA2} hold, then \eqref{eq:PCA3} holds if and only if
$(A,\cdot,[-,-])$ is a Poisson algebra. The PCA algebra is also
understood as giving a new splitting of a Poisson algebra besides
the ordinary splitting in the sense \cite{BBGN} given by a
pre-Poisson algebra \cite{Agu2000.0}.
\end{rmk}

\begin{defi}
Let $(A,\cdot)$ be a commutative associative algebra and $(A,\circ)$ be an anti-pre-Lie algebra. Suppose that $(A,[-,-])$ is the sub-adjacent Lie algebra of $(A,\circ)$ and $P,Q:A\rightarrow A$ are linear maps. If $(A,\cdot,[-,-],P)$ is a relative Poisson algebra and the following equations hold:
        \begin{eqnarray}
            x\cdot Q(y)-P(x)\cdot y-Q(x\cdot y)&=&0,\label{eq:rps1}\\
            x\circ Q(y)-P(x)\circ y-Q(x\circ y)&=&0,\label{eq:rps2}\\
            x\cdot(y\circ z)-y\circ(x\cdot z)+[x,y]\cdot z-x\cdot P(y)\cdot z&=&0,\label{eq:rps3}\\
            (x\cdot y)\circ z-y\circ(x\cdot z)-x\circ(y\cdot z)+Q(x\cdot y\cdot z)&=&0,\;\;\forall x,y,z\in A,\label{eq:rps4}
        \end{eqnarray}
 then we say $(A,\cdot,\circ,P,Q)$ is a {\bf relative PCA
 algebra}. We also say $(A,\cdot,[-,-],P)$ is the {\bf associated
 relative Poisson algebra of $(A,\cdot,\circ,P,Q)$}.
\end{defi}

Let $(A,\cdot,\circ)$ be a PCA algebra. Then by \eqref{eq:PCA1} and \eqref{eq:PCA2}, we have
\begin{eqnarray*}
            x\cdot(y\circ z)-y\circ(x\cdot z)+[x,y]\cdot z&=&0,\\
            (x\cdot y)\circ z-y\circ(x\cdot z)-x\circ(y\cdot z)&=&0,\;\;\forall x,y,z\in A.
\end{eqnarray*}
Hence a PCA algebra is a relative PCA algebra by taking $P=Q=0$.
Moreover,
we have the following characterization of relative PCA algebras.

\begin{pro}\label{pro2.5}
    Let $(A,\cdot,[-,-],P)$ be  a relative Poisson algebra, $(A,\circ)$ be an anti-pre-Lie algebra whose sub-adjacent Lie algebra is $(A,[-,-])$ and $Q:A\rightarrow A$ be a linear map. Then the following conditions are equivalent.
\begin{enumerate}  \item \label{it:11} $(A,\cdot,\circ,P,Q)$ is a relative PCA algebra.
\item \label{it:12}  $(-\mathcal{L}^{*}_{\cdot},-\mathcal{L}^{*}_{\circ},Q^{*},A^{*})$ is a representation of $(A,\cdot,[-,-],P)$.
\item \label{it:13} $(\mathcal{L}_{\cdot},-\mathcal{L}_{\circ},-Q,A)$ is a representation of $(A,\cdot,[-,-],P)$.
\item \label{it:14} $(A\ltimes_{-\mathcal{L}_{\cdot}^{*},-\mathcal{L}_{\circ}^{*}}A^{*},P+Q^{*})$ is a relative Poisson algebra.
\end{enumerate}
\end{pro}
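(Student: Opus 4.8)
The plan is to prove the equivalence $(\ref{it:11})\Leftrightarrow(\ref{it:13})$ by comparing the axioms of a representation in Definition~\ref{de:rep} term by term with the defining equations of a relative PCA algebra, and then to obtain the two remaining equivalences formally from earlier results: $(\ref{it:13})\Leftrightarrow(\ref{it:12})$ from Proposition~\ref{pro:dual}, and $(\ref{it:12})\Leftrightarrow(\ref{it:14})$ from Proposition~\ref{pro:semidirect}.

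For $(\ref{it:11})\Leftrightarrow(\ref{it:13})$, I would first observe that, for the candidate quadruple $(\mathcal{L}_{\cdot},-\mathcal{L}_{\circ},-Q,A)$, the first two conditions in Definition~\ref{de:rep} hold automatically: the first is just the associativity of $(A,\cdot)$, while the second holds because $(A,\circ)$ is an anti-pre-Lie algebra with sub-adjacent Lie algebra $(A,[-,-])$, so that $(-\mathcal{L}_{\circ},A)$ is a representation of $(A,[-,-])$ by the characterization of anti-pre-Lie algebras recalled above. Next, substituting $\mu=\mathcal{L}_{\cdot}$, $\rho=-\mathcal{L}_{\circ}$, $\alpha=-Q$ into \eqref{eq:repRPA1}, \eqref{eq:repRPA2}, \eqref{eq:repRPA3} and unwinding signs, one checks that these become precisely \eqref{eq:rps1}, \eqref{eq:rps2}, \eqref{eq:rps3}, respectively; this part is routine bookkeeping.

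The only equation that does not match literally is \eqref{eq:repRPA4}: the same substitution turns it into
\[
(x\cdot y)\circ z-x\cdot(y\circ z)-y\cdot(x\circ z)+(x\cdot y)\cdot Q(z)=0,
\]
whereas \eqref{eq:rps4} reads
\[
(x\cdot y)\circ z-y\circ(x\cdot z)-x\circ(y\cdot z)+Q(x\cdot y\cdot z)=0.
\]
The key step---which I expect to be the only genuine computation---is to show that the difference of these two left-hand sides vanishes identically once \eqref{eq:rps1} and \eqref{eq:rps3} are assumed, together with the hypotheses that $P$ is a derivation of $(A,\cdot)$ and that $\cdot$ is commutative and associative. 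Concretely, applying \eqref{eq:rps3} twice, once with the roles of $x$ and $y$ interchanged, rewrites $x\circ(y\cdot z)+y\circ(x\cdot z)$ as $x\cdot(y\circ z)+y\cdot(x\circ z)$ plus terms involving $[-,-]$ and $P$; the bracket terms cancel by antisymmetry, leaving $-P(x)\cdot y\cdot z-P(y)\cdot x\cdot z$. Separately, \eqref{eq:rps1} applied to the pair $x\cdot y,\ z$ rewrites $(x\cdot y)\cdot Q(z)-Q(x\cdot y\cdot z)$ as $P(x\cdot y)\cdot z=P(x)\cdot y\cdot z+x\cdot P(y)\cdot z$. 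These two contributions are opposite, so the difference is zero; hence, modulo \eqref{eq:rps1} and \eqref{eq:rps3}, the equation \eqref{eq:repRPA4} is equivalent to \eqref{eq:rps4}. Combining the three literal identifications with this last reconciliation yields $(\ref{it:11})\Leftrightarrow(\ref{it:13})$.

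Finally, $(\ref{it:13})\Leftrightarrow(\ref{it:12})$ is immediate from Proposition~\ref{pro:dual}: the dual of the representation $(\mathcal{L}_{\cdot},-\mathcal{L}_{\circ},-Q,A)$ is exactly $(-\mathcal{L}_{\cdot}^{*},-\mathcal{L}_{\circ}^{*},Q^{*},A^{*})$, and applying Proposition~\ref{pro:dual} a second time (via the canonical identification $A^{**}\cong A$) recovers the original. And $(\ref{it:12})\Leftrightarrow(\ref{it:14})$ is Proposition~\ref{pro:semidirect} applied to the quadruple $(-\mathcal{L}_{\cdot}^{*},-\mathcal{L}_{\circ}^{*},Q^{*},A^{*})$ on $V=A^{*}$, since the semi-direct product relative Poisson algebra $(A\ltimes_{-\mathcal{L}_{\cdot}^{*},-\mathcal{L}_{\circ}^{*}}A^{*},P+Q^{*})$ is defined precisely when that quadruple is a representation. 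The one real obstacle is the careful sign-tracking in the substitution into \eqref{eq:repRPA1}--\eqref{eq:repRPA4} and the cancellation identity relating \eqref{eq:repRPA4} to \eqref{eq:rps4}; everything else is formal.
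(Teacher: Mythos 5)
Your proposal is correct, but it pivots on a different equivalence than the paper does. The paper proves (\ref{it:11})$\Leftrightarrow$(\ref{it:12}) directly, by pairing each of \eqref{eq:rps1}--\eqref{eq:rps4} against the dual space: with $\mu=-\mathcal{L}^{*}_{\cdot}$, $\rho=-\mathcal{L}^{*}_{\circ}$, $\alpha=Q^{*}$ on $V=A^{*}$, each equation \eqref{eq:repRPA1}--\eqref{eq:repRPA4} transposes \emph{literally} into the corresponding equation \eqref{eq:rps1}--\eqref{eq:rps4} (the transposition reorders the compositions, which is exactly why \eqref{eq:repRPA4} on $A^{*}$ becomes \eqref{eq:rps4} with no extra work); the remaining equivalences then follow from Propositions~\ref{pro:dual} and \ref{pro:semidirect}, as in your argument. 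You instead check the representation $(\mathcal{L}_{\cdot},-\mathcal{L}_{\circ},-Q,A)$ on $A$ itself, i.e.\ prove (\ref{it:11})$\Leftrightarrow$(\ref{it:13}) first, where \eqref{eq:repRPA1}--\eqref{eq:repRPA3} do match \eqref{eq:rps1}--\eqref{eq:rps3} on the nose but \eqref{eq:repRPA4} does not match \eqref{eq:rps4}, and you supply the missing reconciliation: using \eqref{eq:rps3} and its $x\leftrightarrow y$ swap one gets $x\circ(y\cdot z)+y\circ(x\cdot z)-x\cdot(y\circ z)-y\cdot(x\circ z)=-P(x\cdot y)\cdot z$, while \eqref{eq:rps1} applied to the pair $(x\cdot y,z)$ gives $(x\cdot y)\cdot Q(z)-Q(x\cdot y\cdot z)=P(x\cdot y)\cdot z$, so the two versions of the fourth identity differ by zero modulo \eqref{eq:rps1}, \eqref{eq:rps3} and the derivation property of $P$; I checked this computation and it is correct, and since the auxiliary identities are available in both directions the biconditional is sound. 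The trade-off: the paper's dual-space route avoids any such identity at the cost of routine pairing computations, whereas your route is more hands-on and makes explicit the (mildly interesting) fact that, in the presence of \eqref{eq:rps1} and \eqref{eq:rps3}, the semidirect-product form of the fourth representation axiom for left multiplications is equivalent to \eqref{eq:rps4}. Your use of Proposition~\ref{pro:dual} (with the finite-dimensional identification $A^{**}\cong A$ for the converse) and of Proposition~\ref{pro:semidirect} for (\ref{it:12})$\Leftrightarrow$(\ref{it:14}) coincides with the paper's.
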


\begin{proof}
(\ref{it:11}) $\Longleftrightarrow$ (\ref{it:12}). By
\cite{Bai2010} and \cite{LB2022},
$(-\mathcal{L}^{*}_{\cdot},A^{*})$ is a representation of
$(A,\cdot)$ and $(-\mathcal{L}^{*}_{\circ},A^{*})$ is a
representation of $(A,[-,-])$. For all $x,y\in A, a^{*}\in A^{*}$,
we have
\begin{equation*}
    \langle -Q^{*}\big(\mathcal{L}^{*}_{\circ}(x)a^{*}\big)+\mathcal{L}^{*}_{\circ}\big(P(x)\big)a^{*}+\mathcal{L}^{*}_{\circ}(x)Q^{*}(a^{*}),y\rangle=\langle a^{*}, x\circ Q(y)-P(x)\circ y-Q(x\circ y)\rangle.
\end{equation*}
Hence \eqref{eq:rps2} holds if and only if \eqref{eq:repRPA2} holds for  $\rho=-\mathcal{L}^{*}_{\circ},\alpha=Q^{*}, V=A^{*}$.
 Similarly, \eqref{eq:rps1}, \eqref{eq:rps3} and  \eqref{eq:rps4} hold if and only if \eqref{eq:repRPA1}, \eqref{eq:repRPA3} and \eqref{eq:repRPA4} hold for $\mu=-\mathcal{L}^{*}_{\cdot},\rho=-\mathcal{L}^{*}_{\circ},\alpha=Q^{*}, V=A^{*}$ respectively. Hence $(A,\cdot,\circ,P,Q)$ is a relative PCA algebra if and only if $(-\mathcal{L}^{*}_{\cdot},-\mathcal{L}^{*}_{\circ},Q^{*},A^{*})$ is a representation of $(A,\cdot,[-,-],P)$.

 (\ref{it:12}) $\Longleftrightarrow$ (\ref{it:13}). It follows from Proposition~\ref{pro:dual}.

 (\ref{it:12}) $\Longleftrightarrow$ (\ref{it:14}). It follows from Proposition~\ref{pro:semidirect}.
\end{proof}

\delete{
\begin{rmk}
Let $(A,\cdot,[-,-],P)$ be a relative Poisson algebra.
In \cite{RPA}, we introduce the condition when there is a linear map $Q:A\rightarrow A$ such that $(-\mathcal{L}^{*}_{\cdot},\mathrm{ad}^{*},Q^{*},A^{*})$ is a representation of $(A,\cdot,[-,-],P)$.
We treat the linear map $Q$ as an additional structure of $(A,\cdot,[-,-],P)$, since $Q$ is the only new participant in composing  $(-\mathcal{L}^{*}_{\cdot},\mathrm{ad}^{*},Q^{*},A^{*})$ as a representation of $(A,\cdot,[-,-],P)$.
In our present study, the representation on the dual space is turned to $(-\mathcal{L}^{*}_{\cdot},-\mathcal{L}^{*}_{\circ},Q^{*},A^{*})$, where there is another compatible anti-pre-Lie algebra $(A,\circ)$ involved, besides a linear map $Q$.
Hence we would like to consider $(A,\cdot,[-,-],P)$ together with the representation $(-\mathcal{L}^{*}_{\cdot},-\mathcal{L}^{*}_{\circ},Q^{*},A^{*})$ as a new algebra structure.
Also note that for a relative Poisson algebra $(A,\cdot,[-,-],P)$, the quadruple $(\mu,\rho,\alpha,V)$ is a representation if and only if $(-\mu^{*},\rho^{*},-\alpha^{*},V^{*})$ is a representation.
Hence the notion of a relative PCA algebra $(A,\cdot,\circ,P,Q)$ can also be characterized as a relative Poisson algebra $(A,\cdot,[-,-],P)$ with a representation $(\mathcal{L}_{\cdot},-\mathcal{L}_{\circ},-Q,A)$.
\end{rmk}}

\begin{defi}\cite{HBG}
Let $(A,\cdot, P)$ be a commutative differential algebra. Suppose
that $Q:A\rightarrow A$ is a linear map satisfying
\eqref{eq:rps1}. Then the quadruple $(A,\cdot,P,Q)$ is called an
{\bf admissible commutative differential algebra}.
\end{defi}

\begin{pro}\label{ex:ex}
    Let $(A,\cdot, P, Q)$ be an admissible commutative differential algebra. Define a binary operation $\circ$ on $A$ by
    \begin{equation}\label{eq:ex}
        x\circ y=Q(x\cdot y)-P(x)\cdot y,\;\forall x,y\in A.
    \end{equation}
 Then $(A,\circ)$ is  an anti-pre-Lie algebra
 and moreover $(A,\cdot,\circ,P,Q)$ is a relative PCA algebra.
\end{pro}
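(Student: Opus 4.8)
The plan is to first identify the sub-adjacent Lie algebra of $(A,\circ)$, then verify the two anti-pre-Lie axioms, and finally read off from the defining identities (equivalently, from Proposition~\ref{pro2.5}) that $(A,\cdot,\circ,P,Q)$ is a relative PCA algebra.

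First I would compute the commutator induced by $\circ$. Since $\cdot$ is commutative, \eqref{eq:ex} gives
\[
x\circ y-y\circ x=\big(Q(x\cdot y)-P(x)\cdot y\big)-\big(Q(x\cdot y)-P(y)\cdot x\big)=x\cdot P(y)-P(x)\cdot y,
\]
which is precisely the Witt type bracket \eqref{eq:Witt Lie} of $(A,\cdot,P)$. Hence $(A,[-,-])$ is a Lie algebra, so $(A,\circ)$ is Lie-admissible with sub-adjacent Lie algebra the Witt type Lie algebra of $(A,\cdot,P)$, and by Proposition~\ref{ex1.2} the quadruple $(A,\cdot,[-,-],P)$ is a relative Poisson algebra, which will serve as the associated relative Poisson algebra of $(A,\cdot,\circ,P,Q)$.

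To show $(A,\circ)$ is anti-pre-Lie, I would invoke the characterization recalled above: an algebra is anti-pre-Lie if and only if it is Lie-admissible and $(-\mathcal{L}_{\circ},A)$ is a representation of its sub-adjacent Lie algebra. Lie-admissibility is already established, so it remains to check \eqref{eq:apl1}. Substituting \eqref{eq:ex} into $x\circ(y\circ z)-y\circ(x\circ z)$ and into $[y,x]\circ z$, and using the commutativity and associativity of $\cdot$ together with the Leibniz rule for $P$, the terms of the form $P(x)\cdot P(y)\cdot z$ cancel; the remaining terms involve $Q$, which I would rewrite using two consequences of the admissibility relation \eqref{eq:rps1}, namely $P(a)\cdot Q(b)=Q\big(P(a)\cdot b\big)+P\big(P(a)\big)\cdot b$ (this is \eqref{eq:rps1} with first argument $P(a)$) and $Q\big(a\cdot Q(b)\big)=Q^{2}(a\cdot b)+Q\big(P(a)\cdot b\big)$ (apply $Q$ to \eqref{eq:rps1}). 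After these substitutions both sides collapse to $P\big(P(y)\big)\cdot x\cdot z-P\big(P(x)\big)\cdot y\cdot z$, giving \eqref{eq:apl1}; by the same characterization \eqref{eq:apl2} then holds too.

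It then remains to verify \eqref{eq:rps1}--\eqref{eq:rps4} (equivalently, any of the representation conditions in Proposition~\ref{pro2.5}). Identity \eqref{eq:rps1} is the hypothesis. For \eqref{eq:rps4}, substituting \eqref{eq:ex} into each of the three $\circ$-terms produces the same summand $Q(x\cdot y\cdot z)$, which cancels against $Q(x\cdot y\cdot z)$, and the surviving $-P(x\cdot y)\cdot z+P(y)\cdot x\cdot z+P(x)\cdot y\cdot z$ vanishes by the Leibniz rule for $P$. For \eqref{eq:rps3}, the same substitution and commutativity of $\cdot$ reduce the left-hand side to $x\cdot Q(y\cdot z)-P(x)\cdot y\cdot z-Q(x\cdot y\cdot z)$, which is zero by \eqref{eq:rps1} with arguments $x$ and $y\cdot z$. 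For \eqref{eq:rps2}, the substitution together with $Q$ applied to \eqref{eq:rps1} reduces the left-hand side to $Q\big(P(x)\cdot y\big)-P(x)\cdot Q(y)+P\big(P(x)\big)\cdot y$, which vanishes by \eqref{eq:rps1} with first argument $P(x)$. The only genuinely delicate point is the bookkeeping in \eqref{eq:apl1}: one must organize the nested occurrences of $Q$ so that $Q\big(a\cdot Q(b)\big)=Q^{2}(a\cdot b)+Q\big(P(a)\cdot b\big)$ can be applied; once this is noticed, every identity closes by a short direct computation.
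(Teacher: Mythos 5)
Your proof is correct, but it takes the direct-verification route, whereas the paper argues via representations: after invoking Proposition~\ref{ex1.2}, the paper observes that $P+Q^{*}$ is a derivation of the semi-direct product commutative associative algebra $A\ltimes_{-\mathcal{L}^{*}_{\cdot}}A^{*}$, identifies the resulting Witt type bracket on $A\oplus A^{*}$ with $[x+a^{*},y+b^{*}]=[x,y]-\mathcal{L}^{*}_{\circ}(x)b^{*}+\mathcal{L}^{*}_{\circ}(y)a^{*}$, concludes that $(-\mathcal{L}^{*}_{\circ},A^{*})$ is a representation of $(A,[-,-])$ (hence $(A,\circ)$ is anti-pre-Lie), and then applies Proposition~\ref{ex1.2} again together with Propositions~\ref{pro:semidirect} and \ref{pro2.5} to get the relative PCA structure without any element-wise computation. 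Your approach instead checks \eqref{eq:apl1} and \eqref{eq:rps1}--\eqref{eq:rps4} directly from \eqref{eq:ex} and the admissibility identity \eqref{eq:rps1}; this is exactly the ``direct verification'' the paper mentions but does not write out, and it is more elementary and self-contained, while the paper's argument is shorter and explains conceptually why $\circ$ arises (as the Witt type construction on the double space). Your reductions of \eqref{eq:rps2}--\eqref{eq:rps4} are accurate, and your use of the two consequences $P(a)\cdot Q(b)=Q\big(P(a)\cdot b\big)+P\big(P(a)\big)\cdot b$ and $Q\big(a\cdot Q(b)\big)=Q^{2}(a\cdot b)+Q\big(P(a)\cdot b\big)$ does make \eqref{eq:apl1} close; the only inaccuracy is your claim that both sides of \eqref{eq:apl1} collapse to $P\big(P(y)\big)\cdot x\cdot z-P\big(P(x)\big)\cdot y\cdot z$: in fact both sides reduce to the common expression
\begin{equation*}
Q\big(P(x)\cdot y\cdot z\big)-Q\big(P(y)\cdot x\cdot z\big)-P\big(P(x)\big)\cdot y\cdot z+P\big(P(y)\big)\cdot x\cdot z,
\end{equation*}
whose $Q$-terms do not vanish but cancel between the two sides, so the identity still holds and the slip does not affect the validity of the proof.
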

\begin{proof}
    It can be obtained from a direct verification. We give another proof in terms of representations of relative Poisson algebras.
    By Proposition~\ref{ex1.2}, there is a relative Poisson algebra $(A,\cdot,[-,-],P)$, where $(A,[-,-])$ is the corresponding Witt type Lie algebra
   defined by (\ref{eq:Witt Lie}). By \cite{LLB2023} or by \eqref{eq:rps1} with a straightforward checking,  $P+Q^{*}$ is a derivation of the semi-direct product commutative associative algebra $A\ltimes_{-\mathcal{L}^{*}_{\cdot}}A^{*}$ whose product is given by
    \begin{equation}
        (x+a^{*})\cdot(y+b^{*})=x\cdot y-\mathcal{L}^{*}_{\cdot}(x)b^{*}-\mathcal{L}^{*}_{\cdot}(y)a^{*},\;\forall x,y\in A, a^{*},b^{*}\in A^{*}.
    \end{equation}
    Hence there is a Witt type Lie algebra structure on $A\oplus A^{*}$ given by
    \begin{small}
    \begin{eqnarray*}
        \ [x+a^{*},y+b^{*}]&=&(x+a^{*})\cdot(P+Q^{*})(y+b^{*})-(P+Q^{*})(x+a^{*})\cdot(y+b^{*})\\
        &=&x\cdot P(y)-\mathcal{L}^{*}_{\cdot}(x)Q^{*}(b^{*})-\mathcal{L}^{*}_{\cdot}\big(P(y)\big)a^{*}-P(x)\cdot y+\mathcal{L}^{*}_{\cdot}\big(P(x)\big)b^{*}+\mathcal{L}^{*}_{\cdot}(y)Q^{*}(a^{*}),
    \end{eqnarray*}
    \end{small}for all $x,y\in A, a^{*},b^{*}\in A^{*}$.
 Note that \eqref{eq:ex} holds if and only if the following equation holds:
    \begin{equation*}
        -\mathcal{L}^{*}_{\cdot}(x)Q^{*}(b^{*})+\mathcal{L}^{*}_{\cdot}\big(P(x)\big)b^{*}=-\mathcal{L}_{\circ}^{*}(x)b^{*}, \;\;\forall x\in A, b^*\in A^*.
    \end{equation*}
Therefore we have
    \begin{equation*}
        [x+a^{*},y+b^{*}]=[x,y]-\mathcal{L}^{*}_{\circ}(x)b^{*}+\mathcal{L}^{*}_{\circ}(y)a^{*},\;\;\forall x,y\in A, a^*,b^*\in A^*.
    \end{equation*}
Also note that $(A,\circ)$ is a Lie-admissible Lie algebra since the sub-adjacent Lie algebra of $(A,\circ)$ is also
$(A,[-,-])$ itself.  Thus $(A,\circ)$ is an anti-pre-Lie algebra since  $(-\mathcal{L}^{*}_{\circ},A^{*})$ is
     a representation  of $(A,[-,-])$.
   Moreover, by Proposition~\ref{ex1.2} again, $(A\ltimes_{-\mathcal{L}^{*}_{\cdot},-\mathcal{L}^{*}_{\circ}}A^{*},P+Q^{*})$ is a
   relative Poisson algebra, and thus $(-\mathcal{L}^{*}_{\cdot},-\mathcal{L}^{*}_{\circ},Q^{*},A^{*})$ is a representation of $(A,\cdot,[-,-],P)$ by Proposition~\ref{pro:semidirect}.
    Then by Proposition \ref{pro2.5}, $(A,\cdot,\circ,P,Q)$ is a relative PCA algebra.
\end{proof}

\begin{pro}\label{pro2.9}\cite{LB2022}
 Let $(A,\cdot, P)$ be a commutative differential algebra and $\mathcal{B}$ be a symmetric invariant bilinear form on $(A,\cdot)$.
 Then $\mathcal{B}$ is a  commutative $2$-cocycle on the corresponding Witt type Lie algebra $(A,[-,-])$ defined by \eqref{eq:Witt Lie}.
\end{pro}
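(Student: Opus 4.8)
The plan is to verify the commutative $2$-cocycle identity \eqref{eq:2-coc} directly by substituting the Witt type bracket \eqref{eq:Witt Lie} and using only the commutativity, associativity of $(A,\cdot)$ together with the Leibniz rule for the derivation $P$. First I would expand each of the three cyclic terms: from \eqref{eq:Witt Lie} we have $[x,y]=x\cdot P(y)-P(x)\cdot y$, so
\begin{equation*}
\mathcal{B}([x,y],z)=\mathcal{B}\big(x\cdot P(y),z\big)-\mathcal{B}\big(P(x)\cdot y,z\big),
\end{equation*}
and similarly for $\mathcal{B}([y,z],x)$ and $\mathcal{B}([z,x],y)$. Next I would use the invariance of $\mathcal{B}$ on $(A,\cdot)$ to move one factor across the bilinear form in each term; the natural normalization is to rewrite every term so that $P$ lands on one chosen variable. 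For instance $\mathcal{B}(x\cdot P(y),z)=\mathcal{B}(P(y),x\cdot z)=\mathcal{B}(y\cdot(x\cdot z)\cdot?\ldots)$—more carefully, using symmetry and invariance, $\mathcal{B}(x\cdot P(y),z)=\mathcal{B}(P(y),x\cdot z)$ and $\mathcal{B}(P(x)\cdot y,z)=\mathcal{B}(P(x),y\cdot z)$.

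The key step is then to collect the six resulting terms of the form $\pm\,\mathcal{B}(P(u),v\cdot w)$ and recognize that, after using $P(v\cdot w)=P(v)\cdot w+v\cdot P(w)$ and commutativity, they cancel in pairs. Concretely, each pair such as $\mathcal{B}(P(y),x\cdot z)$ appearing with a $+$ sign from the expansion of $[x,y]$ should be matched against a term with a $-$ sign coming from the expansion of $[z,x]$ or $[y,z]$; writing everything with $P$ applied to a single variable and the product of the remaining two variables inside $\mathcal{B}$, the cyclic sum telescopes to $0$. An alternative, perhaps cleaner, route is to invoke the result of \cite{LB2022} cited in the surrounding text: a symmetric bilinear form that is invariant on a commutative associative algebra $(A,\cdot)$ is automatically a commutative $2$-cocycle on the corresponding Witt type Lie algebra—but since this proposition \emph{is} presumably that statement (or its specialization), I would instead give the self-contained computation.

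The main obstacle I anticipate is purely bookkeeping: keeping track of signs across the three cyclic expansions and making sure that the invariance identity is applied consistently (always moving the non-$P$ factor, never accidentally moving a $P(-)$ term as if it were a product). A secondary subtlety is that one must \emph{not} need the derivation property of $P$ on the bracket or any $2$-cocycle-type hypothesis at the start; only $P$ being a derivation of $(A,\cdot)$ and $\mathcal{B}$ being symmetric and associative-invariant should be used, and indeed the Leibniz rule $P(v\cdot w)=P(v)\cdot w+v\cdot P(w)$ enters exactly once, at the cancellation step, to merge the mixed terms. Once the six terms are written uniformly as $\mathcal{B}(P(\cdot),\,(\cdot)\cdot(\cdot))$, the cancellation is immediate and the proof is complete.
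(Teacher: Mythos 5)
Your strategy is correct and the computation it describes does go through: writing $[x,y]=x\cdot P(y)-P(x)\cdot y$, commutativity plus invariance give $\mathcal{B}(x\cdot P(y),z)=\mathcal{B}(P(y),x\cdot z)$ and $\mathcal{B}(P(x)\cdot y,z)=\mathcal{B}(P(x),y\cdot z)$, so the cyclic sum becomes six terms of the form $\pm\,\mathcal{B}(P(u),v\cdot w)$ which cancel in pairs. Note, however, that the paper does not prove this statement at all: it is recalled from \cite{LB2022}, so your self-contained verification is a different (and more informative) route than the citation the paper relies on. One inaccuracy in your plan: the Leibniz rule does \emph{not} enter the cancellation step. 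Once each term is normalized to $\mathcal{B}(P(u),v\cdot w)$, the pairs such as $\mathcal{B}(P(y),x\cdot z)-\mathcal{B}(P(y),z\cdot x)$ vanish by commutativity of $\cdot$ alone; the derivation property of $P$ is only needed elsewhere, namely to guarantee that \eqref{eq:Witt Lie} defines a Lie bracket (Jacobi identity), not for the commutative $2$-cocycle identity \eqref{eq:2-coc} itself. So your proof is correct but slightly overstates its hypotheses; trimming the spurious appeal to $P(v\cdot w)=P(v)\cdot w+v\cdot P(w)$ makes the argument cleaner and shows the identity holds for any linear map $P$.
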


\begin{defi}
Let $(A,\cdot,[-,-],P)$ be a relative Poisson algebra. If there is
a nondegenerate  symmetric bilinear form $\mathcal{B}$ on $A$ such
that it is invariant on $(A,\cdot)$ and a commutative $2$-cocycle
on $(A,[-,-])$, then we say $\mathcal{B}$ is an {\bf \Witt form
on $(A,\cdot,[-,-],P)$}.
\end{defi}

\begin{ex}\label{ex:poly}
Let $A=\mathbb K[t,t^{-1}]$ be the space of Laurent polynomials.
Then $A$ is equipped with a commutative associative multiplication by
\begin{equation}
    t^{m}\cdot t^{n}=t^{m+n},\;\forall m,n\in\mathbb{Z}.
\end{equation}
Moreover, there is a derivation $P$ on $(A,\cdot)$ given by
\begin{equation}
    P(t^{n})=nt^{n-1},
\end{equation}
and for any $a\in\mathbb Z$, there is a nondegenerate symmetric invariant bilinear form on $(A,\cdot)$ defined by
\begin{equation}
    \mathcal{B}_a(t^{m},t^{n})=\delta_{m+n,a}.
\end{equation}
Note that the corresponding Witt type Lie algebra $(A,[-,-])$ is given by
\begin{equation}
[t^{m},t^{n}]=t^{m}\cdot P(t^{n})-P(t^{m})\cdot t^{n}=(n-m)t^{m+n-1}.
\end{equation}
Therefore  $(A,\cdot,[-,-],P)$ is a relative Poisson algebra and for any $a\in \mathbb Z$,  $\mathcal B_a$ is a commutative $2$-cocycle on $(A$, $[-,-])$, that is, $\mathcal B_a$ is an \Witt form on $(A,\cdot,[-,-]$,$P)$.
\end{ex}

\begin{rmk}
    Recall \cite{RPA} that a {\bf (symmetric) Frobenius relative Poisson algebra} is a  relative Poisson algebra $(A,\cdot,[-,-],P)$
   together with a nondegenerate (symmetric) bilinear form $\mathcal{B}$ which is invariant on both $(A,\cdot)$ and $(A,[-,-])$.
   Note that a Lie algebra with a nondegenerate symmetric bilinear form $\mathcal{B}$ which is both invariant and a commutative $2$-cocycle is abelian. Hence with the condition in Proposition \ref{pro2.9}, if in addition the bilinear from $\mathcal B$ is nondegenerate, then $(A,\cdot,[-,-],P)$ with $\mathcal B$ in general cannot be  a symmetric Frobenius relative Poisson algebra unless $(A,[-,-])$ is abelian. Also note that in this case, the derivation $P$ satisfies $P(x)\cdot y=x\cdot P(y)$ for all $x,y\in A$.
 Therefore the typical examples of relative Poisson algebras constructed by Proposition~\ref{ex1.2} with the nondegenerate bilinear forms which are invariant on the commutative associative algebras cannot be symmetric Frobenius relative Poisson algebras unless the corresponding Witt type Lie algebras are abelian.
 \end{rmk}

\begin{pro}\label{ex:ex2}
Let $(A,\cdot,\circ,P,Q)$ be a relative PCA algebra and $(A,\cdot,
[-,-],P)$ be the associated relative Poisson algebra. Suppose that
$(A\ltimes_{-\mathcal{L}_{\cdot}^{*},-\mathcal{L}_{\circ}^{*}}A^{*},P+Q^{*})$
is the corresponding relative Poisson algebra given by
Proposition~\ref{pro2.5}. Then the natural  nondegenerate
symmetric  bilinear form $\mathcal{B}_{d}$ on $A\oplus A^*$
defined by
    \begin{equation}\label{eq:Bd}
        \mathcal{B}_{d}(x+a^{*},y+b^{*})=\langle x, b^{*}\rangle+\langle a^{*},y\rangle,\;\forall x,y\in A, a^{*},b^{*}\in A^{*}
    \end{equation}
    is invariant on
    the commutative associative algebra $A\ltimes_{-\mathcal{L}_{\cdot}^{*}} A^*$ and a commutative $2$-cocycle on the Lie algebra
    $A\ltimes_{-\mathcal{L}_{\circ}^{*}}A^{*}$, that is,
    $\mathcal{B}_{d}$ is an \Witt form on
    $(A\ltimes_{-\mathcal{L}_{\cdot}^{*},-\mathcal{L}_{\circ}^{*}}A^{*},P+Q^{*})$.
\end{pro}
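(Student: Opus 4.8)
The plan is to verify the two conditions defining an \Witt form on $\mathcal{B}_{d}$: that it is invariant on the commutative associative algebra $A\ltimes_{-\mathcal{L}_{\cdot}^{*}}A^{*}$, and that it is a commutative $2$-cocycle on the Lie algebra $A\ltimes_{-\mathcal{L}_{\circ}^{*}}A^{*}$. Symmetry and nondegeneracy of $\mathcal{B}_{d}$ are clear, since it is the canonical pairing on $A\oplus A^{*}$. By Proposition~\ref{pro2.5}, $(-\mathcal{L}_{\cdot}^{*},-\mathcal{L}_{\circ}^{*},Q^{*},A^{*})$ is a representation of the associated relative Poisson algebra, so $(A\ltimes_{-\mathcal{L}_{\cdot}^{*},-\mathcal{L}_{\circ}^{*}}A^{*},P+Q^{*})$ is already a relative Poisson algebra; in particular $A\ltimes_{-\mathcal{L}_{\cdot}^{*}}A^{*}$ is a commutative associative algebra and $A\ltimes_{-\mathcal{L}_{\circ}^{*}}A^{*}$ is a Lie algebra, and the relative-PCA identities \eqref{eq:rps1}--\eqref{eq:rps4} enter the proof only through this fact. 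What remains is a purely bilinear verification, using the semi-direct product formulas from Proposition~\ref{pro:semidirect}, the dual-map convention $\langle f^{*}(x)u^{*},v\rangle=-\langle u^{*},f(x)v\rangle$, the commutativity of $\cdot$, and the relation $[x,y]=x\circ y-y\circ x$ of \eqref{eq:sub-adj}.

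For the invariance, I would write $X=x+a^{*}$, $Y=y+b^{*}$, $Z=z+c^{*}$, expand $X\cdot Y$ and $Y\cdot Z$ via \eqref{eq:sd,asso} with $\mu=-\mathcal{L}_{\cdot}^{*}$, and substitute into $\mathcal{B}_{d}$. Using $\langle\mathcal{L}_{\cdot}^{*}(u)\xi,v\rangle=-\langle\xi,u\cdot v\rangle$ and the commutativity of $\cdot$, one computes
\begin{equation*}
\mathcal{B}_{d}(X\cdot Y,Z)=\langle c^{*},x\cdot y\rangle+\langle b^{*},x\cdot z\rangle+\langle a^{*},y\cdot z\rangle=\mathcal{B}_{d}(X,Y\cdot Z),
\end{equation*}
so $\mathcal{B}_{d}$ is invariant on $A\ltimes_{-\mathcal{L}_{\cdot}^{*}}A^{*}$. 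This is precisely the computation underlying the double construction of a commutative Frobenius algebra, and one may alternatively invoke \cite{Agu2004,Bai2010}.

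For the commutative $2$-cocycle property, I would use the bracket of Proposition~\ref{pro:semidirect} with $\rho=-\mathcal{L}_{\circ}^{*}$, namely $[X,Y]=[x,y]-\mathcal{L}_{\circ}^{*}(x)b^{*}+\mathcal{L}_{\circ}^{*}(y)a^{*}$, together with $\langle\mathcal{L}_{\circ}^{*}(u)\xi,v\rangle=-\langle\xi,u\circ v\rangle$, to obtain
\begin{equation*}
\mathcal{B}_{d}([X,Y],Z)=\langle c^{*},[x,y]\rangle+\langle b^{*},x\circ z\rangle-\langle a^{*},y\circ z\rangle.
\end{equation*}
Adding the two cyclic permutations and collecting the coefficients of $a^{*}$, of $b^{*}$, and of $c^{*}$ separately, each of the three resulting groups takes the shape $\langle\xi,[u,v]+(v\circ u-u\circ v)\rangle$, which vanishes by \eqref{eq:sub-adj}. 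Hence
\begin{equation*}
\mathcal{B}_{d}([X,Y],Z)+\mathcal{B}_{d}([Y,Z],X)+\mathcal{B}_{d}([Z,X],Y)=0,
\end{equation*}
i.e.\ $\mathcal{B}_{d}$ is a commutative $2$-cocycle on $A\ltimes_{-\mathcal{L}_{\circ}^{*}}A^{*}$. (Alternatively, this is an instance of the principle from \cite{LB2022} that an anti-pre-Lie algebra induces, on the double of its sub-adjacent Lie algebra, a Lie-algebra structure carrying a nondegenerate commutative $2$-cocycle.) Together with the previous paragraph, this shows that $\mathcal{B}_{d}$ is an \Witt form on $(A\ltimes_{-\mathcal{L}_{\cdot}^{*},-\mathcal{L}_{\circ}^{*}}A^{*},P+Q^{*})$.

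I do not expect a genuine obstacle here: both verifications are short, and the relative-PCA structure has already done its work through Proposition~\ref{pro2.5}. The only point needing care is sign bookkeeping — the minus sign in the dual-representation convention and the signs occurring in the semi-direct product multiplication and bracket — so I would fix all conventions at the outset and track them consistently through the two expansions.
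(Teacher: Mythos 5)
Your proposal is correct and follows essentially the same route as the paper: the paper checks invariance on $A\ltimes_{-\mathcal{L}_{\cdot}^{*}}A^{*}$ as a straightforward computation and obtains the commutative $2$-cocycle property on $A\ltimes_{-\mathcal{L}_{\circ}^{*}}A^{*}$ by citing \cite[Proposition 2.26]{LB2022}, which is exactly the alternative you mention. The only difference is that you carry out the cyclic-sum computation explicitly (correctly observing that it uses nothing beyond commutativity of $\cdot$ and the relation $[x,y]=x\circ y-y\circ x$), which is a fine substitute for the citation.
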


\begin{proof}
It is straightforward to show that $\mathcal{B}_{d}$ is invariant
on the commutative associative algebra
$A\ltimes_{-\mathcal{L}_{\cdot}^{*}}A^*$. Moreover, by
\cite[Proposition 2.26]{LB2022}, $\mathcal{B}_{d}$ is a
commutative $2$-cocycle on the Lie algebra
$A\ltimes_{-\mathcal{L}_{\circ}^{*}}A^{*}$.
\end{proof}

\begin{rmk}
Let $(A,\cdot, P, Q)$ be an admissible commutative differential
algebra and $(A,\cdot,\circ,P,Q)$ be the relative PCA algebra
given in Proposition~\ref{ex:ex}, where  $\circ$ is defined by
{\rm (\ref{eq:ex})}. The fact that $\mathcal B_d$ is a commutative
$2$-cocycle on the Lie algebra
    $A\ltimes_{-\mathcal{L}_{\circ}^{*}}A^{*}$ also can be
    obtained from Proposition~\ref{pro2.9} since the Lie algebra
 $A\ltimes_{-\mathcal{L}_{\circ}^{*}}A^{*}$ is the
Witt type algebra of the commutative differential algebra
$(A\ltimes_{-\mathcal{L}_{\cdot}^{*}}A^*,P+Q^*)$.
\end{rmk}

\delete{
\begin{rmk}
    With the condition in
    Example \ref{ex:ex}, there is another semi-direct Lie algebra $A\ltimes_{\mathrm{ad}^{*}}A^{*}$ with respect to the coadjoint representation $(\mathrm{ad}^{*}, A^{*})$, that is,
    \begin{equation*}
        [x+a^{*},y+b^{*}]'=[x,y]+\mathrm{ad}^{*}(x)b^{*}-\mathrm{ad}^{*}(y)a^{*},\;\forall x,y\in A, a^{*},b^{*}\in A^{*}.
    \end{equation*}
    Moreover, the  natural nondegenerate symmetric bilinear form $\mathcal{B}_{d}$ given by
    \eqref{eq:Bd} is invariant on $A\ltimes_{\mathrm{ad}^{*}}A^{*}$.
    The two Lie algebras $A\ltimes_{\mathrm{ad}^{*}}A^{*}$ and $A\ltimes_{-\mathcal{L}_{\circ}^{*}}A^{*}$ coincide if and only if $\mathrm{ad}=-\mathcal{L}_{\circ}$, that is,
    \begin{equation}\label{eq:coin}
        [x,y]=-x\circ y.
    \end{equation}
    By \eqref{eq:Witt Lie}, \eqref{eq:rps1} and \eqref{eq:ex}, the condition \eqref{eq:coin} holds if and only if
    \begin{equation*}
        Q(x\cdot y)=x\cdot P(y)-2P(x)\cdot y,
        \end{equation*}
        which also indicates $P(x)\cdot y=x\cdot P(y)$. In this case, $A\ltimes_{\mathrm{ad}^{*}}A^{*}=A\ltimes_{-\mathcal{L}_{\circ}^{*}}A^{*}$ is an abelian Lie algebra.
\end{rmk}
}

\begin{ex}
Let $A=\mathrm{span}\{e_{1}, e_{2}, e_{3}\}$ be a vector space,
$\cdot:A\otimes A\rightarrow A$ be a binary operation with nonzero
products given by
    \begin{equation}\label{eq:comm product}
        e_{1}\cdot e_{1}=2e_{3},\; e_{1}\cdot e_{2}=e_{3}
    \end{equation}
    and $P:A\rightarrow A$ be a linear map given by
    \begin{equation*}
        P(e_{1})=e_{1}+e_{2},\; P(e_{2})=2e_{2},\; P(e_{3})=3e_{3}.
    \end{equation*}
    Then $(A,\cdot,P,-P)$ is an admissible  commutative differential
    algebra. By Proposition \ref{ex:ex}, there is an anti-pre-Lie algebra $(A,\circ)$ defined by
\begin{equation*}
    x\circ y=-P(x\cdot y)-P(x)\cdot y,\;\forall x,y\in A,
\end{equation*}
whose nonzero products are
\begin{equation}\label{eq:apl product}
e_{1}\circ e_{1}=-9e_{3},\; e_{1}\circ e_{2}=-4e_{3},\; e_{2}\circ e_{1}=-5e_{3}.
\end{equation}
Moreover, $(A,\cdot,\circ,P,-P)$ is a relative PCA algebra. By
Proposition~\ref{pro2.5},
$(A\ltimes_{-\mathcal{L}_{\cdot}^{*},-\mathcal{L}_{\circ}^{*}}A^{*},P-P^{*})$
is a relative Poisson algebra, whose nonzero products are given by
\eqref{eq:comm product} and
\begin{eqnarray*}
&& e_{1}\cdot e_{3}^{*}=2e^{*}_{1}+e^{*}_{2},\; e_{2}\cdot e^{*}_{3}=e^{*}_{1},\\
&& [e_{1},e_{2}]=e_{3},\; [e_{1}, e^{*}_{3}]=-9e^{*}_{1}-4e^{*}_{2},\; [e_{2}, e^{*}_{3}]=-5e^{*}_{1}.
\end{eqnarray*}
 By
Proposition~\ref{ex:ex2}, the bilinear form $\mathcal{B}_{d}$ on
$A\oplus A^*$ defined by \eqref{eq:Bd} is an \Witt form on
$(A\ltimes_{-\mathcal{L}_{\cdot}^{*},-\mathcal{L}_{\circ}^{*}}A^{*},P-P^{*})$.
\end{ex}


Conversely, a relative Poisson algebra with an \Witt form induces
a relative PCA algebra.

\begin{lem}\cite{LB2022}
Let $(A,[-,-])$ be a Lie algebra with a nondegenerate commutative $2$-cocycle $\mathcal{B}$.
Then there is an anti-pre-Lie algebra structure on $A$ defined by
\begin{equation}\label{eq:APLc2c}
    \mathcal{B}(x\circ y,z)=\mathcal{B}(y,[x,z]),\;\forall x,y,z\in A,
\end{equation}
whose sub-adjacent Lie algebra is $(A,[-,-])$.
\delete{
In particular, if there is a commutative associative algebra $(A,\cdot)$ with a derivation $P$ such that $(A,[-,-])$ is the Witt type Lie algebra of $(A,\cdot,P)$ and $\mathcal{B}$ is invariant on $(A,\cdot)$, then we have
\begin{equation}
    x\circ y=\hat{P}(x\cdot y)-P(x)\cdot y,
\end{equation}
where $\hat{P}:A\rightarrow A$ is the adjoint map of $P$ given by
\begin{equation}\label{eq:hat}
    \mathcal{B}\big(\hat{P}(x),y\big)=\mathcal{B}\big(x,P(y)\big),\;\forall x,y\in A.
\end{equation}}
\end{lem}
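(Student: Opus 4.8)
The plan is to verify the two anti-pre-Lie axioms \eqref{eq:apl1} and \eqref{eq:apl2} directly from the definition \eqref{eq:APLc2c}, using nondegeneracy of $\mathcal{B}$ to reduce each identity to a consequence of the commutative $2$-cocycle condition \eqref{eq:2-coc} and the Jacobi identity for $(A,[-,-])$. First I would record that \eqref{eq:APLc2c} indeed defines a binary operation $\circ$: since $\mathcal{B}$ is nondegenerate, for fixed $x,y$ the linear functional $z\mapsto \mathcal{B}(y,[x,z])$ is represented by a unique element, which we call $x\circ y$. The heart of the argument is then the computation of the sub-adjacent bracket: for all $x,y,z$,
\begin{equation*}
\mathcal{B}(x\circ y - y\circ x, z) = \mathcal{B}(y,[x,z]) - \mathcal{B}(x,[y,z]).
\end{equation*}
I would rewrite the right-hand side using the commutative $2$-cocycle identity \eqref{eq:2-coc}, which gives $\mathcal{B}(y,[x,z]) + \mathcal{B}([z,y],x) + \mathcal{B}([x,z],y)\cdot(\text{relabel})$ — more precisely, applying \eqref{eq:2-coc} to the triple $(x,z,y)$ yields $\mathcal{B}([x,z],y) + \mathcal{B}([z,y],x) + \mathcal{B}([y,x],z) = 0$, hence $\mathcal{B}(y,[x,z]) - \mathcal{B}(x,[y,z]) = \mathcal{B}([x,z],y) + \mathcal{B}([z,y],x) = -\mathcal{B}([y,x],z) = \mathcal{B}([x,y],z)$. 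By nondegeneracy this proves $x\circ y - y\circ x = [x,y]$, so the bracket \eqref{eq:sub-adj} attached to $(A,\circ)$ coincides with the original Lie bracket; this handles \eqref{eq:apl2} immediately, since it is just the Jacobi identity for $(A,[-,-])$, and it also identifies the sub-adjacent Lie algebra as claimed.

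For the left-anti-pre-Lie identity \eqref{eq:apl1}, I would pair both sides against an arbitrary $w\in A$ and expand using \eqref{eq:APLc2c} repeatedly. On the left, $\mathcal{B}\big(x\circ(y\circ z) - y\circ(x\circ z), w\big) = \mathcal{B}(y\circ z,[x,w]) - \mathcal{B}(x\circ z,[y,w]) = \mathcal{B}(z,[y,[x,w]]) - \mathcal{B}(z,[x,[y,w]])$, which by the Jacobi identity equals $-\mathcal{B}(z,[[x,y],w]) = \mathcal{B}(z,[w,[x,y]])$. On the right, $\mathcal{B}([y,x]\circ z, w) = \mathcal{B}(z,[[y,x],w]) = -\mathcal{B}(z,[[x,y],w]) = \mathcal{B}(z,[w,[x,y]])$. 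The two sides agree, and nondegeneracy finishes \eqref{eq:apl1}.

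I do not expect a genuine obstacle here: the statement is essentially an application of nondegeneracy to turn $\mathcal{B}$-pairings into identities, and both anti-pre-Lie axioms collapse onto the Jacobi identity once the $2$-cocycle relation has been used to pin down the commutator. The only point requiring mild care is bookkeeping with signs and the cyclic relabelling in \eqref{eq:2-coc}; a clean way to organize this is to first prove $x\circ y - y\circ x = [x,y]$ as a lemma-within-the-proof and then use it freely. (This result is already stated as a known fact in \cite{LB2022}, so in the paper it may simply be cited, but the above is the direct verification.)
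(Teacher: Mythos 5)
Your construction of $\circ$, your computation showing $x\circ y-y\circ x=[x,y]$, and your pairing argument for \eqref{eq:apl1} are all correct. The gap is in your one-line dismissal of \eqref{eq:apl2}: that identity reads $[x,y]\circ z+[y,z]\circ x+[z,x]\circ y=0$, which is a statement about $\circ$ applied to brackets, not the Jacobi identity $[[x,y],z]+[[y,z],x]+[[z,x],y]=0$, and knowing that the commutator of $\circ$ is $[-,-]$ does not convert one into the other. Pairing \eqref{eq:apl2} against an arbitrary $w$ via \eqref{eq:APLc2c}, what you actually have to prove is
\begin{equation*}
S:=\mathcal{B}\big(z,[[x,y],w]\big)+\mathcal{B}\big(x,[[y,z],w]\big)+\mathcal{B}\big(y,[[z,x],w]\big)=0,
\end{equation*}
and this would follow from Jacobi alone only if you could move the outer bracket across $\mathcal{B}$, i.e.\ only if $\mathcal{B}$ were invariant — which a commutative $2$-cocycle is precisely not (the paper even notes that invariance plus \eqref{eq:2-coc} forces the Lie algebra to be abelian). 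So the second axiom is the genuinely nontrivial half and your proposal as written does not establish it.

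The gap can be closed, for instance, as follows. Applying \eqref{eq:2-coc} to the triples $([x,y],w,z)$, $([y,z],w,x)$, $([z,x],w,y)$ and using the Jacobi identity on the leftover terms gives $S=-T$, where $T=\mathcal{B}([w,x],[y,z])+\mathcal{B}([w,y],[z,x])+\mathcal{B}([w,z],[x,y])$; on the other hand, first rewriting $[[x,y],w]=[x,[y,w]]-[y,[x,w]]$ (and cyclically) by Jacobi and then applying \eqref{eq:2-coc} to the resulting pairs sharing $[y,w]$, $[z,w]$, $[x,w]$ gives $S=T$. Hence $2S=0$, so $S=0$ in characteristic $0$ and \eqref{eq:apl2} follows by nondegeneracy. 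Alternatively, you may argue structurally: you have already shown that $(A,\circ)$ is Lie-admissible with commutator $[-,-]$ and that \eqref{eq:apl1} holds, i.e.\ $(-\mathcal{L}_{\circ},A)$ is a representation of $(A,[-,-])$; by the characterization recalled in Section \ref{sec2} (anti-pre-Lie $\Leftrightarrow$ Lie-admissible plus $(-\mathcal{L}_{\circ},A)$ a representation of the sub-adjacent Lie algebra), \eqref{eq:apl2} then holds — but that is an appeal to a nontrivial equivalence (in characteristic $0$ it amounts to the identity that, given \eqref{eq:apl1}, the Jacobiator of the commutator equals twice the left-hand side of \eqref{eq:apl2}), not to the Jacobi identity "immediately".
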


\begin{pro}\label{pro:from Witt to system}
Let $(A,\cdot,[-,-],P)$ be a relative Poisson algebra with an
\Witt form $\mathcal{B}$. Then there is a relative PCA algebra
$(A,\cdot,\circ,P,\hat{P})$ in which $\circ$ is defined by
\eqref{eq:APLc2c}, and $\hat{P}:A\rightarrow A$ is the adjoint map
of $P$ defined by
\begin{equation}\label{eq:hat}
    \mathcal{B}\big(\hat{P}(x),y\big)=\mathcal{B}\big(x,P(y)\big),\;\forall x,y\in A.
\end{equation}
\end{pro}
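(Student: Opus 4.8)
The plan is to reduce the statement to a representation-theoretic criterion via Proposition~\ref{pro2.5}, rather than verifying the four defining identities \eqref{eq:rps1}--\eqref{eq:rps4} by hand. By that proposition, it suffices to show that $(\mathcal{L}_{\cdot},-\mathcal{L}_{\circ},-\hat P,A)$ (equivalently, $(-\mathcal{L}^{*}_{\cdot},-\mathcal{L}^{*}_{\circ},\hat P^{*},A^{*})$) is a representation of the relative Poisson algebra $(A,\cdot,[-,-],P)$, where $\circ$ is the anti-pre-Lie product defined by \eqref{eq:APLc2c}. First I would record what the hypotheses give for free: since $\mathcal B$ is invariant on $(A,\cdot)$, $(-\mathcal{L}^{*}_{\cdot},A^{*})\cong(\mathcal{L}_{\cdot},A)$ as representations of $(A,\cdot)$, so condition (1) of Definition~\ref{de:rep} holds; since $\mathcal B$ is a nondegenerate commutative $2$-cocycle on $(A,[-,-])$, the cited lemma gives that $(A,\circ)$ is anti-pre-Lie with sub-adjacent Lie algebra $(A,[-,-])$, hence $(-\mathcal{L}_{\circ},A)$ is a representation of $(A,[-,-])$, giving condition (2).

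Next I would translate each of the remaining equations \eqref{eq:repRPA1}--\eqref{eq:repRPA4} into an identity of bilinear forms using the nondegeneracy of $\mathcal B$. Concretely, for the representation $(\mathcal{L}_{\cdot},-\mathcal{L}_{\circ},-\hat P,A)$ one pairs each equation against an arbitrary $w\in A$ via $\mathcal B(-,w)$ and uses the three structural facts: (a) $\mathcal B(x\cdot y,z)=\mathcal B(x,y\cdot z)$; (b) $\mathcal B(x\circ y,z)=\mathcal B(y,[x,z])$ from \eqref{eq:APLc2c}; (c) $\mathcal B(\hat P(x),y)=\mathcal B(x,P(y))$ from \eqref{eq:hat}. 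Then \eqref{eq:repRPA1} with $\mu=\mathcal{L}_{\cdot}$, $\alpha=-\hat P$ should reduce, after pairing, to the statement that $P$ is a derivation of $(A,\cdot)$ combined with the compatibility of $\hat P$ with $\cdot$ coming from invariance; \eqref{eq:repRPA2} should reduce to $P$ being a derivation of $(A,[-,-])$ together with the $2$-cocycle property; \eqref{eq:repRPA3} and \eqref{eq:repRPA4} should reduce to the relative Leibniz rule \eqref{eq:GLR} and the anti-pre-Lie axioms \eqref{eq:apl1}--\eqref{eq:apl2} after dualizing. The point is that each $\mathcal B$-paired identity becomes a known relation, so no genuinely new computation is needed beyond bookkeeping of signs.

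The main obstacle I anticipate is the interplay in \eqref{eq:repRPA3} and \eqref{eq:repRPA4}, where both $\cdot$ and $\circ$ appear and one must juggle the invariance of $\mathcal B$ on $(A,\cdot)$ against its $2$-cocycle behavior on $(A,[-,-])$ simultaneously; in particular the term $\mu(x\cdot P(y))v$ in \eqref{eq:repRPA3} forces one to commute $P$ past a product and then re-express things via $\hat P$ and the Witt-type bracket, and getting every sign consistent there (especially the sign conventions hidden in $f^*$ and in $-\mathcal{L}_{\circ}$) will require care. A cleaner route, which I would actually pursue to sidestep sign bookkeeping, is to invoke Proposition~\ref{ex:ex2}: the double space $(A\ltimes_{-\mathcal{L}_{\cdot}^{*},-\mathcal{L}_{\circ}^{*}}A^{*},P+Q^{*})$ of \emph{any} relative PCA algebra already carries an \Witt form $\mathcal B_d$, and conversely one can use the nondegenerate $\mathcal B$ on $A$ to identify $A\cong A^{*}$ and transport the structure; then the relative PCA identities for $(A,\cdot,\circ,P,\hat P)$ follow because $\circ$ is \emph{forced} to be $\mathcal{L}_{\circ}=$ the operator dual (via $\mathcal B$) to $\operatorname{ad}$, which is exactly what \eqref{eq:APLc2c} says, and $\hat P$ is forced to be the $\mathcal B$-adjoint of $P$. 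Either way, I expect the proof to be a page of careful dualization with the representation characterization of Proposition~\ref{pro2.5} doing the real work.
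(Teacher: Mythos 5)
Your first route is sound and is in substance the paper's own proof: the paper verifies \eqref{eq:rps1}--\eqref{eq:rps4} with $Q=\hat P$ directly by pairing each identity against an arbitrary element via $\mathcal{B}$ and using \eqref{eq:APLc2c}, \eqref{eq:hat} and the invariance on $(A,\cdot)$, so that each identity collapses to a known fact ($P$ being a derivation of $\cdot$ and of $[-,-]$, and the relative Leibniz rule \eqref{eq:GLR} together with its consequence $P(x\cdot y\cdot z)=[x,y\cdot z]+[y,z\cdot x]+[z,x\cdot y]$). Since Proposition \ref{pro2.5} makes the representation conditions \eqref{eq:repRPA1}--\eqref{eq:repRPA4} for $(\mathcal{L}_{\cdot},-\mathcal{L}_{\circ},-\hat P,A)$ term-by-term equivalent to \eqref{eq:rps1}--\eqref{eq:rps4}, your dualized bookkeeping is the same computation, not a genuinely different argument. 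One small correction: the $\mathcal{B}$-paired forms of \eqref{eq:repRPA3} and \eqref{eq:repRPA4} reduce to \eqref{eq:GLR} (and the displayed consequence above), not to the anti-pre-Lie axioms \eqref{eq:apl1}--\eqref{eq:apl2}; the latter enter only through the cited lemma guaranteeing that $\circ$ defined by \eqref{eq:APLc2c} is anti-pre-Lie with sub-adjacent Lie algebra $(A,[-,-])$.

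The route you say you would actually pursue, however, does not work as stated. Proposition \ref{ex:ex2} takes a relative PCA algebra $(A,\cdot,\circ,P,Q)$ as its hypothesis and produces the \Witt form $\mathcal{B}_{d}$ on the double space, so invoking it presupposes exactly what is to be proved, namely that $(A,\cdot,\circ,P,\hat P)$ is a relative PCA algebra; there is no ``conversely'' available in that proposition. Nor does identifying $A\cong A^{*}$ via $\mathcal{B}$ ``force'' the conclusion: the identification merely translates $-\mathcal{L}^{*}_{\cdot}$ and $-\mathcal{L}^{*}_{\circ}$ into the $\mathcal{B}$-adjoints of $\mathcal{L}_{\cdot}$ and $-\mathcal{L}_{\circ}$, and the assertion that these operators together with $\hat P$ satisfy \eqref{eq:repRPA1}--\eqref{eq:repRPA4} is precisely the content of the proposition and still has to be checked; the transport only re-encodes the same four identities in different notation. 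So the proposed shortcut is circular, and the only complete argument in your proposal is the first one, which coincides with the paper's proof.
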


\begin{proof}
\delete{
    By \cite{Bai2010} and \cite{LB2022}, $(-\mathcal{L}^{*}_{\cdot},A^{*})$ is a representation of $(A,\cdot)$ and
$(-\mathcal{L}^{*}_{\circ},A^{*})$ is a representation of $(A,[-,-])$.
We set a linear isomorphism $\phi:A\rightarrow A^{*}$ by
\begin{equation}
    \langle \phi(x),y\rangle=\mathcal{B}(x,y),\;\forall x,y\in A.
\end{equation}}
For all $x,y,z\in A$, we have
\begin{eqnarray*}
    \mathcal{B}\big(y,x\circ \hat{P}(z)-P(x)\circ z-\hat{P}(x\circ z)\big)&\overset{\eqref{eq:APLc2c}}{=}&\mathcal{B}\big( [x,y],\hat{P}(z) \big)-\mathcal{B}\big( [P(x),y],z \big)-\mathcal{B}\big(P(y),x\circ z\big)\\
    &=&\mathcal{B}\big( P([x,y]),z\big)-\mathcal{B}\big( [P(x),y],z \big)-\mathcal{B}\big([x,P(y)],  z\big)\\
    &=&0.
\end{eqnarray*}
Hence \eqref{eq:rps2} with $Q=\hat P$ holds. Similarly \eqref{eq:rps1}, \eqref{eq:rps3} and \eqref{eq:rps4} with $Q=\hat P$ hold, and thus $(A,\cdot,\circ,P,\hat{P})$ is a relative PCA algebra.
\end{proof}

\begin{ex}
Let $(A,\cdot,[-,-],P)$ be the relative Poisson algebra together
with the bilinear form $\mathcal{B}_a$ (for any $a\in \mathbb Z$)
given in Example \ref{ex:poly}. Then for any $a \in \mathbb Z$, by
Proposition \ref{pro:from Witt to system}, there is a relative PCA
algebra  $(A,\cdot,\circ,P,\hat{P})$, where
\begin{equation*}
  \hat{P}(t^{n})=(a+1-n)t^{n-1},\;  t^{m}\circ t^{n}=\hat{P}(t^{m}\cdot t^{n})-P(t^{m})\cdot t^{n}=(a+1-2m-n)t^{m+n-1},\;\forall m,n\in\mathbb Z.
\end{equation*}
In particular when $a=-1$, we have
\begin{equation*}
    \hat{P}=-P,\; t^{m}\circ t^{n}=-( 2m+n)t^{m+n-1}.
\end{equation*}
\end{ex}

At the end of this subsection, we would like to study unital
relative Poisson algebras, or equivalently, Jacobi algebras, with
\Witt forms.
 First, we recall the notion of Jacobi algebras which arise in the study of Jacobi
    manifolds.
\begin{defi} \cite{Ago}
A
    {\bf Jacobi algebra} is a triple $(A,\cdot,$ $[-,-])$, such that
    $(A,\cdot)$ is a unital commutative associative algebra,
    $(A,[-,-])$ is a Lie algebra and the following equation is satisfied:
\begin{equation}\label{eq:JA}
    [z,x\cdot y]=[z,x]\cdot y+x\cdot[z,y]+x\cdot y\cdot[1_{A},z],
    \;\;\forall x,y,z\in A.
\end{equation}
\end{defi}
\begin{lem}\cite{RPA}\label{lem:930}
Suppose that $(A,\cdot,[-,-],P)$ is a  relative Poisson algebra with the unit $1_{A}$.
Then $(A,\cdot,[-,-] )$ is a  Jacobi algebra and $P=\mathrm{ad}(1_{A})$.
Conversely, suppose that $(A,\cdot,[-,-] )$ is a  Jacobi algebra. Then $(A,\cdot,[-,-],P=\mathrm{ad}(1_{A}))$ is a  relative Poisson algebra with the unit $1_{A}$.
Hence there is a one-to-one correspondence between unital relative Poisson algebras and Jacobi algebras.
\end{lem}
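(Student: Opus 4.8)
The plan is to prove the three assertions in turn, extracting everything from the defining identity~\eqref{eq:GLR} of a relative Poisson algebra and the existence of the unit $1_A$. First I would show that if $(A,\cdot,[-,-],P)$ is a relative Poisson algebra with unit $1_A$, then $P=\mathrm{ad}(1_A)$. To see this, set $x=1_A$ in~\eqref{eq:GLR}: for all $y,z\in A$ we get $[z,y]=[z,1_A]\cdot y+[z,y]+y\cdot P(z)$, hence $[z,1_A]\cdot y=-y\cdot P(z)$, and taking $y=1_A$ gives $[z,1_A]=-P(z)$, i.e. $P(z)=[1_A,z]=\mathrm{ad}(1_A)(z)$. (As a byproduct the same computation recovers, via $[z,1_A]\cdot y = -y\cdot P(z) = -P(z)\cdot y$, that $P$ is a multiplication operator, but this is not needed.) With $P=\mathrm{ad}(1_A)$, the relative Leibniz rule~\eqref{eq:GLR} becomes exactly the Jacobi algebra identity~\eqref{eq:JA}: substitute $P(z)=[1_A,z]$ into~\eqref{eq:GLR}. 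Since $(A,\cdot)$ is a unital commutative associative algebra and $(A,[-,-])$ is a Lie algebra by hypothesis, $(A,\cdot,[-,-])$ is a Jacobi algebra.

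For the converse, suppose $(A,\cdot,[-,-])$ is a Jacobi algebra and set $P:=\mathrm{ad}(1_A)$. I must check the three requirements in the definition of a relative Poisson algebra: that $P$ is a derivation of $(A,\cdot)$, that $P$ is a derivation of $(A,[-,-])$, and that the relative Leibniz rule~\eqref{eq:GLR} holds. The last is immediate: it is~\eqref{eq:JA} with $[1_A,z]$ rewritten as $P(z)$. That $P$ is a derivation of $(A,\cdot)$ follows by specializing~\eqref{eq:JA} itself: put $z=1_A$... actually better, put the "$z$" slot to a general element and use~\eqref{eq:JA} with $z\mapsto$ the element playing the derivation role; concretely, $P(x\cdot y)=[1_A,x\cdot y]=[1_A,x]\cdot y+x\cdot[1_A,y]+x\cdot y\cdot[1_A,1_A]=P(x)\cdot y+x\cdot P(y)$, since $[1_A,1_A]=0$. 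That $P$ is a derivation of the Lie bracket, $P[x,y]=[P(x),y]+[x,P(y)]$, i.e. $[1_A,[x,y]]=[[1_A,x],y]+[x,[1_A,y]]$, is simply the Jacobi identity of $(A,[-,-])$. Hence $(A,\cdot,[-,-],P=\mathrm{ad}(1_A))$ is a relative Poisson algebra, and it has unit $1_A$ since $(A,\cdot)$ does.

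Finally, the one-to-one correspondence: the two constructions are mutually inverse. Starting from a unital relative Poisson algebra, forgetting $P$ and recording $(A,\cdot,[-,-])$ gives a Jacobi algebra whose associated $P=\mathrm{ad}(1_A)$ is the original $P$ by the first part; starting from a Jacobi algebra, the relative Poisson algebra produced has underlying $(A,\cdot,[-,-])$ equal to the original and $P$ determined as $\mathrm{ad}(1_A)$. So the assignments are inverse bijections between the two classes of structures. I expect no real obstacle here: every step is a direct specialization of~\eqref{eq:GLR} or~\eqref{eq:JA}, or the Jacobi identity; the only point requiring a moment's care is the bookkeeping of which variable in~\eqref{eq:GLR}/\eqref{eq:JA} plays the "derivation" role (it is the first bracket slot $z$, whereas a derivation acts on products of the other two), but once $1_A$ is substituted into that slot the identities collapse to precisely the derivation property and the relative Leibniz rule as stated.
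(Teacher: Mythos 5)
Your argument is correct: forcing $x=1_A$ (and then $y=1_A$) in \eqref{eq:GLR} indeed pins down $P=\mathrm{ad}(1_A)$, after which \eqref{eq:GLR} and \eqref{eq:JA} are literally the same identity, and in the converse direction the derivation property of $\mathrm{ad}(1_A)$ on $(A,\cdot)$ follows from \eqref{eq:JA} with $z=1_A$ while the derivation property on $[-,-]$ is the Jacobi identity. The paper itself gives no proof of this lemma (it is quoted from \cite{RPA}), and your direct verification is exactly the expected argument, so nothing further is needed.
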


Frobenius Jacobi algebras, as Jacobi algebras with nondegenerate
symmetric bilinear forms which are invariant on both the
commutative associative and Lie algebras, play important roles in
the study of Jacobi manifolds (\cite{Ago}). So it is reasonable to
consider the possible applications of Jacobi algebras with \Witt
forms in the study of Jacobi manifolds. For this purpose, we give
a structure theory of Jacobi algebras with \Witt forms as follows.

\begin{pro}\label{pro:939}
Suppose that $(A,\cdot,\circ,P,Q)$ is a relative PCA algebra.
If $(A,\cdot)$ has the unit $1_{A}$ such that $(A,\cdot,[-,-])$ is a Jacobi algebra, then $(A,[-,-])$ is the Witt type Lie algebra of the commutative differential algebra $(A,\cdot,P=\mathrm{ad}(1_{A}))$.
\end{pro}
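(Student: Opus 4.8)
The plan is to extract from the relative PCA algebra axioms, together with the unital/Jacobi hypothesis, the single identity
\begin{equation*}
[x,y]=x\cdot P(y)-P(x)\cdot y,\qquad\forall x,y\in A,
\end{equation*}
with $P=\mathrm{ad}(1_A)$, which is exactly the statement that $(A,[-,-])$ is the Witt type Lie algebra of $(A,\cdot,P)$. By Lemma~\ref{lem:930}, the unital hypothesis already forces $P=\mathrm{ad}(1_A)$ and tells us $(A,\cdot,[-,-])$ is a Jacobi algebra, so the real content is to produce the displayed bracket formula. The natural place to look is \eqref{eq:rps3}, namely $x\cdot(y\circ z)-y\circ(x\cdot z)+[x,y]\cdot z-x\cdot P(y)\cdot z=0$, since among the four relative PCA axioms this is the one that directly mixes the bracket with the commutative product and $P$.

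First I would substitute $z=1_A$ into \eqref{eq:rps3}. This gives $x\cdot(y\circ 1_A)-y\circ x+[x,y]-x\cdot P(y)=0$, so that
\begin{equation*}
[x,y]=x\cdot P(y)+y\circ x-x\cdot(y\circ 1_A),\qquad\forall x,y\in A.
\end{equation*}
Thus everything reduces to showing $y\circ x-x\cdot(y\circ 1_A)=-P(x)\cdot y$, i.e.\ $y\circ x = x\cdot(y\circ 1_A)-P(x)\cdot y$. To pin down $y\circ 1_A$ and $y\circ x$ I would exploit \eqref{eq:rps4}: $(x\cdot y)\circ z-y\circ(x\cdot z)-x\circ(y\cdot z)+Q(x\cdot y\cdot z)=0$. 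Setting $y=z=1_A$ in \eqref{eq:rps4} gives $x\circ 1_A-1_A\circ x-x\circ 1_A+Q(x)=0$, hence $1_A\circ x=Q(x)$; setting $x=z=1_A$ gives $y\circ 1_A - y\circ 1_A - 1_A\circ y + Q(y)=0$, consistent, and setting $z=1_A$ alone gives $(x\cdot y)\circ 1_A - y\circ x - x\circ y + Q(x\cdot y)=0$, i.e.\ $(x\cdot y)\circ 1_A = x\circ y+y\circ x - Q(x\cdot y)$. I would also use \eqref{eq:rps2} with $y=1_A$: $x\circ Q(1_A)-P(x)\circ 1_A-Q(x\circ 1_A)=0$, together with \eqref{eq:rps1} with $y=1_A$, $x\cdot Q(1_A)-P(x)-Q(x)=0$, to relate $Q$, $P$, $Q(1_A)$ and the operators $\circ 1_A$ and $1_A\circ(-)$. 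Combining these — in particular comparing the expression for $[x,y]$ just obtained with the antisymmetry $[x,y]=-[y,x]$ and with \eqref{eq:rps1}–\eqref{eq:rps2} evaluated at the unit — should collapse $y\circ x$ to $x\cdot(y\circ 1_A)-P(x)\cdot y$ and identify $x\circ 1_A$ with something like $Q(x)+$ (a $P$-term), yielding the Witt bracket.

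The main obstacle I anticipate is bookkeeping rather than conceptual: there are two operators $P$ and $Q$ and two products $\cdot$ and $\circ$, and one must be careful that the specialization $z=1_A$ (or $x=1_A$, $y=1_A$) in each of \eqref{eq:rps1}–\eqref{eq:rps4} is legitimate and that the resulting linear relations among $P$, $Q$, $\mathcal L_\circ(1_A)$, $\mathcal R_\circ(1_A)$ are actually enough to force $[x,y]=x\cdot P(y)-P(x)\cdot y$ without an extra hypothesis. In particular, one should double-check that the value of $Q$ is not needed beyond its unit-specializations — the statement makes no claim about $Q$, only about $P=\mathrm{ad}(1_A)$ and the bracket — so the argument must route entirely through the unital evaluations and the Jacobi identity \eqref{eq:JA} supplied by Lemma~\ref{lem:930}. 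If the direct substitution route stalls, the fallback is to use Proposition~\ref{pro2.5}\eqref{it:13}, rewriting \eqref{eq:rps3}–\eqref{eq:rps4} as the representation identities \eqref{eq:repRPA3}–\eqref{eq:repRPA4} for $(\mathcal L_\cdot,-\mathcal L_\circ,-Q,A)$ and evaluating those on $1_A$, which amounts to the same computation organized representation-theoretically.
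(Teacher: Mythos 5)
Your plan is essentially the paper's proof: the paper likewise sets $y=z=1_A$ in \eqref{eq:rps4} to get $1_A\circ x=Q(x)$ (and hence, since Lemma \ref{lem:930} gives $P=\mathrm{ad}(1_A)$, $x\circ 1_A=(Q-P)(x)$), sets $z=1_A$ in \eqref{eq:rps3} to get $x\cdot(Q-2P)(y)-y\circ x+[x,y]=0$, and then antisymmetrizes in $x,y$. So the substitutions and the overall strategy coincide; your intermediate target $y\circ x=x\cdot(y\circ 1_A)-P(x)\cdot y$ is, given the $z=1_A$ identity, equivalent to the desired Witt bracket.

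The one place your write-up as stated would stall is the final elimination of $Q$. Swapping $x\leftrightarrow y$ in the $z=1_A$ identity, subtracting, and using $[x,y]=x\circ y-y\circ x$ gives $3[x,y]=x\cdot(2P-Q)(y)-y\cdot(2P-Q)(x)$, and to conclude one needs $y\cdot Q(x)-x\cdot Q(y)=x\cdot P(y)-y\cdot P(x)$. This comes from \eqref{eq:rps1} applied to \emph{arbitrary} $x,y$ (subtract $x\cdot Q(y)-P(x)\cdot y-Q(x\cdot y)=0$ from its $x\leftrightarrow y$ swap; the $Q(x\cdot y)$ terms cancel), not from \eqref{eq:rps1}--\eqref{eq:rps2} ``evaluated at the unit'' as you propose: the unit specializations (e.g.\ $x\cdot Q(1_A)=P(x)+Q(x)$, $P(1_A)=0$) and your identity for $(x\cdot y)\circ 1_A$ are neither needed nor sufficient here. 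In particular your stated expectation that ``$Q$ is not needed beyond its unit-specializations'' is exactly backwards: the full identity \eqref{eq:rps1} is what removes $Q$ at the end. With that correction the computation closes (characteristic zero lets you divide by $3$) and the argument is the paper's.
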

\begin{proof}
Taking $y=z=1_{A}$ into \eqref{eq:rps4}, we have $1_{A}\circ x=Q(x)$ and hence $x\circ 1_{A}=(Q-P)x$.
    Taking $ z=1_{A}$ into \eqref{eq:rps3}, we have
    \begin{eqnarray}
        0=x\cdot(y\circ 1_{A}) -y\circ x+[x,y]-x\cdot P(y)=x\cdot (Q-2P)y -y\circ x+[x,y].\label{eq:954}
    \end{eqnarray}
Thus we have
\begin{eqnarray*}
    3[x,y]=x\circ y-[y,x] -y\circ x+[x,y]\overset{\eqref{eq:954}}{=}x\cdot (2P-Q)y-y\cdot (2P-Q)x\overset{\eqref{eq:rps1}}{=}3x\cdot P(y)-3y\cdot P(x).
\end{eqnarray*}
Hence  $(A,[-,-])$ is the Witt type Lie algebra of $(A,\cdot,P=\mathrm{ad}(1_{A}))$.
\end{proof}
\begin{cor}
Suppose that $(A,\cdot,[-,-])$ is a Jacobi algebra with an \Witt form $\mathcal{B}$.
Then $(A,[-,-])$ is the Witt type Lie algebra of the commutative differential algebra $(A,\cdot,P=\mathrm{ad}(1_{A}))$.
\end{cor}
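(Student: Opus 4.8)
The statement is a corollary of Proposition~\ref{pro:939}, so the plan is simply to reduce to it. Suppose $(A,\cdot,[-,-])$ is a Jacobi algebra with an \Witt form $\mathcal{B}$. By Lemma~\ref{lem:930}, the triple $(A,\cdot,[-,-],P=\mathrm{ad}(1_A))$ is a relative Poisson algebra with unit $1_A$; the \Witt form $\mathcal{B}$ on $(A,\cdot,[-,-])$ is then an \Witt form on this relative Poisson algebra in the sense of the definition, since the data are literally the same (invariance on $(A,\cdot)$ and the commutative $2$-cocycle condition on $(A,[-,-])$ are conditions on $\mathcal{B}$ alone).

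Next I would apply Proposition~\ref{pro:from Witt to system} to $(A,\cdot,[-,-],P)$ together with $\mathcal{B}$: this produces a relative PCA algebra $(A,\cdot,\circ,P,\hat{P})$, where $\circ$ is defined by \eqref{eq:APLc2c} and $\hat{P}$ is the $\mathcal{B}$-adjoint of $P$ given by \eqref{eq:hat}. The key point is that the \emph{associated} relative Poisson algebra of this relative PCA algebra is exactly $(A,\cdot,[-,-],P)$, which still has the unit $1_A$, and $(A,\cdot,[-,-])$ is still the given Jacobi algebra.

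Finally I would invoke Proposition~\ref{pro:939} directly to the relative PCA algebra $(A,\cdot,\circ,P,\hat{P})$: its hypotheses — namely that $(A,\cdot)$ has a unit $1_A$ and $(A,\cdot,[-,-])$ is a Jacobi algebra — are satisfied, so the conclusion is that $(A,[-,-])$ is the Witt type Lie algebra of the commutative differential algebra $(A,\cdot,P=\mathrm{ad}(1_A))$, as desired. There is essentially no obstacle here; the only point requiring a moment's care is verifying that the notion of an \Witt form on a Jacobi algebra (viewed via Lemma~\ref{lem:930} as a unital relative Poisson algebra) matches the hypothesis needed to run Proposition~\ref{pro:from Witt to system}, but this is immediate from the definitions. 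Hence the proof is a three-line chain: Lemma~\ref{lem:930} $\to$ Proposition~\ref{pro:from Witt to system} $\to$ Proposition~\ref{pro:939}.
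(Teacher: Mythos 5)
Your proposal is correct and follows exactly the paper's own argument: the chain Lemma~\ref{lem:930} $\to$ Proposition~\ref{pro:from Witt to system} $\to$ Proposition~\ref{pro:939} is precisely how the corollary is proved there. Nothing is missing.
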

\begin{proof}
By Lemma \ref{lem:930}, $(A,\cdot,[-,-]$,   $P=\mathrm{ad}(1_{A}))$ is a relative Poisson algebra with the \Witt form $\mathcal{B}$.    By Proposition \ref{pro:from Witt to system}, there is a relative PCA algebra $(A,\cdot,\circ,P,Q=\hat{P})$ in which $\circ$ is defined by  \eqref{eq:APLc2c}.
Hence the conclusion follows from Proposition \ref{pro:939}.
\end{proof}
Conversely, suppose that $(A,\cdot,P)$ is a unital commutative differential algebra and $\mathcal{B}$ is a  nondegenerate symmetric invariant bilinear form on $(A,\cdot)$. Then it is straightforward to check that $(A,\cdot,[-,-])$ is a Jacobi algebra with an \Witt form $\mathcal{B}$, where $(A,[-,-])$ is the Witt type Lie algebra of $(A,\cdot,P)$ and in this case we also have $P=\mathrm{ad}(1_{A})$.
In conclusion, we have the following result.
\begin{thm}
Every Jacobi algebra  with an \Witt form is realized from a unital commutative differential algebra  with a nondegenerate symmetric invariant bilinear form  and the corresponding Witt type Lie algebra.
\end{thm}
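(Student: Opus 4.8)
The plan is to read the theorem as the statement that the passage recorded in the observation preceding it --- from a unital commutative differential algebra $(A,\cdot,P)$ with a nondegenerate symmetric invariant bilinear form $\mathcal{B}$ to the Jacobi algebra $(A,\cdot,[-,-])$, with $[-,-]$ the Witt type bracket, equipped with the \Witt form $\mathcal{B}$ --- is surjective onto, indeed in bijection with, Jacobi algebras carrying \Witt forms. Both directions of this correspondence are already in hand in the excerpt, so the proof is essentially a synthesis.

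For surjectivity I would start from an arbitrary Jacobi algebra $(A,\cdot,[-,-])$ with an \Witt form $\mathcal{B}$. By Lemma~\ref{lem:930}, $(A,\cdot,[-,-],P)$ with $P:=\mathrm{ad}(1_A)$ is a relative Poisson algebra, so $P$ is a derivation of the unital commutative associative algebra $(A,\cdot)$ and hence $(A,\cdot,P)$ is a unital commutative differential algebra. By the definition of an \Witt form, $\mathcal{B}$ is a nondegenerate symmetric bilinear form invariant on $(A,\cdot)$, so $\big((A,\cdot,P),\mathcal{B}\big)$ is exactly a datum of the required type; and by the Corollary immediately above, the Witt type Lie algebra of $(A,\cdot,P)$ given by \eqref{eq:Witt Lie} is precisely the original $(A,[-,-])$. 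Therefore feeding $\big((A,\cdot,P),\mathcal{B}\big)$ into the construction of the observation preceding the theorem returns the Jacobi algebra $(A,\cdot,[-,-])$ with its \Witt form $\mathcal{B}$, which is the claimed realization.

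To upgrade this to an honest bijection I would observe that in both passages the product $\cdot$, the bilinear form $\mathcal{B}$ and the unit $1_A$ are left untouched, so only the bracket $[-,-]$ and the operator $P$ ever move; the round trip Jacobi $\to$ differential $\to$ Jacobi reproduces the bracket by the Corollary above, while differential $\to$ Jacobi $\to$ differential reproduces $P$ because the observation preceding the theorem records that there $P=\mathrm{ad}(1_A)$, so both composites are the identity. I do not expect a genuinely hard step here: all the algebraic substance is already packaged in Lemma~\ref{lem:930}, Proposition~\ref{pro:from Witt to system}, Proposition~\ref{pro:939} and the Corollary, and the only point requiring care is to make the word ``realized'' precise and to check that the two descriptions of the derivation --- as $\mathrm{ad}(1_A)$ and as the derivation underlying the commutative differential algebra --- coincide, which is immediate from the cited results.
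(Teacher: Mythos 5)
Your argument is correct and is essentially the paper's own: the theorem is obtained by combining Lemma~\ref{lem:930} (so that $P=\mathrm{ad}(1_A)$ makes $(A,\cdot,P)$ a unital commutative differential algebra), the definition of an \Witt form (giving the nondegenerate symmetric invariant form on $(A,\cdot)$), and the Corollary to Proposition~\ref{pro:939} (identifying $[-,-]$ with the Witt type bracket of $(A,\cdot,P)$), with the converse direction being the straightforward observation recorded just before the theorem. Your additional bijection/round-trip discussion goes slightly beyond what the statement requires but is consistent with the cited results.
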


\subsection{Representations and matched pairs of relative PCA algebras}\

Recall that a {\bf representation of an anti-pre-Lie
algebra $(A,\circ)$} \cite{TPA} is a triple $(l_{\circ},r_{\circ},V)$, such
that $V$ is a vector space,
$l_{\circ},r_{\circ}:A\rightarrow\mathrm{End}_{\mathbb K} (V)$ are
linear maps and the following equations hold: 
    \begin{eqnarray*}
    l_{\circ}(y\circ x)v-l_{\circ}(x\circ y)v&=&l_{\circ}(x)l_{\circ}(y)v-l_{\circ}(y)l_{\circ}(x)v,\label{eq:defi:rep anti-pre-Lie algebra1}\\
    r_{\circ}(x\circ y)v&=&l_{\circ}(x)r_{\circ}(y)v+r_{\circ}(y)l_{\circ}(x)v-r_{\circ}(y)r_{\circ}(x)v,\label{eq:defi:rep anti-pre-Lie algebra2}\\
     l_{\circ}(y\circ x)v-l_{\circ}(x\circ y)v&=&r_{\circ}(x)l_{\circ}(y)v-r_{\circ}(y)l_{\circ}(x)v-r_{\circ}(x)r_{\circ}(y)v+r_{\circ}(y)r_{\circ}(x)v,\label{eq:defi:rep anti-pre-Lie algebra3}
\end{eqnarray*}
for all $x,y\in A,v\in V$. Let $(A,\circ)$ be an anti-pre-Lie
algebra, $V$ be a vector space and
$l_{\circ},r_{\circ}:A\rightarrow\mathrm{End}_{\mathbb K} (V)$ be
linear maps. Then $(l_{\circ},r_{\circ},V)$ is a representation of
$(A,\circ)$ if and only if there is an anti-pre-Lie algebra
structure on the direct sum $A\oplus V$ of vector spaces defined by
\begin{equation}\label{eq:sd,apl}
    (x+u)\circ(y+v)=x\circ y+l_{\circ}(x)v+r_{\circ}(y)u,\;\forall x,y\in A, u,v\in V.
\end{equation}

Now we introduce the notion of a representation of a relative PCA algebra.

\begin{defi}
Let $(A,\cdot,\circ,P,Q)$ be a relative PCA algebra, $V$ be a
vector space, and
$\mu,l_{\circ},r_{\circ}:A\rightarrow\mathrm{End}_{\mathbb K} (V)$
and $\alpha,\beta:V\rightarrow V$ be linear maps. Suppose that
$(\mu,V)$ is a representation of $(A,\cdot)$ and
$(l_{\circ},r_{\circ},V)$ is a representation of $(A,\circ)$. If
the following equations hold:
\begin{eqnarray}
\mu(x)(l_{\circ}-r_{\circ})(y)v-\mu([x,y])v-(l_{\circ}-r_{\circ})(y)\mu(x)v+\mu\big(x\cdot P(y)\big)v&=&0,\label{eq:defi:RAPLP rep1}\\
(l_{\circ}-r_{\circ})(x\cdot y)v-\mu(x)(l_{\circ}-r_{\circ})(y)v-\mu(y)(l_{\circ}-r_{\circ})(x)v+\mu(x\cdot y)\alpha(v)&=&0,\label{eq:defi:RAPLP rep2}\\
\alpha\big(\mu(x)v\big)-\mu\big(P(x)\big)v-\mu(x)\alpha(v)&=&0,\label{eq:defi:RAPLP rep3}\\
\alpha\big((l_{\circ}-r_{\circ})(x)v\big)-(l_{\circ}-r_{\circ})\big(P(x)\big)v-(l_{\circ}-r_{\circ})(x)\alpha(v)&=&0,\label{eq:defi:RAPLP rep4}\\
\mu\big(Q(x)\big)v-\mu(x)\alpha(v)-\beta\big(\mu(x)v\big)&=&0,\label{eq:defi:RAPLP rep5}\\
\mu(x)\beta(v)-\mu\big(P(x)\big)v-\beta\big(\mu(x)v\big)&=&0,\label{eq:defi:RAPLP rep6}\\
r_{\circ}\big(Q(x)\big)v-r_{\circ}(x)\alpha(v)-\beta\big(r_{\circ}(x)v\big)&=&0,\label{eq:defi:RAPLP rep7}\\
l_{\circ}(x)\beta(v)-l_{\circ}\big(P(x)\big)v-\beta\big(l_{\circ}(x)v\big)&=&0,\label{eq:defi:RAPLP rep8}\\
\mu(y\circ x)v-l_{\circ}(y)\mu(x)v+\mu(x)r_{\circ}(y)v-\mu(x)l_{\circ}(y)v-\mu\big(x\cdot P(y)\big)v&=&0,\label{eq:defi:RAPLP rep9}\\
\mu(x)r_{\circ}(y)v-r_{\circ}(x\cdot y)v+\mu(y)l_{\circ}(x)v-\mu(y)r_{\circ}(x)v-\mu(x\cdot y)\alpha(v)&=&0,\label{eq:defi:RAPLP rep10}\\
\mu(x)l_{\circ}(y)v-l_{\circ}(y)\mu(x)v+\mu([x,y])v-\mu\big(x\cdot P(y)\big)v&=&0,\label{eq:defi:RAPLP rep11}\\
r_{\circ}(x)\mu(y)v-l_{\circ}(y)\mu(x)v-r_{\circ}(x\cdot y)v+\beta\big(\mu(x\cdot y)v\big)&=&0,\label{eq:defi:RAPLP rep12}\\
l_{\circ}(x\cdot
y)v-l_{\circ}(y)\mu(x)v-l_{\circ}(x)\mu(y)v+\beta\big(\mu(x\cdot
y)v\big)&=&0,\ \ \ \ \ \ \label{eq:defi:RAPLP rep13}
\end{eqnarray}
for all $x,y\in A, v\in V$, then we say
$(\mu,l_{\circ},r_{\circ},\alpha,\beta,V)$ is a {\bf
representation of $(A,\cdot,\circ,P,Q)$}.
\end{defi}

\begin{ex}
    Let $(A,\cdot,\circ,P,Q)$ be a relative PCA algebra.
    Then $(\mathcal{L}_{\cdot},\mathcal{L}_{\circ},\mathcal{R}_{\circ},P,Q)$ is a representation of $(A,\cdot,\circ,P,Q)$,
    which is called the {\bf adjoint representation of $(A,\cdot,\circ,P,Q)$}.
\end{ex}

\begin{pro}
    Let $(A,\cdot,\circ,P,Q)$ be a relative PCA algebra, $V$ be a vector space, and $\mu,l_{\circ},r_{\circ}:A\rightarrow\mathrm{End}_{\mathbb K} (V)$ and $\alpha,\beta:V\rightarrow V$ be linear maps.
    Then $(\mu,l_{\circ},r_{\circ},\alpha,\beta,V)$ is a  representation  of $(A,\cdot,\circ,P,Q)$ if and only if there is a relative PCA algebra $(A\oplus V,\cdot,\circ,P+\alpha,Q+\beta)$ in which $\cdot$ is defined by \eqref{eq:sd,asso}
    and $\circ$ is defined by \eqref{eq:sd,apl}.
    We call the relative PCA algebra structure on $A\oplus V$ the {\bf semi-direct product relative PCA algebra} and denote it by
    $(A\ltimes_{\mu,l_{\circ},r_{\circ}}V,P+\alpha,Q+\beta)$.
\end{pro}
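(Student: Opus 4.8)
The plan is to unwind the four ingredients of a relative PCA algebra structure on $A\oplus V$ — a commutative associative algebra, an anti-pre-Lie algebra with the same sub-adjacent Lie bracket, the associated relative Poisson structure, and equations \eqref{eq:rps1}--\eqref{eq:rps4} — and to match each of them against the hypotheses that $(\mu,V)$ is a representation of $(A,\cdot)$, that $(l_\circ,r_\circ,V)$ is a representation of $(A,\circ)$, and the thirteen equations \eqref{eq:defi:RAPLP rep1}--\eqref{eq:defi:RAPLP rep13}. Since all of these are equivalences, establishing the list of pairwise correspondences proves the proposition in both directions simultaneously.

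First I would record the two ``classical'' halves. By the standard semi-direct product description of representations of commutative associative algebras, $(A\oplus V,\cdot)$ defined by \eqref{eq:sd,asso} is a commutative associative algebra if and only if $(\mu,V)$ is a representation of $(A,\cdot)$; and, as already noted before the definition of a representation of a relative PCA algebra, $(A\oplus V,\circ)$ defined by \eqref{eq:sd,apl} is an anti-pre-Lie algebra if and only if $(l_\circ,r_\circ,V)$ is a representation of $(A,\circ)$. Assuming both, a short computation gives the sub-adjacent Lie bracket of $\circ$ on $A\oplus V$ as $[x+u,y+v]=[x,y]+(l_\circ-r_\circ)(x)v-(l_\circ-r_\circ)(y)u$, which is exactly the semi-direct Lie bracket attached to $\rho:=l_\circ-r_\circ$; moreover restricting the Jacobi identity of this bracket to the abelian ideal $V$ shows that $(l_\circ-r_\circ,V)$ is automatically a representation of the Lie algebra $(A,[-,-])$. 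Hence by Proposition~\ref{pro:semidirect}, $(A\oplus V,\cdot,[-,-],P+\alpha)$ is a relative Poisson algebra if and only if $(\mu,l_\circ-r_\circ,\alpha,V)$ is a representation of $(A,\cdot,[-,-],P)$, and spelling out \eqref{eq:repRPA1}--\eqref{eq:repRPA4} with $\rho=l_\circ-r_\circ$ reproduces precisely \eqref{eq:defi:RAPLP rep3}, \eqref{eq:defi:RAPLP rep4}, \eqref{eq:defi:RAPLP rep1} and \eqref{eq:defi:RAPLP rep2} (the one coming from \eqref{eq:repRPA3} up to an overall sign).

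It then remains to expand \eqref{eq:rps1}--\eqref{eq:rps4} for the quintuple $(A\oplus V,\cdot,\circ,P+\alpha,Q+\beta)$. Because $V$ is an abelian ideal for both $\cdot$ and $\circ$ and $P+\alpha$, $Q+\beta$ respect the splitting $A\oplus V$, each identity decomposes into its $A$-component, which holds since $(A,\cdot,\circ,P,Q)$ is a relative PCA algebra, and the components linear in a single $V$-slot, the purely-$V$ components being trivially zero. Evaluating \eqref{eq:rps1} on one element of $V$ (once in the left slot, once in the right) yields \eqref{eq:defi:RAPLP rep5} and \eqref{eq:defi:RAPLP rep6}; likewise \eqref{eq:rps2} yields \eqref{eq:defi:RAPLP rep7} and \eqref{eq:defi:RAPLP rep8}; placing the $V$-element in the first, second and third argument of \eqref{eq:rps3} yields \eqref{eq:defi:RAPLP rep9}, \eqref{eq:defi:RAPLP rep10} and \eqref{eq:defi:RAPLP rep11}; and the same procedure for \eqref{eq:rps4} yields \eqref{eq:defi:RAPLP rep12} (arising from both the first and second slots, which coincide by the $x\leftrightarrow y$ symmetry of \eqref{eq:rps4}) and \eqref{eq:defi:RAPLP rep13}. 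Conversely, these thirteen equations together with the two representation conditions reassemble into the defining data of the relative PCA algebra $(A\ltimes_{\mu,l_\circ,r_\circ}V,P+\alpha,Q+\beta)$.

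The only real obstacle is organizational: the trilinear relations \eqref{eq:rps3} and \eqref{eq:rps4} must be expanded carefully using the semi-direct formulas and $\mu(x)\mu(y)=\mu(x\cdot y)$ so that the coefficient of each $V$-slot is seen to be exactly one of the listed equations, which requires keeping close track of the order in which $l_\circ$, $r_\circ$ and $\mu$ are applied. There is no conceptual difficulty. As an alternative route one could instead apply Proposition~\ref{pro2.5} to the quintuple on $A\oplus V$, characterizing it through the representation $(-\mathcal{L}^{*}_{\cdot},-\mathcal{L}^{*}_{\circ},(Q+\beta)^{*},(A\oplus V)^{*})$ of its associated relative Poisson algebra, but the direct bookkeeping above is more transparent and makes the role of each of the thirteen equations explicit.
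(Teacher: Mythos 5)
Your argument is correct, but it is organized differently from the paper's. The paper disposes of this proposition in one line, declaring it the special case of the matched-pair characterization (Proposition~\ref{pro:2.20}) obtained by taking $A_{2}=V$ with zero multiplication; that general proposition is itself only justified by ``straightforward checking''. You instead give a direct, self-contained verification: you reduce the commutative associative, anti-pre-Lie and relative Poisson layers of the semi-direct structure to the already established semi-direct equivalences (in particular Proposition~\ref{pro:semidirect}, after identifying the sub-adjacent bracket on $A\oplus V$ with the semi-direct bracket for $\rho=l_{\circ}-r_{\circ}$, which correctly accounts for \eqref{eq:defi:RAPLP rep1}--\eqref{eq:defi:RAPLP rep4}), and then expand \eqref{eq:rps1}--\eqref{eq:rps4} slot by slot to obtain \eqref{eq:defi:RAPLP rep5}--\eqref{eq:defi:RAPLP rep13}; your assignment of which equation comes from which $V$-slot, including the $x\leftrightarrow y$ symmetry collapsing the first two slots of \eqref{eq:rps4} onto \eqref{eq:defi:RAPLP rep12}, checks out. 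What your route buys is transparency: the role of each of the thirteen equations is made explicit and the proof does not lean on a later, more general statement whose proof is omitted; what the paper's route buys is economy, since the matched-pair proposition is needed anyway and subsumes this one. Two small points of care: in the trilinear identities the components with \emph{two} $V$-entries (not only the purely-$V$ ones) must also be seen to vanish, which they do termwise because both products kill $V\otimes V$; and, as you note, the equality $\mu(x)\mu(y)=\mu(x\cdot y)$ (i.e.\ associativity of the semi-direct product) is used when rewriting the terms $x\cdot P(y)\cdot z$ and $Q(x\cdot y\cdot z)$, so that part of the data must be assumed before decomposing \eqref{eq:rps3}--\eqref{eq:rps4}, exactly as in your ordering of the steps.
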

\begin{proof}
    It is the special case of Proposition \ref{pro:2.20} when $A_{2}=V$ is equipped with the zero multiplication.
\end{proof}

\begin{pro}
   Let $(A,\cdot,\circ,P,Q)$ be a relative PCA algebra.
   If $(\mu,l_{\circ},r_{\circ},\alpha,\beta,V)$ is a   representation  of $(A,\cdot,\circ,P,Q)$,
   then $(-\mu^{*},r^{*}_{\circ}-l^{*}_{\circ},r^{*}_{\circ},\beta^{*},\alpha^{*},V^{*})$ is also a representation of $(A,\cdot,\circ,P,Q)$.
   In particular, $(-\mathcal{L}^{*}_{\cdot},-\mathrm{ad}^{*},\mathcal{R}^{*}_{\circ},Q^{*},P^{*},A^{*})$ is a representation of $(A,\cdot,\circ,P,Q)$.
\end{pro}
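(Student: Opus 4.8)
The plan is to verify directly that the sextuple $(-\mu^{*},r^{*}_{\circ}-l^{*}_{\circ},r^{*}_{\circ},\beta^{*},\alpha^{*},V^{*})$ meets every requirement in the definition of a representation of $(A,\cdot,\circ,P,Q)$. Two ingredients are immediate. First, since $(\mu,V)$ is a representation of the commutative associative algebra $(A,\cdot)$, its dual $(-\mu^{*},V^{*})$ is again a representation of $(A,\cdot)$ (cf.\ \cite{Bai2010}). Second, since $(l_{\circ},r_{\circ},V)$ is a representation of the anti-pre-Lie algebra $(A,\circ)$, the triple $(r^{*}_{\circ}-l^{*}_{\circ},r^{*}_{\circ},V^{*})$ is again a representation of $(A,\circ)$: this is the dual representation of an anti-pre-Lie algebra, which I would either quote from \cite{LB2022,TPA} or obtain by transposing the three representation axioms of $(l_{\circ},r_{\circ},V)$.

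With these in hand, the remaining task is to check equations \eqref{eq:defi:RAPLP rep1}--\eqref{eq:defi:RAPLP rep13} for the dual data, keeping $P$, $Q$ and the operations $\cdot,\circ,[-,-]$ fixed. The mechanism is uniform: write the relevant equation for the sextuple $(-\mu^{*},r^{*}_{\circ}-l^{*}_{\circ},r^{*}_{\circ},\beta^{*},\alpha^{*},V^{*})$, apply it to an element of $V^{*}$, pair against an arbitrary $v\in V$, and use the defining relations of the dual maps together with the observation that the Lie action attached to the dual anti-pre-Lie representation is $(r^{*}_{\circ}-l^{*}_{\circ})-r^{*}_{\circ}=-l^{*}_{\circ}$. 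After this transposition each of the thirteen identities collapses, up to an overall sign and up to the interchanges $\alpha\leftrightarrow\beta$ and $l_{\circ}-r_{\circ}\leftrightarrow -l_{\circ}$ forced by the duality, onto one of \eqref{eq:defi:RAPLP rep1}--\eqref{eq:defi:RAPLP rep13} for the original data $(\mu,l_{\circ},r_{\circ},\alpha,\beta,V)$; for instance \eqref{eq:defi:RAPLP rep1} for the dual data becomes \eqref{eq:defi:RAPLP rep11} for the original data, while \eqref{eq:defi:RAPLP rep5} for the dual data becomes \eqref{eq:defi:RAPLP rep5} itself. Running through all thirteen exhibits a permutation of $\{1,\dots,13\}$ matching dual equations with original ones, so every equation holds for the dual data precisely because it holds for the given representation; hence $(-\mu^{*},r^{*}_{\circ}-l^{*}_{\circ},r^{*}_{\circ},\beta^{*},\alpha^{*},V^{*})$ is a representation of $(A,\cdot,\circ,P,Q)$.

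For the final assertion, I would apply the statement just proved to the adjoint representation $(\mathcal{L}_{\cdot},\mathcal{L}_{\circ},\mathcal{R}_{\circ},P,Q)$ of $(A,\cdot,\circ,P,Q)$: its dual is $(-\mathcal{L}^{*}_{\cdot},\mathcal{R}^{*}_{\circ}-\mathcal{L}^{*}_{\circ},\mathcal{R}^{*}_{\circ},Q^{*},P^{*},A^{*})$, and since $[x,y]=x\circ y-y\circ x$ gives $\mathrm{ad}=\mathcal{L}_{\circ}-\mathcal{R}_{\circ}$ on the sub-adjacent Lie algebra, we have $\mathcal{R}^{*}_{\circ}-\mathcal{L}^{*}_{\circ}=-\mathrm{ad}^{*}$, which yields exactly $(-\mathcal{L}^{*}_{\cdot},-\mathrm{ad}^{*},\mathcal{R}^{*}_{\circ},Q^{*},P^{*},A^{*})$. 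I expect the only genuine difficulty to be bookkeeping: there is no conceptual obstacle, but one must transpose thirteen relations carefully, tracking the signs, the swap of the two operators $\alpha,\beta$, and the three distinct ``$l$-type'' maps $l_{\circ}$, $r_{\circ}$ and $l_{\circ}-r_{\circ}$. Having a ready-made lemma on the dual representation of an anti-pre-Lie algebra, rather than re-deriving it, is the one place where it is worth invoking earlier work.
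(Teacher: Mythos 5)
Your proposal is correct and follows essentially the same route as the paper: cite the known dualizations of representations of commutative associative algebras and of anti-pre-Lie algebras, then transpose the thirteen defining identities and observe that each one for $(-\mu^{*},r^{*}_{\circ}-l^{*}_{\circ},r^{*}_{\circ},\beta^{*},\alpha^{*},V^{*})$ collapses (via $\tilde{l}-\tilde{r}=-l^{*}_{\circ}$ and the swap $\alpha\leftrightarrow\beta$) onto one of the original identities — indeed your sample computation, rep1 for the dual data reducing to rep11 for the original data, is exactly the one worked out in the paper. The final assertion via the adjoint representation and $\mathcal{R}^{*}_{\circ}-\mathcal{L}^{*}_{\circ}=-\mathrm{ad}^{*}$ is likewise the intended argument.
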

\begin{proof}
    By \cite{Bai2010}, $(-\mu^{*},V^{*})$ is a representation of $(A,\cdot)$.
    By \cite{TPA}, $( r^{*}_{\circ}-l^{*}_{\circ},r^{*}_{\circ}, V^{*} )$ is a representation of $(A,\circ)$.
    For all $x,y\in A, u^{*}\in V^{*},v\in V$, we have
    \begin{eqnarray*}
&&\langle -\mu^{*}(x)(r^{*}_{\circ}-l^{*}_{\circ}-r^{*}_{\circ})(y)u^{*}+\mu^{*}([x,y])u^{*}
+(r^{*}_{\circ}-l^{*}_{\circ}-r^{*}_{\circ})(y)\mu^{*}(x)u^{*}-\mu^{*}\big(x\cdot P(y)\big)u^{*},v\rangle\\
&&=\langle u^{*}, l_{\circ}(y)\mu(x)v-\mu([x,y])v-\mu(x)l_{\circ}(y)v+\mu\big(x\cdot P(y)\big)v\rangle\\
&&\overset{\eqref{eq:defi:RAPLP rep11}}{=}0.
\end{eqnarray*}
Hence for
$(-\mu^{*},r^{*}_{\circ}-l^{*}_{\circ},r^{*}_{\circ},\beta^{*},\alpha^{*},V^{*})$,
\eqref{eq:defi:RAPLP rep1} holds. Similarly, for
$(-\mu^{*},r^{*}_{\circ}-l^{*}_{\circ},r^{*}_{\circ},\beta^{*},\alpha^{*},V^{*})$,
\eqref{eq:defi:RAPLP rep2}-\eqref{eq:defi:RAPLP rep13} hold and
thus it is a representation of $(A,\cdot,\circ,P,Q)$.
\end{proof}

We recall the notions of matched pairs of commutative associative algebras, Lie algebras, relative Poisson algebras and anti-pre-Lie algebras.

\begin{defi}\label{defi:mp}
\begin{enumerate}
    \item \cite{Bai2010} Let $(A_{1},\cdot_{1})$ and $(A_{2},\cdot_{2})$ be two commutative
associative algebras, and
$\mu_{1}:A_{1}\rightarrow\mathrm{End}_{\mathbb K} (A_{2})$ and
$\mu_{2}:A_{2}\rightarrow\mathrm{End}_{\mathbb K} (A_{1})$ be
linear maps. We say $\big((A_{1},\cdot_{1})$, $(A_{2}$,
$\cdot_{2}),\mu_{1},\mu_{2}\big)$ is a \textbf{matched pair of
commutative associative  algebras} if there is a commutative
associative algebra $(A_{1}\oplus A_{2},\cdot)$ in which $\cdot$
is defined by
\begin{equation}\label{eq:mp,ca}
    (x+a)\cdot (y+b)=x\cdot_{1}
y+\mu_{2}(a)y+\mu_{2}(b)x+a\cdot_{2}
b+\mu_{1}(x)b+\mu_{1}(y)a,\;\forall x,y\in A_1, a,b\in A_2.
\end{equation}
\item\cite{Maj} Let $(A_{1},[-,-]_{1})$ and $(A_{2},[-,-]_{2})$ be
two Lie algebras, and $
\rho_{1}:A_{1}\rightarrow\mathrm{End}_{\mathbb K} (A_{2})$ and $
\rho_{2}:A_{2}\rightarrow\mathrm{End}_{\mathbb K} (A_{1})$ be
linear maps. We say $\big((A_{1},[-,-]_{1}),(A_{2},[-,-]_{2})$,
$\rho_{1},\rho_{2}\big)$  is   a \textbf{matched pair of Lie
algebras}  if there is a Lie algebra  $(A_{1}\oplus A_{2}, [-,-]
)$ in which $[-,-]$ is defined by
\begin{equation}\label{eq:mp,Lie}
    [x+a,y+b]=[x,y]_{1}+\rho_{2}(a)y-\rho_{2}(b)x+[a,b]_{2}+\rho_{1}(x)b-\rho_{1}(y)a,\;\;\forall
x,y\in A_1, a,b\in A_2.
\end{equation}
\item \cite{RPA} Let $(A_{1},\cdot_{1},[-,-]_{1},P_{1})$ and
$(A_{2},\cdot_{2},[-,-]_{2},P_{2})$ be two relative Poisson
algebras, and
$\mu_{1},\rho_{1}:A_{1}\rightarrow\mathrm{End}_{\mathbb K}
(A_{2})$ and
$\mu_{2},\rho_{2}:A_{2}\rightarrow\mathrm{End}_{\mathbb K}
(A_{1})$ be linear maps. We say $\big((A_{1}$, $\cdot_{1}$,
$[-,-]_{1},P_{1}),(A_{2},\cdot_{2},[-,-]_{2},P_{2}),\mu_{1},\rho_{1},\mu_{2},\rho_{2}\big)$
is a \textbf{matched pair of relative Poisson algebras}  if there
is a relative Poisson algebra structure $(A_{1}\oplus
A_{2},\cdot,[-,-],P_{1}+P_{2})$ in which $\cdot$ and $[-,-]$ are
defined by \eqref{eq:mp,ca} and \eqref{eq:mp,Lie} respectively.
\item \cite{TPA} Let $(A_{1},\circ_{1})$ and $(A_{2},\circ_{2})$
be two anti-pre-Lie algebras,  and $l_{1},r_{1}
:A_{1}\rightarrow\mathrm{End}_{\mathbb K} (A_{2})$ and $l_{2},
r_{2}:A_{2}\rightarrow\mathrm{End}_{\mathbb K} (A_{1})$ be linear
maps. We say $\big((A_{1},\circ_{1} ),(A_{2},\circ_{2} ), l_{1}$,
$r_{1},l_{2},r_{2}\big)$ is a {\bf matched pair of anti-pre-Lie
algebras} if there is an anti-pre-Lie algebra  $(A_{1}\oplus
A_{2},\circ)$ in which $\circ$ is defined by
\begin{equation}\label{eq:mp,APL}
    (x+a)\circ (y+b)=x\circ_{1}
y+l_{2}(a)y+r_{2}(b)x+a\circ_{2} b+l_{1}(x)b+r_{1}(y)a,\;\forall
x,y\in A_1, a,b\in A_2.
\end{equation}
\end{enumerate}
\end{defi}

\begin{pro}
\cite{RPA}  Suppose that $(A_{1},\cdot_{1},[-,-]_{1},P_{1})$ and
$(A_{2},\cdot_{2},[-,-]_{2},P_{2})$ are relative Poisson algebras,
and $\mu_{1},\rho_{1}:A_{1}\rightarrow\mathrm{End}_{\mathbb K}
(A_{2})$ and
$\mu_{2},\rho_{2}:A_{2}\rightarrow\mathrm{End}_{\mathbb K}
(A_{1})$ are linear maps.
 Then
$\big((A_{1},\cdot_{1},[-,-]_{1},P_{1}),(A_{2},\cdot_{2},[-,-]_{2},P_{2}),\mu_{1},\rho_{1},\mu_{2},\rho_{2}\big)$
is a matched pair of relative Poisson algebras if and only if the
following conditions are satisfied.
\begin{enumerate}
\item  $(\mu_{1},\rho_{1},P_{2},A_{2})$ is a representation of
$(A_{1},\cdot_{1},[-,-]_{1},P_{1})$. \item
$(\mu_{2},\rho_{2},P_{1},A_{1})$ is a representation of
$(A_{2},\cdot_{2},[-,-]_{2},P_{2})$. \item
$\big((A_{1},\cdot_{1}),(A_{2},\cdot_{2}),\mu_{1},\mu_{2}\big)$ is
a matched pair of commutative associative algebras. \item
$\big((A_{1},[-,-]_{1}),(A_{2},[-,-]_{2}),\rho_{1},\rho_{2}\big)$
is a matched pair of Lie algebras. \item  The following conditions
are satisfied: {\small
\begin{eqnarray}
\rho_{2}(a)(x\cdot_{1} y)+\mu_{2}\big(\rho_{1}(y)a\big)x-x\cdot_{1}\rho_{2}(a)y+\mu_{2}\big(\rho_{1}(x)a\big)y-y\cdot_{1}\rho_{2}(a)x-\mu_{2}\big(P_{2}(a)\big)(x\cdot_{1} y)&=&0,\ \ \ \ \ \ \label{eq:MP1}\\
\rho_{1}(x)(a\cdot_{2} b)+\mu_{1}\big(\rho_{2}(b)x\big)a-a\cdot_{2}\rho_{1}(x)b+\mu_{1}\big(\rho_{2}(a)x\big)b-b\cdot_{2}\rho_{1}(x)a-\mu_{1}\big(P_{1}(x)\big)(a\cdot_{2} b)&=&0,\ \ \ \ \ \ \label{eq:MP2}\\
\rho_{2}\big(\mu_{1}(x)a\big)y+[\mu_{2}(a)x,y]_{1}-x\cdot_{1}\rho_{2}(a)y+\mu_{2}\big(\rho_{1}(y)a\big)x-\mu_{2}(a)([x,y]_{1})+\mu_{2}(a)\big(x\cdot_{1} P_{1}(y)\big)&=&0,\ \ \ \ \ \ \label{eq:MP3}\\
\rho_{1}\big(\mu_{2}(a)x\big)b+[\mu_{1}(x)a,b]_{2}-a\cdot_{2}\rho_{1}(x)b+\mu_{1}\big(\rho_{2}(b)x\big)a-\mu_{1}(x)([a,b]_{2})+\mu_{1}(x)\big(a\cdot_{2}
P_{2}(b)\big)&=&0,\ \ \ \ \ \ \label{eq:MP4}
\end{eqnarray} }for all $x,y\in A_{1},a,b\in A_{2}$.
\end{enumerate}
\end{pro}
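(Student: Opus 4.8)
The plan is to read the assertion through Definition~\ref{defi:mp}(3): the tuple is a matched pair of relative Poisson algebras precisely when the bicrossed data assemble into a relative Poisson algebra structure $(A_{1}\oplus A_{2},\cdot,[-,-],P_{1}+P_{2})$ on the direct sum of vector spaces, with $\cdot$ and $[-,-]$ prescribed by \eqref{eq:mp,ca} and \eqref{eq:mp,Lie}. So the strategy is to unfold each defining axiom of a relative Poisson algebra for this candidate structure and sort the resulting component identities into the five listed families; this is the matched-pair analogue of the semidirect-product statement in Proposition~\ref{pro:semidirect}, recovered when $A_{2}$ carries the zero operations. First I would dispose of the purely commutative associative and purely Lie parts: by Definition~\ref{defi:mp}(1), commutativity and associativity of $\cdot$ on $A_{1}\oplus A_{2}$ are, verbatim, condition (3), and this already forces $(\mu_{1},A_{2})$ and $(\mu_{2},A_{1})$ to be representations of $(A_{1},\cdot_{1})$ and $(A_{2},\cdot_{2})$ (see \cite{Bai2010}); symmetrically, by Definition~\ref{defi:mp}(2), the Lie algebra axioms for $[-,-]$ on $A_{1}\oplus A_{2}$ are condition (4), forcing $(\rho_{1},A_{2})$ and $(\rho_{2},A_{1})$ to be Lie-algebra representations (see \cite{Maj}). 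Next I would impose that $P_{1}+P_{2}$ is a derivation of both $(A_{1}\oplus A_{2},\cdot)$ and $(A_{1}\oplus A_{2},[-,-])$: plugging in $x+a$ and $y+b$, expanding via \eqref{eq:mp,ca}--\eqref{eq:mp,Lie}, and projecting onto $A_{1}$ and $A_{2}$, the components with both arguments in a single $A_{i}$ recover that $P_{i}$ is a derivation of $\cdot_{i}$ and of $[-,-]_{i}$ (part of $(A_{i},\cdot_{i},[-,-]_{i},P_{i})$ being a relative Poisson algebra), while the mixed components are exactly \eqref{eq:repRPA1} and \eqref{eq:repRPA2} for the data $(\mu_{1},\rho_{1},P_{2})$ on $A_{2}$ and for $(\mu_{2},\rho_{2},P_{1})$ on $A_{1}$.

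The remaining and most substantial step is the relative Leibniz rule \eqref{eq:GLR} for $(A_{1}\oplus A_{2},\cdot,[-,-],P_{1}+P_{2})$ evaluated on $z+c$, $x+a$, $y+b$. Here one expands both sides using the bicrossed product, bracket and $P_{1}+P_{2}$, and collects terms according to which of the three arguments lie in $A_{1}$ and which in $A_{2}$ (commutativity of $\cdot$ identifying the cases symmetric in $x$ and $y$). The components with all three arguments in one $A_{i}$ give the relative Leibniz rule for $A_{i}$; the components with exactly one argument in the complementary $A_{j}$ split into two families, one being \eqref{eq:repRPA3}--\eqref{eq:repRPA4} for the representations $(\mu_{1},\rho_{1},P_{2},A_{2})$ and $(\mu_{2},\rho_{2},P_{1},A_{1})$, the other being precisely \eqref{eq:MP1}--\eqref{eq:MP4}. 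Assembling: all the relative Poisson axioms hold on $A_{1}\oplus A_{2}$ if and only if each $(A_{i},\cdot_{i},[-,-]_{i},P_{i})$ is a relative Poisson algebra (given), the pairs $(\mu_{1},\rho_{1},P_{2},A_{2})$ and $(\mu_{2},\rho_{2},P_{1},A_{1})$ satisfy all of \eqref{eq:repRPA1}--\eqref{eq:repRPA4}, i.e.\ are representations in the sense of Definition~\ref{de:rep} (conditions (1) and (2)), the matched-pair conditions (3) and (4) hold, and \eqref{eq:MP1}--\eqref{eq:MP4} hold (condition (5)).

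I expect the main obstacle to be the bookkeeping in this last step: one must carefully enumerate all cross-term components produced by the three-argument identity \eqref{eq:GLR}, verify that after subtracting the already-extracted representation identities \eqref{eq:repRPA3}--\eqref{eq:repRPA4} exactly the four equations \eqref{eq:MP1}--\eqref{eq:MP4} remain, and confirm that these four are independent and that no hidden extra constraint appears. One should also make sure the $P$-derivation mixed terms of the previous step and the Leibniz mixed terms here are not conflated, which is harmless since \eqref{eq:repRPA1}--\eqref{eq:repRPA4} together form the package defining a representation of a relative Poisson algebra and are logically independent; and one should double-check that the derivation property of $P_{1}+P_{2}$ on $(A_{1}\oplus A_{2},[-,-])$ contributes nothing beyond conditions (1)--(2) once conditions (3)--(4) are in force.
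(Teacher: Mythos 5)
Your plan is correct and coincides with the route of the proof the paper relies on (the statement is quoted from \cite{RPA}, where it is established exactly by this straightforward verification): one unfolds the relative Poisson axioms for the bicrossed structure on $A_{1}\oplus A_{2}$, the associative and Lie parts yielding conditions (3)--(4), the derivation property of $P_{1}+P_{2}$ yielding \eqref{eq:repRPA1}--\eqref{eq:repRPA2}, and the relative Leibniz rule, sorted by the placement of arguments and projected onto the two components, yielding \eqref{eq:repRPA3}--\eqref{eq:repRPA4} together with \eqref{eq:MP1}--\eqref{eq:MP4}. No gap; the only remaining work is the routine bookkeeping you already identify.
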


\begin{rmk}
In fact, there are also the similar equivalent characterizations
of matched pairs of commutative associative algebras, Lie algebras
and anti-pre-Lie algebras (see \cite{Bai2010}, \cite{Maj} and \cite{TPA} respectively). We will not list them since they are
not needed in the rest of this paper.
\end{rmk}

Now we introduce the notion of matched pairs of relative PCA
algebras.

\begin{defi}\label{defi:mp,rps}
Let $(A_{1},\cdot_{1},\circ_{1},P_{1},Q_{1})$ and
$(A_{2},\cdot_{2},\circ_{2},P_{2},Q_{2})$ be relative PCA
algebras, and
 $\mu_{1},l_{1},r_{1}:A_{1}\rightarrow\mathrm{End}_{\mathbb K} (A_{2})$ and $\mu_{2},l_{2},r_{2}:A_{2}\rightarrow\mathrm{End}_{\mathbb K} (A_{1})$ be linear maps.
We say $\big((A_{1}$, $\cdot_{1}$, $\circ_{1}$, $P_{1}$,
$Q_{1}),(A_{2},\cdot_{2},\circ_{2},P_{2},Q_{2}),\mu_{1},l_{1},
r_{1},\mu_{2},l_{2}, r_{2} \big)$ is a {\bf matched pair of
relative PCA algebras} if there is a relative PCA algebra
structure $(A_{1}\oplus
A_{2},\cdot,\circ,P_{1}+P_{2},Q_{1}+Q_{2})$ in which $\cdot$ and
$\circ$ are defined by \eqref{eq:mp,ca} and \eqref{eq:mp,APL}
respectively.
\end{defi}

\begin{pro}\label{pro:2.20}
Let $(A_{1},\cdot_{1},\circ_{1},P_{1},Q_{1})$ and
$(A_{2},\cdot_{2},\circ_{2},P_{2},Q_{2})$ be relative PCA
algebras, and
 $\mu_{1},l_{1},r_{1}:A_{1}\rightarrow\mathrm{End}_{\mathbb K} (A_{2})$ and $\mu_{2},l_{2},r_{2}:A_{2}\rightarrow\mathrm{End}_{\mathbb K} (A_{1})$ be linear maps.
Then $\big((A_{1}$, $\cdot_{1}$, $\circ_{1}$, $P_{1}$,
$Q_{1}),(A_{2},\cdot_{2},\circ_{2},P_{2},Q_{2}),\mu_{1},l_{1},
r_{1},\mu_{2},l_{2}, r_{2} \big)$ is a   matched pair of relative
PCA algebras if and only if the following conditions are
satisfied.
\begin{enumerate}
\item $( \mu_{1},l_{1}, r_{1}, P_{2}, Q_{2}, A_{2}  )$ is a
representation of  $(A_{1},\cdot_{1},\circ_{1},P_{1},Q_{1})$.

\item $( \mu_{2},l_{2}, r_{2}, P_{1}, Q_{1}, A_{1}  )$ is a
representation of $(A_{2},\cdot_{2},\circ_{2}$,
    $P_{2},Q_{2})$.
\item
$\big((A_{1},\cdot_{1},[-,-]_{1},P_{1}),(A_{2},\cdot_{2},[-,-]_{2},P_{2}),\mu_{1},l_{1}-r_{1},\mu_{2},l_{2}-r_{2}\big)$
is a matched pair of relative Poisson algebras. \item
$\big((A_{1},\cdot_{1} ),(A_{2},\cdot_{2} ),
l_{1},r_{1},l_{2},r_{2}\big)$ is a matched pair of anti-pre-Lie
algebras.

\item The following equations hold:
\begin{small}
 \begin{eqnarray*}
x\cdot_{1}r_{2}(a)y+\mu_{2}\big(l_{1}(y)a\big)x-y\circ_{1}\mu_{2}(a)x-r_{2}\big(\mu_{1}(x)a\big)y+\mu_{2}(a)[x,y]_{1}-\mu_{2}(a)\big(x\cdot_{1}P_{1}(y)\big)=0,&&\\
x\cdot_{1}l_{2}(a)y+\mu_{2}\big(r_{1}(y)a\big)x-l_{2}(a)(x\cdot_{1}y)+\mu_{2}\big((l_{1}-r_{1})(x)a\big)y+(r_{2}-l_{2})(a)x\cdot_{1}y-\mu_{2}\big(P_{2}(a)\big)(x\cdot_{1}y)=0,&&\\
\mu_{2}(a)(x\circ_{1}y)-x\circ_{1}\mu_{2}(a)y-r_{2}\big( \mu(y)a \big)x+(l_{2}-r_{2})(a)x\cdot_{1}y+\mu_{2}\big( (r_{1}-l_{1})(x)a \big)y-\mu_{2}(a)\big(P_{1}(x)\cdot_{1}y\big)=0,&&\\
r_{2}(a)(x\cdot_{1}y)-y\circ_{1}\mu_{2}(a)x-r_{2}\big( \mu_{1}(x)a\big)y-x\circ_{1}\mu_{2}(a)y-r_{2}\big( \mu_{1}(y)a \big)x+Q_{1}\big( \mu_{2}(a)(x\cdot_{1}y)\big)=0,&&\\
l_{2}\big( \mu_{1}(x)a\big) y+\mu_{2}(a)x\circ_{1}y-l_{2}(a)(x\cdot_{1}y)-x\circ_{1}\mu_{2}(a)y-r_{2}\big( \mu_{1}(y)a\big)x+Q_{1}\big( \mu_{2}(a)(x\cdot_{1}y)\big)=0,&&\\
a\cdot_{2}r_{1}(x)b+\mu_{1}\big(l_{2}(b)x\big)a-b\circ_{2}\mu_{1}(x)a-r_{1}\big(\mu_{2}(a)x\big)b+\mu_{1}(x)[a,b]_{2}-\mu_{1}(x)\big(a\cdot_{2}P_{2}(b)\big)=0,&&\\
a\cdot_{2}l_{1}(x)b+\mu_{1}\big(r_{2}(b)x\big)a-l_{1}(x)(a\cdot_{2}b)+\mu_{1}\big((l_{2}-r_{2})(a)x\big)b+(r_{1}-l_{1})(x)a\cdot_{2}b-\mu_{1}\big(P_{1}(x)\big)(a\cdot_{2}b)=0,&&\\
\mu_{1}(x)(a\circ_{2}b)-a\circ_{2}\mu_{1}(x)b-r_{1}\big( \mu(b)x \big)a+(l_{1}-r_{1})(x)a\cdot_{2}b+\mu_{1}\big( (r_{2}-l_{2})(a)x \big)b-\mu_{1}(x)\big(P_{2}(a)\cdot_{2}b\big)=0,&&\\
r_{1}(x)(a\cdot_{2}b)-b\circ_{2}\mu_{1}(x)a-r_{1}\big( \mu_{2}(a)x\big)b-a\circ_{2}\mu_{1}(x)b-r_{1}\big( \mu_{2}(b)x \big)a+Q_{2}\big( \mu_{1}(x)(a\cdot_{2}b)\big)=0,&&\\
l_{1}\big( \mu_{2}(a)x\big)
b+\mu_{1}(x)a\circ_{2}b-l_{1}(x)(a\cdot_{2}b)-a\circ_{2}\mu_{1}(x)b-r_{1}\big(
\mu_{2}(b)x\big)a+Q_{2}\big( \mu_{1}(x)(a\cdot_{2}b)\big)=0,&&
\end{eqnarray*}
\end{small}for all $x,y\in A_{1}, a,b\in A_{2}$.

\end{enumerate}

\end{pro}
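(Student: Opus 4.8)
The plan is to unravel the definition of a relative PCA algebra on $A_1\oplus A_2$ and to match the resulting families of identities with the conditions (1)--(5). By definition, $(A_1\oplus A_2,\cdot,\circ,P_1+P_2,Q_1+Q_2)$, with $\cdot$ and $\circ$ given by \eqref{eq:mp,ca} and \eqref{eq:mp,APL} and with $P_1+P_2,Q_1+Q_2$ acting diagonally, is a relative PCA algebra if and only if (a) $(A_1\oplus A_2,\circ)$ is an anti-pre-Lie algebra, (b) $(A_1\oplus A_2,\cdot,[-,-],P_1+P_2)$ is a relative Poisson algebra where $[-,-]$ is the commutator of $\circ$, and (c) the identities \eqref{eq:rps1}--\eqref{eq:rps4} hold in $A_1\oplus A_2$. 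First I would record that the commutator of the operation $\circ$ of \eqref{eq:mp,APL} is exactly the bracket \eqref{eq:mp,Lie} with $\rho_i:=l_i-r_i$; hence by Definition~\ref{defi:mp}(4), (a) is equivalent to condition (4), and in that case $(A_1\oplus A_2,[-,-])$ is the Lie algebra arising from the matched pair of Lie algebras $\big((A_1,[-,-]_1),(A_2,[-,-]_2),l_1-r_1,l_2-r_2\big)$. Granting (a), the operations $\cdot$ and $[-,-]$ on $A_1\oplus A_2$ are precisely \eqref{eq:mp,ca} and \eqref{eq:mp,Lie} with $\rho_i=l_i-r_i$, so by Definition~\ref{defi:mp}(3), (b) is equivalent to condition (3). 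Thus it remains to show that, in the presence of (3) and (4), (c) is equivalent to the conjunction of (1), (2) and (5).

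The heart of the argument is the expansion of \eqref{eq:rps1}--\eqref{eq:rps4} on $A_1\oplus A_2$. I would substitute $x+a,y+b,z+c$ (with $x,y,z\in A_1$, $a,b,c\in A_2$) for $x,y,z$, expand every operation via \eqref{eq:mp,ca}, \eqref{eq:mp,APL}, and then split each identity into homogeneous pieces according to which arguments lie in $A_1$ and which in $A_2$, projecting each piece onto $A_1$ and onto $A_2$. The all-$A_1$ and all-$A_2$ pieces reproduce \eqref{eq:rps1}--\eqref{eq:rps4} for $(A_1,\cdot_1,\circ_1,P_1,Q_1)$ and for $(A_2,\cdot_2,\circ_2,P_2,Q_2)$, and hence hold automatically. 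Collecting the mixed pieces and rewriting them with the identities already available from (3) and (4) (for instance, that $\mu_i$ is a representation of the corresponding commutative associative algebra), one obtains exactly the equations \eqref{eq:defi:RAPLP rep5}--\eqref{eq:defi:RAPLP rep13} for the two candidate representations $(\mu_1,l_1,r_1,P_2,Q_2,A_2)$ and $(\mu_2,l_2,r_2,P_1,Q_1,A_1)$, together with the ten compatibility equations listed in (5): the mixed pieces of \eqref{eq:rps1} give \eqref{eq:defi:RAPLP rep5} and \eqref{eq:defi:RAPLP rep6} for both representations, those of \eqref{eq:rps2} give \eqref{eq:defi:RAPLP rep7} and \eqref{eq:defi:RAPLP rep8}, and those of \eqref{eq:rps3} and \eqref{eq:rps4} give \eqref{eq:defi:RAPLP rep9}--\eqref{eq:defi:RAPLP rep13} and the equations of (5).

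Finally, conditions (3) and (4) already furnish the remaining defining data of these candidate representations: (4) gives that $(\mu_i,A_j)$ is a representation of the relevant commutative associative algebra and $(l_i,r_i,A_j)$ a representation of the relevant anti-pre-Lie algebra (and hence $(l_i-r_i,A_j)$ of the sub-adjacent Lie algebra), while (3), through the characterization of matched pairs of relative Poisson algebras recalled above, gives exactly \eqref{eq:defi:RAPLP rep1}--\eqref{eq:defi:RAPLP rep4} for $(\mu_i,l_i,r_i,P_j,Q_j,A_j)$ (these being \eqref{eq:repRPA3}, \eqref{eq:repRPA4}, \eqref{eq:repRPA1}, \eqref{eq:repRPA2} for $(\mu_i,l_i-r_i,P_j,A_j)$). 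Hence, granting (3) and (4), validity of \eqref{eq:defi:RAPLP rep5}--\eqref{eq:defi:RAPLP rep13} for both candidate representations is equivalent to conditions (1) and (2). Combining this with the reduction of the first paragraph and the expansion of the second yields the asserted equivalence.

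The step I expect to be the main obstacle is the expansion of \eqref{eq:rps1}--\eqref{eq:rps4}: it is a long and delicate bookkeeping, since \eqref{eq:rps3} and \eqref{eq:rps4} each break into six mixed homogeneous pieces whose $A_1$- and $A_2$-projections must be tracked one by one and identified, after simplification using (3) and (4), with the correct representation equation among \eqref{eq:defi:RAPLP rep9}--\eqref{eq:defi:RAPLP rep13} or compatibility equation in (5), checking that nothing is missing and that the final list is exactly the one stated. The $A_1\leftrightarrow A_2$ symmetry of the construction roughly halves this work.
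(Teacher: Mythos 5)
Your proposal is correct and is essentially the paper's argument: the paper disposes of this proposition as ``a straightforward checking,'' and that checking is exactly the componentwise expansion of \eqref{eq:rps1}--\eqref{eq:rps4} on $A_1\oplus A_2$ and the matching with conditions (1)--(5) that you outline (and your count of mixed components tallies with the stated list). One small slip: the fact that $(\mu_i,A_j)$ is a representation of the relevant commutative associative algebra comes from condition (3) (via the matched pair of relative Poisson algebras), not from condition (4), which involves only the anti-pre-Lie data; since you assume both (3) and (4) at that stage, nothing is affected.
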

\begin{proof}
    It follows from a straightforward checking.
\end{proof}

\section{Manin triples of relative Poisson algebras with respect to the \Witt forms and relative PCA bialgebras}\label{sec3}

In this section, we introduce the notions of Manin triples of
relative Poisson algebras with respect to the \Witt forms and
relative PCA bialgebras, and show that the two structures are equivalent
through specific matched pairs of relative Poisson
algebras.

\subsection{Manin triples of relative Poisson algebras with respect to the \Witt forms}

\begin{defi}
    Let $(A,\cdot_{A},[-,-]_{A},P)$ and $(A^{*},\cdot_{A^{*}},[-,-]_{A^{*}},Q^{*})$ be relative Poisson algebras.
    Suppose that there is a relative Poisson algebra structure $(A\oplus A^{*},\cdot,[-,-],P+Q^{*})$ on the direct sum $A\oplus A^{*}$ of vector
    spaces containing $(A,\cdot_{A},[-,-]_{A},P)$ and $(A^{*},\cdot_{A^{*}},[-,-]_{A^{*}},Q^{*})$ as relative Poisson subalgebras, and the bilinear form $\mathcal{B}_{d}$ on $A\oplus A^{*}$ given by \eqref{eq:Bd}
    is an \Witt form on $(A\oplus A^{*},\cdot,[-,-],P+Q^{*})$.
    Such a structure is called a {\bf Manin triple of relative Poisson algebras with respect to the \Witt form} and is denoted by $\big( (A\oplus A^{*},\cdot,[-,-],P+Q^{*}),(A,\cdot_{A},[-,-]_{A},P),(A^{*},\cdot_{A^{*}},[-,-]_{A^{*}},Q^{*}) \big)$.
\end{defi}

Recall the notion of double constructions of commutative
differential Frobenius algebras \cite{LLB2023}. Explicitly, let
$(A,\cdot_{A},P)$ and $(A^{*},\cdot_{A^{*}}, Q^*)$ be two
commutative differential algebras. Suppose that there is a
commutative associative algebra structure $(A\oplus A^{*},\cdot)$
on the direct sum $A\oplus A^{*}$ of vector spaces containing
$(A,\cdot_{A})$ and $(A^{*},\cdot_{A^{*}})$ as commutative
associative subalgebras, and the bilinear form $\mathcal{B}_{d}$
on $A\oplus A^{*}$ given by \eqref{eq:Bd}
    is invariant on $(A\oplus A^{*},\cdot)$.
    If $P+Q^{*}$ is a derivation of $(A\oplus A^{*},\cdot)$, then we say $\big( (A\oplus A^{*},\cdot,P+Q^{*}),(A,\cdot_{A},P),(A^{*},\cdot_{A^{*}},Q^{*}) \big)$ is
     a {\bf double construction  of a commutative differential Frobenius algebra}.

\begin{pro}\label{pro:3.2}
Let $\big( (A\oplus
A^{*},\cdot,P+Q^{*}),(A,\cdot_{A},P),(A^{*},\cdot_{A^{*}},Q^{*})
\big)$ be a  double construction of a commutative differential
Frobenius algebra. Let the Witt type Lie algebras of  $(A\oplus
A^{*},\cdot,P+Q^{*}),(A,\cdot_{A},P)$ and
$(A^{*},\cdot_{A^{*}},Q^{*})$  be $(A\oplus
A^{*},[-,-]),(A,[-,-]_{A})$ and $(A^{*},[-,-]_{A^{*}})$
respectively. Then  $\big( (A\oplus
A^{*},\cdot,[-,-],P+Q^{*}),(A,\cdot_{A},[-,-]_{A},P),(A^{*},\cdot_{A^{*}},[-,-]_{A^{*}}$,
$Q^{*}) \big)$ is a   Manin triple of relative Poisson algebras
with respect to the \Witt form.
\end{pro}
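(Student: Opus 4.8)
The plan is to deduce everything from the hypothesis that we have a double construction of a commutative differential Frobenius algebra, using Proposition~\ref{ex1.2} to pass from the commutative differential structures to the Witt type Lie algebra structures, and Proposition~\ref{pro2.9} to control the bilinear form on the Lie side. First I would record that by Proposition~\ref{ex1.2}, since $(A\oplus A^{*},\cdot,P+Q^{*})$, $(A,\cdot_{A},P)$ and $(A^{*},\cdot_{A^{*}},Q^{*})$ are commutative differential algebras, the quadruples $(A\oplus A^{*},\cdot,[-,-],P+Q^{*})$, $(A,\cdot_{A},[-,-]_{A},P)$ and $(A^{*},\cdot_{A^{*}},[-,-]_{A^{*}},Q^{*})$ are relative Poisson algebras, where the brackets are the Witt type brackets given by \eqref{eq:Witt Lie}. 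Next I would check that $(A,\cdot_{A},[-,-]_{A},P)$ and $(A^{*},\cdot_{A^{*}},[-,-]_{A^{*}},Q^{*})$ are relative Poisson subalgebras of $(A\oplus A^{*},\cdot,[-,-],P+Q^{*})$: they are already subalgebras for $\cdot$ and are closed under $P+Q^{*}$ by hypothesis, and since the Witt bracket on $A\oplus A^{*}$ is built from $\cdot$ and $P+Q^{*}$ by the formula \eqref{eq:Witt Lie}, its restriction to $A$ (resp.\ $A^{*}$) agrees with $[-,-]_{A}$ (resp.\ $[-,-]_{A^{*}}$); hence they are Lie subalgebras as well.

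It then remains to verify that $\mathcal{B}_{d}$ is an \Witt form on $(A\oplus A^{*},\cdot,[-,-],P+Q^{*})$, i.e.\ nondegenerate, invariant on $(A\oplus A^{*},\cdot)$, and a commutative $2$-cocycle on $(A\oplus A^{*},[-,-])$. Nondegeneracy and invariance on the commutative associative algebra are part of the definition of a double construction of a commutative differential Frobenius algebra, so nothing is needed there. The commutative $2$-cocycle property is exactly where Proposition~\ref{pro2.9} applies: $(A\oplus A^{*},\cdot,P+Q^{*})$ is a commutative differential algebra, $\mathcal{B}_{d}$ is a symmetric invariant bilinear form on $(A\oplus A^{*},\cdot)$, and $(A\oplus A^{*},[-,-])$ is precisely its Witt type Lie algebra; therefore $\mathcal{B}_{d}$ is a commutative $2$-cocycle on $(A\oplus A^{*},[-,-])$ by Proposition~\ref{pro2.9}. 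This completes the verification that we have a Manin triple of relative Poisson algebras with respect to the \Witt form.

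I do not anticipate a serious obstacle here; the content of the statement is essentially bookkeeping, confirming that the two upgrades — from commutative differential algebra to relative Poisson algebra via Proposition~\ref{ex1.2}, and from symmetric invariant form to commutative $2$-cocycle via Proposition~\ref{pro2.9} — are compatible with the subalgebra/direct-sum structure. The one point that deserves a sentence of care is that the Witt bracket on the big algebra genuinely restricts to the Witt brackets on $A$ and $A^{*}$; this is immediate from the defining formula \eqref{eq:Witt Lie} together with the fact that $A$ and $A^{*}$ are closed under both $\cdot$ and $P+Q^{*}$, but it is worth stating explicitly so that the "subalgebra" claim in the conclusion is justified rather than merely asserted.
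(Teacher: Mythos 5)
Your proposal is correct and follows essentially the same route as the paper's proof: apply Proposition~\ref{ex1.2} to upgrade the three commutative differential algebras to relative Poisson algebras with their Witt type brackets, observe that the subalgebra structure is preserved, and invoke Proposition~\ref{pro2.9} (together with the nondegeneracy and invariance built into the definition of a double construction) to conclude that $\mathcal{B}_{d}$ is an \Witt form. The paper's argument is just a terser version of yours; your extra remark that the Witt bracket on $A\oplus A^{*}$ restricts to the Witt brackets on $A$ and $A^{*}$ is a harmless elaboration of the same point.
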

\begin{proof}
   By Proposition \ref{ex1.2}, $(A\oplus A^{*},\cdot,[-,-],P+Q^{*})$ is a relative Poisson algebra, and it contains $(A,\cdot_{A},[-,-]_{A},P)$ and $(A^{*},\cdot_{A^{*}},[-,-]_{A^{*}},Q^{*})$
   as relative Poisson subalgebras. Moreover, by Proposition \ref{pro2.9}, $\mathcal{B}_{d}$ is an \Witt form on $(A\oplus A^{*},\cdot,[-,-],P+Q^{*})$. Hence the conclusion follows.
\end{proof}

\begin{lem}
    Let $\big( (A\oplus A^{*},\cdot,[-,-],P+Q^{*}),(A,\cdot_{A},[-,-]_{A},P),(A^{*},\cdot_{A^{*}},[-,-]_{A^{*}},Q^{*}) \big)$ be a Manin triple of relative Poisson algebras with respect to the \Witt form. Then
     there is a compatible anti-pre-Lie algebra $(A\oplus A^{*},\circ)$ of $(A\oplus A^{*},[-,-])$ induced from $\mathcal{B}_{d}$ through \eqref{eq:APLc2c},
    which contains $(A,\circ_{A}=\circ|_{A\otimes A})$ and $(A^{*},\circ_{A^{*}}=\circ|_{A^{*}\otimes A^{*}})$ as anti-pre-Lie subalgebras.
   Moreover, $(A\oplus A^{*},\cdot,\circ,P+Q^{*},Q+P^{*})$ is a   relative PCA algebra structure  on $A\oplus A^{*}$, which contains
    $(A,\cdot_{A},\circ_{A},P,Q)$ and $(A^{*},\cdot_{A^{*}},\circ_{A^{*}},Q^{*},P^{*})$ as relative PCA subalgebras.
    \delete{
      The binary operations of $\cdot$ and $\circ$ on $A\oplus A^{*}$ satisfy
    \begin{eqnarray}
&& (x+a^{*})\cdot(y+b^{*})=x\cdot_{A}y-\mathcal{L}^{*}_{\cdot_{A^{*}}}(b^{*})x-\mathcal{L}^{*}_{\cdot_{A^{*}}}(a^{*})y
+a^{*}\cdot_{A^{*}}b^{*}-\mathcal{L}^{*}_{\cdot_{A }}(x^{*})b^{*}-\mathcal{L}^{*}_{\cdot_{A }}(y)a^{*},\ \ \ \ \label{eq:MT asso}\\
&& (x+a^{*})\circ(y+b^{*})=x\circ_{A}y-\mathcal{L}^{*}_{\circ_{A^{*}}}(b^{*})x-\mathcal{L}^{*}_{\circ_{A^{*}}}(a^{*})y
+a^{*}\circ_{A^{*}}b^{*}-\mathcal{L}^{*}_{\circ_{A }}(x^{*})b^{*}-\mathcal{L}^{*}_{\circ_{A }}(y)a^{*},\ \ \ \ \label{eq:MT APL}
    \end{eqnarray}
    for all $x,y\in A, a^{*},b^{*}\in A^{*}$. }
\end{lem}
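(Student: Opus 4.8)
The plan is to produce $\circ$ on $A\oplus A^{*}$ from the nondegenerate commutative $2$-cocycle $\mathcal{B}_{d}$ by the construction recalled just above (a Lie algebra with a nondegenerate commutative $2$-cocycle carries a compatible anti-pre-Lie structure via \eqref{eq:APLc2c}), and then to recognize the whole relative PCA algebra as a single application of Proposition~\ref{pro:from Witt to system}. Indeed, since $(A\oplus A^{*},\cdot,[-,-],P+Q^{*})$ is a relative Poisson algebra on which $\mathcal{B}_{d}$ is an \Witt form, Proposition~\ref{pro:from Witt to system} gives at once that $(A\oplus A^{*},\cdot,\circ,P+Q^{*},\widehat{P+Q^{*}})$ is a relative PCA algebra, where $\circ$ is exactly the operation determined by \eqref{eq:APLc2c} with $\mathcal{B}_{d}$ and $\widehat{P+Q^{*}}$ is the adjoint of $P+Q^{*}$ with respect to $\mathcal{B}_{d}$. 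So only two points remain to be checked: that $A$ and $A^{*}$ are closed under $\circ$, and that $\widehat{P+Q^{*}}=Q+P^{*}$ in the block sense of the statement.

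For closure, I would use that $A$ and $A^{*}$ are Lie subalgebras of $(A\oplus A^{*},[-,-])$, and that from the explicit form \eqref{eq:Bd} each of $A$ and $A^{*}$ is totally isotropic for $\mathcal{B}_{d}$; being complementary with $\mathcal{B}_{d}$ nondegenerate, each coincides with its own $\mathcal{B}_{d}$-orthogonal complement. Hence for $u,v,w\in A$ one gets $\mathcal{B}_{d}(u\circ v,w)=\mathcal{B}_{d}(v,[u,w])=0$ since $[u,w]\in A$ and $v\in A$, so $u\circ v\in A^{\perp}=A$; the same argument gives $A^{*}\circ A^{*}\subseteq A^{*}$. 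Since the anti-pre-Lie axioms are identities, $(A,\circ_{A}:=\circ|_{A\otimes A})$ and $(A^{*},\circ_{A^{*}}:=\circ|_{A^{*}\otimes A^{*}})$ are then anti-pre-Lie subalgebras, and their sub-adjacent Lie brackets are $u\circ_{A}v-v\circ_{A}u=[u,v]_{A}$ and the $A^{*}$-analogue, so they are compatible with $(A,[-,-]_{A})$ and $(A^{*},[-,-]_{A^{*}})$ as required.

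For the identification of the adjoint, I would compute $\widehat{P+Q^{*}}$ directly from \eqref{eq:Bd}: for $x,y\in A$ one has $\mathcal{B}_{d}(\widehat{P+Q^{*}}(x),y)=\mathcal{B}_{d}(x,(P+Q^{*})y)=\mathcal{B}_{d}(x,P(y))=0$, so $\widehat{P+Q^{*}}$ maps $A$ into $A^{\perp}=A$, and symmetrically it maps $A^{*}$ into $A^{*}$; pairing $\widehat{P+Q^{*}}(x)$ against $A^{*}$ (resp.\ $\widehat{P+Q^{*}}(a^{*})$ against $A$) then identifies $\widehat{P+Q^{*}}|_{A}$ with $Q$ and $\widehat{P+Q^{*}}|_{A^{*}}$ with $P^{*}$, that is, $\widehat{P+Q^{*}}=Q+P^{*}$. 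Since $\cdot$, $\circ$, $P+Q^{*}$ and $Q+P^{*}$ then all restrict to $A$ and to $A^{*}$, and the relative PCA axioms \eqref{eq:rps1}--\eqref{eq:rps4} holding on $A\oplus A^{*}$ restrict accordingly, this exhibits $(A,\cdot_{A},\circ_{A},P,Q)$ and $(A^{*},\cdot_{A^{*}},\circ_{A^{*}},Q^{*},P^{*})$ as relative PCA subalgebras. I expect the only mildly delicate step to be this last computation---choosing which Lagrangian to pair against in order to detect membership, and keeping the sign conventions for the dual maps straight---while everything else is a formal consequence of Proposition~\ref{pro:from Witt to system} together with the isotropy of $A$ and $A^{*}$.
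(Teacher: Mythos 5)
Your proposal is correct and follows essentially the same route as the paper: obtain $\circ$ on $A\oplus A^{*}$ from $\mathcal{B}_{d}$ via \eqref{eq:APLc2c}, apply Proposition \ref{pro:from Witt to system} to get the relative PCA structure $(A\oplus A^{*},\cdot,\circ,P+Q^{*},\widehat{P+Q^{*}})$, and then restrict the identities \eqref{eq:rps1}--\eqref{eq:rps4} to $A$ and to $A^{*}$. The only difference is that where the paper imports two facts by citation (closure of $\circ$ on the two halves, and $\widehat{P+Q^{*}}=Q+P^{*}$), you verify them directly through the isotropy of the complementary Lagrangians $A$, $A^{*}$ and the pairing computation with $\mathcal{B}_{d}$, and both verifications are sound.
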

\begin{proof}
The first half part follows from \cite[Lemma 2.9]{TPA}.
   By \cite[Lemma 3.4 (1)]{RPA}, the adjoint map $\widehat{P+Q^{*}}$ of $P+Q^{*}$ with respect to $\mathcal{B}_{d}$ is $Q+P^{*}$.
    Then by Proposition \ref{pro:from Witt to system}, $(A\oplus A^{*},\cdot,\circ,P+Q^{*},Q+P^{*})$ is a   relative PCA algebra, that is, the following equations hold:
    {\small \begin{eqnarray*}
&&(x+a^{*})\cdot(Q+P^{*})(y+b^{*})-(P+Q^{*})(x+a^{*})\cdot(y+b^{*})-(Q+P^{*})\big((x+a^{*})\cdot(y+b^{*})\big)=0,\\
&&(x+a^{*})\circ(Q+P^{*})(y+b^{*})-(P+Q^{*})(x+a^{*})\circ(y+b^{*})-(Q+P^{*})\big((x+a^{*})\circ (y+b^{*})\big)=0,\\
&&(x+a^{*})\cdot\big((y+b^{*})\circ(z+c^{*})\big)-(y+b^{*})\circ\big((x+a^{*})\cdot(z+c^{*})\big)+[x+a^{*},y+b^{*}]\cdot(z+c^{*})\\
&&\ \ -(x+a^{*})\cdot(P+Q^{*})(y+b^{*})\cdot(z+c^{*})=0,\\
&&\big((x+a^{*})\cdot(y+b^{*})\big)\circ(z+c^{*})-(y+b^{*})\circ\big((x+a^{*})\cdot(z+c^{*})\big)-(x+a^{*})\circ\big((y+b^{*})\cdot(z+c^{*})\big)\\
&&\ \ +(Q+P^{*})\big((x+a^{*})\cdot(y+b^{*})\cdot(z+c^{*})\big)=0,\;\forall x,y,z\in A, a^{*},b^{*},c^{*}\in A^{*}.
\end{eqnarray*}}Taking $a^{*}=b^{*}=c^{*}=0$ and $x=y=z=0$ respectively into the
above equations, we show that $(A,\cdot_{A},\circ_{A},P,Q)$ and
$(A^{*},\cdot_{A^{*}},\circ_{A^{*}},Q^{*},P^{*})$ are relative PCA
algebras.
\end{proof}

\begin{thm}\label{thm:equ 1}
    Let $(A,\cdot_{A},\circ_{A},P,Q)$ and $(A^{*},\cdot_{A^{*}},\circ_{A^{*}},Q^{*},P^{*})$ be relative PCA algebras, whose
  associated relative Poisson algebras are $(A,\cdot_{A},[-,-]_{A},P)$ and $(A^{*},\cdot_{A^{*}},[-,-]_{A^{*}},Q^{*})$ respectively.
    Then the following conditions are equivalent.
\begin{enumerate}
    \item\label{thm:equ 1,a} There is a Manin triple of relative Poisson algebras with respect to the \Witt form
    $\big( (A\oplus A^{*},\cdot,[-,-],P+Q^{*}),(A,\cdot_{A},[-,-]_{A},P),(A^{*},\cdot_{A^{*}},[-,-]_{A^{*}},Q^{*}) \big)$,
    such that the induced relative PCA algebra structure $(A\oplus A^{*},\cdot,\circ,P+Q^{*},Q+P^{*})$ on $A\oplus A^{*}$ from $\mathcal{B}_{d}$ contains $(A,\cdot_{A},\circ_{A},P,Q)$ and $(A^{*},\cdot_{A^{*}},\circ_{A^{*}},Q^{*},P^{*})$ as relative PCA subalgebras.
    \item\label{thm:equ 1,b} $\big( (A,\cdot_{A},\circ_{A},P,Q), (A^{*},\cdot_{A^{*}},\circ_{A^{*}},Q^{*},P^{*}),\; -\mathcal{L}^{*}_{\cdot_{A}},\;-\mathrm{ad}^{*}_{\circ_{A}},\;
    \mathcal{R}^{*}_{\circ_{A}},\;
    -\mathcal{L}^{*}_{\cdot_{A^{*}}},\;-\mathrm{ad}^{*}_{\circ_{A^{*}}},\; \mathcal{R}^{*}_{\circ_{A^{*}}}\big)$ is a matched pair of relative PCA algebras.
    \item\label{thm:equ 1,c}$\big((A ,\cdot_{A},[-,-]_{A},P ),(A^{*},\cdot_{A^{*}},[-,-]_{A^{*}},Q^{*}),-\mathcal{L}^{*}_{\cdot_{A}}$,
    $-\mathcal{L}^{*}_{\circ_{A}},
    -\mathcal{L}^{*}_{\cdot_{A^{*}}},-\mathcal{L}^{*}_{\circ_{A^{*}}}\big)$ is a matched pair of relative Poisson algebras.
\end{enumerate}
\end{thm}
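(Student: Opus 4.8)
The plan is to establish the chain $(\ref{thm:equ 1,a})\Leftrightarrow(\ref{thm:equ 1,b})\Leftrightarrow(\ref{thm:equ 1,c})$, deriving the first equivalence from the Lemma immediately above together with a structure-map identification and the second by unwinding Proposition~\ref{pro:2.20}. For $(\ref{thm:equ 1,b})\Leftrightarrow(\ref{thm:equ 1,c})$ I would apply Proposition~\ref{pro:2.20} with the maps $\mu_i=-\mathcal{L}^{*}_{\cdot_{A_i}}$, $l_i=-\mathrm{ad}^{*}_{\circ_{A_i}}$, $r_i=\mathcal{R}^{*}_{\circ_{A_i}}$. The pivotal identity is that, since $\mathrm{ad}_{\circ_{A_i}}=\mathcal{L}_{\circ_{A_i}}-\mathcal{R}_{\circ_{A_i}}$, one has $l_i-r_i=-\mathrm{ad}^{*}_{\circ_{A_i}}-\mathcal{R}^{*}_{\circ_{A_i}}=-\mathcal{L}^{*}_{\circ_{A_i}}$; hence the third condition of Proposition~\ref{pro:2.20} coincides with condition~(\ref{thm:equ 1,c}), which already gives $(\ref{thm:equ 1,b})\Rightarrow(\ref{thm:equ 1,c})$. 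For the converse, the first two conditions of Proposition~\ref{pro:2.20} hold automatically: they say precisely that the dual adjoint representations $(-\mathcal{L}^{*}_{\cdot_A},-\mathrm{ad}^{*}_{\circ_A},\mathcal{R}^{*}_{\circ_A},Q^{*},P^{*},A^{*})$ and $(-\mathcal{L}^{*}_{\cdot_{A^*}},-\mathrm{ad}^{*}_{\circ_{A^*}},\mathcal{R}^{*}_{\circ_{A^*}},P,Q,A)$ are representations of the given relative PCA algebras, which is the content of the proposition on dual representations established above; its fourth condition, a matched pair of the anti-pre-Lie algebras $(A,\circ_A)$ and $(A^{*},\circ_{A^{*}})$ with maps $-\mathrm{ad}^{*}_{\circ_{A_i}},\mathcal{R}^{*}_{\circ_{A_i}}$, follows from the matched-pair-of-Lie-algebras part of condition~(\ref{thm:equ 1,c}) via the anti-pre-Lie bialgebra theory of \cite{TPA}, the bracket \eqref{eq:mp,Lie} with $\rho_i=-\mathcal{L}^{*}_{\circ_{A_i}}$ being the sub-adjacent bracket of the anti-pre-Lie product \eqref{eq:mp,APL} with $l_i,r_i$ as above. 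So the only substantive task here is to check that the ten equations in the fifth condition of Proposition~\ref{pro:2.20}, specialized to these maps, are consequences of condition~(\ref{thm:equ 1,c}) together with the relative PCA axioms \eqref{eq:rps1}--\eqref{eq:rps4} of the two algebras; I would do this by rewriting each as a pairing identity in $A\oplus A^{*}$ and invoking \eqref{eq:MP1}--\eqref{eq:MP4}.

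For $(\ref{thm:equ 1,a})\Leftrightarrow(\ref{thm:equ 1,b})$, given a Manin triple as in $(\ref{thm:equ 1,a})$ the preceding Lemma already provides a relative PCA algebra $(A\oplus A^{*},\cdot,\circ,P+Q^{*},Q+P^{*})$ containing the two given relative PCA algebras as subalgebras, so it only remains to pin down the shape of $\cdot$ and $\circ$. Invariance of $\mathcal{B}_{d}$ on $(A\oplus A^{*},\cdot)$, which has $A$ and $A^{*}$ as subalgebras, exhibits a double construction of a commutative Frobenius algebra, forcing $\cdot$ to be \eqref{eq:mp,ca} with $\mu_1=-\mathcal{L}^{*}_{\cdot_A}$, $\mu_2=-\mathcal{L}^{*}_{\cdot_{A^*}}$ (\cite{Bai2010}); and since $\circ$ is defined from the nondegenerate commutative $2$-cocycle $\mathcal{B}_{d}$ on $(A\oplus A^{*},[-,-])$ through \eqref{eq:APLc2c}, with $[-,-]$ the sub-adjacent Lie bracket restricting to $[-,-]_A$ and $[-,-]_{A^*}$, the associated anti-pre-Lie bialgebra structure forces $\circ$ to be \eqref{eq:mp,APL} with $l_i=-\mathrm{ad}^{*}_{\circ_{A_i}}$, $r_i=\mathcal{R}^{*}_{\circ_{A_i}}$ (\cite{TPA}); by Definition~\ref{defi:mp,rps} this is exactly the matched pair of $(\ref{thm:equ 1,b})$. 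Conversely, a matched pair as in $(\ref{thm:equ 1,b})$ produces, via Definition~\ref{defi:mp,rps}, a relative PCA algebra on $A\oplus A^{*}$ whose associated relative Poisson algebra contains $(A,\cdot_A,[-,-]_A,P)$ and $(A^{*},\cdot_{A^{*}},[-,-]_{A^{*}},Q^{*})$ as relative Poisson subalgebras; here $\mathcal{B}_{d}$ is nondegenerate, invariant on the commutative part (double Frobenius construction) and a commutative $2$-cocycle on the Lie part (the double construction of \cite{TPA}), hence an \Witt form, and this same double construction identifies the anti-pre-Lie product built from the matched pair with the one defined from $\mathcal{B}_{d}$ via \eqref{eq:APLc2c}, so the induced relative PCA structure is the prescribed one. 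This gives the Manin triple of $(\ref{thm:equ 1,a})$.

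I expect the main obstacle to be the last step of $(\ref{thm:equ 1,b})\Leftrightarrow(\ref{thm:equ 1,c})$: showing that the fifth condition of Proposition~\ref{pro:2.20} carries no information beyond condition~(\ref{thm:equ 1,c}) and the relative PCA axioms --- equivalently, that relative-Poisson-level compatibility together with the splitting of the Lie bracket into its commutative associative and anti-pre-Lie ``shadows'' already forces the mixed relative PCA matched-pair equations, despite the asymmetric roles of $P$ and $Q$. The conceptual route I would try first is to push everything through the Manin triple: by Proposition~\ref{pro:from Witt to system} (and the preceding Lemma), a relative Poisson algebra with an \Witt form carries a \emph{canonical} relative PCA structure with $\circ$ determined by \eqref{eq:APLc2c} and $Q=\hat{P}$, so the relative PCA structure on $A\oplus A^{*}$ is uniquely determined by its associated relative Poisson structure and $\mathcal{B}_{d}$; hence no independent equations can emerge, and the fifth condition must follow. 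Should this stall, the fallback is the brute-force pairing computation sketched above.
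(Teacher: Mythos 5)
Your proposal is correct and is essentially the paper's own proof: the paper establishes the cycle $(1)\Rightarrow(2)\Rightarrow(3)\Rightarrow(1)$, using the uniqueness theorems of \cite{Bai2010} and \cite{TPA} to force the six structure maps in $(1)\Rightarrow(2)$, Proposition~\ref{pro:2.20} together with the identity $l_i-r_i=-\mathrm{ad}^{*}_{\circ_{A_i}}-\mathcal{R}^{*}_{\circ_{A_i}}=-\mathcal{L}^{*}_{\circ_{A_i}}$ for $(2)\Rightarrow(3)$, and the double relative Poisson algebra built from the matched pair, on which $\mathcal{B}_{d}$ is an \Witt form inducing the relative PCA structure via Proposition~\ref{pro:from Witt to system}, for $(3)\Rightarrow(1)$. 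Your preferred ``conceptual route'' for $(3)\Rightarrow(2)$ --- passing through the Manin triple instead of verifying the ten mixed equations of Proposition~\ref{pro:2.20} directly --- is precisely this cycle, so the brute-force fallback you kept in reserve is unnecessary.
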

\begin{proof}
Item (\ref{thm:equ 1,a}) $\Longrightarrow$ Item (\ref{thm:equ 1,b}). By Definition \ref{defi:mp,rps}, there is a matched pair of relative PCA algebras $\big((A,\cdot_{A},\circ_{A},P,Q), (A^{*},\cdot_{A^{*}},\circ_{A^{*}},Q^{*},P^{*}),\mu_{1},l_{1}, r_{1},\mu_{2},l_{2}, r_{2})$ giving rise to the relative PCA algebra structure $(A\oplus A^{*},\cdot,\circ,P+Q^{*},Q+P^{*})$ on $A\oplus A^{*}$.  Then $\big((A,\cdot_{A}), (A^{*},\cdot_{A^{*}}),\mu_{1},\mu_{2}\big)$ is a matched pair of commutative associative algebras, and
$\big((A, \circ_{A} ), (A^{*}$,
$\circ_{A^{*}} ), l_{1}, r_{1}, l_{2}, r_{2}\big)$ is a matched pair of anti-pre-Lie algebras.
    By \cite[Theorem 2.2.1]{Bai2010} and \cite[Theorem 2.10]{TPA} respectively,
    we have
    \begin{equation*}
    \mu_{1}=-\mathcal{L}^{*}_{\cdot_{A}},\;\mu_{2}=-\mathcal{L}^{*}_{\cdot_{A^{*}}},\;
        l_{1}=-\mathrm{ad}^{*}_{\circ_{A}},\; r_{1}=\mathcal{R}^{*}_{\circ_{A}},\;
     l_{2}=-\mathrm{ad}^{*}_{\circ_{A^{*}}},\; r_{2}=\mathcal{R}^{*}_{\circ_{A^{*}}},
    \end{equation*}
    that is, the binary operations $\cdot,\circ$ on $A\oplus A^{*}$ are given by
    \begin{eqnarray}
&& (x+a^{*})\cdot(y+b^{*})=x\cdot_{A}y-\mathcal{L}^{*}_{\cdot_{A^{*}}}(b^{*})x-\mathcal{L}^{*}_{\cdot_{A^{*}}}(a^{*})y
+a^{*}\cdot_{A^{*}}b^{*}-\mathcal{L}^{*}_{\cdot_{A }}(x^{*})b^{*}-\mathcal{L}^{*}_{\cdot_{A }}(y)a^{*},\ \ \ \ \label{eq:MT asso}\\
&& (x+a^{*})\circ(y+b^{*})=
x\circ_{A}y-\mathrm{ad}^{*}_{A^{*}}(a^{*})y+\mathcal{R}^{*}_{\circ_{A^{*}}}(b^{*})x
+a^{*}\circ_{A^{*}}b^{*}-\mathrm{ad}^{*}_{A }(x^{*})b^{*}+\mathcal{R}^{*}_{\circ_{A}}(y)a^{*},\ \ \ \ \label{eq:MT APL}
    \end{eqnarray}
  for all $x,y\in A, a^{*},b^{*}\in A^{*}$.  Therefore Item (\ref{thm:equ 1,b}) holds.

Item (\ref{thm:equ 1,b})$ \Longrightarrow$ Item (\ref{thm:equ 1,c}). It follows from by Proposition \ref{pro:2.20}.

Item (\ref{thm:equ 1,c}) $\Longrightarrow$ Item (\ref{thm:equ 1,a}).
Assume that  $\big((A ,\cdot_{A},[-,-]_{A},P
),(A^{*},\cdot_{A^{*}},[-,-]_{A^{*}},Q^{*}),-\mathcal{L}^{*}_{\cdot_{A}}$,
    $-\mathcal{L}^{*}_{\circ_{A}}$,
    $-\mathcal{L}^{*}_{\cdot_{A^{*}}}$,
    $-\mathcal{L}^{*}_{\circ_{A^{*}}}\big)$ is a matched pair of relative Poisson algebras.
    Then by Definition~\ref{defi:mp}, there is a relative Poisson algebra $(A\oplus A^{*},\cdot,[-,-],P+Q^{*})$ in which $\cdot$ is defined by \eqref{eq:MT asso} and $[-,-]$ is defined by
    \begin{eqnarray}
&& [x+a^{*},y+b^{*}]=[x,y]_{A}+\mathcal{L}^{*}_{\circ_{A^{*}}}(b^{*})x-\mathcal{L}^{*}_{\circ_{A^{*}}}(a^{*})y
+[a^{*},b^{*}]_{A^{*}}-\mathcal{L}^{*}_{\circ_{A }}(x^{*})b^{*}+\mathcal{L}^{*}_{\circ_{A }}(y)a^{*}.\ \ \ \ \label{eq:MT Lie}
    \end{eqnarray}
   Straightforwardly, $\mathcal{B}_{d}$ is an \Witt form on $(A\oplus A^{*},\cdot,[-,-],P+Q^{*})$  and the induced relative PCA algebra $(A\oplus A^{*},\cdot,\circ,P+Q^{*},Q+P^{*})$ from $\mathcal{B}_{d}$
    satisfies \eqref{eq:MT asso}-\eqref{eq:MT APL},
   and hence contains $(A,\cdot_{A},\circ_{A},P,Q)$ and $(A^{*},\cdot_{A^{*}},\circ_{A^{*}},Q^{*},P^{*})$ as relative PCA subalgebras.
    Therefore Item (\ref{thm:equ 1,a}) holds.
\end{proof}

\subsection{Relative PCA bialgebras}\

Let $A$ be a vector space.
If there is a linear map $\Delta:A\rightarrow A\otimes A$ such that the following equations hold:
\begin{equation*}
    \tau\Delta=\Delta,\;(\mathrm{id}\otimes\Delta) \Delta=(\Delta\otimes\mathrm{id}) \Delta,
\end{equation*}
where  $\tau(x\otimes y)=y\otimes x$ for all $x,y\in A$,
then we say $(A,\Delta)$ is a {\bf cocommutative coassociative coalgebra} \cite{Bai2010}.
On the other hand,
if there is a linear map $\theta:A\rightarrow A\otimes A$ such that the following equations hold:
\begin{eqnarray*}
&&(\mathrm{id}^{\otimes 3}-\tau\otimes\mathrm{id})(\mathrm{id}\otimes\theta)\theta=(\tau\otimes\mathrm{id}-\mathrm{id}^{\otimes 3})(\theta\otimes\mathrm{id})\theta,\\
&&( \mathrm{id}^{\otimes 3}+\xi+\xi^{2} )( \tau\otimes\mathrm{id}-  \mathrm{id}^{\otimes 3})(\theta\otimes\mathrm{id})\theta=0,
\end{eqnarray*}
where  $\xi(x\otimes y\otimes z)=y\otimes z\otimes x$ for all $x,y,z\in A$,
then we say $(A,\theta)$ is an {\bf anti-pre-Lie coalgebra} \cite{TPA}.
In this case, there is a Lie coalgebra $(A,\delta)$, where $\delta=\theta-\tau\theta$.

\begin{defi}
A {\bf relative PCA coalgebra} is a quintuple $(A,\Delta,\theta,Q,P)$, such that $(A,\Delta)$ is a cocommutative coassociative coalgebra, $(A,\theta)$ is an anti-pre-Lie coalgebra, $Q,P:A\rightarrow A$ are linear maps and the following equations hold:
\begin{eqnarray}
&&\Delta Q=(Q\otimes\mathrm{id}+\mathrm{id}\otimes Q)\Delta,\label{eq:cos1}\\
&& \delta Q=(Q\otimes \mathrm{id}+\mathrm{id}\otimes Q)\delta,\label{eq:cos2}\\
&&
 (\mathrm{id}\otimes\Delta)\delta(x)- (\delta\otimes
\mathrm{id} )\Delta(x)-(\tau\otimes\mathrm{id})(\mathrm{id}\otimes\delta)\Delta(x)-(Q\otimes
\mathrm{id}\otimes \mathrm{id})(\Delta\otimes
\mathrm{id})\Delta(x)=0,\label{eq:cos3}
 \\
&&\Delta P=(\mathrm{id}\otimes P-Q\otimes \mathrm{id})\Delta,\label{eq:cos4}\\
&&\theta P=(\mathrm{id}\otimes P-Q\otimes \mathrm{id})\theta,\label{eq:cos5}\\
&&(\mathrm{id}\otimes \theta)\Delta(x)-(\tau\otimes\mathrm{id})(\mathrm{id}\otimes\Delta)\theta(x)
+(\delta\otimes\mathrm{id})\Delta(x)-(\mathrm{id}\otimes Q\otimes\mathrm{id})(\Delta\otimes\mathrm{id})\Delta(x)    =0,\label{eq:cos6}\\
&&(\Delta\otimes\mathrm{id})\theta(x)=(\tau\otimes\mathrm{id})(\mathrm{id}\otimes\Delta)\theta(x)+(\mathrm{id}\otimes\Delta)\theta(x)-(\Delta\otimes\mathrm{id})\Delta\big(P(x)\big),\label{eq:cos7}
\end{eqnarray}
for all $x\in A$, where $\delta=\theta-\tau\theta$.
\end{defi}

\begin{pro}
Let $A$ be a vector space with linear maps
$\Delta,\theta:A\rightarrow A\otimes A$ and $Q,P:A\rightarrow A$.
Let $\cdot_{A^{*}},\circ_{A^{*}}:A^{*}\otimes A^{*}\rightarrow
A^{*}$ be the linear duals of $\Delta$ and $\theta$ respectively.
Then  $(A^{*},\cdot_{A^{*}},\circ_{A^{*}},Q^{*},P^{*})$ is a relative PCA algebra if and only if
$(A,\Delta,\theta,Q,P)$ is a relative PCA coalgebra.
\delete{, that is,
\begin{eqnarray*}
&&\langle\Delta(x), a^{*}\otimes b^{*}\rangle=\langle x,\Delta^{*}(a^{*}\otimes b^{*})\rangle=\langle x,a^{*}\cdot_{A^{*}}b^{*}\rangle,\\
&&\langle\theta(x), a^{*}\otimes b^{*}\rangle=\langle x,\theta^{*}(a^{*}\otimes b^{*})\rangle=\langle x,a^{*}\circ_{A^{*}}b^{*}\rangle,\;\;\forall x\in A, a^{*},b^{*}\in A^{*}.
\end{eqnarray*}}
\end{pro}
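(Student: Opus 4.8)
The plan is to establish the equivalence axiom by axiom, translating each defining condition of the relative PCA algebra $(A^*,\cdot_{A^*},\circ_{A^*},Q^*,P^*)$ into a co-identity on $A$ via the standard pairings $\langle a^*\cdot_{A^*}b^*,x\rangle=\langle a^*\otimes b^*,\Delta(x)\rangle$, $\langle a^*\circ_{A^*}b^*,x\rangle=\langle a^*\otimes b^*,\theta(x)\rangle$, $\langle Q^*(a^*),x\rangle=\langle a^*,Q(x)\rangle$ and $\langle P^*(a^*),x\rangle=\langle a^*,P(x)\rangle$ for $x\in A$ and $a^*,b^*\in A^*$, together with their iterates (recall $A$ is finite-dimensional, so all transposes behave as expected). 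The first two ingredients are classical: by \cite{Bai2010}, $(A^*,\cdot_{A^*})$ is a commutative associative algebra if and only if $(A,\Delta)$ is a cocommutative coassociative coalgebra; by \cite{TPA}, $(A^*,\circ_{A^*})$ is an anti-pre-Lie algebra if and only if $(A,\theta)$ is an anti-pre-Lie coalgebra, and in that case the sub-adjacent Lie algebra $(A^*,[-,-]_{A^*})$ of $(A^*,\circ_{A^*})$ is exactly the dual of the Lie coalgebra $(A,\delta)$ with $\delta=\theta-\tau\theta$, since $\langle [a^*,b^*]_{A^*},x\rangle=\langle a^*\otimes b^*,(\theta-\tau\theta)(x)\rangle$. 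It therefore remains to match the remaining defining conditions of $(A^*,\cdot_{A^*},\circ_{A^*},Q^*,P^*)$ with \eqref{eq:cos1}--\eqref{eq:cos7}.

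Next I would dualize the conditions that promote $(A^*,\cdot_{A^*},[-,-]_{A^*},Q^*)$ to a relative Poisson algebra: that $Q^*$ is a derivation of $(A^*,\cdot_{A^*})$ dualizes to \eqref{eq:cos1}; that $Q^*$ is a derivation of $(A^*,[-,-]_{A^*})$ dualizes to \eqref{eq:cos2}; and the relative Leibniz rule \eqref{eq:GLR} for $(A^*,\cdot_{A^*},[-,-]_{A^*},Q^*)$ dualizes to \eqref{eq:cos3}. Then the four compatibility equations \eqref{eq:rps1}--\eqref{eq:rps4} for $(A^*,\cdot_{A^*},\circ_{A^*},Q^*,P^*)$ — obtained from \eqref{eq:rps1}--\eqref{eq:rps4} by replacing the derivation $P$ by $Q^*$ and the auxiliary map $Q$ by $P^*$ — dualize respectively to \eqref{eq:cos4}, \eqref{eq:cos5}, \eqref{eq:cos6} and \eqref{eq:cos7}. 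For example, \eqref{eq:rps1} reads $a^*\cdot_{A^*}P^*(b^*)-Q^*(a^*)\cdot_{A^*}b^*-P^*(a^*\cdot_{A^*}b^*)=0$, and pairing with $x\in A$ turns it into $\langle a^*\otimes b^*,(\mathrm{id}\otimes P-Q\otimes\mathrm{id})\Delta(x)-\Delta P(x)\rangle=0$ for all $a^*,b^*,x$, i.e. \eqref{eq:cos4}; the derivation conditions and \eqref{eq:rps2} are handled identically. Since each of these correspondences is an equivalence, assembling them over all $x\in A$ and $a^*,b^*,c^*\in A^*$ yields the statement.

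The step that requires genuine care is the tensor-leg bookkeeping in the three ``ternary'' correspondences: relative Leibniz $\leftrightarrow$ \eqref{eq:cos3}, \eqref{eq:rps3} $\leftrightarrow$ \eqref{eq:cos6}, and \eqref{eq:rps4} $\leftrightarrow$ \eqref{eq:cos7}. Here one must track precisely which of the three legs each transpose $Q$ or $P$ acts on, and which permutation — $\tau\otimes\mathrm{id}$, $\mathrm{id}\otimes\tau$, or the cyclic $\xi$ — is produced by transposing a product occurring in a given position. The essential point that makes everything collapse to the normalized forms written in \eqref{eq:cos3}, \eqref{eq:cos6} and \eqref{eq:cos7} is that, once $(A,\Delta)$ is cocommutative and coassociative, the iterated coproduct $(\Delta\otimes\mathrm{id})\Delta=(\mathrm{id}\otimes\Delta)\Delta$ is invariant under all permutations of its three legs, so the $\tau$- and $\xi$-twisted expressions arising on dualization can be rewritten freely; a term such as $a^*\cdot_{A^*}b^*\cdot_{A^*}Q^*(c^*)$ then dualizes unambiguously to $(Q\otimes\mathrm{id}\otimes\mathrm{id})(\Delta\otimes\mathrm{id})\Delta$. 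One could alternatively route the argument through the representation characterization in Proposition~\ref{pro2.5}, dualizing the semidirect-product description, but this only repackages the same computation.
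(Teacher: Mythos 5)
Your proposal is correct and follows essentially the same route as the paper: cite \cite{Bai2010} and \cite{TPA} for the (co)associative and anti-pre-Lie (co)structures, then dualize the remaining conditions term by term, matching the relative Poisson conditions for $(A^{*},\cdot_{A^{*}},[-,-]_{A^{*}},Q^{*})$ with \eqref{eq:cos1}--\eqref{eq:cos3} and \eqref{eq:rps1}--\eqref{eq:rps4} (with $P\mapsto Q^{*}$, $Q\mapsto P^{*}$) with \eqref{eq:cos4}--\eqref{eq:cos7}, exactly the correspondence the paper verifies. The only cosmetic difference is that the paper invokes \cite{RPA} to dispose of \eqref{eq:cos1}--\eqref{eq:cos3} in one stroke, while you dualize those conditions by hand.
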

\begin{proof}
    By \cite{TPA},  $(A^{*},\circ_{A^{*}})$ is an anti-pre-Lie algebra if and only if $(A,\theta)$ is an anti-pre-Lie coalgebra.
    Set $[a^{*},b^{*}]_{A^{*}}=a^{*}\circ_{A^{*}}b^{*}-b^{*}\circ_{A^{*}} a^{*},\;\forall a^{*},b^{*}\in A^{*}$.
    Then $[-,-]_{A^{*}}$ is the linear dual of $\delta=\theta-\tau\theta$.
    By \cite{RPA}, $(A^{*},\cdot_{A^{*}},[-,-]_{A^{*}},Q^{*})$ is a relative Poisson algebra if and only if $(A,\Delta)$ is a cocommutative coassociative coalgebra, $(A,\delta)$ is a Lie coalgebra and \eqref{eq:cos1}-\eqref{eq:cos3} hold.
    Let $x\in A, a^{*},b^{*},c^{*}\in A^{*}$. Then we have
    \begin{eqnarray*}
    &&\langle a^{*}\cdot_{A^{*}}(b^{*}\circ_{A^{*}}c^{*}),x\rangle=\langle \Delta^{*}(\mathrm{id}\otimes\theta^{*})(a^{*}\otimes b^{*}\otimes c^{*}),x\rangle=\langle a^{*}\otimes b^{*}\otimes c^{*},(\mathrm{id}\otimes \theta)\Delta(x)\rangle,\\
    &&\langle b^{*}\circ_{A^{*}}(a^{*}\cdot_{A^{*}}c^{*}),x\rangle=\langle \theta^{*}(\mathrm{id}\otimes\Delta^{*})(\tau\otimes\mathrm{id})(a^{*}\otimes b^{*}\otimes c^{*}),x\rangle\\
    &&=\langle a^{*}\otimes b^{*}\otimes c^{*},(\tau\otimes\mathrm{id})(\mathrm{id}\otimes\Delta)\theta(x)\rangle,\\
    &&\langle [a^{*},b^{*}]_{A^{*}}\cdot_{A^{*}}c^{*},x\rangle=\langle \Delta^{*}(\delta^{*}\otimes\mathrm{id})(a^{*}\otimes b^{*}\otimes c^{*}),x\rangle=\langle a^{*}\otimes b^{*}\otimes c^{*},(\delta\otimes\mathrm{id})\Delta(x)\rangle,\\
    &&\langle a^{*}\cdot_{A^{*}}Q(b^{*})\cdot_{A^{*}}c^{*},x\rangle=\langle \Delta^{*}(\Delta^{*}\otimes\mathrm{id})(\mathrm{id}\otimes Q^{*}\otimes\mathrm{id})(a^{*}\otimes b^{*}\otimes c^{*}),x\rangle\\
    &&=\langle a^{*}\otimes b^{*}\otimes c^{*}, (\mathrm{id}\otimes Q\otimes\mathrm{id})(\Delta\otimes\mathrm{id})\Delta(x)\rangle.
    \end{eqnarray*}
    Hence \eqref{eq:cos6} holds if and only if the following equation holds:
    \begin{equation*}
        a^{*}\cdot_{A^{*}}(b^{*}\circ_{A^{*}}c^{*})-b^{*}\circ_{A^{*}}(a^{*}\cdot_{A^{*}}c^{*})+[a^{*},b^{*}]_{A^{*}}\cdot_{A^{*}}c^{*}-a^{*}\cdot_{A^{*}}Q(b^{*})\cdot_{A^{*}}c^{*}=0.
    \end{equation*}
    Similarly, \eqref{eq:cos4}, \eqref{eq:cos5} and \eqref{eq:cos7} hold if and only if the following equations hold respectively:
    \begin{eqnarray*}
        && a^{*}\cdot_{A^{*}}P^{*}(b^{*})-Q^{*}(a^{*})\cdot_{A^{*}}b^{*}-P^{*}(a^{*}\cdot_{A^{*}}b^{*})=0,\\
        &&a^{*}\circ_{A^{*}}P^{*}(b^{*})-Q^{*}(a^{*})\circ_{A^{*}}b^{*}-P^{*}(a^{*}\circ_{A^{*}}b^{*})=0,\\
        &&(a^{*}\cdot_{A^{*}}b^{*})\circ_{A^{*}}c^{*}-b^{*}\circ_{A^{*}}(a^{*}\cdot_{A^{*}}c^{*})-a^{*}\circ_{A^{*}}(b^{*}\cdot_{A^{*}}c^{*})+P^{*}(a^{*}\cdot_{A^{*}}b^{*}\cdot_{A^{*}}c^{*})=0.
    \end{eqnarray*}
    Hence $(A^{*},\cdot_{A^{*}},\circ_{A^{*}},Q^{*},P^{*})$ is a relative PCA algebra if and only if
$(A,\Delta,\theta,Q,P)$ is a relative PCA coalgebra.
\end{proof}
Recall that a {\bf commutative and cocommutative antisymmetric infinitesimal (ASI) bialgebra} \cite{Bai2010} is a triple $(A,\cdot,\Delta)$, such that
$(A,\cdot)$ is a commutative associative algebra,
$(A,\Delta)$ is a cocommutative coassociative coalgebra,
and the following equation  holds:
\begin{equation}\label{AssoBia}
\Delta(x\cdot y)=\big(\mathcal{L}_{\cdot}(x)\otimes \mathrm{id}\big)\Delta(y)+\big(\mathrm{id}\otimes\mathcal{L}_{\cdot}(y)\big)\Delta(x),\;\;\forall x,y\in A.
\end{equation}
An {\bf anti-pre-Lie bialgebra} \cite{TPA} is a triple $(A,\circ,\theta)$, such that $(A,\circ)$ is an anti-pre-Lie algebra,
$(A,\theta)$ is an anti-pre-Lie coalgebra,
and the following equations hold:
\begin{eqnarray}
     &&(\mathrm{id}^{\otimes 2}-\tau)\Big(\theta(x\circ y)-\big(\mathcal{L}_{\circ}(x)\otimes \mathrm{id}\big)\theta(y)-\big(\mathrm{id}\otimes\mathcal{L}_{\circ}(x)\big)\theta(y)+\big(\mathrm{id}\otimes\mathcal{R}_{\circ}(y)\big)\theta(x)\Big)=0,\label{eq:defi:anti-pre-Lie bialgebra1}\\
     &&\theta([x,y])=\big(\mathrm{id}\otimes\mathrm{ad}(x)-\mathcal{L}_{\circ}(x)\otimes \mathrm{id}\big)\theta(y)-\big(\mathrm{id}\otimes\mathrm{ad}(y)-\mathcal{L}_{\circ}(y)\otimes
        \mathrm{id}\big)\theta(x),\;\forall x,y\in A.\label{eq:defi:anti-pre-Lie bialgebra2}
\end{eqnarray}

\begin{defi}
    A {\bf relative PCA bialgebra} is a collection $(A,\cdot ,\circ ,\Delta,\theta,P,Q)$ such that
    the following conditions hold.
    \begin{enumerate}
        \item $(A,\cdot ,\circ ,$
    $P,Q)$ is a relative PCA algebra and  $(A,\Delta,\theta,Q,P)$ is a relative PCA coalgebra.
    \item $(A,\cdot,\Delta)$ is a commutative and cocommutative ASI bialgebra and $(A,\circ,\theta)$ is an anti-pre-Lie bialgebra.
    \item The following equations hold:
    {\small
     \begin{eqnarray}
    &&\begin{aligned}\label{b1}
&\theta(x\cdot y)+\big(\mathcal{L}_{\circ}(y)\otimes\mathrm{id}\big)\Delta(x)-\big(\mathrm{id}\otimes\mathcal{L}_{\cdot}(x)\big)\theta(y)\\
&+\big(\mathcal{L}_{\circ}(x)\otimes\mathrm{id}\big)\Delta(y)-\big(\mathrm{id}\otimes \mathcal{L}_{\cdot}(y)\big)\theta(x)-(Q\otimes\mathrm{id})\Delta(x\cdot y)=0,
\end{aligned}\\
&&\Delta(x\circ y)+\Big( (\tau+\mathrm{id}^{\otimes 2})\big(\mathrm{id}\otimes \mathcal{L}_{\cdot}(y)\big)\Big)\theta(x)-\big(\mathrm{id}\otimes\mathcal{L}_{\circ}(x)+\mathcal{L}_{\circ}(x)\otimes\mathrm{id}\big)\Delta(y)-\Delta\big(P(x)\cdot y\big)=0,\label{b2}\\
&& \big( \mathcal{L}_{\cdot}(x)\otimes\mathrm{id}-\mathrm{id}\otimes\mathcal{L}_{\cdot}(x)\big)\theta(y)+\big(\mathcal{L}_{\circ}(y)\otimes\mathrm{id}-\mathrm{id}\otimes\mathrm{ad}(y)\big)\Delta(x)-\Delta([x,y])+\Delta\big(x\cdot P(y)\big)=0,\label{b3}\\
&& \begin{aligned}\label{b4}
&\big( \mathrm{id}\otimes\mathcal{R}_{\circ}(y)\big)\Delta(x)
+\big(\mathcal{L}_{\cdot}(x)\otimes\mathrm{id}\big)\delta(y)
-\big(\mathrm{id}\otimes\mathcal{L}_{\circ}(x)\big)\Delta(y)\\
&+\tau\big(\mathrm{id}\otimes\mathcal{L}_{\cdot}(y)\big)\theta(x)
-\delta(x\cdot y)+(\mathrm{id}\otimes Q)\Delta(x\cdot y)=0,\;\forall x,y\in A.\end{aligned}
    \end{eqnarray}}
    \end{enumerate}
\end{defi}

\begin{thm}\label{thm:equ 2}
    Let $(A,\cdot_{A},\circ_{A},P,Q)$ and $(A^{*},\cdot_{A^{*}},\circ_{A^{*}},Q^{*},P^{*})$ be relative PCA algebras whose associated relative Poisson algebras are $(A,\cdot_{A},[-,-]_{A},P)$ and $(A^{*},\cdot_{A^{*}},[-,-]_{A^{*}},Q^{*})$ respectively.
    Let $\Delta,\theta:A\rightarrow A\otimes A$ be the linear duals of $\cdot_{A^{*}}$ and $\circ_{A^{*}}$ respectively.
    Then $(A,\cdot_{A},\circ_{A},\Delta,\theta,P,Q)$ is a relative PCA bialgebra if and only if
    $\big((A ,\cdot_{A},[-,-]_{A},P ),(A^{*},\cdot_{A^{*}},[-,-]_{A^{*}},Q^{*}),-\mathcal{L}^{*}_{\cdot_{A}}$,
    $-\mathcal{L}^{*}_{\circ_{A}},
    -\mathcal{L}^{*}_{\cdot_{A^{*}}}$, $-\mathcal{L}^{*}_{\circ_{A^{*}}}\big)$ is a matched pair of relative Poisson algebras.
\end{thm}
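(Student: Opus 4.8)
The plan is to deduce the statement from Theorem~\ref{thm:equ 1}. By that theorem, the matched pair of relative Poisson algebras in the statement (Item~(\ref{thm:equ 1,c})) is equivalent to Item~(\ref{thm:equ 1,b}), namely that $\big((A,\cdot_{A},\circ_{A},P,Q),(A^{*},\cdot_{A^{*}},\circ_{A^{*}},Q^{*},P^{*}),-\mathcal{L}^{*}_{\cdot_{A}},-\mathrm{ad}^{*}_{\circ_{A}},\mathcal{R}^{*}_{\circ_{A}},-\mathcal{L}^{*}_{\cdot_{A^{*}}},-\mathrm{ad}^{*}_{\circ_{A^{*}}},\mathcal{R}^{*}_{\circ_{A^{*}}}\big)$ is a matched pair of relative PCA algebras. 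So it suffices to show that $(A,\cdot_{A},\circ_{A},\Delta,\theta,P,Q)$ is a relative PCA bialgebra if and only if this tuple is a matched pair of relative PCA algebras, and for the latter I would use the characterization in Proposition~\ref{pro:2.20}.

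First I would note that, since $(A,\cdot_{A},\circ_{A},P,Q)$ and $(A^{*},\cdot_{A^{*}},\circ_{A^{*}},Q^{*},P^{*})$ are relative PCA algebras by hypothesis, the proposition identifying relative PCA coalgebras with relative PCA algebra structures on the dual space (proved immediately before this theorem) gives that $(A,\Delta,\theta,Q,P)$ is automatically a relative PCA coalgebra; hence the first item in the definition of a relative PCA bialgebra holds for free, and so do conditions (1)--(2) of Proposition~\ref{pro:2.20}, which for the maps above are exactly the dual adjoint representations of the two relative PCA algebras recorded earlier. Next, by \cite[Theorem 2.2.1]{Bai2010}, $(A,\cdot_{A},\Delta)$ is a commutative and cocommutative ASI bialgebra if and only if $\big((A,\cdot_{A}),(A^{*},\cdot_{A^{*}}),-\mathcal{L}^{*}_{\cdot_{A}},-\mathcal{L}^{*}_{\cdot_{A^{*}}}\big)$ is a matched pair of commutative associative algebras, and by \cite[Theorem 2.10]{TPA}, $(A,\circ_{A},\theta)$ is an anti-pre-Lie bialgebra if and only if $\big((A,\circ_{A}),(A^{*},\circ_{A^{*}}),-\mathrm{ad}^{*}_{\circ_{A}},\mathcal{R}^{*}_{\circ_{A}},-\mathrm{ad}^{*}_{\circ_{A^{*}}},\mathcal{R}^{*}_{\circ_{A^{*}}}\big)$ is a matched pair of anti-pre-Lie algebras, which is condition (4) of Proposition~\ref{pro:2.20}; since $l_{i}-r_{i}=-\mathcal{L}^{*}_{\circ}$ on each side, the commutative associative matched pair together with the Lie matched pair underlying condition (4) is precisely the commutative associative and Lie part of the matched pair of relative Poisson algebras in condition (3).

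It then remains to show that the compatibility equations \eqref{b1}--\eqref{b4} of a relative PCA bialgebra are equivalent, modulo the data already matched, to the remaining equations of Proposition~\ref{pro:2.20}, namely the mixed equations \eqref{eq:MP1}--\eqref{eq:MP4} of condition (3) and the ten equations of condition (5). I would do this by rewriting each of \eqref{b1}--\eqref{b4} as an identity of linear maps from $A$ to $A\otimes A$, taking transposes, and pairing with tensors $a^{*}\otimes b^{*}$ (and, where a composed map into $A^{\otimes 3}$ occurs, with $a^{*}\otimes b^{*}\otimes c^{*}$), using $\Delta^{*}=\cdot_{A^{*}}$, $\theta^{*}=\circ_{A^{*}}$ and the transposition conventions for $\mathcal{L}_{\cdot},\mathcal{L}_{\circ},\mathcal{R}_{\circ},\mathrm{ad}$, exactly as in the proofs of the preceding proposition and of Theorem~\ref{thm:equ 1}; each dualized equation is then recognised as one of the matched-pair equations evaluated on the dual structures. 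The main obstacle is precisely this last step: the four tensor identities \eqref{b1}--\eqref{b4} must be matched against a considerably longer list of scalar identities, so there is substantial redundancy among the matched-pair axioms that has to be made transparent, and one must keep careful track of the flips $\tau$, the cyclic permutation $\xi$, and the various left/right/adjoint operators throughout. Once the dualization is complete, $(A,\cdot_{A},\circ_{A},\Delta,\theta,P,Q)$ is a relative PCA bialgebra exactly when the tuple above is a matched pair of relative PCA algebras, and Theorem~\ref{thm:equ 1} yields the asserted equivalence with the matched pair of relative Poisson algebras.
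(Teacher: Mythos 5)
Your reduction has a genuine gap at its central step. Dualizing the four compatibility identities \eqref{b1}--\eqref{b4} produces exactly the four equations \eqref{eq:MP1}--\eqref{eq:MP4} specialized to $\mu_1=-\mathcal{L}^{*}_{\cdot_{A}}$, $\rho_1=-\mathcal{L}^{*}_{\circ_{A}}$, $\mu_2=-\mathcal{L}^{*}_{\cdot_{A^{*}}}$, $\rho_2=-\mathcal{L}^{*}_{\circ_{A^{*}}}$ --- and nothing more. The ten identities in condition (5) of Proposition~\ref{pro:2.20} (and condition (4)) involve the operators $\mathrm{ad}^{*}_{\circ}$ and $\mathcal{R}^{*}_{\circ}$ \emph{separately}, not only the combination $l_i-r_i=-\mathcal{L}^{*}_{\circ}$, so they are not transposes of any bialgebra axiom; no amount of bookkeeping with $\tau$ and $\xi$ will extract them from \eqref{b1}--\eqref{b4}. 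For these particular dual maps they hold only because of the equivalence of items (\ref{thm:equ 1,b}) and (\ref{thm:equ 1,c}) in Theorem~\ref{thm:equ 1}, which is proved through the Manin triple, i.e.\ through the nondegenerate form $\mathcal{B}_{d}$ and Proposition~\ref{pro:from Witt to system} --- precisely the ``substantial redundancy'' you name but do not resolve. (A smaller issue of the same kind: what you attribute to \cite[Theorem 2.10]{TPA} is the identification of the maps in a matched pair of anti-pre-Lie algebras arising from a commutative $2$-cocycle; the statement actually needed, and used by the paper, is that an anti-pre-Lie bialgebra is equivalent to a matched pair of \emph{Lie} algebras via $-\mathcal{L}^{*}_{\circ_{A}},-\mathcal{L}^{*}_{\circ_{A^{*}}}$.)

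The repair also shows the detour is unnecessary. The statement to be proved is the equivalence of the bialgebra with item (\ref{thm:equ 1,c}), and this can be done directly, which is the paper's route: the ASI bialgebra condition is equivalent to the matched pair of commutative associative algebras (\cite{Bai2010}), the anti-pre-Lie bialgebra condition is equivalent to the matched pair of Lie algebras with $-\mathcal{L}^{*}_{\circ_{A}},-\mathcal{L}^{*}_{\circ_{A^{*}}}$ (\cite{TPA}), the representation conditions in the characterization of matched pairs of relative Poisson algebras are automatic from the hypothesis that both $(A,\cdot_{A},\circ_{A},P,Q)$ and $(A^{*},\cdot_{A^{*}},\circ_{A^{*}},Q^{*},P^{*})$ are relative PCA algebras (Proposition~\ref{pro2.5}), and \eqref{b1}--\eqref{b4} dualize exactly to \eqref{eq:MP1}--\eqref{eq:MP4}. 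Theorem~\ref{thm:equ 1} is then needed only afterwards, to assemble Corollary~\ref{cor:equ}. If you insist on passing through the matched pair of relative PCA algebras, the only sound order is to establish item (\ref{thm:equ 1,c}) first and then invoke Theorem~\ref{thm:equ 1} to obtain item (\ref{thm:equ 1,b}); but at that point (\ref{thm:equ 1,c}) --- the assertion of the theorem --- is already proved, so the passage through Proposition~\ref{pro:2.20} adds nothing.
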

\begin{proof}
By \cite{Bai2010}, $(A,\cdot_{A},\Delta)$ is a commutative and cocommutative ASI  bialgebra if and only if
$\big( (A,\cdot_{A})$,
$(A^{*},\cdot_{A^{*}}),-\mathcal{L}^{*}_{\cdot_{A}}, -\mathcal{L}^{*}_{\cdot_{A^{*}}}  \big)$
is a matched pair of commutative associative algebras.
By \cite{TPA}, $(A,\circ_{A},\theta)$ is an anti-pre-Lie bialgebra if and only if $\big( (A,[-,-]_{A}),(A^{*},[-,-]_{A^{*}}),-\mathcal{L}^{*}_{\circ_{A}}, -\mathcal{L}^{*}_{\circ_{A^{*}}}  \big)$ is a matched pair of Lie algebras.
For all $x,y\in A, a^{*},b^{*}\in A^{*}$, we have
    \begin{eqnarray*}
-\langle\mathcal{L}^{*}_{\circ_{A^{*}}}(a^{*})(x\cdot_{A}y),b^{*}\rangle&=&\langle x\cdot_{A}y,a^{*}\circ_{A^{*}}b^{*}\rangle=\langle\theta(x\cdot_{A}y),a^{*}\otimes b^{*}\rangle,\\
\langle\mathcal{L}^{*}_{\cdot_{A^{*}}}\big(\mathcal{L}^{*}_{\circ_{A}}(y)a^{*}\big)x,b^{*}\rangle&=&-\langle x,\mathcal{L}^{*}_{\circ_{A}}(y)a^{*}\cdot_{A^{*}}b^{*}\rangle=\langle\big(\mathcal{L}_{\circ_{A}}(y)\otimes\mathrm{id}\big)\Delta(x),a^{*}\otimes b^{*}\rangle,\\
\langle x\cdot_{A}\mathcal{L}^{*}_{\circ_{A^{*}}}(a^{*})y,b^{*}\rangle&=&\langle y,a^{*}\circ_{A^{*}}\mathcal{L}^{*}_{\cdot_{A}}(x)b^{*}\rangle=-\langle\big(\mathrm{id}\otimes\mathcal{L}_{\cdot_{A}}(x)\big)\theta(y),a^{*}\otimes b^{*}\rangle,\\
\langle\mathcal{L}^{*}_{\cdot_{A^{*}}}\big(Q^{*}(a^{*})\big)(x\cdot_{A}y),b^{*}\rangle&=&-\langle x\cdot_{A}y,Q^{*}(a^{*})\cdot_{A^{*}}b^{*}\rangle=-\langle(Q\otimes\mathrm{id})\Delta(x\cdot_{A}y),a^{*}\otimes b^{*}\rangle.
\end{eqnarray*}
Hence \eqref{b1}  holds if and only  if \eqref{eq:MP1} holds for the case that
\begin{equation}\label{eq:assu}
    A_1=A,\;P_1=P,\;A_2=A^*,\;P_2=Q^*,\;\mu_1=-\mathcal{L}_{\cdot_{A}}^{*},\;\rho_1=-\mathcal{L}^{*}_{\circ_{A^{*}}},\;\mu_2=-\mathcal{L}_{\cdot_{A^*}}^{*},\;\rho_2=-\mathcal{L}_{\circ_{A^*}}^{*}.
\end{equation}
Similarly, \eqref{b2}-\eqref{b4}  hold  if and only  if
\eqref{eq:MP2}-\eqref{eq:MP4} hold  for the case given by
\eqref{eq:assu} respectively. Hence the conclusion follows.
\end{proof}

Combining Theorems \ref{thm:equ 1} and \ref{thm:equ 2} together, we have
\begin{cor}\label{cor:equ}
     Let $(A,\cdot_{A},\circ_{A},P,Q)$ and $(A^{*},\cdot_{A^{*}},\circ_{A^{*}},Q^{*},P^{*})$ be relative PCA algebras whose associated relative Poisson algebras are $(A,\cdot_{A},[-,-]_{A},P)$
     and $(A^{*},\cdot_{A^{*}},[-,-]_{A^{*}},Q^{*})$ respectively.
     Then the following statements are equivalent.
     \begin{enumerate}
         \item There is a Manin triple of relative Poisson algebras with respect to the \Witt form    $\big( (A\oplus A^{*},\cdot,[-,-],P+Q^{*}),(A,\cdot_{A},[-,-]_{A},P),(A^{*},\cdot_{A^{*}},[-,-]_{A^{*}},Q^{*}) \big)$,
    such that the induced relative PCA algebra structure $(A\oplus A^{*},\cdot,\circ,P+Q^{*},Q+P^{*})$ on $A\oplus A^{*}$ from $\mathcal{B}_{d}$ contains $(A,\cdot_{A},\circ_{A},P,Q)$ and $(A^{*},\cdot_{A^{*}},\circ_{A^{*}},Q^{*},P^{*})$ as relative PCA subalgebras.
    \item  $\big( (A,\cdot_{A},\circ_{A},P,Q), (A^{*},\cdot_{A^{*}},\circ_{A^{*}},Q^{*},P^{*}),\; -\mathcal{L}^{*}_{\cdot_{A}},\;-\mathrm{ad}^{*}_{\circ_{A}},\;
    \mathcal{R}^{*}_{\circ_{A}},\;
    -\mathcal{L}^{*}_{\cdot_{A^{*}}},\;-\mathrm{ad}^{*}_{\circ_{A^{*}}},\; \mathcal{R}^{*}_{\circ_{A^{*}}}\big)$ is a matched pair of relative PCA algebras.
    \item $\big((A ,\cdot_{A},[-,-]_{A},P ),(A^{*},\cdot_{A^{*}},[-,-]_{A^{*}},Q^{*}),-\mathcal{L}^{*}_{\cdot_{A}}$,
    $-\mathcal{L}^{*}_{\circ_{A}},
    -\mathcal{L}^{*}_{\cdot_{A^{*}}},-\mathcal{L}^{*}_{\circ_{A^{*}}}\big)$ is a matched pair of relative Poisson algebras.
    \item $(A,\cdot_{A},\circ_{A},\Delta,\theta,P,Q)$ is a relative PCA bialgebra, where
    $\Delta,\theta:A\rightarrow A\otimes A$ are the linear duals of $\cdot_{A^{*}}$ and $\circ_{A^{*}}$ respectively.
     \end{enumerate}
\end{cor}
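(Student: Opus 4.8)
The plan is to derive the corollary directly from Theorems~\ref{thm:equ 1} and~\ref{thm:equ 2}, which together already contain all the work. First I would match up the four statements with the conditions of those two theorems. Statements (1), (2) and (3) of the corollary are verbatim the conditions (\ref{thm:equ 1,a}), (\ref{thm:equ 1,b}) and (\ref{thm:equ 1,c}) of Theorem~\ref{thm:equ 1}, so by that theorem they are mutually equivalent. Next, statement (4), namely that $(A,\cdot_{A},\circ_{A},\Delta,\theta,P,Q)$ is a relative PCA bialgebra with $\Delta,\theta$ the linear duals of $\cdot_{A^{*}},\circ_{A^{*}}$, is by Theorem~\ref{thm:equ 2} equivalent to the assertion that $\big((A,\cdot_{A},[-,-]_{A},P),(A^{*},\cdot_{A^{*}},[-,-]_{A^{*}},Q^{*}),-\mathcal{L}^{*}_{\cdot_{A}},-\mathcal{L}^{*}_{\circ_{A}},-\mathcal{L}^{*}_{\cdot_{A^{*}}},-\mathcal{L}^{*}_{\circ_{A^{*}}}\big)$ is a matched pair of relative Poisson algebras, which is precisely statement (3). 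Chaining these equivalences yields $(1)\Leftrightarrow(2)\Leftrightarrow(3)\Leftrightarrow(4)$, which is the assertion of the corollary.

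The only point requiring a moment's care is to confirm that the matched pair of relative Poisson algebras appearing in Theorem~\ref{thm:equ 1}(\ref{thm:equ 1,c}) is literally the same object featured in Theorem~\ref{thm:equ 2}: both have the same underlying relative Poisson algebras $(A,\cdot_{A},[-,-]_{A},P)$ and $(A^{*},\cdot_{A^{*}},[-,-]_{A^{*}},Q^{*})$ and the same structure maps $-\mathcal{L}^{*}_{\cdot_{A}},-\mathcal{L}^{*}_{\circ_{A}},-\mathcal{L}^{*}_{\cdot_{A^{*}}},-\mathcal{L}^{*}_{\circ_{A^{*}}}$, so the two equivalences dock together without any adjustment. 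I expect no genuine obstacle here; the substantive content has already been discharged in the proofs of the two theorems, in particular the use of Proposition~\ref{pro:2.20} relating matched pairs of relative PCA algebras to matched pairs of the associated relative Poisson algebras, and the dualization in Theorem~\ref{thm:equ 2} identifying the relative PCA bialgebra axioms \eqref{b1}--\eqref{b4} with the matched-pair equations \eqref{eq:MP1}--\eqref{eq:MP4}.
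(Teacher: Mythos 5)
Your proposal is correct and is exactly how the paper argues: the corollary is stated immediately after the phrase "Combining Theorems \ref{thm:equ 1} and \ref{thm:equ 2} together," with items (1)--(3) being the three equivalent conditions of Theorem \ref{thm:equ 1} and item (4) linked to item (3) via Theorem \ref{thm:equ 2}. Your check that the matched pair in Theorem \ref{thm:equ 1}(\ref{thm:equ 1,c}) is literally the same object as in Theorem \ref{thm:equ 2} is the only gluing point, and it holds as you say.
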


Recall  that a {\bf cocommutative differential
coalgebra} \cite{HBG} is a triple $(A,\Delta,Q)$ such that $(A,\Delta)$ is  a
cocommutative coassociative coalgebra and $Q$ is a linear map
satisfying \eqref{eq:cos1}. An {\bf admissible cocommutative
differential coalgebra} is a tuple $(A,\Delta, Q, P)$ such that
$(A,\Delta, Q)$ is  a cocommutative differential coalgebra and $P$
is a linear map satisfying \eqref{eq:cos4}. Obviously, $(A,\Delta,
Q, P)$ is  an admissible cocommutative differential coalgebra if
and only if $(A^*,\Delta^*, Q^*,P^*)$ is an admissible commutative
differential algebra.

\begin{defi}{\rm \cite{HBG,LLB2023}}
A quintuple $(A,\cdot, \Delta,P,Q)$ is called a {\bf commutative
and cocommutative differential ASI bialgebra} if
\begin{enumerate}
    \item $(A,\cdot,\Delta)$ is a commutative
and cocommutative ASI bialgebra.
    \item  $(A,\cdot, P, Q)$ is an
    admissible commutative differential algebra.
    \item  $(A,\Delta, Q, P)$ is an admissible cocommutative differential coalgebra.
\end{enumerate}
\label{de:admdifbialg}
\end{defi}

\begin{lem} \cite{LLB2023}\label{lem:equ:diff}
Let $(A,\cdot_{A})$ and $(A^{*},\cdot_{A^{*}})$ be commutative
associative algebras, and $P,Q:A\rightarrow A$ be linear maps.
Suppose that $\Delta:A\rightarrow A\otimes A$ is the linear dual
of $\cdot_{A^{*}}$. Then there is a double construction of a
commutative differential  Frobenius algebra $\big( (A\oplus
A^{*},\cdot,P+Q^{*}),(A,\cdot_{A},P),(A^{*},\cdot_{A^{*}},Q^{*})
\big)$ if and only if $(A,\cdot_{A},\Delta,P,Q)$ is a commutative
and cocommutative differential ASI bialgebra.
\end{lem}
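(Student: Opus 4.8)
This is the purely (co)associative shadow of the relative PCA bialgebra theory and is established in \cite{LLB2023}; the plan is to separate the ``(co)associative part'', which is governed by the classical ASI bialgebra theory, from the ``differential part'' carried by $P$ and $Q$, and to match the two sides condition by condition.

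First I would recall the double construction theory for commutative Frobenius algebras \cite{Bai2010}: forgetting the operators, a commutative associative multiplication $\cdot$ on $A\oplus A^{*}$ having $(A,\cdot_{A})$ and $(A^{*},\cdot_{A^{*}})$ as subalgebras and making $\mathcal{B}_{d}$ invariant exists if and only if $(A,\cdot_{A},\Delta)$ is a commutative and cocommutative ASI bialgebra; moreover, in that case $\cdot$ is forced to be the matched-pair multiplication \eqref{eq:mp,ca} associated with the actions $\mu_{1}=-\mathcal{L}^{*}_{\cdot_{A}}$ of $A$ on $A^{*}$ and $\mu_{2}=-\mathcal{L}^{*}_{\cdot_{A^{*}}}$ of $A^{*}$ on $A$. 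Hence a double construction of a commutative differential Frobenius algebra is precisely this data together with the single extra requirement that $P+Q^{*}$ be a derivation of $(A\oplus A^{*},\cdot)$, and the lemma reduces to showing that, under the ASI bialgebra hypothesis, this requirement is equivalent to conditions (2) and (3) of Definition \ref{de:admdifbialg}.

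Since the derivation identity is linear in each slot, it is enough to test it on products of the three basic types $x\cdot_{A}y$, $a^{*}\cdot_{A^{*}}b^{*}$, and $x\cdot b^{*}$ for $x,y\in A$, $a^{*},b^{*}\in A^{*}$. On $x\cdot_{A}y\in A$ it says exactly that $P$ is a derivation of $(A,\cdot_{A})$; on $a^{*}\cdot_{A^{*}}b^{*}\in A^{*}$ it says that $Q^{*}$ is a derivation of $(A^{*},\cdot_{A^{*}})$, which by dualising is $\Delta Q=(Q\otimes\mathrm{id}+\mathrm{id}\otimes Q)\Delta$, i.e. $(A,\Delta,Q)$ is a cocommutative differential coalgebra. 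The mixed product $x\cdot b^{*}=-\mathcal{L}^{*}_{\cdot_{A^{*}}}(b^{*})x-\mathcal{L}^{*}_{\cdot_{A}}(x)b^{*}$ has a component in $A$ and a component in $A^{*}$, so the derivation identity on it splits into two: projecting the output to $A^{*}$ and pairing with an element of $A$ yields, after unwinding the definitions of $\mathcal{L}^{*}$, $P^{*}$, $Q^{*}$, precisely \eqref{eq:rps1}; projecting the output to $A$ and pairing with an element of $A^{*}$ yields the $A^{*}$-analogue of \eqref{eq:rps1}, namely $a^{*}\cdot_{A^{*}}P^{*}(b^{*})-Q^{*}(a^{*})\cdot_{A^{*}}b^{*}-P^{*}(a^{*}\cdot_{A^{*}}b^{*})=0$, which is exactly \eqref{eq:cos4} for $(A,\Delta,Q,P)$ (using that $(A,\Delta,Q,P)$ is an admissible cocommutative differential coalgebra if and only if $(A^{*},\cdot_{A^{*}},Q^{*},P^{*})$ is an admissible commutative differential algebra, as noted just before Definition \ref{de:admdifbialg}). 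Collecting these facts: ``$P$ is a derivation of $(A,\cdot_{A})$'' together with \eqref{eq:rps1} is exactly condition (2), and ``$(A,\Delta,Q)$ is a cocommutative differential coalgebra'' together with \eqref{eq:cos4} is exactly condition (3). Since each of these feeds back into its own component of the derivation identity, the equivalence holds in both directions, which completes the proof.

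The only delicate point is the sign bookkeeping in the mixed-product computation: one must keep straight the conventions $\langle\mathcal{L}^{*}_{\cdot}(x)u^{*},v\rangle=-\langle u^{*},x\cdot v\rangle$ and $\langle P^{*}(a^{*}),y\rangle=\langle a^{*},P(y)\rangle$, together with which summand of $x\cdot b^{*}$ lives in $A$ and which in $A^{*}$, in order to land on \eqref{eq:rps1} and its $A^{*}$-version rather than a sign variant. Everything else is a direct, if slightly lengthy, verification, and no part of it requires the relative PCA machinery of the later sections.
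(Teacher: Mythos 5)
Your argument is correct. Note that the paper offers no proof of this lemma at all — it is quoted verbatim from \cite{LLB2023} — so there is nothing internal to compare against; but your route is exactly the expected one: invoke the double-construction/ASI-bialgebra equivalence of \cite{Bai2010}, which forces the product on $A\oplus A^{*}$ to be the matched-pair one with actions $-\mathcal{L}^{*}_{\cdot_{A}}$ and $-\mathcal{L}^{*}_{\cdot_{A^{*}}}$, and then decompose the requirement that $P+Q^{*}$ be a derivation into its $A\times A$, $A^{*}\times A^{*}$ and mixed components, which give respectively that $P$ is a derivation of $(A,\cdot_{A})$, that $Q^{*}$ is a derivation of $(A^{*},\cdot_{A^{*}})$ (dually \eqref{eq:cos1}), and the two admissibility conditions \eqref{eq:rps1} and (dually) \eqref{eq:cos4}; I have checked the sign bookkeeping in the mixed component under the paper's conventions and it lands where you say it does.
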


\begin{pro}\label{pro:equ:diff}
Let $(A,\cdot_{A},\Delta,P,Q)$ be a commutative and cocommutative
differential  ASI  bialgebra. Let $(A,\circ_{A})$ be the
anti-pre-Lie algebra given by \eqref{eq:ex}, and
$\theta:A\rightarrow A\otimes A$ be a linear map given by
\begin{equation}\label{eq:pro:equ:diff}
    \theta(x)=\Delta\big(P(x)\big)-(Q\otimes\mathrm{id})\Delta(x),\;\forall x\in A.
\end{equation}
Then $(A,\cdot_{A},\circ_{A},\Delta,\theta,P,Q)$ is a relative PCA bialgebra.
\end{pro}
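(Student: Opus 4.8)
The plan is to obtain $(A,\cdot_{A},\circ_{A},\Delta,\theta,P,Q)$ as the relative PCA bialgebra attached to a Manin triple and then to quote Corollary~\ref{cor:equ}. First I would feed the given commutative and cocommutative differential ASI bialgebra $(A,\cdot_{A},\Delta,P,Q)$ into Lemma~\ref{lem:equ:diff} to produce a double construction of a commutative differential Frobenius algebra $\big((A\oplus A^*,\cdot,P+Q^*),(A,\cdot_{A},P),(A^*,\cdot_{A^*},Q^*)\big)$, where $\cdot_{A^*}$ is the linear dual of $\Delta$ (so $\Delta$ is the linear dual of $\cdot_{A^*}$). Taking $[-,-]$, $[-,-]_{A}$, $[-,-]_{A^*}$ to be the Witt type Lie brackets of $(A\oplus A^*,\cdot,P+Q^*)$, $(A,\cdot_{A},P)$ and $(A^*,\cdot_{A^*},Q^*)$ respectively, Proposition~\ref{pro:3.2} then yields a Manin triple of relative Poisson algebras with respect to the \Witt form $\big((A\oplus A^*,\cdot,[-,-],P+Q^*),(A,\cdot_{A},[-,-]_{A},P),(A^*,\cdot_{A^*},[-,-]_{A^*},Q^*)\big)$.

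Next I would pin down the relative PCA data this Manin triple carries. By the lemma preceding Theorem~\ref{thm:equ 1}, it induces a relative PCA algebra $(A\oplus A^*,\cdot,\circ,P+Q^*,Q+P^*)$ in which $\circ$ is recovered from $\mathcal{B}_{d}$ through \eqref{eq:APLc2c} and $Q+P^*$ is the adjoint of $P+Q^*$ with respect to $\mathcal{B}_{d}$ by \cite[Lemma 3.4 (1)]{RPA}. Since $[-,-]$ is the Witt type bracket of $(A\oplus A^*,\cdot,P+Q^*)$ and $\mathcal{B}_{d}$ is invariant on $(A\oplus A^*,\cdot)$, for all $X,Y,Z\in A\oplus A^*$ one computes $\mathcal{B}_{d}(X\circ Y,Z)=\mathcal{B}_{d}\big(Y,X\cdot(P+Q^*)(Z)\big)-\mathcal{B}_{d}\big(Y,(P+Q^*)(X)\cdot Z\big)=\mathcal{B}_{d}\big((Q+P^*)(X\cdot Y)-(P+Q^*)(X)\cdot Y,Z\big)$, so by nondegeneracy $X\circ Y=(Q+P^*)(X\cdot Y)-(P+Q^*)(X)\cdot Y$. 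Restricting to $A\otimes A$ recovers $x\circ y=Q(x\cdot_{A}y)-P(x)\cdot_{A}y$, which is precisely $\circ_{A}$ of \eqref{eq:ex}; restricting to $A^*\otimes A^*$ gives $a^*\circ b^*=P^*(a^*\cdot_{A^*}b^*)-Q^*(a^*)\cdot_{A^*}b^*$. Because $(A,\Delta,Q,P)$ is an admissible cocommutative differential coalgebra, $(A^*,\cdot_{A^*},Q^*,P^*)$ is an admissible commutative differential algebra, so by Proposition~\ref{ex:ex} this last product is an anti-pre-Lie product $\circ_{A^*}$, the quintuple $(A^*,\cdot_{A^*},\circ_{A^*},Q^*,P^*)$ is a relative PCA algebra, and its associated relative Poisson algebra is $(A^*,\cdot_{A^*},[-,-]_{A^*},Q^*)$. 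A one-line pairing, $\langle\theta(x),a^*\otimes b^*\rangle=\langle P(x),a^*\cdot_{A^*}b^*\rangle-\langle\Delta(x),Q^*(a^*)\otimes b^*\rangle=\langle x,a^*\circ_{A^*}b^*\rangle$, identifies $\theta$ from \eqref{eq:pro:equ:diff} as the linear dual of $\circ_{A^*}$.

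Finally, by Proposition~\ref{ex:ex} once more $(A,\cdot_{A},\circ_{A},P,Q)$ is a relative PCA algebra with associated relative Poisson algebra $(A,\cdot_{A},[-,-]_{A},P)$, so the Manin triple of the first paragraph, together with its induced relative PCA algebra $(A\oplus A^*,\cdot,\circ,P+Q^*,Q+P^*)$, contains $(A,\cdot_{A},\circ_{A},P,Q)$ and $(A^*,\cdot_{A^*},\circ_{A^*},Q^*,P^*)$ as relative PCA subalgebras; this is the first statement of Corollary~\ref{cor:equ}. Since $\Delta$ and $\theta$ are the linear duals of $\cdot_{A^*}$ and $\circ_{A^*}$, the fourth statement of Corollary~\ref{cor:equ} then gives that $(A,\cdot_{A},\circ_{A},\Delta,\theta,P,Q)$ is a relative PCA bialgebra, as required. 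I expect the only real friction to be in the middle step, namely checking that the explicit $\circ_{A}$ of \eqref{eq:ex} and the explicit $\theta$ of \eqref{eq:pro:equ:diff} agree, on the nose, with the anti-pre-Lie data that the general construction via \eqref{eq:APLc2c} produces on $A$ and $A^*$; after that the proposition is just an assembly of Lemma~\ref{lem:equ:diff}, Proposition~\ref{pro:3.2}, Proposition~\ref{ex:ex} and Corollary~\ref{cor:equ}.
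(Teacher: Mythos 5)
Your proposal is correct and follows essentially the same route as the paper's proof: pass from the differential ASI bialgebra to a double construction (Lemma~\ref{lem:equ:diff}), then to a Manin triple via Proposition~\ref{pro:3.2}, identify the dual structures $\cdot_{A^*}$, $\circ_{A^*}$ using Proposition~\ref{ex:ex}, and conclude by Corollary~\ref{cor:equ}. The only difference is that you spell out explicitly (via the computation $X\circ Y=(Q+P^*)(X\cdot Y)-(P+Q^*)(X)\cdot Y$ and the pairing identifying $\theta$ with the dual of $\circ_{A^*}$) the compatibility of the induced structure with \eqref{eq:ex} and \eqref{eq:pro:equ:diff}, a step the paper leaves implicit, so your write-up is if anything slightly more complete.
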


\begin{proof}
It follows from a straightforward verification or can be verified as
follows. Let the linear duals of $\Delta$ and $\theta$ be
$\cdot_{A^{*}}$ and $\circ_{A^{*}}$ respectively. Then
$(A^{*},\cdot_{A^{*}}, Q^*)$ is a commutative differential
algebra, and by \eqref{eq:pro:equ:diff} we have
\begin{equation}
    a^{*}\circ_{A^{*}}b^{*}=P^{*}(a^{*}\cdot_{A^{*}}b^{*})-Q^{*}(a^{*})\cdot_{A^{*}}b^{*},\;\forall a^{*},b^{*}\in A^{*}.
\end{equation}
By Proposition \ref{ex:ex}, both $(A,\cdot_{A},\circ_{A},P,Q)$ and
$(A^{*},\cdot_{A^{*}},\circ_{A^{*}},Q^{*},P^{*})$ are relative PCA
algebras. The corresponding double construction of a commutative
differential  Frobenius algebra $\big( (A\oplus
A^{*},\cdot,P+Q^{*}),(A,\cdot_{A},P),(A^{*},\cdot_{A^{*}},Q^{*})
\big)$ gives rise to a Manin triple of relative Poisson algebras
with respect to the \Witt form $\big( (A\oplus
A^{*},\cdot,[-,-],P+Q^{*}),(A,\cdot_{A},[-,-]_{A},P),(A^{*},\cdot_{A^{*}},[-,-]_{A^{*}},Q^{*})
\big)$  by Proposition \ref{pro:3.2}. Hence by Corollary
\ref{cor:equ}, $(A,\cdot_{A},\circ_{A},\Delta,\theta,P,Q)$ is a
relative PCA bialgebra.
\end{proof}

Therefore Proposition \ref{pro:equ:diff} shows that commutative
and cocommutative differential  ASI  bialgebras naturally give
rise to relative PCA bialgebras, instead of relative Poisson
bialgebras introduced in \cite{RPA}.

\section{Coboundary relative PCA bialgebras, relative PCA Yang-Baxter equations and $\mathcal{O}$-operators}\label{sec4}

In this section, we study the coboundary cases of relative PCA
bialgebras. We introduce the notion of the relative PCA
Yang-Baxter equation  (RPCA-YBE) in a relative PCA algebra, and
show that
  antisymmetric solutions of the RPCA-YBE  give rise to coboundary relative PCA bialgebras.
  Furthermore, we introduce the notions of $\mathcal{O}$-operators of  relative PCA algebras and relative pre-PCA algebras, which give antisymmetric solutions of the RPCA-YBE in semi-direct product relative PCA algebras.

\subsection{Coboundary relative PCA bialgebras and relative PCA Yang-Baxter equations}\

Recall that  a commutative and cocommutative ASI
bialgebra $(A,\cdot,\Delta)$ is called \textbf{coboundary} \cite{Bai2010} if
there exists an $r\in A\otimes A$ such that
\begin{equation}\label{eq:AssoCob}
\Delta(x):=\Delta_{r}(x):=\big(\mathrm{id}\otimes\mathcal{L}_{\cdot}(x)-\mathcal{L}_{\cdot}(x)\otimes \mathrm{id}\big)r,\;\forall x\in A.
\end{equation}


Let $(A,\cdot)$ be a commutative associative algebra, and
$\Delta:A\rightarrow A\otimes A$ be a linear map defined by
 \eqref{eq:AssoCob}. By \cite{Bai2010},  $(A,\cdot,\Delta)$ is a commutative and cocommutative
ASI bialgebra if and only if  the following equations hold:
\begin{eqnarray}
&& \big(\mathrm{id}\otimes\mathcal{L}_{\cdot}(x)-\mathcal{L}_{\cdot}(x)\otimes \mathrm{id}\big)\big(r+\tau(r)\big)=0,\label{eq:ASI 1}\\
&&\big(\mathrm{id}\otimes \mathrm{id}\otimes\mathcal{L}_{\cdot}(x)-\mathcal{L}_{\cdot}(x)\otimes \mathrm{id}\otimes \mathrm{id}\big)\textbf{A}(r)=0, \;\forall x\in A,\label{eq:ASI 2}
\end{eqnarray}
where
\begin{eqnarray*}
&&\textbf{A}(r)=r_{12}\cdot r_{13}-r_{23}\cdot r_{12}+ r_{13}\cdot r_{23},\;\;{\rm and}\;\;{\rm for}\;\;r=\sum_i a_i\otimes b_i,\\
&&r_{12}\cdot r_{13}=\sum_{i,j} a_{i}\cdot a_{j}\otimes b_{i}\otimes b_{j}, r_{23}\cdot r_{12}=\sum_{i,j}a_{i}\otimes a_{j}\cdot b_{i}\otimes b_{j}, r_{13}\cdot r_{23}=\sum_{i,j}a_{i}\otimes a_{j}\otimes b_{i}\cdot b_{j}.
\end{eqnarray*}
The equation $\textbf{A}(r)=0$ is called the \textbf{associative
Yang-Baxter equation (AYBE) in $(A,\cdot)$}.

An anti-pre-Lie bialgebra $(A,\circ,\theta)$ is called \textbf{coboundary} \cite{TPA} if there exists an $r\in A\otimes A$ such that
\begin{equation}\label{eq:aplCob}
\theta(x):=\theta_{r}(x):=\big(-\mathcal{L}_{\circ}(x)\otimes \mathrm{id}+\mathrm{id}\otimes\mathrm{ad}(x)\big)r, \;\forall x\in A.
\end{equation}

Let $(A,\circ)$ be an anti-pre-Lie algebra, and
$\theta:A\rightarrow A\otimes A$ be a linear map defined by
 \eqref{eq:aplCob}. By \cite{TPA},  $(A,\circ,\theta)$ is an anti-pre-Lie  bialgebra if and only if the following equations
 hold:
    \begin{small}
\begin{equation}\label{eq:Apl 1}
        \begin{split}
            &\Big(\mathcal{L}_{\circ }(x)\otimes\mathrm{id}\otimes\mathrm{id}-(\tau\otimes\mathrm{id})\big(\mathcal{L}_{\circ }(x)\otimes\mathrm{id}\otimes\mathrm{id}\big)-\mathrm{id}\otimes\mathrm{id}\otimes\mathrm{ad}(x)\Big)\textbf{T}(r)\\
            &+\sum_{j}\big(\mathrm{id}\otimes\mathcal{L}_{\circ}(a_{j})\otimes\mathrm{ad}(x)-\mathrm{ad}(a_{j})\otimes\mathrm{id}\otimes\mathrm{ad}(x)\big)\Big(\big(r+\tau(r)\big)\otimes b_{j}\Big)\\
            &+(\mathrm{id}^{\otimes 3}-\tau\otimes\mathrm{id})\sum_{j}\big(\mathcal{L}_{\circ}(x\circ a_{j})\otimes\mathrm{id}\otimes\mathrm{id}-\mathcal{L}_{\circ}(x)\mathcal{R}_{\circ}(a_{j})\otimes\mathrm{id}\otimes\mathrm{id}\big)\Big(\big(r+\tau(r)\big)\otimes b_{j}\Big)=0,
        \end{split}
    \end{equation}
\begin{equation}\label{eq:Apl 2}
    \begin{split}
        &(\mathrm{id}^{\otimes 3}+\xi+\xi^{2})\bigg(-\big(\mathrm{id}\otimes\mathrm{id}\otimes\mathcal{L}_{\circ}(x)\big)\textbf{T}(r)+\sum_{j}\big(\mathrm{id}\otimes\mathrm{id}\otimes\mathcal{L}_{\circ}([x,b_{j}])\big)(\mathrm{id}^{\otimes 3}-\tau\otimes\mathrm{id})\Big(a_{j}\otimes \big(r+\tau(r)\big)\Big)\\
        &+\sum_{j}\big(\mathrm{id}\otimes\mathrm{ad}(b_{j})\otimes \mathcal{L}_{\circ}(x)\big)\Big(a_{j}\otimes \big(r+\tau(r)\big)\Big)+\sum_{j}\big(\mathrm{id}\otimes\mathrm{ad}(a_{j})\otimes \mathcal{L}_{\circ}(x)\big)\Big(b_{j}\otimes \big(r+\tau(r)\big)\Big)\\
        &+\sum_{j}\big(\mathcal{L}_{\circ}(a_{j})\otimes\mathrm{id}\otimes \mathcal{L}_{\circ}(x)\big)(\tau\otimes\mathrm{id})\Big(b_{j}\otimes \big(r+\tau(r)\big)\Big)+\sum_{j}\big(\mathrm{ad}(b_{j})\otimes \mathrm{id}\otimes\mathcal{L}_{\circ}(x)\big)
        \Big(\big(r+\tau(r)\big)\otimes a_{j}\Big)\bigg)=0,
    \end{split}
\end{equation}
    \begin{equation}\label{eq:Apl 3}
        \big(\mathrm{id}\otimes\mathcal{L}_{\circ}(x\circ y)-\mathrm{id}\otimes\mathcal{L}_{\circ}(x)\mathcal{L}_{\circ}(y)+\mathcal{L}_{\circ}(x)\mathcal{L}_{\circ}(y)\otimes \mathrm{id}-\mathcal{L}_{\circ}(x\circ y)\otimes \mathrm{id}+\mathcal{L}_{\circ}(y)\otimes\mathcal{L}_{\circ}(x)-\mathcal{L}_{\circ}(x)\otimes\mathcal{L}_{\circ}(y)\big)\big(r+\tau(r)\big)=0,
    \end{equation}
\end{small}for all $x,y\in A$, where
\begin{eqnarray*}\label{eq:TYBE}
&&\textbf{T}(r)=r_{12}\circ r_{13}+ r_{12}\circ r_{23}-
[r_{13},r_{23}],\;\;{\rm and}\;\;{\rm for}\;\;r=\sum_i a_i\otimes
b_i,\\ && r_{12}\circ r_{13}=\sum_{i,j} a_{i}\circ a_{j}\otimes
b_{i}\otimes b_{j}, r_{12}\circ r_{23}=\sum_{i,j}a_{i}\otimes
b_{i}\circ a_{j}\otimes b_{j},
[r_{13},r_{23}]=\sum_{i,j}a_{i}\otimes a_{j}\otimes[b_{i},b_{j}].
\end{eqnarray*}
The equation $\textbf{T}(r)=0$ is called the \textbf{anti-pre-Lie
Yang-Baxter equation (APL-YBE) in $(A,\circ)$}.


\begin{pro}\label{pro:discussion}
Let $(A,\cdot_{A},\Delta,P,Q)$ be a commutative and cocommutative
differential  ASI bialgebra and the corresponding relative PCA
bialgebra be given by Proposition \ref{pro:equ:diff}.
 If there is an $r\in A\otimes A$ such that \eqref{eq:AssoCob} and the following equation hold:
    \begin{equation}\label{eq:pro:d2}
        (Q\otimes \mathrm{id}-\mathrm{id}\otimes P)r=0,
    \end{equation}
    then we have
   \begin{equation}\label{eq:aplCob2}
\theta(x)=\big(\mathcal{L}_{\circ}(x)\otimes \mathrm{id}-\mathrm{id}\otimes\mathrm{ad}(x)\big)r, \;\forall x\in A.
\end{equation}
\delete{
    Let $(A,\cdot_{A},\circ_{A},\Delta,\theta,P,Q)$ be a relative PCA bialgebra and the corresponding relative PCA algebra structure on $A\oplus A^{*}$ be denoted by $(A\oplus A^{*},\cdot,\circ,P+Q^{*},Q+P^{*})$.
    Suppose that the anti-pre-Lie algebra $(A\oplus A^{*},\circ)$ is given by Example \ref{ex:ex}, that is,
    \begin{equation}\label{eq:pro:d1}
        (x+a^{*})\circ (y+b^{*})=(Q+P^{*})\big( (x+a^{*})\cdot(y+b^{*}) \big)-(P+Q^{*})(x+a^{*})\cdot (y+b^{*}),\;\forall x,y\in A, a^{*},b^{*}\in A^{*}.
    \end{equation} }
\end{pro}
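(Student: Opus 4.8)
The plan is to establish \eqref{eq:aplCob2} by a direct computation in $A\otimes A$. Write $r=\sum_i a_i\otimes b_i$. By \eqref{eq:AssoCob} we have $\Delta(x)=\sum_i\big(a_i\otimes x\cdot b_i-x\cdot a_i\otimes b_i\big)$ for all $x\in A$. Recall that in the relative PCA bialgebra supplied by Proposition~\ref{pro:equ:diff} the anti-pre-Lie multiplication is the one of Proposition~\ref{ex:ex}, namely $x\circ y=Q(x\cdot y)-P(x)\cdot y$, so that $\mathcal L_{\circ}(x)y=Q(x\cdot y)-P(x)\cdot y$, while the associated relative Poisson algebra $(A,\cdot,[-,-],P)$ has $[-,-]$ equal to the Witt type bracket \eqref{eq:Witt Lie} of $(A,\cdot,P)$, so that $\mathrm{ad}(x)y=[x,y]=x\cdot P(y)-P(x)\cdot y$. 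Hence the main task is just to expand both sides of \eqref{eq:aplCob2} in terms of the $a_i,b_i$ and to compare them.

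First I would expand the right-hand side of \eqref{eq:aplCob2}:
\[
\big(\mathcal L_{\circ}(x)\otimes\mathrm{id}-\mathrm{id}\otimes\mathrm{ad}(x)\big)r
=\sum_i\Big(\big(Q(x\cdot a_i)-P(x)\cdot a_i\big)\otimes b_i
-a_i\otimes\big(x\cdot P(b_i)-P(x)\cdot b_i\big)\Big).
\]
Next I would expand the left-hand side using the defining formula \eqref{eq:pro:equ:diff} for $\theta$ together with the formula for $\Delta$ above:
\[
\theta(x)=\Delta\big(P(x)\big)-(Q\otimes\mathrm{id})\Delta(x)
=\sum_i\Big(a_i\otimes P(x)\cdot b_i-P(x)\cdot a_i\otimes b_i
-Q(a_i)\otimes x\cdot b_i+Q(x\cdot a_i)\otimes b_i\Big).
\]
Comparing the two sums termwise, three of the four summands agree, and the difference $\theta(x)-\big(\mathcal L_{\circ}(x)\otimes\mathrm{id}-\mathrm{id}\otimes\mathrm{ad}(x)\big)r$ collapses to $\sum_i\big(a_i\otimes x\cdot P(b_i)-Q(a_i)\otimes x\cdot b_i\big)$.

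Finally I would invoke the hypothesis \eqref{eq:pro:d2}, which reads $\sum_i Q(a_i)\otimes b_i=\sum_i a_i\otimes P(b_i)$; applying $\mathrm{id}\otimes\mathcal L_{\cdot}(x)$ to both sides yields $\sum_i Q(a_i)\otimes x\cdot b_i=\sum_i a_i\otimes x\cdot P(b_i)$, so the residual difference vanishes and \eqref{eq:aplCob2} follows. There is no substantive obstacle in the argument; the only care required is in bookkeeping the four sign-bearing tensor terms on each side and in using the correct Witt type specializations of $\circ$ and $[-,-]$ attached to $(A,\cdot,P)$ rather than a general anti-pre-Lie or Lie structure.
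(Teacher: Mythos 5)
Your argument is correct and is essentially the paper's own proof: both expand $\theta(x)=\Delta(P(x))-(Q\otimes\mathrm{id})\Delta(x)$ via \eqref{eq:AssoCob}, use the Witt type formulas $x\circ y=Q(x\cdot y)-P(x)\cdot y$ and $[x,y]=x\cdot P(y)-P(x)\cdot y$, and then apply $\mathrm{id}\otimes\mathcal{L}_{\cdot}(x)$ to \eqref{eq:pro:d2} to kill the residual terms. The only cosmetic difference is that you expand both sides and compare, whereas the paper rewrites $\theta(x)$ step by step into the right-hand side.
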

\begin{proof} Set $r=\sum\limits_{i}a_i\otimes b_i$. Let $x\in A$. Then we have
        \begin{eqnarray*}
\theta(x)&=&\Delta\big(P(x)\big)-(Q\otimes \mathrm{id})\Delta(x)\\
&=&\sum_{i} a_{i}\otimes P(x)\cdot b_{i}-P(x)\cdot a_{i}\otimes b_{i}-Q(a_{i})\otimes x\cdot b_{i}+Q(x\cdot a_{i})\otimes b_{i}\\
&\overset{\eqref{eq:pro:d2}}{=}&\sum_{i}   a_{i}\otimes P(x)\cdot b_{i}-P(x)\cdot a_{i}\otimes b_{i}-a_{i}\otimes x\cdot P(b_{i})+Q(x\cdot a_{i})\otimes b_{i}\\
&\overset{\eqref{eq:ex}}{=}&\sum_{i} x\circ a_{i}\otimes b_{i}-a_{i}\otimes[x,b_{i}] \\
&=&\big(\mathcal{L}_{\circ}(x)\otimes\mathrm{id}-\mathrm{id}\otimes\mathrm{ad}(x)\big)r.
\end{eqnarray*}
Hence the conclusion follows.
\end{proof}

Therefore we are motivated to give the following notion.
\begin{defi}
A relative PCA bialgebra $(A,\cdot,\circ,\Delta,\theta,P,Q)$ is called \textbf{coboundary} if there exists an $r\in A\otimes A$ such that  \eqref{eq:AssoCob}  and  \eqref{eq:aplCob2}  hold.
\end{defi}

\begin{pro}\label{pro:cob cos}
Let $(A,\cdot,\circ,P,Q)$ be a relative PCA algebra. Let
$r=\sum\limits_{i}a_{i}\otimes b_{i}\in A\otimes A$ and
$\Delta,\theta:A\rightarrow A\otimes A$ be linear maps defined by
\eqref{eq:AssoCob}  and  \eqref{eq:aplCob2} respectively.
\begin{enumerate}
\item \eqref{eq:cos1} holds if and only if the following equation holds:
\begin{equation}\label{eq:pro:cob cos1}
\big(\mathrm{id}\otimes\mathcal{L}_{\cdot}(x)\big)(\mathrm{id}\otimes P-Q\otimes \mathrm{id})r+\big(\mathcal{L}_{\cdot}(x)\otimes \mathrm{id}\big)(\mathrm{id}\otimes Q-P\otimes \mathrm{id})r=0,\;\;\forall x\in A.
\end{equation}
    \item\label{pro:cob cos2}  \eqref{eq:cos2} holds if and only if the following equation holds:
\begin{equation}\label{eq:pro:cob cos2}
\begin{split}
&(\mathrm{id}^{\otimes 2}-\tau)\Big(\mathcal{L}_{\circ}\big((P+Q)x\big)\otimes\mathrm{id}-\mathcal{L}_{\circ}(x)Q\otimes\mathrm{id}
-\mathcal{L}_{\circ}(x)\otimes Q\Big)\big(r+\tau(r)\big)\\
&\ \ +(\mathrm{id}^{\otimes 2}-\tau)\big(\mathcal{R}_{\circ}(x)\otimes\mathrm{id}\big)\tau(Q\otimes\mathrm{id}-\mathrm{id}\otimes P)r=0,\;\;\forall x\in A.
\end{split}
\end{equation}
\item\label{pro:cob cos3} \eqref{eq:cos3} holds if and only if the following equation holds:
{\small
\begin{equation}\label{eq:pro:cob cos3}
\begin{split}
&\big(\mathcal{L}_{\circ}(x)\otimes\mathrm{id}\otimes\mathrm{id}+Q\otimes\mathrm{id}\otimes\mathcal{L}_{\cdot}(x)-(\mathrm{ad}(x)\otimes\mathrm{id}\otimes\mathrm{id})\xi^{2}\big){\bf A}(r)\\
&-\big((\mathrm{id}\otimes\mathcal{L}_{\cdot}(x)\otimes\mathrm{id})(\mathrm{id}^{\otimes 3}+\xi^{2})+\mathrm{id}\otimes\mathrm{id}\otimes\mathcal{L}_{\cdot}(x)\big){\bf T}(r)\\
&+\big(P\mathcal{L}_{\cdot}(x)\otimes\mathrm{id}\otimes\mathrm{id}\big)(r_{21}\cdot r_{31}-r_{12}\cdot r_{13})\\
&+\big(\mathrm{id}\otimes\mathcal{L}_{\cdot}(x)\otimes\mathrm{id}\big)(r_{21}\circ r_{13}+r_{12}\circ r_{13}+r_{23}\circ r_{21}+r_{23}\circ r_{12})\\
&+\sum_{j}\bigg( -\big(\mathrm{ad}(b_{j})\mathcal{L}_{\cdot}(x)\otimes\mathrm{id}\otimes\mathrm{id}\big)(\mathrm{id}\otimes\tau)\Big(\big(r+\tau(r)\big)\otimes a_{j}\Big)\\
&\ \ +\Big(\mathrm{id}\otimes\mathrm{id}\otimes\big(\mathcal{L}_{\circ}(x\cdot b_{j})-\mathcal{L}_{\cdot}(b_{j})\mathcal{L}_{\circ}(x)\big)\Big)(\tau\otimes\mathrm{id})\Big(a_{j}\otimes\big(r+\tau(r)\big)\Big)\bigg)\\
&\ \ -(\tau\otimes\mathrm{id})\big(\mathrm{id}\otimes\mathrm{id}\otimes\mathcal{L}_{\cdot}(x\cdot b_{j})\big)\big(a_{j}\otimes(Q\otimes\mathrm{id}-\mathrm{id}\otimes P)r\big)\\
&\ \ +\big(\mathrm{id}\otimes \mathcal{L}_{\cdot}(x\cdot a_{j}) \otimes \mathrm{id}\big)\big((Q\otimes\mathrm{id}-\mathrm{id}\otimes P)r\otimes b_{j}\big)\\
&\ \ +\big(\mathrm{id}\otimes\mathcal{L}_{\circ}(a_{j})\otimes\mathcal{L}_{\cdot}(x)-\mathrm{ad}(a_{j})\otimes\mathrm{id}\otimes\mathcal{L}_{\cdot}(x)- \mathcal{R}_{\circ}(a_{j})\mathcal{L}_{\cdot}(x)\otimes\mathrm{id}\otimes\mathrm{id} \big)
\Big(\big(r+\tau(r)\big)\otimes b_{j}\Big)\\
&\ \ +\Big(\mathrm{id}\otimes\big(\mathcal{L}_{\cdot}(a_{j})\mathcal{L}_{\circ}(x)-\mathcal{L}_{\circ}(x\cdot a_{j})\big)\otimes \mathrm{id}\Big)\Big(\big(r+\tau(r)\big)\otimes b_{j}\Big)\bigg)=0,\;\forall x\in A.\\
\end{split}
\end{equation}}where
\begin{eqnarray*}
&&r_{21}\cdot r_{31}=\sum_{i,j} b_{i}\cdot b_{j}\otimes a_{i}\otimes a_{j},\; r_{21}\circ r_{13}=\sum_{i,j} b_{i}\circ a_{j}\otimes a_{i}\otimes b_{j},\\
&& r_{23}\circ r_{21}=\sum_{i,j} b_{j}\otimes a_{i}\circ a_{j}\otimes b_{i}, r_{23}\circ r_{12}=\sum_{i,j} a_{j}\otimes a_{i}\circ b_{j}\otimes b_{i}.
\end{eqnarray*}
\item \eqref{eq:cos4} holds if and only if the following equation holds:
\begin{equation}\label{eq:pro:cob cos4}
\big(\mathrm{id}\otimes\mathcal{L}_{\cdot}(x)- \mathcal{L}_{\cdot}(x)\otimes \mathrm{id}\big)(\mathrm{id}\otimes Q-P\otimes \mathrm{id})r=0,\;\;\forall x\in A.
\end{equation}
\item \eqref{eq:cos5} holds if and only if the following equation holds:
\begin{equation}\label{eq:pro:cob cos5}
\big(\mathcal{L}_{\circ}(x)\otimes\mathrm{id}-\mathrm{id}\otimes\mathrm{ad}(x)\big)(Q\otimes\mathrm{id}-\mathrm{id}\otimes P)r=0,\;\;\forall x\in A.
\end{equation}
\item \eqref{eq:cos6} holds if and only if the following equation holds:
\begin{equation}\label{eq:pro:cob cos6}
\begin{split}
&\Big(\mathrm{id}\otimes Q\otimes\mathcal{L}_{\cdot}(x)-\big(\mathrm{id}\otimes\mathcal{L}_{\circ}(x)\otimes\mathrm{id}\big)(\tau\otimes\mathrm{id})\Big){\bf A}(r)\\
&+\big(\mathrm{id}\otimes\mathrm{id}\otimes\mathcal{L}_{\cdot}(x)-\mathcal{L}_{\cdot}(x)\otimes\mathrm{id}\otimes\mathrm{id}\big){\bf T}(r)\\
&+\sum_{j}\bigg( -\big( \mathcal{L}_{\cdot}(a_{j})\mathrm{ad}(x)+\mathrm{ad}(x\cdot a_{j})\big)\otimes\mathrm{id}\otimes\mathrm{id})\Big( \big(r+\tau(r)\big)\otimes b_{j}\Big)\\
&+\Big( \mathrm{id}\otimes \big( \mathcal{L}_{\circ}(x)\mathcal{L}_{\cdot}(a_{j})+\mathcal{L}_{\circ}(x\cdot a_{j}) \big)\otimes\mathrm{id} \Big)\Big( \big(r+\tau(r)\big)\otimes b_{j}\Big)\\
&+\big(\mathrm{ad}(a_{j})\otimes\mathrm{id}\otimes\mathcal{L}_{\cdot}(x)-\mathrm{id}\otimes\mathcal{L}_{\circ}(a_{j})\otimes\mathcal{L}_{\cdot}(x)\big)
\Big(\big(r+\tau(r)\big)\otimes b_{j}) \Big) \\
&-\big( \mathcal{L}_{\cdot}(x\cdot a_{j})\otimes\mathrm{id}\otimes\mathrm{id}\big)\big((\mathrm{id}\otimes Q-P\otimes\mathrm{id})r\otimes b_{j}\big)\\
&+\big(\mathrm{id}\otimes \mathrm{id}\otimes \mathcal{L}_{\cdot}(x\cdot b_{j})\big)\big( a_{j}\otimes (\mathrm{id}\otimes P-Q\otimes\mathrm{id})r\big) \bigg)=0,\;\forall x\in A.\\
\end{split}
\end{equation}
\item \eqref{eq:cos7} holds if and only if the following equation holds:
{\small
\begin{equation}\label{eq:pro:cob cos7}
\begin{split}
&\Big(\mathrm{id}\otimes\mathrm{id}\otimes\mathrm{ad}(x)-\mathrm{id}\otimes\mathrm{id}\otimes \mathcal{L}_{\cdot}\big(P(x)\big)-\mathcal{L}_{\circ}(x)\otimes\mathrm{id}\otimes\mathrm{id}-\big(\mathrm{id}\otimes\mathcal{L}_{\circ}(x)\otimes\mathrm{id}\big)(\tau\otimes\mathrm{id})\Big){\bf A}(r)\\
&+\sum_{j}\big( \mathrm{id}\otimes\mathcal{L}_{\circ}(x)\mathcal{L}_{\cdot}(a_{j})\otimes\mathrm{id}- \mathcal{L}_{\cdot}(a_{j})\mathrm{ad}(x)\otimes\mathrm{id}\otimes\mathrm{id}  \big)\Big( \big(r+\tau(r)\big)\otimes b_{j} \Big)=0,\;\;\forall x\in A.
\end{split}
\end{equation}}
\end{enumerate}
\end{pro}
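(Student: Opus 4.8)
The plan is to verify the seven equivalences one at a time by inserting the coboundary formulas \eqref{eq:AssoCob} and \eqref{eq:aplCob2} for $\Delta$ and $\theta$ (and the induced $\delta=\theta-\tau\theta$) into the relative PCA coalgebra axioms \eqref{eq:cos1}--\eqref{eq:cos7}, expanding everything in terms of $r=\sum_i a_i\otimes b_i$, and then simplifying using only the axioms of the relative PCA algebra $(A,\cdot,\circ,P,Q)$. The operator identities I would use repeatedly are: $\mathcal{L}_{\cdot}(x)\mathcal{L}_{\cdot}(y)=\mathcal{L}_{\cdot}(x\cdot y)=\mathcal{L}_{\cdot}(y)\mathcal{L}_{\cdot}(x)$ (commutativity and associativity of $\cdot$); the rewrites of \eqref{eq:rps1} and \eqref{eq:rps2} as $\mathcal{L}_{\cdot}(x)Q=\mathcal{L}_{\cdot}(P(x))+Q\mathcal{L}_{\cdot}(x)$ and $\mathcal{L}_{\circ}(x)Q=\mathcal{L}_{\circ}(P(x))+Q\mathcal{L}_{\circ}(x)$; the derivation properties $P\mathcal{L}_{\cdot}(x)=\mathcal{L}_{\cdot}(P(x))+\mathcal{L}_{\cdot}(x)P$ and $P([x,y])=[P(x),y]+[x,P(y)]$; the identity $\mathrm{ad}(x)=\mathcal{L}_{\circ}(x)-\mathcal{R}_{\circ}(x)$; the facts that $-\mathcal{L}_{\circ}$ is a representation of the sub-adjacent Lie algebra $(A,[-,-])$ (equivalently \eqref{eq:apl1}) and that \eqref{eq:apl2} holds; and \eqref{eq:rps3}, \eqref{eq:rps4}, i.e.\ the statement of Proposition~\ref{pro2.5} that $(\mathcal{L}_{\cdot},-\mathcal{L}_{\circ},-Q,A)$ is a representation of the associated relative Poisson algebra, to deal with the mixed $\cdot$--$\circ$ terms.

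For the ``differential'' conditions --- items (1), (2), (4) and (5), i.e.\ \eqref{eq:cos1}, \eqref{eq:cos2}, \eqref{eq:cos4}, \eqref{eq:cos5} --- the computation should be short, since both sides of each axiom are linear in $r$. One expands, uses the identities above to commute $P$ and $Q$ past $\mathcal{L}_{\cdot}(x)$, $\mathcal{L}_{\circ}(x)$, $\mathrm{ad}(x)$ and past $\tau$, and observes that every contribution not involving a first-order ``defect'' combination of $P,Q$ applied to $r$ or $\tau(r)$ cancels, precisely because $(A,\cdot,\circ,P,Q)$ is already a relative PCA algebra; what is left is exactly \eqref{eq:pro:cob cos1}, \eqref{eq:pro:cob cos4}, \eqref{eq:pro:cob cos5}. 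For \eqref{eq:cos2} one first replaces $\delta$ by $\theta-\tau\theta$ and antisymmetrizes, which is where the $(\mathrm{id}^{\otimes 2}-\tau)$ projector and the terms in $r+\tau(r)$ and $\tau(Q\otimes\mathrm{id}-\mathrm{id}\otimes P)r$ of \eqref{eq:pro:cob cos2} come from.

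The substantive work lies in items (3), (6) and (7) --- the ``coassociativity/compatibility'' conditions \eqref{eq:cos3}, \eqref{eq:cos6}, \eqref{eq:cos7} --- because the compositions $(\mathrm{id}\otimes\Delta)\delta$, $(\delta\otimes\mathrm{id})\Delta$, $(\Delta\otimes\mathrm{id})\Delta$, $(\mathrm{id}\otimes\theta)\Delta$, $(\mathrm{id}\otimes\Delta)\theta$ occurring there become quadratic in $r$ once the coboundary formulas are substituted. Here I would reorganize the quadratic part into the building blocks $\textbf{A}(r)=r_{12}\cdot r_{13}-r_{23}\cdot r_{12}+r_{13}\cdot r_{23}$ and $\textbf{T}(r)=r_{12}\circ r_{13}+r_{12}\circ r_{23}-[r_{13},r_{23}]$ already appearing in the coboundary theory of commutative and cocommutative ASI bialgebras (cf.\ \eqref{eq:ASI 1}--\eqref{eq:ASI 2}) and of anti-pre-Lie bialgebras (cf.\ \eqref{eq:Apl 1}--\eqref{eq:Apl 3}), so that the bulk of the expansion can be transcribed from those known computations --- being careful about the sign caused by the $\theta$ of \eqref{eq:aplCob2} being the negative of the anti-pre-Lie coboundary \eqref{eq:aplCob} --- leaving exactly the lower-order corrections involving $P$, $Q$, $(Q\otimes\mathrm{id}-\mathrm{id}\otimes P)r$, $(\mathrm{id}\otimes P-Q\otimes\mathrm{id})r$ and $r+\tau(r)$ displayed in \eqref{eq:pro:cob cos3}, \eqref{eq:pro:cob cos6} and \eqref{eq:pro:cob cos7}.

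The hard part will be the bookkeeping in these last three items: keeping the dozen-odd families of terms correctly indexed (distinguishing $r_{12}$, $r_{21}$, $r_{13}$, $r_{31}$, $r_{23}$, and applying $\tau\otimes\mathrm{id}$, $\xi$, $\xi^2$ consistently), and recognizing which clusters reassemble into $\textbf{A}(r)$, $\textbf{T}(r)$ or their transposed/cyclic variants $r_{21}\cdot r_{31}$, $r_{21}\circ r_{13}$, $r_{23}\circ r_{21}$, $r_{23}\circ r_{12}$. No algebraic input beyond the relative PCA axioms enters; the whole content is the reorganization, and once the two sides of each axiom are fully expanded and regrouped, every equivalence follows by comparing them term by term.
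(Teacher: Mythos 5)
Your proposal takes essentially the same route as the paper: substitute \eqref{eq:AssoCob} and \eqref{eq:aplCob2} into \eqref{eq:cos1}--\eqref{eq:cos7}, expand in $r$, and regroup the quadratic part into $\textbf{A}(r)$, $\textbf{T}(r)$ and their transposed variants while the relative PCA axioms absorb the remaining terms; the paper likewise proceeds by direct expansion, writing out only Item (3) in full (using auxiliary identities derived from \eqref{eq:GLR}, \eqref{eq:rps3} and \eqref{eq:rps4}, all within the toolkit you list) and declaring the other items analogous. The only outstanding work in your plan is the term-by-term bookkeeping you yourself flag, which is exactly what the paper's displayed computation carries out.
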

\begin{proof}
We only give the proof of  Item (\ref{pro:cob cos3}) as an explicit example, and other items are proved similarly.
Let $x,y,z\in A$. By \eqref{eq:GLR} and \eqref{eq:rps4} respectively, we have
\begin{eqnarray}
    &&P(x\cdot y\cdot z)=[x,y\cdot z]+[y,z\cdot x]+[z,x\cdot y],\label{eq:pf1}\\
    &&(x\cdot y)\circ z+z\circ(x\cdot y)=y\circ(x\cdot z)+(x\cdot z)\circ y.\label{eq:pf2}
\end{eqnarray}
Hence we have
\begin{eqnarray*}
&&2Q(x\cdot y\cdot z)-P(x\cdot y\cdot z)-(x\cdot y)\circ z-z\circ(x\cdot y)\\
&&\overset{\eqref{eq:rps4},\eqref{eq:pf1}}{=}2x\circ(y\cdot z)+2y\circ(x\cdot z)-2(x\cdot y)\circ z\\
&&\ \ -[z,x\cdot y]-[x,y\cdot z]-[y,z\cdot x]-(x\cdot y)\circ z-z\circ(x\cdot y)\\
&&=x\circ(y\cdot z)+y\circ(z\cdot x)-2(x\cdot y)\circ z-2z\circ(x\cdot y)+(y\cdot z)\circ x+(z\cdot x)\circ y\\
&&\overset{\eqref{eq:pf2}}{=}0,
\end{eqnarray*}
that is,
\begin{equation}\label{eq:pf3}
    (2Q-P)(x\cdot y\cdot z)=(x\cdot y)\circ z+z\circ(x\cdot y).
\end{equation}
Moreover, by \eqref{eq:GLR} and \eqref{eq:rps3}, we have
\begin{equation}\label{eq:pf4}
    2x\cdot P(y)\cdot z=2[x,y]\cdot z+x\cdot (z\circ y)-(x\cdot z)\circ y.
\end{equation}
Now we have
{\small
\begin{eqnarray*}
&&(\mathrm{id}\otimes\Delta)\delta(x)-(\delta\otimes
\mathrm{id})\Delta(x) -(\tau\otimes
\mathrm{id})(\mathrm{id}\otimes\delta)\Delta(x)-(Q\otimes
\mathrm{id}\otimes \mathrm{id})(\Delta\otimes
\mathrm{id})\Delta(x)\\
&&=\sum_{i,j}\big( x\circ a_{i}\otimes a_{j}\otimes b_{i}\cdot b_{j}-x\circ a_{i}\otimes b_{i}\cdot a_{j}\otimes b_{j}-a_{i}\otimes a_{j}\otimes[x,b_{i}]\cdot b_{j}+a_{i}\otimes[x,b_{i}]\cdot a_{j}\otimes b_{j}\\
&&\ \ -b_{i}\otimes a_{j}\otimes(x\circ a_{i})\cdot b_{j}+b_{i}\otimes (x\circ a_{i})\cdot a_{j}\otimes b_{j}+[x,b_{i}]\otimes a_{j}\otimes a_{i}\cdot b_{j}-[x,b_{i}]\otimes a_{i}\cdot a_{j}\otimes b_{j}\\
&&\ \ -a_{i}\circ a_{j}\otimes b_{j}\otimes x\cdot b_{i}+a_{j}\otimes[a_{i},b_{j}]\otimes x\cdot b_{i}+(x\cdot a_{i})\circ a_{j}\otimes b_{j}\otimes b_{i}-a_{j}\otimes[x\cdot a_{i},b_{j}]\otimes b_{i}\\
&&\ \ +b_{j}\otimes a_{i}\circ a_{j}\otimes x\cdot b_{i}-[a_{i},b_{j}]\otimes a_{j}\otimes x\cdot b_{i}-b_{j}\otimes(x\cdot a_{i})\circ a_{j}\otimes b_{i}+[x\cdot a_{i},b_{j}]\otimes a_{j}\otimes b_{i}\\
&&\ \ -(x\cdot b_{i})\circ a_{j}\otimes a_{i}\otimes b_{j}+a_{j}\otimes a_{i}\otimes[x\cdot b_{i},b_{j}]+b_{i}\circ a_{j}\otimes x\cdot a_{i}\otimes b_{j}-a_{j}\otimes x\cdot a_{i}\otimes[b_{i},b_{j}]\\
&&\ \ +b_{j}\otimes a_{i}\otimes(x\cdot b_{i})\circ a_{j}-[x\cdot b_{i},b_{j}]\otimes a_{i}\otimes a_{j}-b_{j}\otimes x\cdot a_{i}\otimes b_{i}\circ a_{j}+[b_{i},b_{j}]\otimes x\cdot a_{i}\otimes a_{j}\\
&&\ \ -Q(a_{j})\otimes a_{i}\cdot b_{j}\otimes x\cdot b_{i}+Q(a_{i}\cdot a_{j})\otimes b_{j}\otimes x\cdot b_{i}
+Q(a_{j})\otimes x\cdot a_{i}\cdot b_{j}\otimes b_{i}-Q(x\cdot a_{i}\cdot a_{j})\otimes b_{j}\otimes b_{i}\big)\\
&&=A(1)+A(2)+A(3).
\end{eqnarray*}
Here
\begin{eqnarray*}
A(1)&=&\sum_{i,j}\big(x\circ a_{i}\otimes a_{j}\otimes b_{i}\cdot b_{j}-x\circ a_{i}\otimes b_{i}\cdot a_{j}\otimes b_{j}+[x,b_{i}]\otimes a_{j}\otimes a_{i}\cdot b_{j}\\
&&\ \ -[x,b_{i}]\otimes a_{i}\cdot a_{j}\otimes b_{j}+(x\cdot a_{i})\circ a_{j}\otimes b_{j}\otimes b_{i}+[x\cdot a_{i},b_{j}]\otimes a_{j}\otimes b_{i}\\
&&\ \ -(x\cdot b_{i})\circ a_{j}\otimes a_{i}\otimes b_{j}-[x\cdot b_{i},b_{j}]\otimes a_{i}\otimes a_{j}-Q(x\cdot a_{i}\cdot a_{j})\otimes b_{j}\otimes b_{i}\big)\\
&=&A(1,1)+A(1,2),\\
A(1,1)&=&\sum_{i,j}\big([x,b_{i}]\otimes a_{j}\otimes a_{i}\cdot b_{j}-[x,b_{i}]\otimes a_{i}\cdot a_{j}\otimes b_{j}+[x\cdot a_{i},b_{j}]\otimes a_{j}\otimes b_{i} -[x\cdot b_{i},b_{j}]\otimes a_{i}\otimes a_{j}\big)\\
&=&-\big(\mathrm{ad}(x)\otimes\mathrm{id}\otimes\mathrm{id}\big)\xi^{2}{\bf A}(r)-\sum_{j}\big(\mathrm{ad}(b_{j})\mathcal{L}_{\cdot}(x)\otimes\mathrm{id}\otimes\mathrm{id}\big)(\mathrm{id}\otimes\tau)\Big(\big(r+\tau(r)\big)\otimes a_{j}\Big)\\
&&+\sum_{i,j}([x,b_{i}\cdot b_{j}]\otimes a_{i}\otimes a_{j}+[b_{i},x\cdot b_{j}]\otimes a_{i}\otimes a_{j}+[b_{j},x\cdot b_{i}]\otimes a_{i}\otimes a_{j})\\
&=&-(\mathrm{ad}(x)\otimes\mathrm{id}\otimes\mathrm{id})\xi^{2}{\bf A}(r)-\sum_{j}\big(\mathrm{ad}(b_{j})\mathcal{L}_{\cdot}(x)\otimes\mathrm{id}\otimes\mathrm{id}\big)(\mathrm{id}\otimes\tau)\Big(\big(r+\tau(r)\big)\otimes a_{j}\Big)\\
&&+\sum_{i,j}P(x\cdot b_{i}\cdot b_{j})\otimes a_{i}\otimes a_{j}\\
&=&-(\mathrm{ad}(x)\otimes\mathrm{id}\otimes\mathrm{id})\xi^{2}{\bf A}(r)
+(P\mathcal{L}_{\cdot}(x)\otimes\mathrm{id}\otimes\mathrm{id})(r_{21}\cdot r_{31}-r_{12}\cdot r_{13})\\
&&-
\sum_{j}\big(\mathrm{ad}(b_{j})\mathcal{L}_{\cdot}(x)\otimes\mathrm{id}\otimes\mathrm{id}\big)(\mathrm{id}\otimes\tau)\Big(\big(r+\tau(r)\big)\otimes a_{j}\Big)\\
&&+\sum_{i,j}P(x\cdot a_{i}\cdot a_{j})\otimes b_{i}\otimes b_{j},\\
A(1,2)&=&\sum_{i,j}\big(x\circ a_{i}\otimes a_{j}\otimes b_{i}\cdot b_{j}-x\circ a_{i}\otimes b_{i}\cdot a_{j}\otimes b_{j}+(x\cdot a_{i})\circ a_{j}\otimes b_{j}\otimes b_{i}\\
&&-(x\cdot b_{i})\circ a_{j}\otimes  b_{i}\otimes b_{j}-Q(x\cdot a_{i}\cdot a_{j})\otimes b_{j}\otimes b_{i}\big)\\
&=&\big(\mathcal{L}_{\circ}(x)\otimes\mathrm{id}\otimes\mathrm{id}\big){\bf A}(r)-\sum_{j}\big(\mathcal{R}_{\circ}(a_{j})\mathcal{L}_{\cdot}(x)\otimes\mathrm{id}\otimes\mathrm{id}\big)\Big(\big(r+\tau(r)\big)\otimes b_{j}\Big)\\
&&+\sum_{i,j}\big(-x\circ(a_{i}\cdot a_{j})\otimes b_{i}\otimes b_{j}+(x\cdot a_{j})\circ a_{i}\otimes b_{i}\otimes b_{j}\\
&&+(x\cdot a_{i})\circ a_{j}\otimes b_{i}\otimes b_{j}-Q(x\cdot a_{i}\cdot a_{j})\otimes b_{i}\otimes b_{j}\big)\\
&\overset{\eqref{eq:rps4}}{=}&\big(\mathcal{L}_{\circ}(x)\otimes\mathrm{id}\otimes\mathrm{id}\big){\bf A}(r)-\sum_{j}\big(\mathcal{R}_{\circ}(a_{j})\mathcal{L}_{\cdot}(x)\otimes\mathrm{id}\otimes\mathrm{id}\big)\Big(\big(r+\tau(r)\big)\otimes b_{j}\Big)\\
&&+\sum_{i,j}\big(a_{j}\circ(x\cdot a_{i})\otimes b_{i}\otimes b_{j}+(x\cdot a_{i})\circ a_{j}\otimes b_{i}\otimes b_{j}-2Q(x\cdot a_{i}\cdot a_{j})\otimes b_{i}\otimes b_{j}\big)\\
&\overset{\eqref{eq:pf2}}{=}&\big(\mathcal{L}_{\circ}(x)\otimes\mathrm{id}\otimes\mathrm{id}\big){\bf A}(r)-\sum_{j}\big(\mathcal{R}_{\circ}(a_{j})\mathcal{L}_{\cdot}(x)\otimes\mathrm{id}\otimes\mathrm{id}\big)\Big(\big(r+\tau(r)\big)\otimes b_{j}\Big)\\
&&-\sum_{i,j}P(x\cdot a_{i}\cdot a_{j})\otimes b_{i}\otimes b_{j}.
\end{eqnarray*}
Thus
{\small
\begin{eqnarray*}
A(1)&=&A(1,1)+A(1,2)\\
&=&-\big(\mathrm{ad}(x)\otimes\mathrm{id}\otimes\mathrm{id}\big)\xi^{2}{\bf A}(r)+\big(\mathcal{L}_{\circ}(x)\otimes\mathrm{id}\otimes\mathrm{id}\big){\bf A}(r) +\big(P\mathcal{L}_{\cdot}(x)\otimes\mathrm{id}\otimes\mathrm{id}\big)(r_{21}\cdot r_{31}-r_{12}\cdot r_{13})\\
&&-\sum_{j} \bigg(\big(\mathrm{ad}(b_{j})\mathcal{L}_{\cdot}(x)\otimes\mathrm{id}\otimes\mathrm{id}\big)(\mathrm{id}\otimes\tau)\Big(\big(r+\tau(r)\big)\otimes a_{j}\Big)\\
&&+\big(\mathcal{R}_{\circ}(a_{j})\mathcal{L}_{\cdot}(x)\otimes\mathrm{id}\otimes\mathrm{id}\big)\Big(\big(r+\tau(r)\big)\otimes b_{j}\Big)\bigg).
\end{eqnarray*}}
\begin{eqnarray*}
    A(2)&=&\sum_{i,j}\big( a_{i}\otimes [x,b_{i}]\cdot a_{j}\otimes b_{j}+b_{i}\otimes(x\circ a_{i})\cdot a_{j}\otimes b_{j}-a_{j}\otimes[x\cdot a_{i},b_{j}]\otimes b_{i}\\
&&-b_{j}\otimes(x\cdot a_{i})\circ a_{j}\otimes b_{i}+b_{i}\circ a_{j}\otimes x\cdot a_{i}\otimes b_{j}-a_{j}\otimes x\cdot a_{i}\otimes[b_{i},b_{j}]\\
&&\ \ -b_{j}\otimes x\cdot a_{i}\otimes b_{i}\circ a_{j}+[b_{i},b_{j}]\otimes x\cdot a_{i}\otimes a_{j}+Q(a_{j})\otimes x\cdot a_{i}\cdot b_{j}\otimes b_{i}\big)\\
&=&A(2,1)+A(2,2)+A(2,3)+A(2,4),\\
A(2,1)&=&\sum_{i,j} (a_{i}\otimes[x,b_{i}]\cdot a_{j}\otimes b_{j}-a_{i}\otimes[x\cdot a_{j},b_{i}]\otimes b_{j})\\
&\overset{\eqref{eq:GLR}}{=}&\sum_{i,j}\big( a_{i}\otimes x\cdot[b_{i},a_{j}]\otimes b_{j}+a_{i}\otimes x\cdot a_{j}\cdot P(b_{i})\otimes b_{j} \big)\\
A(2,2)&=&\sum_{i,j}\big( b_{i}\otimes(x\circ a_{i})\cdot a_{j}\otimes b_{j}-b_{j}\otimes(x\cdot a_{i})\circ a_{j}\otimes b_{i}+Q(a_{j})\otimes x\cdot a_{i}\cdot b_{j}\otimes b_{i}\big)\\
&=&\sum_{j}\Big( \big(\mathrm{id}\otimes \mathcal{L}_{\cdot}(a_{j})\mathcal{L}_{\circ}(x)-\mathrm{id}\otimes\mathcal{L}_{\circ}(x\cdot a_{j}) \big)\big( r+\tau(r) \big)\otimes b_{j}\\
&&\ \ +\big(\mathrm{id}\otimes \mathcal{L}_{\cdot}(x\cdot a_{j}) \big)(Q\otimes\mathrm{id}-\mathrm{id}\otimes P)r\otimes b_{j}
\Big)\\
&&\ \ +\sum_{i,j}\big(a_{i}\otimes(x\cdot a_{j})\circ b_{i}\otimes b_{j}-a_{i}\otimes (x\circ b_{i})\cdot a_{j}\otimes b_{j}+a_{i}\otimes x\cdot a_{j}\cdot P(b_{i})\otimes b_{j}\big),\\
A(2,3)&=&\sum_{i,j}( b_{i}\circ a_{j}\otimes x\cdot a_{i}\otimes b_{j}-a_{j}\otimes x\cdot a_{i}\otimes [b_{i},b_{j}])\\
&=&-\big(\mathrm{id}\otimes\mathcal{L}_{\cdot}(x)\otimes\mathrm{id}\big){\bf T}(r)+\big(\mathrm{id}\otimes\mathcal{L}_{\cdot}(x)\otimes\mathrm{id}\big)(r_{21}\circ r_{13}+r_{12}\circ r_{13}) +\sum_{i,j} a_{i}\otimes x\cdot (b_{i} \circ a_{j} )\otimes b_{j},\\
A(2,4)&=&\sum_{i,j}( -b_{j}\otimes x\cdot a_{i}\otimes b_{i}\circ a_{j}+[b_{i},b_{j}]\otimes x\cdot a_{i}\otimes a_{j})\\
&=&-\big(\mathrm{id}\otimes\mathcal{L}_{\cdot}(x)\otimes\mathrm{id}\big)\xi^{2}{\bf T}(r)+\big(\mathrm{id}\otimes\mathcal{L}_{\cdot}(x)\otimes\mathrm{id}\big)(r_{23}\circ r_{21}+r_{23}\circ r_{12}) -\sum_{i,j} a_{i}\otimes x\cdot(a_{j}\circ b_{i})\otimes b_{j},
\end{eqnarray*}
Thus
{\small
\begin{eqnarray*}
A(2)&=&A(2,1)+A(2,2)+A(2,3)+A(2,4)\\
&=& -\big(\mathrm{id}\otimes\mathcal{L}_{\cdot}(x)\otimes\mathrm{id}\big)\big({\bf T}(r)+\xi^{2}{\bf T}(r)\big) +\big(\mathrm{id}\otimes\mathcal{L}_{\cdot}(x)\otimes\mathrm{id}\big)(r_{21}\circ r_{13}+r_{12}\circ r_{13}+r_{23}\circ r_{21}+r_{23}\circ r_{12})\\
&&+\sum_{j}\Big( \big(\mathrm{id}\otimes \mathcal{L}_{\cdot}(a_{j})\mathcal{L}_{\circ}(x)-\mathrm{id}\otimes\mathcal{L}_{\circ}(x\cdot a_{j}) \big) \big( r+\tau(r) \big)\otimes b_{j} \\
&&+\big(\mathrm{id}\otimes \mathcal{L}_{\cdot}(x\cdot a_{j}) \big)(Q\otimes\mathrm{id}-\mathrm{id}\otimes P)r\otimes b_{j}\Big)\\
&&+\sum_{i,j}a_{i}\otimes \big(2x\cdot [b_{i},a_{j}]+2x\cdot a_{j}\cdot P(b_{i})-(x\circ b_{i})\cdot a_{j}+(x\cdot a_{j})\circ b_{i}\big)\otimes b_{j}\\
&\overset{\eqref{eq:pf4}}{=}&-\big(\mathrm{id}\otimes\mathcal{L}_{\cdot}(x)\otimes\mathrm{id}\big)\big({\bf T}(r)+\xi^{2}{\bf T}(r)\big)+\big(\mathrm{id}\otimes\mathcal{L}_{\cdot}(x)\otimes\mathrm{id}\big)(r_{21}\circ r_{13}+r_{12}\circ r_{13}+r_{23}\circ r_{21}+r_{23}\circ r_{12})\\
&&+\sum_{j}\Big( \big(\mathrm{id}\otimes \mathcal{L}_{\cdot}(a_{j})\mathcal{L}_{\circ}(x)-\mathrm{id}\otimes\mathcal{L}_{\circ}(x\cdot a_{j}) \big)\big( r+\tau(r) \big)\otimes b_{j}\\
&&+\big(\mathrm{id}\otimes \mathcal{L}_{\cdot}(x\cdot a_{j}) \big)(Q\otimes\mathrm{id}-\mathrm{id}\otimes P)r\otimes b_{j}\Big).
\end{eqnarray*}}

\begin{eqnarray*}
A(3)&=&\sum_{i,j}\big( -a_{i}\otimes a_{j}\otimes[x,b_{i}]\cdot b_{j}-b_{i}\otimes a_{j}\otimes(x\circ a_{i})\cdot b_{j}-a_{i}\circ a_{j}\otimes b_{j}\otimes x\cdot b_{i}\\
&&+a_{j}\otimes[a_{i},b_{j}]\otimes x\cdot b_{i}+b_{j}\otimes a_{i}\circ a_{j}\otimes x\cdot b_{i}-[a_{i},b_{j}]\otimes a_{j}\otimes x\cdot b_{i}+a_{j}\otimes a_{i}\otimes [x\cdot b_{i},b_{j}]\\
&&+b_{j}\otimes a_{i}\otimes (x\cdot b_{i})\circ a_{j} -Q(a_{j})\otimes a_{i}\cdot b_{j}\otimes x\cdot b_{i}+Q(a_{i}\cdot a_{j})\otimes b_{j}\otimes x\cdot b_{i}\big)\\
&=&A(3,1)+A(3,2)+A(3,3),\\
A(3,1)&=&\sum_{i,j}( -a_{i}\circ a_{j}\otimes b_{j}\otimes x\cdot b_{i}-[a_{i},b_{j}]\otimes a_{j}\otimes x\cdot b_{i})\\
&=&-\sum_{j}\big(\mathrm{ad}(a_{j})\otimes\mathrm{id}\otimes\mathcal{L}_{\cdot}(x)\big)\Big(\big(r+\tau(r)\big)\otimes b_{j}\Big)-\sum_{i,j} a_{i}\circ a_{j}\otimes b_{i}\otimes x\cdot b_{j},\\
A(3,2)&=&\sum_{i,j} (a_{j}\otimes [a_{i},b_{j}]\otimes x\cdot b_{i}+b_{j}\otimes a_{i}\circ a_{j}\otimes x\cdot b_{i})\\
&=&\sum_{j} \big(\mathrm{id}\otimes\mathcal{L}_{\circ}(a_{j})\otimes\mathcal{L}_{\cdot}(x)\big)\Big(\big(r+\tau(r)\big)\otimes b_{j}\Big)-\sum_{i,j} a_{i}\otimes b_{i}\circ a_{j}\otimes x\cdot b_{j},\\
A(3,3)&=&\sum_{i,j} \big( -a_{i}\otimes a_{j}\otimes [x,b_{i}]\cdot b_{j}-b_{i}\otimes a_{j}\otimes (x\circ a_{i})\cdot b_{j}+a_{j}\otimes a_{i}\otimes [x\cdot b_{i},b_{j}]\\
&&+b_{j}\otimes a_{i}\otimes (x\cdot b_{i})\circ a_{j}-Q(a_{j})\otimes a_{i}\cdot b_{j}\otimes x\cdot b_{i}+Q(a_{i}\cdot a_{j})\otimes b_{j}\otimes x\cdot b_{i}\big)\\
&=&(Q\otimes\mathrm{id}\otimes\mathcal{L}_{\cdot}(x)){\bf A}(r) +\sum_{j} \bigg( (\tau\otimes\mathrm{id})\big(\mathrm{id}\otimes\mathrm{id}\otimes\mathcal{L}_{\cdot}(x\cdot b_{j})\big)   \big(a_{j}\otimes (\mathrm{id}\otimes P-Q\otimes\mathrm{id})r \big)\\
&&+ \Big(\mathrm{id}\otimes\mathrm{id}\otimes\big(\mathcal{L}_{\circ}(x\cdot b_{j})-\mathcal{L}_{\cdot}(b_{j})\mathcal{L}_{\circ}(x)\big)\Big)(\tau\otimes\mathrm{id})\Big(a_{j}\otimes \big(r+\tau(r)\big)\Big) \bigg)\\
&&+\sum_{i,j}\big( -a_{i}\otimes a_{j}\otimes x\cdot P(b_{i})\cdot b_{j}- a_{i}\otimes a_{j}\otimes [x,b_{i}]\cdot b_{j}+a_{i}\otimes a_{j}\otimes (x\circ b_{i})\cdot b_{j}\\
&&+a_{i}\otimes a_{j}\otimes [x\cdot b_{j},b_{i}]-a_{i}\otimes a_{j}\otimes (x\cdot b_{j})\circ b_{i} \big)\\
&\overset{\eqref{eq:rps4}}{=}&(Q\otimes\mathrm{id}\otimes\mathcal{L}_{\cdot}(x)){\bf A}(r) +\sum_{j} \bigg( (\tau\otimes\mathrm{id})\big(\mathrm{id}\otimes\mathrm{id}\otimes\mathcal{L}_{\cdot}(x\cdot b_{j})\big)   \big(a_{j}\otimes (\mathrm{id}\otimes P-Q\otimes\mathrm{id})r \big)\\
&&+ \Big(\mathrm{id}\otimes\mathrm{id}\otimes\big(\mathcal{L}_{\circ}(x\cdot b_{j})-\mathcal{L}_{\cdot}(b_{j})\mathcal{L}_{\circ}(x)\big)\Big)(\tau\otimes\mathrm{id})\Big(a_{j}\otimes \big(r+\tau(r)\big)\Big) \bigg)\\
&&+\sum_{i,j} a_{i}\otimes a_{j}\otimes x\cdot[b_{i},b_{j}],
\end{eqnarray*}
Thus
\begin{eqnarray*}
A(3)&=&A(3,1)+A(3,2)+A(3,3)\\
&=& \big(Q\otimes\mathrm{id}\otimes\mathcal{L}_{\cdot}(x)\big){\bf A}(r)-\big(\mathrm{id}\otimes\mathrm{id}\otimes\mathcal{L}_{\cdot}(x)\big){\bf T}(r)\\
&&+\sum_{j}\bigg(  \big(\mathrm{id}\otimes\mathcal{L}_{\circ}(a_{j})\otimes\mathcal{L}_{\cdot}(x)-\mathrm{ad}(a_{j})\otimes\mathrm{id}\otimes\mathcal{L}_{\cdot}(x)\big)
\Big(\big(r+\tau(r)\big)\otimes b_{j}\Big)\\
&&- (\tau\otimes\mathrm{id})\big(\mathrm{id}\otimes\mathrm{id}\otimes\mathcal{L}_{\cdot}(x\cdot b_{j})\big)\big( a_{j}\otimes (Q\otimes\mathrm{id}-\mathrm{id}\otimes P)r\big)\\
&&+\big(\mathrm{id}\otimes\mathrm{id}\otimes\mathcal{L}_{\circ}(x\cdot b_{j})-\mathrm{id}\otimes\mathrm{id}\otimes\mathcal{L}_{\cdot}(b_{j})\mathcal{L}_{\circ}(x)\big)(\tau\otimes\mathrm{id})\Big(a_{j}\otimes \big(r+\tau(r)\big)\Big)  \bigg).
\end{eqnarray*}}
Thus \eqref{eq:cos3} holds if and only if \eqref{eq:pro:cob cos3} holds.
\end{proof}

\begin{pro}\label{pro:cob mp}
Let $(A,\cdot,\circ,P,Q)$ be a relative PCA algebra. Let $r\in A\otimes A$ and $\Delta,\theta:A\rightarrow A\otimes A$ be linear
maps defined by \eqref{eq:AssoCob}  and  \eqref{eq:aplCob2}
respectively.
\begin{enumerate}
    \item \eqref{b1} holds if and only if the following equation holds:
    \begin{equation}\label{eq:pro:cob mp1}
\big(\mathrm{id}\otimes\mathcal{L}_{\cdot}(x\cdot y)\big)(\mathrm{id}\otimes P-Q\otimes\mathrm{id})r=0,\;\forall x,y\in A.
\end{equation}
\item \eqref{b2} holds if and only if the following equation holds:
    \begin{equation}\label{eq:pro:cob mp2}
\big(\mathcal{L}_{\cdot}(y)\otimes \mathcal{L}_{\circ}(x)-\mathcal{L}_{\cdot}(y)\mathrm{ad}(x)\otimes\mathrm{id}\big)\big(r+\tau(r)\big)=0,\;\forall x,y\in A.
\end{equation}
\item \eqref{b3} holds automatically.
\item\label{pro:cob mp4} \eqref{b4} holds if and only if the following equation holds:
   {\small
    \begin{equation}\label{eq:pro:cob mp4}
\begin{split}
&\Big(\big(\mathcal{L}_{\cdot}(x)\mathrm{ad}(y)-\mathcal{L}_{\cdot}(y)\mathrm{ad}(x)-\mathrm{ad}(x\cdot y)\big)\otimes\mathrm{id}-\mathcal{L}_{\cdot}(x)\otimes\mathcal{L}_{\circ}(y)+\mathcal{L}_{\cdot}(y)\otimes\mathcal{L}_{\circ}(x)+\mathrm{id}\otimes\mathcal{L}_{\circ}(x\cdot y)\Big)\big(r+\tau(r)\big)\\
&+\big(\mathcal{L}_{\cdot}(x\cdot y)\otimes\mathrm{id}\big)(P\otimes\mathrm{id}-\mathrm{id}\otimes Q)r=0,\;\forall x,y\in A.
\end{split}
\end{equation}}
\end{enumerate}
\end{pro}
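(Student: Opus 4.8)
The plan is to substitute the coboundary forms \eqref{eq:AssoCob} and \eqref{eq:aplCob2} into each of \eqref{b1}--\eqref{b4} and reduce the resulting identities in $A^{\otimes 2}$ modulo the relative PCA axioms, exactly in the spirit of the proof of Proposition~\ref{pro:cob cos}. Writing $r=\sum_i a_i\otimes b_i$, one has $\Delta(z)=\sum_i(a_i\otimes z\cdot b_i-z\cdot a_i\otimes b_i)$ and $\theta(z)=\sum_i(z\circ a_i\otimes b_i-a_i\otimes[z,b_i])$, with $\delta(z)=\theta(z)-\tau\theta(z)$ read off accordingly. For \eqref{b1}, plugging these in, the terms $\pm\,y\circ a_i\otimes x\cdot b_i$ and $\pm\,x\circ a_i\otimes y\cdot b_i$ cancel in pairs, the surviving terms landing on the second tensor factor of $r$ cancel by \eqref{eq:rps4} applied to the triple $x,y,a_i$, and after rewriting $[x\cdot y,b_i]$ through the relative Leibniz rule \eqref{eq:GLR} and commutativity of $\cdot$ the left-hand side of \eqref{b1} reduces to $\big(\mathrm{id}\otimes\mathcal L_\cdot(x\cdot y)\big)(\mathrm{id}\otimes P-Q\otimes\mathrm{id})r$, giving the equivalence with \eqref{eq:pro:cob mp1}.

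The case \eqref{b3} is the reason it ``holds automatically'': since $\Delta$ is coboundary, $\Delta([x,y])-\Delta\big(x\cdot P(y)\big)$ first collapses to $\Delta\big([x,y]-x\cdot P(y)\big)$, and then the terms of the reduced expression vanish identically by \eqref{eq:rps3} (for the part attached to the first factor of $r$) and by \eqref{eq:GLR} after rewriting $[y,x\cdot b_i]$ (for the part attached to the second factor), leaving no constraint on $r$. For \eqref{b2} the same procedure is used, now invoking \eqref{eq:rps2}, \eqref{eq:rps3} and the one-line auxiliary identities \eqref{eq:pf1}--\eqref{eq:pf4} to collapse the generic part, after which a single residual term in $\big(r+\tau(r)\big)$, acted on by $\mathcal L_\cdot(y)\otimes\mathcal L_\circ(x)-\mathcal L_\cdot(y)\mathrm{ad}(x)\otimes\mathrm{id}$, remains; that is \eqref{eq:pro:cob mp2}.

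The heaviest computation is Item~(\ref{pro:cob mp4}), the analogue of \eqref{eq:cos3}. Here I would expand every term of \eqref{b4} against $r=\sum_i a_i\otimes b_i$ (including $\delta(y)$ and $(\mathrm{id}\otimes Q)\Delta(x\cdot y)$) and split the result into sub-sums according to which pair of factors carries the external operator $\mathcal L_\cdot(x)$, just as \eqref{eq:cos3} was broken into $A(1)+A(2)+A(3)$. Each sub-sum then collapses under a single relative PCA axiom: \eqref{eq:rps4} with \eqref{eq:pf2}--\eqref{eq:pf3} for the $Q$- and pure-$\circ$ triple-product terms, \eqref{eq:rps3} with \eqref{eq:pf4} for the $P$- and mixed $\cdot/\circ$ terms, and \eqref{eq:GLR} for the bracket terms; the conventions $r_{12}\circ r_{13}$, $r_{21}\cdot r_{31}$, etc.\ fixed before Proposition~\ref{pro:cob cos} serve to repackage the leftovers. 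After all cancellations the left-hand side of \eqref{b4} equals the left-hand side of \eqref{eq:pro:cob mp4}, a combination of $\big(r+\tau(r)\big)$-terms and one $\big(P\otimes\mathrm{id}-\mathrm{id}\otimes Q\big)r$-term, which is the asserted equivalence.

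The main obstacle is precisely this bookkeeping: with on the order of two dozen terms distributed over three tensor legs it is easy to misattribute a copy of $r$ or to apply an axiom with the wrong substitution. I would control it by fixing the tensor-leg conventions once at the outset, performing the expansion leg-by-leg, and checking each sub-sum against its governing axiom in turn; Items~(1)--(3) are genuinely routine by comparison and can be recorded with only the key cancellation displayed, exactly as the routine identities \eqref{eq:cos1}, \eqref{eq:cos2} and \eqref{eq:cos4}--\eqref{eq:cos7} were in Proposition~\ref{pro:cob cos}.
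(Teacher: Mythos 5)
Your proposal is correct and follows essentially the same route as the paper: substitute the coboundary formulas for $\Delta$ and $\theta$, expand against $r=\sum_i a_i\otimes b_i$, and collapse the surviving terms leg by leg using \eqref{eq:rps3}, \eqref{eq:rps4}, \eqref{eq:GLR} and commutativity, which is exactly the paper's $B(1)+\cdots+B(4)$ treatment of \eqref{b4}, and your detailed reductions of \eqref{b1} and \eqref{b3} are the intended ones. One small correction to your bookkeeping worries: the \eqref{b4} computation stays entirely in $A\otimes A$ and yields only $\big(r+\tau(r)\big)$-terms plus the single $(P\otimes\mathrm{id}-\mathrm{id}\otimes Q)r$-term of \eqref{eq:pro:cob mp4}, so the three-tensor-leg conventions $r_{12}\circ r_{13}$, $r_{21}\cdot r_{31}$, etc.\ belong to Proposition~\ref{pro:cob cos} rather than here, and for \eqref{b2} only \eqref{eq:rps3} (with \eqref{eq:GLR} and commutativity), not \eqref{eq:rps2} or \eqref{eq:pf1}--\eqref{eq:pf4}, is actually needed.
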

\begin{proof}
 We only give the proof of Item  (\ref{pro:cob mp4}), and other items are proved similarly. Set $r =\sum\limits_{i}a_{i}\otimes b_{i}$. Let $x,y\in A$. Then we have
 \begin{eqnarray*}
&&\big( \mathrm{id}\otimes\mathcal{R}_{\circ}(y)\big)\Delta(x)
+\big(\mathcal{L}_{\cdot}(x)\otimes\mathrm{id}\big)\delta(y)
-\big(\mathrm{id}\otimes\mathcal{L}_{\circ}(x)\big)\Delta(y)\\
&&\ \ +\tau\big(\mathrm{id}\otimes\mathcal{L}_{\cdot}(y)\big)\theta(x)
-\delta(x\cdot y)+(\mathrm{id}\otimes Q)\Delta(x\cdot y)\\
&&=\sum_{i}\big( a_{i}\otimes(x\cdot b_{i})\circ y-x\cdot a_{i}\otimes b_{i}\circ y+x\cdot(y\circ a_{i})\otimes b_{i}-x\cdot a_{i}\otimes [y,b_{i}]\\
&&\ \ -x\cdot b_{i}\otimes y\circ a_{i}+x\cdot[y,b_{i}]\otimes a_{i}-a_{i}\otimes x\circ(y\cdot b_{i})+y\cdot a_{i}\otimes x\circ b_{i}\\
&&\ \ +y\cdot b_{i}\otimes x\circ a_{i}-y\cdot[x,b_{i}]\otimes a_{i}+b_{i}\otimes(x\cdot y)\circ a_{i}-[x\cdot y,b_{i}]\otimes a_{i}\\
&&\ \ -(x\cdot y)\circ a_{i}\otimes b_{i}+a_{i}\otimes[x\cdot y,b_{i}]+a_{i}\otimes Q(x\cdot y\cdot b_{i})-x\cdot y\cdot a_{i}\otimes Q(b_{i}) \big)\\
&&=B(1)+B(2)+B(3)+B(4),
\end{eqnarray*}
where
{\small
\begin{eqnarray*}
B(1)&=&\sum_{i}\big( x\cdot(y\circ a_{i})\otimes b_{i}+x\cdot [y,b_{i}]\otimes a_{i}-y\cdot[x,b_{i}]\otimes a_{i}\\
&&-[x\cdot y,b_{i}]\otimes a_{i}-(x\cdot y)\circ a_{i}\otimes b_{i}-x\cdot y\cdot a_{i}\otimes Q(b_{i})\big)\\
&=&\Big(\big(\mathcal{L}_{\cdot}(x)\mathrm{ad}(y)-\mathcal{L}_{\cdot}(y)\mathrm{ad}(x)-\mathrm{ad}(x\cdot y)\big)\otimes\mathrm{id}\Big)\big(r+\tau(r)\big) +\big(\mathcal{L}_{\cdot}(x\cdot y)\otimes\mathrm{id}\big)(P\otimes\mathrm{id}-\mathrm{id}\otimes Q)r\\
&&+\sum_{i}\big( x\cdot(y\circ a_{i})\otimes b_{i}-(x\cdot y)\circ a_{i}\otimes b_{i}-x\cdot[y,a_{i}]\otimes b_{i} +y\cdot[x,a_{i}]\otimes b_{i}+[x\cdot y,a_{i}]\otimes b_{i}\big)\\
&\overset{\eqref{eq:rps3}}{=}&\Big(\big(\mathcal{L}_{\cdot}(x)\mathrm{ad}(y)-\mathcal{L}_{\cdot}(y)\mathrm{ad}(x)-\mathrm{ad}(x\cdot y)\big)\otimes\mathrm{id}\Big)\big(r+\tau(r)\big)
+\big(\mathcal{L}_{\cdot}(x\cdot y)\otimes\mathrm{id}\big)(P\otimes\mathrm{id}-\mathrm{id}\otimes Q)r,\\
B(2)&=&\sum_{i}(-x\cdot a_{i}\otimes b_{i}\circ y-x\cdot a_{i}\otimes [y,b_{i}]-x\cdot b_{i}\otimes y\circ a_{i})\\
&=&-\big(\mathcal{L}_{\cdot}(x)\otimes\mathcal{L}_{\circ}(y)\big)\big(r+\tau(r)\big),\\
B(3)&=&\sum_{i}( y\cdot a_{i}\otimes x\circ b_{i}+y\cdot b_{i}\otimes x\circ a_{i})\\
&=&\big(\mathcal{L}_{\cdot}(y)\otimes\mathcal{L}_{\circ}(x)\big)\big(r+\tau(r)\big),\\
B(4)&=&\sum_{i}\big( a_{i}\otimes (x\cdot b_{i})\circ y-a_{i}\otimes x\circ(y\cdot b_{i})+b_{i}\otimes(x\cdot y)\circ a_{i} +a_{i}\otimes[x\cdot y,b_{i}]+a_{i}\otimes Q(x\cdot y\cdot b_{i})\big)\\
&=&\big(\mathrm{id}\otimes\mathcal{L}_{\circ}(x\cdot y)\big)\big(r+\tau(r)\big)\\
&&+\sum_{i}\big( a_{i}\otimes(x\cdot b_{i})\circ y-a_{i}\otimes x\circ(y\cdot b_{i})-a_{i}\otimes (x\cdot y)\circ b_{i}
+a_{i}\otimes [x\cdot y,b_{i}]+a_{i}\otimes Q(x\cdot y\cdot b_{i})\big)\\
&\overset{\eqref{eq:rps4}}{=}&\big(\mathrm{id}\otimes\mathcal{L}_{\circ}(x\cdot y)\big)\big(r+\tau(r)\big).
\end{eqnarray*}}Hence \eqref{b4} holds if and only if \eqref{eq:pro:cob mp4} holds.
\end{proof}

Combining Propositions \ref{pro:cob cos} and \ref{pro:cob mp}, we have the following result.

\begin{thm}\label{thm:cob}
    Let $(A,\cdot,\circ,P,Q)$ be a relative PCA algebra.
Let $r=\sum\limits_{i}a_{i}\otimes b_{i}\in A\otimes A$ and
$\Delta,\theta:A\rightarrow A\otimes A$ be linear maps defined by
\eqref{eq:AssoCob}  and  \eqref{eq:aplCob2} respectively. Then
$(A,\cdot,\circ,\Delta,\theta,P,Q)$ is a relative PCA bialgebra if
and only if \eqref{eq:ASI 1}-\eqref{eq:ASI 2}, \eqref{eq:Apl
1}-\eqref{eq:Apl 3}, \eqref{eq:pro:cob cos1}-\eqref{eq:pro:cob
cos7} and \eqref{eq:pro:cob mp1}-\eqref{eq:pro:cob mp4} hold.
\end{thm}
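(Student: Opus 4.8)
The plan is to unpack the definition of a relative PCA bialgebra into its three constituent requirements and match each of them, term by term, against the equation systems in the statement, using the coboundary characterizations already recorded for the component bialgebras together with Propositions \ref{pro:cob cos} and \ref{pro:cob mp}. Recall that $(A,\cdot,\circ,\Delta,\theta,P,Q)$ is a relative PCA bialgebra exactly when (i) $(A,\cdot,\circ,P,Q)$ is a relative PCA algebra and $(A,\Delta,\theta,Q,P)$ is a relative PCA coalgebra, (ii) $(A,\cdot,\Delta)$ is a commutative and cocommutative ASI bialgebra and $(A,\circ,\theta)$ is an anti-pre-Lie bialgebra, and (iii) \eqref{b1}--\eqref{b4} hold. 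The first half of (i) is part of the hypothesis, and the cocommutative coassociative coalgebra structure on $(A,\Delta)$, resp.\ the anti-pre-Lie coalgebra structure on $(A,\theta)$, is by definition already contained in the ASI, resp.\ anti-pre-Lie, bialgebra requirement in (ii); hence the only genuinely new content of (i) beyond (ii) is the system \eqref{eq:cos1}--\eqref{eq:cos7}.

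For (ii): with $\Delta=\Delta_r$ given by \eqref{eq:AssoCob}, the cited result of \cite{Bai2010} says $(A,\cdot,\Delta)$ is a commutative and cocommutative ASI bialgebra if and only if \eqref{eq:ASI 1}--\eqref{eq:ASI 2} hold. For the anti-pre-Lie factor, I would note that $\theta$ defined by \eqref{eq:aplCob2} at $r$ is the map $\theta_{-r}$ in the convention \eqref{eq:aplCob}, and that \eqref{eq:Apl 1}--\eqref{eq:Apl 3} are invariant under $r\mapsto -r$ (since $\mathbf{T}(\cdot)$ is quadratic and the remaining terms pair the summation index with the factor $r+\tau(r)$, hence are effectively quadratic, while \eqref{eq:Apl 3} is homogeneous of degree one); thus \cite{TPA} gives that $(A,\circ,\theta)$ is an anti-pre-Lie bialgebra if and only if \eqref{eq:Apl 1}--\eqref{eq:Apl 3} hold.

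For (i) and (iii): since $(A,\cdot,\circ,P,Q)$ is assumed to be a relative PCA algebra, Proposition \ref{pro:cob cos} applies directly and shows \eqref{eq:cos1}--\eqref{eq:cos7} are equivalent to \eqref{eq:pro:cob cos1}--\eqref{eq:pro:cob cos7}, finishing (i), while Proposition \ref{pro:cob mp} shows \eqref{b1}, \eqref{b2}, \eqref{b4} are equivalent to \eqref{eq:pro:cob mp1}, \eqref{eq:pro:cob mp2}, \eqref{eq:pro:cob mp4} respectively and that \eqref{b3} holds automatically, finishing (iii). Conjoining all these equivalences gives the claimed biconditional; the absence of any condition attached to \eqref{b3} in the list is exactly because \eqref{b3} is automatic.

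The demanding computations having all been done in Propositions \ref{pro:cob cos} and \ref{pro:cob mp}, the argument is essentially bookkeeping. The two points to be careful about --- which I expect to be the main obstacle to a clean write-up --- are confirming that the coassociativity and cocommutativity of $\Delta$ and the anti-pre-Lie coalgebra axioms for $\theta$ are genuinely subsumed by the ASI- and anti-pre-Lie-bialgebra conditions (so they are neither omitted nor double-counted), and the sign translation between \eqref{eq:aplCob} and \eqref{eq:aplCob2}, which amounts to tracking an $r\mapsto -r$ under which the relevant equations are invariant.
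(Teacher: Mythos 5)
Your proposal is correct and is essentially the paper's own argument: the theorem is obtained there simply by combining the coboundary criteria from \cite{Bai2010} and \cite{TPA} with Propositions \ref{pro:cob cos} and \ref{pro:cob mp}, exactly as you do. Your explicit check that \eqref{eq:Apl 1}--\eqref{eq:Apl 3} are invariant under $r\mapsto -r$ (reconciling \eqref{eq:aplCob} with \eqref{eq:aplCob2}) and that the coalgebra axioms are subsumed by the bialgebra conditions is sound and in fact makes the bookkeeping more careful than the paper's one-line derivation.
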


\begin{defi}
 Let $(A,\cdot,\circ,P,Q)$ be a relative PCA algebra and $r\in A\otimes A$.
 If $\textbf{A}(r)=\textbf{T}(r)=0$, \eqref{eq:pro:d2} and the following equation hold:
\begin{equation}\label{eq:pro:d2,2}
    (P\otimes\mathrm{id}-\mathrm{id}\otimes Q)r=0,
\end{equation}
 then we say $r$ is a solution of the {\bf relative PCA  Yang-Baxter equation (RPCA-YBE) in $(A,\cdot,\circ,P,Q)$}.
\end{defi}

Note that when $r$ is antisymmetric, then \eqref{eq:pro:d2} holds if and only if \eqref{eq:pro:d2,2} holds. By Theorem \ref{thm:cob}, we have the following result.

\begin{cor}\label{cor:cob}
     Let $(A,\cdot,\circ,P,Q)$ be a relative PCA algebra. If $r\in A\otimes A$ is an antisymmetric solution of the RPCA-YBE in $(A,\cdot,\circ,P,Q)$,
     then $(A,\cdot,\circ,\Delta,\theta,P,Q)$ is a relative PCA bialgebra, where $\Delta,\theta:A\rightarrow A\otimes A$ are linear maps defined by
\eqref{eq:AssoCob}  and  \eqref{eq:aplCob2} respectively.
\end{cor}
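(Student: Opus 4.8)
The plan is to deduce the statement from Theorem~\ref{thm:cob}, which characterizes $(A,\cdot,\circ,\Delta,\theta,P,Q)$ being a relative PCA bialgebra as the conjunction of \eqref{eq:ASI 1}--\eqref{eq:ASI 2}, \eqref{eq:Apl 1}--\eqref{eq:Apl 3}, \eqref{eq:pro:cob cos1}--\eqref{eq:pro:cob cos7} and \eqref{eq:pro:cob mp1}--\eqref{eq:pro:cob mp4}. So it suffices to show all of these hold when $r$ is an antisymmetric solution of the RPCA-YBE. First I would record the available data: antisymmetry of $r$ gives $r+\tau(r)=0$ and $r_{12}=-r_{21}$, $r_{13}=-r_{31}$, $r_{23}=-r_{32}$; being a solution gives $\textbf{A}(r)=\textbf{T}(r)=0$ and, since \eqref{eq:pro:d2} and \eqref{eq:pro:d2,2} are equivalent for antisymmetric $r$ and both belong to the definition, the vanishing of $(Q\otimes\mathrm{id}-\mathrm{id}\otimes P)r$, $(P\otimes\mathrm{id}-\mathrm{id}\otimes Q)r$, $(\mathrm{id}\otimes P-Q\otimes\mathrm{id})r$ and $(\mathrm{id}\otimes Q-P\otimes\mathrm{id})r$.

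Then I would go down the list of equations from Theorem~\ref{thm:cob}. Equations \eqref{eq:ASI 1}, \eqref{eq:Apl 3} and \eqref{eq:pro:cob mp2} have the shape ``$(\text{operator})(r+\tau(r))=0$'' and hold at once; \eqref{b3} holds automatically by Proposition~\ref{pro:cob mp}(3); \eqref{eq:ASI 2} follows from $\textbf{A}(r)=0$; \eqref{eq:pro:cob cos1}, \eqref{eq:pro:cob cos4}, \eqref{eq:pro:cob cos5} and \eqref{eq:pro:cob mp1} follow from the vanishing of the appropriate $(\,\cdot\,)r$ expressions above; and in \eqref{eq:Apl 1}, \eqref{eq:Apl 2}, \eqref{eq:pro:cob cos2}, \eqref{eq:pro:cob cos6}, \eqref{eq:pro:cob cos7} and \eqref{eq:pro:cob mp4} every summand carries a factor of $r+\tau(r)$, $\textbf{A}(r)$, $\textbf{T}(r)$ or one of those $(\,\cdot\,)r$ expressions, so each vanishes termwise.

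The one equation that is not purely mechanical is \eqref{eq:pro:cob cos3}: after deleting every summand with a factor of $\textbf{A}(r)$, $\textbf{T}(r)$, $r+\tau(r)$ or $(Q\otimes\mathrm{id}-\mathrm{id}\otimes P)r$, there remains exactly $\bigl(P\mathcal{L}_{\cdot}(x)\otimes\mathrm{id}\otimes\mathrm{id}\bigr)(r_{21}\cdot r_{31}-r_{12}\cdot r_{13})+\bigl(\mathrm{id}\otimes\mathcal{L}_{\cdot}(x)\otimes\mathrm{id}\bigr)(r_{21}\circ r_{13}+r_{12}\circ r_{13}+r_{23}\circ r_{21}+r_{23}\circ r_{12})$. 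Using $r_{21}=-r_{12}$ and $r_{31}=-r_{13}$ gives $r_{21}\cdot r_{31}=r_{12}\cdot r_{13}$, so the first bracket vanishes; and $r_{21}=-r_{12}$ also gives $r_{21}\circ r_{13}=-r_{12}\circ r_{13}$ and $r_{23}\circ r_{21}=-r_{23}\circ r_{12}$, so the second bracket vanishes. Hence \eqref{eq:pro:cob cos3} holds, all the equations of Theorem~\ref{thm:cob} are verified, and the conclusion follows. I expect this bookkeeping in \eqref{eq:pro:cob cos3} to be the only step requiring care; the rest is a direct substitution of the RPCA-YBE data.
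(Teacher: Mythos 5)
Your proposal is correct and follows the same route as the paper: the paper obtains the corollary directly from Theorem~\ref{thm:cob}, observing only that for antisymmetric $r$ the conditions \eqref{eq:pro:d2} and \eqref{eq:pro:d2,2} coincide, with the equation-by-equation verification left implicit. Your explicit check of \eqref{eq:pro:cob cos3} — that $r_{21}\cdot r_{31}=r_{12}\cdot r_{13}$ and $r_{21}\circ r_{13}+r_{12}\circ r_{13}+r_{23}\circ r_{21}+r_{23}\circ r_{12}=0$ by antisymmetry alone — is accurate and is precisely the step the paper does not spell out.
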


Let $(A,\cdot, P)$ be a commutative differential algebra and
$Q:A\rightarrow A$ be a linear map. Then $r\in A\otimes A$ is
called a solution of the {\bf $Q$-admissible AYBE in $(A,\cdot,
P)$} \cite{LLB2023} if $\textbf{A}(r)=0$ and \eqref{eq:pro:d2} and
\eqref{eq:pro:d2,2} hold.

\begin{pro}
 Let $(A,\cdot, P, Q)$ be an admissible commutative differential algebra and
    $(A,\cdot,\circ$, $P$, $Q)$ be the relative PCA algebra given by Proposition \ref{ex:ex}.
    Then for any $r=\sum\limits_{i} a_{i}\otimes b_{i}\in A\otimes A$, we have
  {\small  \begin{eqnarray*}
    \textbf{T}(r)&=&(Q\otimes\mathrm{id}\otimes \mathrm{id}-\mathrm{id}\otimes Q\otimes\mathrm{id})\textbf{A}(r) +\sum_{j}\Big(\big(\mathrm{id}\otimes\mathcal{L}_{\cdot}(a_{j})\otimes\mathrm{id}\big)\big((Q\otimes\mathrm{id}-\mathrm{id}\otimes P)r\otimes b_{j}\big)\\
    &&-\big(\mathrm{id}\otimes\mathrm{id}\otimes\mathcal{L}_{\cdot}(b_{j})\big)(\tau\otimes\mathrm{id})\big(a_{j}\otimes(Q\otimes\mathrm{id}-\mathrm{id}\otimes P)r\big)\\
    &&+\big(\mathcal{L}_{\cdot}(a_{j})\otimes\mathrm{id}\otimes\mathrm{id}\big)\big((\mathrm{id}\otimes Q-P\otimes\mathrm{id})r\otimes b_{j}\big)+\big(\mathrm{id}\otimes\mathrm{id}\otimes\mathcal{L}_{\cdot}(b_{j})\big)\big(a_{j}\otimes(Q\otimes\mathrm{id}-\mathrm{id}\otimes P)r\big)\Big).
    \end{eqnarray*}}In particular, if $r$ is  a solution of the $Q$-admissible AYBE in $(A,\cdot,P)$, then $r$ is also a solution of the RPCA-YBE in $(A,\cdot,\circ,P,Q)$.
\end{pro}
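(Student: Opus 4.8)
The plan is to verify the displayed formula by expanding both sides in terms of $r=\sum_{i}a_{i}\otimes b_{i}$ and the defining maps, and then to read off the last assertion as an immediate corollary.

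First I would rewrite the left-hand side. Since $(A,\cdot,\circ,P,Q)$ is the relative PCA algebra produced by Proposition~\ref{ex:ex}, the product $\circ$ is given by \eqref{eq:ex}, i.e. $x\circ y=Q(x\cdot y)-P(x)\cdot y$, and the sub-adjacent Lie bracket of $(A,\circ)$ is the Witt type bracket $[x,y]=x\cdot P(y)-P(x)\cdot y$ of \eqref{eq:Witt Lie}. Substituting these into $\textbf{T}(r)=r_{12}\circ r_{13}+r_{12}\circ r_{23}-[r_{13},r_{23}]$, each of the three terms splits into two, so that $\textbf{T}(r)$ becomes an explicit sum of six tensors built from $a_{i},b_{i},P,Q$ and $\cdot$ --- for example $\sum_{i,j}Q(a_{i}\cdot a_{j})\otimes b_{i}\otimes b_{j}-\sum_{i,j}\big(P(a_{i})\cdot a_{j}\big)\otimes b_{i}\otimes b_{j}$ coming from $r_{12}\circ r_{13}$, and similarly for the other two.

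Next I would expand the right-hand side: write out $(Q\otimes\mathrm{id}\otimes\mathrm{id}-\mathrm{id}\otimes Q\otimes\mathrm{id})\textbf{A}(r)$ from the definitions of $r_{12}\cdot r_{13}$, $r_{23}\cdot r_{12}$ and $r_{13}\cdot r_{23}$, and expand each of the four correction sums by inserting $(Q\otimes\mathrm{id}-\mathrm{id}\otimes P)r=\sum_{k}\big(Q(a_{k})\otimes b_{k}-a_{k}\otimes P(b_{k})\big)$ and $(\mathrm{id}\otimes Q-P\otimes\mathrm{id})r=\sum_{k}\big(a_{k}\otimes Q(b_{k})-P(a_{k})\otimes b_{k}\big)$, then applying the relevant $\mathcal{L}_{\cdot}$'s and the flip $\tau$. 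This produces fourteen tensor summands. Using only that $\cdot$ is commutative and relabelling the summation indices $i\leftrightarrow j$ where necessary, four pairs among these fourteen summands cancel against one another, and the remaining six agree, one at a time, with the six summands of $\textbf{T}(r)$ obtained above; this establishes the identity. It is worth remarking that the admissibility relation \eqref{eq:rps1} is not used in this computation --- only \eqref{eq:ex}, \eqref{eq:Witt Lie} and commutativity of $\cdot$ --- it enters only through Proposition~\ref{ex:ex}, which guarantees that $(A,\cdot,\circ,P,Q)$ is indeed a relative PCA algebra so that the RPCA-YBE makes sense.

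Finally, for the ``in particular'' statement, suppose $r$ is a solution of the $Q$-admissible AYBE in $(A,\cdot,P)$, so that $\textbf{A}(r)=0$ and \eqref{eq:pro:d2} and \eqref{eq:pro:d2,2} hold; the latter two say precisely that $(Q\otimes\mathrm{id}-\mathrm{id}\otimes P)r=0$ and $(\mathrm{id}\otimes Q-P\otimes\mathrm{id})r=0$. Then the first term on the right-hand side of the displayed identity vanishes because $\textbf{A}(r)=0$, and each of the four correction terms vanishes because it contains a factor $(Q\otimes\mathrm{id}-\mathrm{id}\otimes P)r$ or $(\mathrm{id}\otimes Q-P\otimes\mathrm{id})r$; hence $\textbf{T}(r)=0$. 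Together with $\textbf{A}(r)=0$, \eqref{eq:pro:d2}, \eqref{eq:pro:d2,2} and Proposition~\ref{ex:ex}, this shows $r$ is a solution of the RPCA-YBE in $(A,\cdot,\circ,P,Q)$. The only genuine obstacle here is the bookkeeping in the matching step of the main identity; there is no structural difficulty beyond carefully tracking the fourteen terms.
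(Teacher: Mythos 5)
Your proposal is correct and is essentially the paper's own argument: both amount to substituting $x\circ y=Q(x\cdot y)-P(x)\cdot y$ and the Witt bracket into $\textbf{T}(r)$ and matching the resulting six tensors against the expansion of the right-hand side (your count of fourteen summands, four cancelling pairs and six matches, using only commutativity of $\cdot$ and index relabelling, checks out), the paper merely organizing the same computation by rewriting the left-hand side term by term rather than expanding both sides. The deduction of the ``in particular'' statement is identical in both.
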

\begin{proof}
By the assumption, we have {\small \begin{eqnarray*}
 \textbf{T}(r)&=&r_{12}\circ r_{13}+r_{12}\circ r_{23}-[r_{13},r_{23}]\\
 &=&\sum_{i,j} a_{i}\circ a_{j}\otimes b_{i}\otimes b_{j}+a_{i}\otimes b_{i}\circ a_{j}\otimes b_{j}-a_{i}\otimes a_{j}\otimes [b_{i},b_{j}]\\
 &=&\sum_{i,j} Q(a_{i}\cdot a_{j})\otimes b_{i}\otimes b_{j}-P(a_{i})\cdot a_{j}\otimes b_{i}\otimes b_{j}+a_{i}\otimes Q(b_{i}\cdot a_{j})\otimes b_{j}\\
 &&-a_{i}\otimes P(b_{i})\cdot a_{j}\otimes b_{j}+a_{i}\otimes a_{j}\otimes P(b_{i})\cdot b_{j}-a_{i}\otimes a_{j}\otimes b_{i}\cdot P(b_{j})\\
 &=&\sum_{i,j} Q(a_{i}\cdot a_{j})\otimes b_{i}\otimes b_{j}-Q(a_{i})\otimes b_{i}\cdot a_{j}\otimes b_{j}+Q(a_{i})\otimes a_{j}\otimes b_{i}\cdot b_{j}\\
 &&+Q(a_{i})\otimes b_{i}\cdot a_{j}\otimes b_{j}-a_{i}\otimes P(b_{i})\cdot a_{j}\otimes b_{j}\\
 &&+a_{i}\otimes a_{j}\otimes P(b_{i})\cdot b_{j}-Q(a_{i})\otimes a_{j}\otimes b_{i}\cdot b_{j}\\
 &&-a_{i}\cdot a_{j}\otimes Q(b_{i})\otimes b_{j}+a_{i}\otimes Q(b_{i}\cdot a_{j})\otimes b_{j}-a_{i}\otimes Q(a_{j})\otimes b_{i}\cdot b_{j}\\
 &&+a_{i}\cdot a_{j}\otimes Q(b_{i})\otimes b_{j}-P(a_{i})\cdot a_{j}\otimes b_{i}\otimes b_{j}\\
 &&+a_{i}\otimes Q(a_{j})\otimes b_{i}\cdot b_{j}-a_{i}\otimes a_{j}\otimes b_{i}\cdot P(b_{j})\\
 &=&(Q\otimes\mathrm{id}\otimes \mathrm{id}-\mathrm{id}\otimes Q\otimes\mathrm{id})\textbf{A}(r) +\sum_{j}\Big(\big(\mathrm{id}\otimes\mathcal{L}_{\cdot}(a_{j})\otimes\mathrm{id}\big)\big((Q\otimes\mathrm{id}-\mathrm{id}\otimes P)r\otimes b_{j}\big)\\
 &&-\big(\mathrm{id}\otimes\mathrm{id}\otimes\mathcal{L}_{\cdot}(b_{j})\big)(\tau\otimes\mathrm{id})\big(a_{j}\otimes(Q\otimes\mathrm{id}-\mathrm{id}\otimes P)r\big)\\
 &&+\big(\mathcal{L}_{\cdot}(a_{j})\otimes\mathrm{id}\otimes\mathrm{id}\big)\big((\mathrm{id}\otimes Q-P\otimes\mathrm{id})r\otimes b_{j}\big)+\big(\mathrm{id}\otimes\mathrm{id}\otimes\mathcal{L}_{\cdot}(b_{j})\big)\big(a_{j}\otimes(Q\otimes\mathrm{id}-\mathrm{id}\otimes P)r\big)\Big).
\end{eqnarray*}}Hence the conclusion follows.
\end{proof}

Let $V$ and $A$ be vector spaces.
Then any $r\in V\otimes A$ is identified with a linear map (still denoted by $r$) from $V^{*}$ to $A$ by
\begin{equation}\label{eq:identify}
    \langle r(u^{*}),a^{*}\rangle =\langle r, u^{*}\otimes a^{*}\rangle,\;\forall u^{*}\in V^{*},a^{*} \in A^{*}.
\end{equation}
Then we have the following result.
\begin{pro}\label{pro:4.8}
 Let $(A,\cdot,\circ,P,Q)$ be a relative PCA algebra and $r\in A\otimes A$ be antisymmetric.
 Then $r$ is a solution of the RPCA-YBE in $(A,\cdot,\circ,P,Q)$ if and only if the following equations hold:
 \begin{eqnarray}
    r(a^{*})\cdot r(b^{*})&=&r\Big( -\mathcal{L}^{*}_{\cdot }\big(r(a^{*})\big)b^{*}-\mathcal{L}^{*}_{\cdot }\big(r(b^{*})\big)a^{*}\Big),\label{eq:of1}\\
    r(a^{*})\circ r(b^{*})&=&r\Big( -\mathrm{ad}^{*} \big(r (a^{*})\big)b^{*}+ \mathcal{R}^{*}_{\circ }\big(r(b^{*})\big)a^{*}\Big),\label{eq:of2}\\
    Pr&=&rQ^{*},\label{eq:of3}
 \end{eqnarray}
 for all $a^{*},b^{*}\in A^{*}$.
\end{pro}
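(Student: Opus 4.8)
The plan is to translate, under the identification \eqref{eq:identify}, each of the four conditions that make up the RPCA-YBE in $(A,\cdot,\circ,P,Q)$ --- namely $\textbf{A}(r)=0$, $\textbf{T}(r)=0$, \eqref{eq:pro:d2} and \eqref{eq:pro:d2,2} --- into an equivalent identity for the linear map $r\colon A^{*}\rightarrow A$. The one structural input is that antisymmetry of $r$ is equivalent to $\langle r(a^{*}),b^{*}\rangle=-\langle r(b^{*}),a^{*}\rangle$ for all $a^{*},b^{*}\in A^{*}$; this skew-symmetry is the device that converts tensorial expressions into operator ones.

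I would first dispose of \eqref{eq:of3}. Writing $r=\sum_{i}a_{i}\otimes b_{i}$, one checks directly that $(Q\otimes\mathrm{id}-\mathrm{id}\otimes P)r=\sum_{i}\big(Q(a_{i})\otimes b_{i}-a_{i}\otimes P(b_{i})\big)$ corresponds, under \eqref{eq:identify}, to the operator $rQ^{*}-Pr\colon A^{*}\rightarrow A$, so \eqref{eq:pro:d2} is equivalent to $Pr=rQ^{*}$. Applying $\tau$ and using $\tau(r)=-r$ shows that $\tau\big((Q\otimes\mathrm{id}-\mathrm{id}\otimes P)r\big)=(P\otimes\mathrm{id}-\mathrm{id}\otimes Q)r$, so for antisymmetric $r$ conditions \eqref{eq:pro:d2} and \eqref{eq:pro:d2,2} coincide (as already observed before Corollary~\ref{cor:cob}), and both are equivalent to \eqref{eq:of3}.

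For $\textbf{A}(r)=0$ and $\textbf{T}(r)=0$ I would contract with an arbitrary covector $a^{*}\otimes b^{*}$ in the first two tensor slots. Using that $(-\mathcal{L}^{*}_{\cdot},A^{*})$ is the dual representation of $(A,\cdot)$ together with antisymmetry of $r$, the summands $r_{12}\cdot r_{13}$, $r_{23}\cdot r_{12}$, $r_{13}\cdot r_{23}$ reduce to $r(\mathcal{L}^{*}_{\cdot}(r(b^{*}))a^{*})$, $-r(\mathcal{L}^{*}_{\cdot}(r(a^{*}))b^{*})$, $r(a^{*})\cdot r(b^{*})$ respectively; since the remaining slot is free and $A$ is finite-dimensional this shows $\textbf{A}(r)=0$ is equivalent to \eqref{eq:of1}, which is exactly the operator form of the associative Yang--Baxter equation in a commutative associative algebra \cite{Bai2010}. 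In the same way, using the dual maps $-\mathcal{L}^{*}_{\circ}$ and $\mathcal{R}^{*}_{\circ}$ of the anti-pre-Lie algebra $(A,\circ)$, the summands $r_{12}\circ r_{13}$, $r_{12}\circ r_{23}$, $[r_{13},r_{23}]$ reduce to $r(\mathcal{L}^{*}_{\circ}(r(b^{*}))a^{*})$, $-r(\mathcal{L}^{*}_{\circ}(r(a^{*}))b^{*})$, $[r(a^{*}),r(b^{*})]$, so $\textbf{T}(r)=0$ is equivalent to
\[
[r(a^{*}),r(b^{*})]=r\big(\mathcal{L}^{*}_{\circ}(r(b^{*}))a^{*}-\mathcal{L}^{*}_{\circ}(r(a^{*}))b^{*}\big),\qquad\forall\,a^{*},b^{*}\in A^{*}.
\]
It then remains to identify this with \eqref{eq:of2}: writing $\mathrm{ad}^{*}=\mathcal{L}^{*}_{\circ}-\mathcal{R}^{*}_{\circ}$ and using only the skew-symmetry $\langle r(a^{*}),b^{*}\rangle=-\langle r(b^{*}),a^{*}\rangle$ together with the dual-map convention, one pairs \eqref{eq:of2} against an arbitrary $c^{*}\in A^{*}$ and, after a relabelling of the three covectors, recovers the displayed identity; conversely the displayed identity is the antisymmetrization of \eqref{eq:of2} in $a^{*},b^{*}$. (Equivalently, this is the $\mathcal{O}$-operator characterization of the anti-pre-Lie Yang--Baxter equation from \cite{TPA}: for antisymmetric $r$, $\textbf{T}(r)=0$ holds iff $r$ is an $\mathcal{O}$-operator of $(A,\circ)$ associated to the representation $(-\mathrm{ad}^{*},\mathcal{R}^{*}_{\circ},A^{*})$.) Combining the three translations yields the claimed equivalence.

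I expect the genuinely delicate point to be this last step for $\textbf{T}(r)=0$: one must track carefully the signs produced by the dual-map convention $\langle f^{*}(x)u^{*},v\rangle=-\langle u^{*},f(x)v\rangle$ and by antisymmetry, and then carry out the covector relabelling that reconciles the manifestly antisymmetric identity coming out of $\textbf{T}(r)$ with the a priori asymmetric form \eqref{eq:of2}. The computations for $\textbf{A}(r)=0$ and for \eqref{eq:of3} are routine by comparison.
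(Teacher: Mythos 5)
Your proposal is correct, and it follows the same overall decomposition as the paper: split the RPCA-YBE into $\textbf{A}(r)=0$, $\textbf{T}(r)=0$ and the compatibility conditions \eqref{eq:pro:d2}, \eqref{eq:pro:d2,2}, and translate each into operator form. The difference is that the paper's proof consists of three citations (the equivalence of $\textbf{A}(r)=0$ with \eqref{eq:of1} from \cite{Bai2010}, of $\textbf{T}(r)=0$ with \eqref{eq:of2} from \cite{TPA}, and of \eqref{eq:pro:d2,2} with \eqref{eq:of3} from \cite{RPA}), whereas you rederive the translations by contracting with covectors; your computations for $\textbf{A}(r)=0$ and for \eqref{eq:of3}, including the remark that \eqref{eq:pro:d2} and \eqref{eq:pro:d2,2} coincide for antisymmetric $r$, are sound. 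Where your route genuinely differs is $\textbf{T}(r)=0$: contracting the first two slots gives $[r(a^{*}),r(b^{*})]=r\big(\mathcal{L}^{*}_{\circ}(r(b^{*}))a^{*}-\mathcal{L}^{*}_{\circ}(r(a^{*}))b^{*}\big)$, the $\mathcal{O}$-operator condition for the sub-adjacent Lie algebra with respect to $-\mathcal{L}^{*}_{\circ}$, not \eqref{eq:of2} itself, and the reconciliation is where the content lies. Be aware that your stated justification is too weak in one direction: antisymmetrizing \eqref{eq:of2} in $a^{*},b^{*}$ only proves that \eqref{eq:of2} implies your displayed identity, and it is the converse you need. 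The passage does work, and can be made precise as follows: pair each identity with an arbitrary $c^{*}$ and set $F(a^{*},b^{*},c^{*})=\langle c^{*},r(a^{*})\circ r(b^{*})\rangle$; using $\langle r(u^{*}),v^{*}\rangle=-\langle r(v^{*}),u^{*}\rangle$ and the convention for dual maps, \eqref{eq:of2} becomes $F(a^{*},b^{*},c^{*})+F(a^{*},c^{*},b^{*})=F(c^{*},a^{*},b^{*})+F(c^{*},b^{*},a^{*})$, while your identity becomes $F(a^{*},b^{*},c^{*})+F(a^{*},c^{*},b^{*})=F(b^{*},a^{*},c^{*})+F(b^{*},c^{*},a^{*})$; since the map $(x^{*},y^{*},z^{*})\mapsto F(x^{*},y^{*},z^{*})+F(x^{*},z^{*},y^{*})$ is symmetric in its last two arguments, each condition is equivalent to its total symmetry, so the two identities are equivalent (remarkably, without using the anti-pre-Lie axioms). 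Alternatively, your parenthetical appeal to the $\mathcal{O}$-operator characterization of the APL-YBE in \cite{TPA} is exactly the paper's argument and closes this step by citation, so the proposal has no genuine gap.
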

\begin{proof}
By \cite{Bai2010}, $\textbf{A}(r)=0$ if and only if \eqref{eq:of1} holds.
By \cite{TPA}, $\textbf{T}(r)=0$ if and only if \eqref{eq:of2} holds.
By \cite{RPA}, \eqref{eq:pro:d2,2} holds if and only if \eqref{eq:of3} holds. Hence the conclusion follows.
\end{proof}

\subsection{$\mathcal{O}$-operators of relative PCA algebras}\

\begin{defi}
Let $(\mu,l_{\circ},r_{\circ},\alpha,\beta,V)$ be a representation
of  a relative PCA algebra  $(A,\cdot,\circ,P,Q)$.  If there is a
linear map $T:V\rightarrow A$ satisfying the following conditions:
\begin{enumerate}
    \item\cite{Bai2010} $T$ is an $\mathcal{O}$-operator of $(A,\cdot)$ associated to $(\mu,V)$, that is,
    \begin{equation}
        T(u)\cdot T(v)=T\Big( \mu\big(T(u)\big)v+\mu\big(T(v)\big)u     \Big),\;\forall u,v\in V;
        \end{equation}
      \item\cite{TPA} $T$ is an $\mathcal{O}$-operator of $(A,\circ)$ associated to $(l_{\circ},r_{\circ},V)$, that is,
    \begin{equation}
        T(u)\circ T(v)=T\Big( l_{\circ}\big(T(u)\big)v+r_{\circ}\big(T(v)\big)u     \Big),\;\forall u,v\in V;
    \end{equation}
    \item the following equations hold:
    \begin{equation}\label{eq:oop3}
        PT=T\alpha,
    \end{equation}
    \begin{equation}\label{eq:oop4}
       QT=T\beta,
    \end{equation}
\end{enumerate}
then we say $T$ is an {\bf $\mathcal{O}$-operator of $(A,\cdot,\circ,P,Q)$ associated to $(\mu,l_{\circ},r_{\circ},\alpha,\beta,V)$}.
\end{defi}

Now we can rewrite Proposition \ref{pro:4.8} as follows.

\begin{cor}\label{cor:as rps}
    Let $(A,\cdot,\circ,P,Q)$ be a relative PCA algebra and $r\in A\otimes A$ be antisymmetric.
    Then $r$ is a solution of the RPCA-YBE in $(A,\cdot,\circ,P,Q)$ if and only if $r:A^{*}\rightarrow A$ is an $\mathcal{O}$-operator of $(A,\cdot,\circ,P,Q)$ associated to $(-\mathcal{L}^{*}_{\cdot},-\mathrm{ad}^{*},\mathcal{R}^{*}_{\circ},Q^{*},P^{*},A^{*})$.
\end{cor}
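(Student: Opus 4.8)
The plan is to recognize that Corollary~\ref{cor:as rps} is essentially a restatement of Proposition~\ref{pro:4.8} once the definition of an $\mathcal{O}$-operator is unfolded. First I would recall, from the discussion of dual representations earlier in the paper, that $(-\mathcal{L}^{*}_{\cdot},-\mathrm{ad}^{*},\mathcal{R}^{*}_{\circ},Q^{*},P^{*},A^{*})$ is a representation of $(A,\cdot,\circ,P,Q)$, so that it makes sense to speak of $\mathcal{O}$-operators associated to it; here the vector space $V$ in the definition is taken to be $A^{*}$, and the data $\mu,l_{\circ},r_{\circ},\alpha,\beta$ to be $-\mathcal{L}^{*}_{\cdot},-\mathrm{ad}^{*},\mathcal{R}^{*}_{\circ},Q^{*},P^{*}$.

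Next I would spell out, under the identification \eqref{eq:identify}, what it means for $T=r\colon A^{*}\to A$ to be an $\mathcal{O}$-operator of $(A,\cdot,\circ,P,Q)$ associated to this representation. The requirement that $r$ be an $\mathcal{O}$-operator of $(A,\cdot)$ associated to $(-\mathcal{L}^{*}_{\cdot},A^{*})$ is precisely \eqref{eq:of1}; the requirement that $r$ be an $\mathcal{O}$-operator of $(A,\circ)$ associated to $(-\mathrm{ad}^{*},\mathcal{R}^{*}_{\circ},A^{*})$ is precisely \eqref{eq:of2}; condition \eqref{eq:oop3}, i.e.\ $PT=T\alpha$ with $\alpha=Q^{*}$, is $Pr=rQ^{*}$, i.e.\ \eqref{eq:of3}; and condition \eqref{eq:oop4}, i.e.\ $QT=T\beta$ with $\beta=P^{*}$, is $Qr=rP^{*}$. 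So being such an $\mathcal{O}$-operator amounts to \eqref{eq:of1}, \eqref{eq:of2}, \eqref{eq:of3} together with the extra relation $Qr=rP^{*}$.

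The last step is to eliminate the extra relation for antisymmetric $r$. Translating back through \eqref{eq:identify}, the operator identities $Pr=rQ^{*}$ and $Qr=rP^{*}$ are exactly \eqref{eq:pro:d2,2} and \eqref{eq:pro:d2}; and for antisymmetric $r$ these two are equivalent, as already recorded just after the definition of a solution of the RPCA-YBE (equivalently, one transposes $Pr=rQ^{*}$ and uses the antisymmetry of $r$ to get $Qr=rP^{*}$). Hence, for antisymmetric $r$, the four $\mathcal{O}$-operator conditions reduce to exactly \eqref{eq:of1}, \eqref{eq:of2}, \eqref{eq:of3}, and invoking Proposition~\ref{pro:4.8} then identifies this with the statement that $r$ is a solution of the RPCA-YBE in $(A,\cdot,\circ,P,Q)$. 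I do not expect any real obstacle here: the whole argument is bookkeeping of dualized data, the only point needing care being the reduction of the two derivation-compatibility conditions \eqref{eq:oop3}--\eqref{eq:oop4} to the single condition \eqref{eq:of3} via antisymmetry.
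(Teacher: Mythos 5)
Your proposal is correct and matches the paper's intent: the paper simply presents the corollary as a rewriting of Proposition~\ref{pro:4.8}, and your unfolding of the $\mathcal{O}$-operator conditions into \eqref{eq:of1}, \eqref{eq:of2}, \eqref{eq:of3} plus $Qr=rP^{*}$, together with the observation that antisymmetry of $r$ makes \eqref{eq:pro:d2} and \eqref{eq:pro:d2,2} (equivalently $Pr=rQ^{*}$ and $Qr=rP^{*}$) interchangeable, is exactly the bookkeeping the authors rely on. No gap; at most, which of the two operator identities corresponds to which of \eqref{eq:pro:d2}, \eqref{eq:pro:d2,2} is a labeling detail that is irrelevant once antisymmetry is invoked.
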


\delete{
\begin{rmk}
If $r\in A\otimes A$ is antisymmetric, then its operator form satisfies $r^{*}=-r$.
Hence we have
\begin{equation*}
    (Pr-rQ^{*})^{*}=r^{*}P^{*}-Qr^{*}=Qr-rP^{*},
\end{equation*}
and thus $Pr=rQ^{*}$ if and only if $Qr=rP^{*}$.
This indicates that the condition \eqref{eq:oop4} is superfluous for Corollary \ref{cor:as rps}.
In fact, in the study of relative Poisson bialgebras \cite{RPA},
the authors introduced the notion of $\mathcal{O}$-operators of relative Poisson algebras where \eqref{eq:oop4} is 
not involved.
Then it is shown that if
the tensor form of an $\mathcal{O}$-operator $r:A^{*}\rightarrow A$ of a relative Poisson  algebra $(A,\cdot,[-,-],P)$ associated to  a representation $(-\mathcal{L}^{*}_{\cdot},\mathrm{ad}^{*},Q^{*},A^{*})$
is antisymmetric, then it gives rise to a relative Poisson bialgebra through specific co-multiplications.
In our present study, \eqref{eq:oop4} is  needed due to the existence of $Q$ in a relative PCA algebra $(A,\cdot,\circ,P,Q)$.
In the following, we shall see the convenience of  the existence of the linear map $Q$ and condition \eqref{eq:oop4} in advance:
they fit naturally in the framework of \cite{Bai2007} concerning the semi-direct products.
\end{rmk}}

\begin{thm}\label{thm:oop}
  Let $(\mu,l_{\circ},r_{\circ},\alpha,\beta,V)$ be a representation
of  a relative PCA algebra  $(A,\cdot,\circ,P,Q)$.
   Let $T:V\rightarrow A$ be a linear map which is identified as an element in $$V^{*}\otimes A\subset A\ltimes_{-\mu^{*},r^{*}_{\circ}-l^{*}_{\circ}, r^{*}_{\circ}}V^{*}\otimes A\ltimes_{-\mu^{*},r^{*}_{\circ}-l^{*}_{\circ}, r^{*}_{\circ}}V^{*}$$ by \eqref{eq:identify}. Then $r=T-\tau(T)$ is a solution of the RPCA-YBE in the relative PCA algebra $( A\ltimes_{-\mu^{*},r^{*}_{\circ}-l^{*}_{\circ}, r^{*}_{\circ}}V^{*},P+\beta^{*},Q+\alpha^{*})$ if and only if
   $T$ is an $\mathcal{O}$-operator of $(A,\cdot,\circ,P,Q)$ associated to $(\mu,l_{\circ},r_{\circ},\alpha,\beta,V)$.
\end{thm}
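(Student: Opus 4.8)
The plan is to deduce this from the three underlying structure theorems that are already in the literature and quoted in the paper, namely the characterizations of $\mathcal{O}$-operators of commutative associative algebras (from \cite{Bai2010}), of anti-pre-Lie algebras (from \cite{TPA}), and the handling of the differential-type maps (in the spirit of \cite{RPA}, \cite{Bai2007}). First I would unwind the statement: the element $r=T-\tau(T)\in A\otimes A$ sits inside $\widehat{A}\otimes\widehat{A}$ where $\widehat{A}:=A\ltimes_{-\mu^*,r_\circ^*-l_\circ^*,r_\circ^*}V^*$, and it is antisymmetric. By Proposition~\ref{pro:4.8} (equivalently Corollary~\ref{cor:as rps}), $r$ is a solution of the RPCA-YBE in $\widehat{A}$ if and only if it satisfies the three operator identities \eqref{eq:of1}, \eqref{eq:of2}, \eqref{eq:of3} computed with respect to the multiplications, the $\circ$-operation, and the differential maps $P+\beta^*$, $Q+\alpha^*$ of $\widehat{A}$. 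So the task reduces to showing that each of these three identities on $\widehat A$ is equivalent to the corresponding $\mathcal{O}$-operator condition on $T$.

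The first two equivalences are not new: the commutative associative part $\widehat{A}=A\ltimes_{-\mu^*}V^*$ is exactly the semidirect product appearing in \cite{Bai2010}, and there it is proved that the antisymmetrization $r=T-\tau(T)$ of $T\in V^*\otimes A$ solves the AYBE in $A\ltimes_{-\mu^*}V^*$ if and only if $T$ is an $\mathcal{O}$-operator of $(A,\cdot)$ associated to $(\mu,V)$; similarly, the anti-pre-Lie part $\widehat{A}=A\ltimes_{r_\circ^*-l_\circ^*,r_\circ^*}V^*$ is the semidirect product of anti-pre-Lie algebras, and \cite{TPA} proves that $r=T-\tau(T)$ solves the APL-YBE there if and only if $T$ is an $\mathcal{O}$-operator of $(A,\circ)$ associated to $(l_\circ,r_\circ,V)$. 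Thus I would simply invoke those two results to get $\mathbf{A}(r)=0\iff$ condition (1) in the definition of $\mathcal{O}$-operator, and $\mathbf{T}(r)=0\iff$ condition (2). (One should double-check that the representation $(r_\circ^*-l_\circ^*,r_\circ^*,V^*)$ used to build $\widehat{A}$ is indeed the dual anti-pre-Lie representation of $(l_\circ,r_\circ,V)$, which is exactly the content of \cite[Lemma~2.9]{TPA} already cited.)

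The remaining point is the differential condition. By definition, $r$ solves the RPCA-YBE in $\widehat{A}$ also requires \eqref{eq:pro:d2} and \eqref{eq:pro:d2,2}, and since $r$ is antisymmetric these two are equivalent, so it suffices to check $(P+\beta^*)\,r = r\,(Q+\alpha^*)^* = r\,(Q^*+\alpha)$ in operator form (using $\widehat{A}^*\cong A^*\oplus V$ and the fact that the derivation of $\widehat A$ in the ``$Q$-slot'' is $Q+\alpha^*$, whose dual is $Q^*+\alpha$). I would compute both sides of $(P+\beta^*)\circ r = r\circ(Q^*+\alpha)$ as maps $A^*\oplus V\to A\oplus V^*$, block by block. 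Because $r=T-\tau(T)$ with $T:V^*\to A$ (so $T$ vanishes on $A^*$ and lands in $A$, while $-\tau(T):A^*\to V^*$), the cross terms force exactly two scalar-type identities: on the $V$-component one gets $PT=T\alpha$, i.e.\ \eqref{eq:oop3}, and on the $A^*$-component one gets $QT=T\beta$, i.e.\ \eqref{eq:oop4} (the $\beta^*$ and $\alpha^*$ pieces pairing up via transposition with $T$). Conversely \eqref{eq:oop3} and \eqref{eq:oop4} reassemble into $(P+\beta^*)r=r(Q^*+\alpha)$.

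The main obstacle — really the only place where care is needed — is this last bookkeeping step: correctly identifying which derivation of the semidirect product $\widehat{A}$ plays the role of ``$P$'' and which plays the role of ``$Q$'' in the RPCA-YBE, and then tracking how $\beta^*$ (resp. $\alpha^*$) acting on $V^*\subset\widehat A$ transposes against $T\in V^*\otimes A$ to reproduce $T\beta$ (resp. $T\alpha$). Once the slots are matched, everything is a direct consequence of Proposition~\ref{pro:4.8} together with the two cited $\mathcal{O}$-operator characterizations, so I would organize the proof as: (i) state that solving the RPCA-YBE in $\widehat A$ is, by Proposition~\ref{pro:4.8}, the conjunction of \eqref{eq:of1}–\eqref{eq:of3} in $\widehat A$; (ii) invoke \cite{Bai2010} and \cite{TPA} for the first two; (iii) do the short block computation for the third, concluding equivalence with \eqref{eq:oop3}–\eqref{eq:oop4}.
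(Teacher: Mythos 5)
Your proposal is correct and follows essentially the same route as the paper: split the RPCA-YBE in the semi-direct product into the AYBE, the APL-YBE and the differential condition, and quote \cite{Bai2010} and \cite{TPA} for the first two equivalences. The only (harmless) difference is that for the condition $\big((P+\beta^{*})\otimes\mathrm{id}\big)r=\big(\mathrm{id}\otimes(Q+\alpha^{*})\big)r$ the paper simply cites \cite{RPA}, whereas you verify it by the block computation yielding $PT=T\alpha$ and $QT=T\beta$, which is exactly the intended content of that citation.
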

\begin{proof}
    By \cite{Bai2010}, $r=T-\tau(T)$ is a solution of the AYBE in  $ A\ltimes_{-\mu^{*}} V^{*} $ if and only if
   $T$ is an $\mathcal{O}$-operator of $(A,\cdot )$ associated to $(\mu, V)$.
   By \cite{TPA}, $r=T-\tau(T)$ is a solution of the APL-YBE in  $ A\ltimes_{r^{*}_{\circ}-l^{*}_{\circ}, r^{*}_{\circ}} V^{*} $ if and only if
   $T$ is an $\mathcal{O}$-operator of $(A,\circ )$ associated to $(l_{\circ},r_{\circ},V)$.
   By \cite{RPA}, $r=T-\tau(T)$ satisfies
   \begin{eqnarray*}
       \big( (P+\beta^{*})\otimes\mathrm{id} \big)r=\big( \mathrm{id}\otimes ( Q+\alpha^{*} )  \big)r
   \end{eqnarray*}
   if and only if \eqref{eq:oop3}-\eqref{eq:oop4} hold.
   Hence the conclusion follows.
\end{proof}

We recall the notions of Zinbiel algebras and pre-APL algebras.

A \textbf{Zinbiel algebra} \cite{Lod} is a vector space $A$
equipped with a binary operation  $\star:A\otimes A\rightarrow A$
such that
\begin{equation}\label{eq:Zinbiel}
x\star(y\star z)=(y\star x)\star z+(x\star y)\star z, \;\;\forall x,y,z\in A.
\end{equation}
Let $(A,\star)$ be a Zinbiel algebra.
Then there is a commutative associative algebra $(A,\cdot)$ in which $\cdot$ is defined by
\begin{equation}\label{eq:ZintoAss}
x\cdot y=x\star y+y\star x,\;\;\forall x,y\in A,
\end{equation}
which is called the {\bf descendent commutative associative
algebra of $(A,\star)$}. Moreover, $(\mathcal{L}_{\star},A)$ is a
representation of  $(A,\cdot)$.

A {\bf differential Zinbiel algebra} \cite{LLB2023} is a Zinbiel algebra $(A,\star)$ with a derivation $P$, that is,
\begin{equation} \label{eq:pre-sys3}
    P(x\star y)=P(x)\star y+x\star P(y), \;\forall x,y\in A.
\end{equation}
Moreover, if there is a linear map $Q:A\rightarrow A$ such that the following equations hold:
\begin{eqnarray}
    && Q(x)\star y-x\star P(y)-Q(x\star y)=0,  \label{eq:pre-sys5}\\
    &&  x\star Q(y)-P(x)\star y-Q(x\star y)=0, \label{eq:pre-sys6}
    \end{eqnarray}
then we say $(A,\star,P,Q)$ is an {\bf admissible differential
Zinbiel algebra} \cite{HBG}.

\begin{ex}\label{ex:ADZA}
    Let $A=\mathrm{span}\{e_{1}, e_{2}, e_{3}\}$ be a vector space, $\star:A\otimes A\rightarrow A$ be a binary operation with nonzero products given by
    \begin{equation*}
        e_{1}\star e_{1}=e_{1}\star e_{2}=e_{3}
    \end{equation*}
    and $P:A\rightarrow A$ be a linear map given by
    \begin{equation*}
        P(e_{1})=e_{1}+e_{2},\; P(e_{2})=2e_{2},\; P(e_{3})=3e_{3}.
    \end{equation*}
    Then $(A,\star,P,-P)$ is an admissible differential Zinbiel
    algebra.
\end{ex}

A {\bf pre-APL algebra} \cite{TPA}  is a triple $(A,\succ$,
    $\prec)$, where $A$ is a vector space and $\succ,\prec:A\otimes A\rightarrow A$ are binary operations, such that the following equations hold:
    \begin{small}
\begin{eqnarray}
x\succ(y\succ z)-y\succ(x\succ z)
    &=&(y\succ x+y\prec x)\succ z-(x\succ y+x\prec y)\succ z, \ \ \ \ \  \label{eq:defi:quasi anti-pre-Lie algebras1}\\
    z\prec(x\succ y+ x\prec y)&=&x\succ(z\prec y)+(x\succ z)\prec y-(z\prec x)\prec y, \ \ \ \ \  \label{eq:defi:quasi anti-pre-Lie algebras2}\\
   x\succ(y\succ z)-y\succ(x\succ z)
       &=&(y\succ z)\prec x-(x\succ z)\prec y-(z\prec y)\prec x+(z\prec x)\prec y,\; \ \ \ \ \  \label{eq:defi:quasi anti-pre-Lie algebras3}
\end{eqnarray}
\end{small}for all $x,y,z\in A$. Let $(A,\succ,\prec)$ be a pre-APL algebra. Then there is an anti-pre-Lie algebra $(A,\circ)$ in which $\circ$ is given by
    \begin{equation}\label{eq:pre-APL to APL}
        x\circ y=x\succ y+ x\prec y,\;\;\forall x,y\in A,
    \end{equation}
    which is called the {\bf sub-adjacent anti-pre-Lie algebra
     of $(A,\succ,\prec)$}.
 Moreover, $(\mathcal{L}_{\succ},\mathcal{R}_{\prec},A)$ is a representation of $(A,\circ)$, and the identity map $\mathrm{id}$ is an $\mathcal{O}$-operator of $(A,\circ)$ associated to $(\mathcal{L}_{\succ},\mathcal{R}_{\prec}, A)$.

   Recall a {\bf pre-Lie algebra} \cite{Bai2021.2,Bur} is a vector space $A$ together with a binary operation $\diamond:A\otimes A\rightarrow A$ such that
      \begin{equation*}
        (x\diamond y)\diamond z-x\diamond (y\diamond z)=
        (y\diamond x)\diamond z-y\diamond (x\diamond z),\;\forall x,y,z\in A.
      \end{equation*}
  A pre-Lie algebra $(A,\diamond)$ is a Lie-admissible algebra and $(\mathcal{L}_{\diamond},A)$ is a representation of $(A,[-,-])$ which is the sub-adjacent Lie algebra of $(A,\diamond)$.

Let $(\rho,V)$ be a representation of a Lie algebra $(A,[-,-])$
and $T:V\rightarrow A$ be a linear map.
  If $T:V\rightarrow A$ is an $\mathcal{O}$-operator of $(A,[-,-])$ associated to $(\rho,V)$ in the sense that
  \begin{equation*}
    [T(u),T(v)]=T\Big(\rho\big(T(u)\big)v-\rho\big(T(v)\big)u\Big),\;\forall u,v\in V,
  \end{equation*}
then by \cite{Bai2007}, there is a pre-Lie algebra structure on $V$ defined by
\begin{equation*}
    u\diamond v=\rho\big(T(u)\big)v.
\end{equation*} 
\begin{pro}
    Let $(A,\succ,\prec)$ be a pre-APL algebra and  $(A,\circ)$ be the sub-adjacent anti-pre-Lie algebra of $(A,\succ,\prec)$. Then there is a pre-Lie algebra $(A,\diamond)$ in which $\diamond$ is defined by
    \begin{equation*}
        x\diamond y=x\succ y-y\prec x,\;\forall x,y\in A,
    \end{equation*}
which is called the {\bf sub-adjacent pre-Lie algebra of $(A,\succ,\prec)$}.
Moreover, both $(A,\circ)$ and $(A,\diamond)$ have the same sub-adjacent Lie algebra $(A,[-,-])$ in which $[-,-]$ is defined by
\begin{equation*}
    [x,y]=x\succ y+x\prec y-y\succ x-y\prec x,\;\;\forall x,y\in
    A.
\end{equation*}
\end{pro}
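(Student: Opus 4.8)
The plan is to realize the operation $\diamond$ through an $\mathcal{O}$-operator of the sub-adjacent Lie algebra and then invoke the result of \cite{Bai2007} recalled just above. First I would dispose of the bracket identity: by \eqref{eq:pre-APL to APL} the sub-adjacent anti-pre-Lie algebra $(A,\circ)$ satisfies $x\circ y=x\succ y+x\prec y$, so its sub-adjacent Lie bracket $x\circ y-y\circ x$ equals $x\succ y+x\prec y-y\succ x-y\prec x$, which is exactly the bracket $[-,-]$ displayed in the statement; since $(A,\circ)$ is an anti-pre-Lie algebra, hence Lie-admissible, $(A,[-,-])$ is a Lie algebra.

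Next I would introduce the linear map $\rho:=\mathcal{L}_{\succ}-\mathcal{R}_{\prec}:A\to\mathrm{End}_{\mathbb K}(A)$, so that $\rho(x)y=x\succ y-y\prec x$ (this is the candidate $x\diamond y$) and $\rho(x)y-\rho(y)x=[x,y]$. The key step is to prove that $(\rho,A)$ is a representation of the Lie algebra $(A,[-,-])$, i.e. $\rho([x,y])=\rho(x)\rho(y)-\rho(y)\rho(x)$. Expanding $\rho(x)\rho(y)z-\rho(y)\rho(x)z$ produces, besides a pair of iterated $\succ$-terms, several $\prec$-type terms; using \eqref{eq:defi:quasi anti-pre-Lie algebras1} to rewrite $x\succ(y\succ z)-y\succ(x\succ z)$ as $-[x,y]\succ z$ and using \eqref{eq:defi:quasi anti-pre-Lie algebras3} to rewrite that same difference in terms of $\prec$, all the $\prec\prec$-terms cancel and one is left with $\rho(x)\rho(y)z-\rho(y)\rho(x)z=y\succ(z\prec x)-x\succ(z\prec y)$. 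On the other hand $\rho([x,y])z=[x,y]\succ z-z\prec[x,y]$, and expanding $z\prec[x,y]=z\prec(x\circ y)-z\prec(y\circ x)$ by means of \eqref{eq:defi:quasi anti-pre-Lie algebras2}, together with the same use of \eqref{eq:defi:quasi anti-pre-Lie algebras1} and \eqref{eq:defi:quasi anti-pre-Lie algebras3}, gives $z\prec[x,y]=x\succ(z\prec y)-y\succ(z\prec x)+[x,y]\succ z$, whence $\rho([x,y])z=y\succ(z\prec x)-x\succ(z\prec y)$, matching the previous expression. This cancellation argument --- trading iterated $\succ$-terms for $\prec$-terms via \eqref{eq:defi:quasi anti-pre-Lie algebras3} while keeping signs straight so that the $\prec\prec$-blocks disappear --- is the only real obstacle; the rest is bookkeeping.

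Given that $(\rho,A)$ is a representation of $(A,[-,-])$, the identity $\rho(x)y-\rho(y)x=[x,y]$ says precisely that $\mathrm{id}_A\colon A\to A$ is an $\mathcal{O}$-operator of $(A,[-,-])$ associated to $(\rho,A)$. By the cited consequence of \cite{Bai2007}, $x\diamond y:=\rho(\mathrm{id}_A(x))y=x\succ y-y\prec x$ defines a pre-Lie algebra structure on $A$, which is the asserted $(A,\diamond)$. Finally, since pre-Lie algebras are Lie-admissible, the sub-adjacent Lie bracket of $(A,\diamond)$ is $x\diamond y-y\diamond x=(x\succ y-y\prec x)-(y\succ x-x\prec y)=x\succ y+x\prec y-y\succ x-y\prec x=[x,y]$; thus $(A,\diamond)$ and $(A,\circ)$ share the sub-adjacent Lie algebra $(A,[-,-])$, completing the proof. (Alternatively one can avoid the $\mathcal{O}$-operator language altogether and check the pre-Lie identity $(x\diamond y)\diamond z-x\diamond(y\diamond z)=(y\diamond x)\diamond z-y\diamond(x\diamond z)$ by a direct expansion from \eqref{eq:defi:quasi anti-pre-Lie algebras1}--\eqref{eq:defi:quasi anti-pre-Lie algebras3} --- essentially the same computation.)
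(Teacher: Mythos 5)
Your proposal is correct and follows essentially the same route as the paper: realize $\diamond$ through the $\mathcal{O}$-operator $\mathrm{id}$ of the sub-adjacent Lie algebra $(A,[-,-])$ associated to $\mathcal{L}_{\succ}-\mathcal{R}_{\prec}$ and invoke the pre-Lie construction of \cite{Bai2007}, with the bracket identity and the shared sub-adjacent Lie algebra handled by the same direct computation. The only (harmless) difference is that the paper quotes the recalled fact that $\mathrm{id}$ is an $\mathcal{O}$-operator of the anti-pre-Lie algebra $(A,\circ)$ associated to $(\mathcal{L}_{\succ},\mathcal{R}_{\prec},A)$ and then passes to the Lie level, whereas you verify directly from \eqref{eq:defi:quasi anti-pre-Lie algebras1}--\eqref{eq:defi:quasi anti-pre-Lie algebras3} that $\mathcal{L}_{\succ}-\mathcal{R}_{\prec}$ is a representation of $(A,[-,-])$, which is a correct substitute for that citation.
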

\begin{proof}
It can be obtained from a straightforward verification or as follows.
Since $\mathrm{id}$ is an $\mathcal{O}$-operator of $(A,\circ)$
associated to $(\mathcal{L}_{\succ},\mathcal{R}_{\prec}, A)$, it
is also an $\mathcal{O}$-operator of the Lie algebra $(A,[-,-])$ associated to
$(\mathcal{L}_{\succ}-\mathcal{R}_{\prec}, A)$. Therefore there is
a pre-Lie algebra $(A,\diamond)$ in which $\diamond$ is defined by
\begin{equation*}
    x\diamond y=(\mathcal{L}_{\succ}-\mathcal{R}_{\prec})\big(\mathrm{id}(x)\big)y=x\succ y-y\prec x,\;\forall x,y\in A.
\end{equation*}The other result is obvious.
\end{proof}


\begin{defi}\label{de:GPPA}\cite{RPA}
        A \textbf{relative pre-Poisson algebra} is a quadruple
        $(A,\star,\diamond,P)$, where $(A,\star)$ is a Zinbiel algebra,
        $(A,\diamond)$ is a pre-Lie algebra,  $P:A\rightarrow A$ is a
        derivation of both $(A,\star)$ and $(A,\diamond)$, that is,
        \begin{eqnarray}
        P(x\star y)&=&P(x)\star y+x\star P(y),\label{eq:GPPA1}\\
        P(x\diamond y)&=&P(x)\diamond y+x\diamond P(y),\;\;\forall x,y\in A, \label{eq:GPPA2}
        \end{eqnarray}
        and the following compatible conditions are satisfied:
        \begin{eqnarray}
        y\diamond(x\star z)-x\star(y\diamond z)+(x\diamond y-y\diamond x)\star z-(x\star P(y)+P(y)\star x)\star z&=&0,\label{eq:GPPA4}\\
            (x\star y+y\star x) \diamond z-x\star(y\diamond z)-y\star(x\diamond z)+(x\star y+y\star x)\star P(z)&=&0,\ \ \ \ \label{eq:GPPA3}
        \end{eqnarray}
for all $x,y,z\in A$.
\end{defi}

\begin{defi}
Let $(A,\star)$ be a Zinbiel algebra and  $(A,\cdot)$ be the
descendent commutative associative algebra. Let $(A,\succ,\prec)$
be a pre-APL algebra, $(A,\circ)$ be the sub-adjacent anti-pre-Lie
algebra, $(A,\diamond)$ be the sub-adjacent pre-Lie algebra and
$(A,[-,-])$ be their sub-adjacent Lie algebra. If there are linear
maps $P,Q:A\rightarrow A$ satisfying the following conditions:
\begin{enumerate}
    \item $(A,\star,\diamond,P)$ is a relative pre-Poisson algebra;
    \item \eqref{eq:pre-sys5}-\eqref{eq:pre-sys6} hold such that $(A,\star,P,Q)$ is an admissible differential Zinbiel algebra;
    \item the following equations hold:
    \begin{eqnarray}
        y\prec Q(x)-P(y)\prec x-Q(y\prec x)&=&0, \label{eq:pre-sys7}\\
        x\succ Q(y)-P(x)\succ y-Q(x\succ y)&=&0,  \label{eq:pre-sys8}\\
        (y\circ x)\star z-y\succ(x\star z)-x\star(y\diamond z)-\big(x\cdot P(y)\big)\star z&=&0,  \label{eq:pre-sys9}\\
        x\star(z\prec y)-z\prec(x\cdot y) +y\star(x\diamond z)-(x\cdot y)\star P(z)&=&0,  \label{eq:pre-sys10}\\
        x\star(y\succ z)-y\succ(x\star z)+[x,y]\star z-\big( x\cdot P(y) \big)\star z&=&0, \label{eq:pre-sys11}\\
        (y\star z)\prec x-y\succ(x\star z)-z\prec(x\cdot y)+Q\big( (x\cdot y)\star z\big)&=&0,  \label{eq:pre-sys12}\\
        (x\cdot y)\succ z-y\succ(x\star z)-x\succ(y\star z)+Q\big((x\cdot y)\star z\big)&=&0,\;\;\forall x,y,z\in A,  \label{eq:pre-sys13}
    \end{eqnarray}
\end{enumerate}
then we say $(A,\star,\succ,\prec,P,Q)$ is a {\bf relative pre-PCA algebra}.
\end{defi}

\delete{
\begin{defi}
    Let $(A,\star,P,Q)$ be an admissible differential Zinbiel algebra \cm{admissible is necessary here?} and $(A,\succ,\prec)$ be a pre-APL algebra.
    If the following equations hold:
    \begin{eqnarray}
    &&  P(x\succ y-y\prec x)=P(x)\succ y-y\prec P(x)+x\succ P(y)-P(y)\prec x, \label{eq:pre-sys4}\\
    &&  y\prec Q(x)-P(y)\prec x-Q(y\prec x)=0, \label{eq:pre-sys7}\\
    && x\succ Q(y)-P(x)\succ y-Q(x\succ y)=0,  \label{eq:pre-sys8}\\
    &&x\star(y\succ z-z\prec y)-[x,y]\star z-y\succ (x\star z)+(x\star z)\prec y+\big( x\cdot P(y)\big)\star z=0,\label{eq:pre-sys1}\\
    &&(x\cdot y)\succ z-z\prec(x\cdot y)-x\star(y\succ z-z\prec y)-y\star(x\succ z-z\prec x)+(x\cdot y)\star P(z)=0,\ \ \ \ \label{eq:pre-sys2}\\
    && (y\circ x)\star z-y\succ(x\star z)+x\star(z\prec y)-x\star(y\succ z)-\big(x\cdot P(y)\big)\star z=0,  \label{eq:pre-sys9}\\
    &&x\star(z\prec y)-z\prec(x\cdot y) +y\star(x\succ z)-y\star(z\prec x)-(x\cdot y)\star P(z)=0,  \label{eq:pre-sys10}\\
    &&  x\star(y\succ z)-y\succ(x\star z)+[x,y]\star z-\big( x\cdot P(y) \big)\star z=0, \label{eq:pre-sys11}\\
    && (y\star z)\prec x-y\succ(x\star z)-z\prec(x\cdot y)+Q\big( (x\cdot y)\star z\big)=0,  \label{eq:pre-sys12}\\
    && (x\cdot y)\succ z-y\succ(x\star z)-x\succ(y\star z)+Q\big((x\cdot y)\star z\big)=0,\;\forall x,y,z\in A,  \label{eq:pre-sys13}
    \end{eqnarray}
    then we say $(A,\star,\succ,\prec,P,Q)$ is a {\bf relative pre-PCA algebra}.
\end{defi}}

\delete{
\begin{defi}
    Let $(A,\star)$ be a Zinbiel algebra, $(A,\succ,\prec)$ be a pre-APL algebra and $P,Q:A\rightarrow A$ be linear maps.
    If the following equations hold:
    \begin{eqnarray}
    &&x\star(y\succ z-z\prec y)-[x,y]\star z-y\succ (x\star z)+(x\star z)\prec y+\big( x\cdot P(y)\big)\star z=0,\label{eq:pre-sys1}\\
    &&(x\cdot y)\succ z-z\prec(x\cdot y)-x\star(y\succ z-z\prec y)-y\star(x\succ z-z\prec x)+(x\cdot y)\star P(z)=0,\ \ \ \ \label{eq:pre-sys2}\\
    && P(x\star y)=P(x)\star y+x\star P(y),  \label{eq:pre-sys3}\\
    &&  P(x\succ y-y\prec x)=P(x)\succ y-y\prec P(x)+x\succ P(y)-P(y)\prec x, \label{eq:pre-sys4}\\
    && Q(x)\star y-x\star P(y)-Q(x\star y)=0,  \label{eq:pre-sys5}\\
    &&  x\star Q(y)-P(x)\star y-Q(x\star y)=0, \label{eq:pre-sys6}\\
    &&  y\prec Q(x)-P(y)\prec x-Q(y\prec x)=0, \label{eq:pre-sys7}\\
    && x\succ Q(y)-P(x)\succ y-Q(x\succ y)=0,  \label{eq:pre-sys8}\\
    && (y\circ x)\star z-y\succ(x\star z)+x\star(z\prec y)-x\star(y\succ z)-\big(x\cdot P(y)\big)\star z=0,  \label{eq:pre-sys9}\\
    &&x\star(z\prec y)-z\prec(x\cdot y) +y\star(x\succ z)-y\star(z\prec x)-(x\cdot y)\star P(z)=0,  \label{eq:pre-sys10}\\
    &&  x\star(y\succ z)-y\succ(x\star z)+[x,y]\star z-\big( x\cdot P(y) \big)\star z=0, \label{eq:pre-sys11}\\
    && (y\star z)\prec x-y\succ(x\star z)-z\prec(x\cdot y)+Q\big( (x\cdot y)\star z\big)=0,  \label{eq:pre-sys12}\\
    && (x\cdot y)\succ z-y\succ(x\star z)-x\succ(y\star z)+Q\big((x\cdot y)\star z\big)=0,\;\forall x,y,z\in A,  \label{eq:pre-sys13}
    \end{eqnarray}
    then we say $(A,\star,\succ,\prec,P,Q)$ is a {\bf relative pre-PCA algebra}.
\end{defi}}

\begin{pro}\label{pro:ADZA}
    Let $(A,\star,P,Q)$ be an admissible differential Zinbiel algebra.
    Define binary operations $\succ,\prec:A\otimes A\rightarrow A$ respectively by
    \begin{equation}\label{eq:pro:ADZA}
        x\succ y=Q(x\star y)-P(x)\star y,\; x\prec y=Q(y\star x)-y\star P(x),\;\forall x,y\in A.
    \end{equation}
    Then $(A,\star,\succ,\prec,P,Q)$ is a  relative pre-PCA algebra.
\end{pro}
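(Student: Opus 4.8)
The plan is to reduce the commutative--associative layer to Proposition~\ref{ex:ex} and then verify the remaining axioms of a relative pre-PCA algebra by a direct, layered computation in which the Zinbiel identity and the derivation property of $P$ do essentially all the work.

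First I would pass to the descendent commutative associative algebra $(A,\cdot)$ of $(A,\star)$, with $\cdot$ given by \eqref{eq:ZintoAss}, so that $(\mathcal{L}_\star,A)$ is a representation of $(A,\cdot)$. Symmetrizing \eqref{eq:pre-sys3} shows that $P$ is a derivation of $(A,\cdot)$, and adding \eqref{eq:pre-sys6} to the copy of \eqref{eq:pre-sys5} with $x$ and $y$ interchanged yields $x\cdot Q(y)-P(x)\cdot y-Q(x\cdot y)=0$, that is, \eqref{eq:rps1}; hence $(A,\cdot,P,Q)$ is an admissible commutative differential algebra. By Proposition~\ref{ex:ex}, $(A,\cdot,\circ,P,Q)$ is then a relative PCA algebra with $x\circ y=Q(x\cdot y)-P(x)\cdot y$. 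Substituting \eqref{eq:pro:ADZA} one sees at once that $x\succ y+x\prec y=x\circ y$ and that $x\diamond y:=x\succ y-y\prec x=x\star P(y)-P(x)\star y$, so that $(A,\circ)$ and $(A,\diamond)$ share the sub-adjacent Lie algebra $(A,[-,-])$ with $[x,y]=x\cdot P(y)-P(x)\cdot y$, the Witt type Lie algebra of $(A,\cdot,P)$.

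Next I would dispatch the three structural ingredients. That $(A,\star,P,Q)$ is an admissible differential Zinbiel algebra is the hypothesis. That $(A,\star,\diamond,P)$ is a relative pre-Poisson algebra is the ``split'' counterpart of Proposition~\ref{ex1.2}: $P$ is a derivation of $\diamond$ because it is one of $\star$, and \eqref{eq:GPPA3}--\eqref{eq:GPPA4} follow from the Zinbiel identity \eqref{eq:Zinbiel} rewritten as $a\star(b\star c)=(a\cdot b)\star c$ together with $P(x\cdot y)=P(x)\cdot y+x\cdot P(y)$. That $(A,\succ,\prec)$ is a pre-APL algebra with sub-adjacent anti-pre-Lie algebra $(A,\circ)$ is the split counterpart of \eqref{eq:ex}: after inserting \eqref{eq:pro:ADZA} and using $a\star(b\star c)=(a\cdot b)\star c$, \eqref{eq:pre-sys6} to move $Q$ across $\star$, and the derivation property of $P$, each of \eqref{eq:defi:quasi anti-pre-Lie algebras1}--\eqref{eq:defi:quasi anti-pre-Lie algebras3} reduces to the anti-pre-Lie axioms \eqref{eq:apl1}--\eqref{eq:apl2} for $(A,\circ)$ together with elementary identities for $P$. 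Finally, \eqref{eq:pre-sys7} and \eqref{eq:pre-sys8} are obtained by iterating \eqref{eq:pre-sys5} and \eqref{eq:pre-sys6} respectively, a short telescoping once \eqref{eq:pro:ADZA} is substituted.

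The remaining and most laborious step is the list of mixed compatibilities \eqref{eq:pre-sys9}--\eqref{eq:pre-sys13}. The recipe for each is the same: insert \eqref{eq:pro:ADZA}, replace every $a\star(b\star c)$ by $(a\cdot b)\star c$ and every $P(a)\star(b\star c)$ by $(P(a)\cdot b)\star c$, and use that $P$ is a derivation of $\cdot$; the identity then either cancels outright --- for instance \eqref{eq:pre-sys13} collapses to $\big(P(x)\cdot y+x\cdot P(y)-P(x\cdot y)\big)\star z=0$ --- or reduces to an identity already available in the relative PCA algebra $(A,\cdot,\circ,P,Q)$, the relevant ones being \eqref{eq:rps3} and \eqref{eq:rps4} (equivalently \eqref{eq:pf2}--\eqref{eq:pf4}), which are needed to clear the terms in \eqref{eq:pre-sys9}--\eqref{eq:pre-sys12} that do not visibly symmetrize. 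There is no conceptual obstacle here; the main difficulty is purely organizational, and it can be streamlined by instead showing that $(\mathcal{L}_\star,\mathcal{L}_\succ,\mathcal{R}_\prec,P,Q,A)$ is a representation of $(A,\cdot,\circ,P,Q)$ and that $\mathrm{id}_A$ is the corresponding $\mathcal{O}$-operator, a reformulation that bundles exactly the same computations into a single conceptual statement.
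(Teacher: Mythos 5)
Your plan is correct and is essentially the paper's own proof: the paper likewise verifies the defining identities directly, computing \eqref{eq:GPPA4} and \eqref{eq:pre-sys9} in full using the Zinbiel identity \eqref{eq:Zinbiel}, the admissibility relations \eqref{eq:pre-sys5}--\eqref{eq:pre-sys6} and the derivation property of $P$, and leaving the remaining identities to analogous calculations. Your preliminary reduction through Proposition~\ref{ex:ex} and the closing remark about bundling the mixed identities into the representation/$\mathcal{O}$-operator formulation are organizational repackagings of the same computations rather than a different route, and your sample reductions (e.g.\ \eqref{eq:pre-sys13} collapsing to $\big(P(x)\cdot y+x\cdot P(y)-P(x\cdot y)\big)\star z=0$) check out.
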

\begin{proof}
Let $x,y,z\in A$. Then we have
\begin{eqnarray*}
&&x\star(y\diamond z)-[x,y]\star z-y\diamond (x\star z) +\big( x\cdot P(y)\big)\star z\\
&&=x\star \big(y\star P(z)-P(y)\star z\big)-\big( x\cdot P(y)-P(x)\cdot y \big)\star z\\
&&\ \ -Q\big( y\star(x\star z)  \big)+P(y)\star (x\star z)+Q\big( y\star(x\star z)  \big)-y\star P(x\star z)+\big(x\cdot P(y)\big)\star z\\
&&\overset{\eqref{eq:Zinbiel}}{=}x\star \big(y\star P(z)\big)+\big( P(x)\cdot y \big)\star z-y\star P(x\star z)\\
&&\overset{\eqref{eq:Zinbiel},\eqref{eq:pre-sys3}}{=}0.
\end{eqnarray*}
Hence \eqref{eq:GPPA4} holds. Similarly \eqref{eq:GPPA2} and \eqref{eq:GPPA3} hold, and thus $(A,\star,\diamond,P)$ is a relative pre-Poisson algebra.
Moreover, we have
\begin{eqnarray*}
&&(y\circ x)\star z-y\succ (x\star z)-x\star(y\diamond z)-\big(x\cdot P(y)\big)\star z\\
&&=Q(x\cdot y)\star z-\big(P(y)\cdot x\big)\star z-Q\big( y\star (x\star z)\big)+P(y)\star(x\star z)\\
&&\ \ -x\star\big(y\star P(z)\big)+x\star\big( P(y)\star z\big)-\big(x\cdot P(y)\big)\star z\\
&&\overset{\eqref{eq:pre-sys5},\eqref{eq:Zinbiel}}{=}(x\cdot y)\star P(z)-x\star\big(y\star P(z)\big)\\
&&\overset{\eqref{eq:Zinbiel}}{=}0.
\end{eqnarray*}
Hence \eqref{eq:pre-sys9} hold. Similarly \eqref{eq:pre-sys7},
\eqref{eq:pre-sys8} and \eqref{eq:pre-sys10}-\eqref{eq:pre-sys13}
hold. Therefore $(A,\star,\succ,\prec,P,Q)$ is a  relative pre-PCA
algebra.
\end{proof}
\begin{pro}\label{pro:oop}
    Let $(A,\star,\succ,\prec,P,Q)$ be a relative pre-PCA algebra.
    Then $(A,\cdot,\circ,P,Q)$ with the binary operations $\cdot$ and $\circ$ defined by \eqref{eq:ZintoAss} and \eqref{eq:pre-APL to APL} respectively is a relative PCA algebra, which is called the {\bf sub-adjacent relative PCA algebra of $(A,\star,\succ,\prec,P,Q)$}.
    Moreover, $(\mathcal{L}_{\star},\mathcal{L}_{\succ},\mathcal{R}_{\prec},P,Q,A)$ is a representation of $(A,\cdot,\circ,P,Q)$,
    and $\mathrm{id}$ is an $\mathcal{O}$-operator of $(A,\cdot,\circ,P,Q)$ associated to $(\mathcal{L}_{\star},\mathcal{L}_{\succ},\mathcal{R}_{\prec},P,Q,A)$.
\end{pro}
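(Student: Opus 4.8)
The plan is to proceed in three steps: first show that $(A,\cdot,\circ,P,Q)$ is a relative PCA algebra, then that $(\mathcal{L}_{\star},\mathcal{L}_{\succ},\mathcal{R}_{\prec},P,Q,A)$ is a representation of it, and finally that $\mathrm{id}$ is the asserted $\mathcal{O}$-operator; the last step is immediate once the first two are established.

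For the first step, $(A,\cdot)$ is commutative associative, being the descendent commutative associative algebra of the Zinbiel algebra $(A,\star)$, and $(A,\circ)$ is an anti-pre-Lie algebra, being the sub-adjacent anti-pre-Lie algebra of the pre-APL algebra $(A,\succ,\prec)$; moreover $(A,[-,-])$, which is the sub-adjacent Lie algebra of $(A,\circ)$, is also the sub-adjacent Lie algebra of the pre-Lie algebra $(A,\diamond)$ (since $x\circ y-y\circ x=x\diamond y-y\diamond x$). As $(A,\star,\diamond,P)$ is a relative pre-Poisson algebra, $(A,\cdot,[-,-],P)$ is a relative Poisson algebra (cf. \cite{RPA}). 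It then remains to verify \eqref{eq:rps1}--\eqref{eq:rps4}. Writing $x\cdot y=x\star y+y\star x$, equation \eqref{eq:rps1} splits into \eqref{eq:pre-sys5} and \eqref{eq:pre-sys6}, and likewise, writing $x\circ y=x\succ y+x\prec y$, equation \eqref{eq:rps2} splits into \eqref{eq:pre-sys7} and \eqref{eq:pre-sys8}. For \eqref{eq:rps3} and \eqref{eq:rps4} one expands every product in terms of $\star,\succ,\prec$ and reduces the identity to consequences of \eqref{eq:pre-sys9}--\eqref{eq:pre-sys13}, the derivation property \eqref{eq:pre-sys3}, and the Zinbiel identity \eqref{eq:Zinbiel}; this is a routine if lengthy computation, in the spirit of the proof of Proposition \ref{pro:ADZA}.

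For the second step, recall that $(\mathcal{L}_{\star},A)$ is a representation of $(A,\cdot)$ and $(\mathcal{L}_{\succ},\mathcal{R}_{\prec},A)$ is a representation of $(A,\circ)$, so it suffices to check the thirteen identities \eqref{eq:defi:RAPLP rep1}--\eqref{eq:defi:RAPLP rep13} for $\mu=\mathcal{L}_{\star}$, $l_{\circ}=\mathcal{L}_{\succ}$, $r_{\circ}=\mathcal{R}_{\prec}$, $\alpha=P$, $\beta=Q$ and $V=A$. The key observation is that, once evaluated on $z\in A$ and rewritten using $(l_{\circ}-r_{\circ})(x)z=x\succ z-z\prec x=x\diamond z$, $\mathcal{R}_{\prec}(x)z=z\prec x$ and $x\star y+y\star x=x\cdot y$, these thirteen identities are exactly the defining equations of a relative pre-PCA algebra: \eqref{eq:defi:RAPLP rep1}--\eqref{eq:defi:RAPLP rep4} become the relative pre-Poisson axioms \eqref{eq:GPPA4}, \eqref{eq:GPPA3}, \eqref{eq:GPPA1}, \eqref{eq:GPPA2} respectively, while \eqref{eq:defi:RAPLP rep5}--\eqref{eq:defi:RAPLP rep13} become \eqref{eq:pre-sys5}--\eqref{eq:pre-sys13} in order. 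Hence all of them hold and $(\mathcal{L}_{\star},\mathcal{L}_{\succ},\mathcal{R}_{\prec},P,Q,A)$ is a representation of $(A,\cdot,\circ,P,Q)$.

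Finally, for $T=\mathrm{id}:A\to A$ the two $\mathcal{O}$-operator identities for $\cdot$ and $\circ$ read $x\cdot y=\mathcal{L}_{\star}(x)y+\mathcal{L}_{\star}(y)x$ and $x\circ y=\mathcal{L}_{\succ}(x)y+\mathcal{R}_{\prec}(y)x$, which are \eqref{eq:ZintoAss} and \eqref{eq:pre-APL to APL}, while \eqref{eq:oop3} and \eqref{eq:oop4} reduce to $P=P$ and $Q=Q$ because $\alpha=P$ and $\beta=Q$; so $\mathrm{id}$ is an $\mathcal{O}$-operator of $(A,\cdot,\circ,P,Q)$ associated to $(\mathcal{L}_{\star},\mathcal{L}_{\succ},\mathcal{R}_{\prec},P,Q,A)$. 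The only genuine effort will be the multi-term expansions needed for \eqref{eq:rps3}--\eqref{eq:rps4} in the first step, together with the careful sign-bookkeeping that matches the thirteen representation identities with the relative pre-PCA axioms in the second step; neither is conceptually hard, as the correspondence is forced once the dictionary $\mu\leftrightarrow\mathcal{L}_{\star}$, $(l_{\circ},r_{\circ})\leftrightarrow(\mathcal{L}_{\succ},\mathcal{R}_{\prec})$, $(l_{\circ}-r_{\circ})\leftrightarrow\diamond$ is fixed.
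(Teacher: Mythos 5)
Your proposal is correct and takes essentially the same route as the paper: the relative Poisson structure is imported from \cite{RPA}, the identities \eqref{eq:rps1}--\eqref{eq:rps4} are checked by expanding $\cdot$ and $\circ$ in terms of $\star,\succ,\prec$ and using \eqref{eq:pre-sys5}--\eqref{eq:pre-sys13} together with the Zinbiel identity, and the representation and $\mathcal{O}$-operator statements (which the paper leaves as ``straightforward'') follow by direct specialization. Your explicit dictionary identifying \eqref{eq:defi:RAPLP rep1}--\eqref{eq:defi:RAPLP rep13} for $(\mathcal{L}_{\star},\mathcal{L}_{\succ},\mathcal{R}_{\prec},P,Q,A)$ with \eqref{eq:GPPA4}, \eqref{eq:GPPA3}, \eqref{eq:GPPA1}, \eqref{eq:GPPA2} and \eqref{eq:pre-sys5}--\eqref{eq:pre-sys13} is accurate, so the details you defer are exactly the routine computations the paper also defers.
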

\begin{proof}
By \cite{RPA}, $(A,\cdot,[-,-],P)$ is a relative Poisson algebra.
For all $x,y,z\in A$, we have
\begin{eqnarray*}
&&x\cdot(y\circ z)-y\circ(x\cdot z)+[x,y]\cdot z-x\cdot P(y)\cdot z\\
&&=x\star(y\succ y)+x\star(y\prec z)+(y\succ z)\star x+(y\prec z)\star x\\
&&\ \ -y\succ(x\star z)-y\succ(z\star x)-y\prec(x\star z)-y\prec(z\star x)\\
&&\ \ +[x,y]\star z+z\star(x\succ y+x\prec y-y\succ x-y\prec x)\\
&&\ \ -\big(x\cdot P(y)\big)\star z-(x\cdot z)\star P(y)-\big(z\cdot P(y)\big)\star x\\
&&\overset{\eqref{eq:pre-sys9}-\eqref{eq:pre-sys11}}{=}0.
\end{eqnarray*}
Hence \eqref{eq:rps3} holds. Similarly \eqref{eq:rps1}, \eqref{eq:rps2} and \eqref{eq:rps4} hold, and hence $(A,\star,\succ,\prec,P,Q)$ is a relative PCA algebra.
The rest results are straightforward.
\end{proof}

\begin{pro}\label{pro:r,anti}
Let $(A,\star,\succ,\prec,P,Q)$ be a relative pre-PCA algebra and $(A,\cdot,\circ,P,Q)$ be the sub-adjacent relative PCA algebra.
Let $\{ e_{1},\cdots,e_{n} \}$ be a basis of $A$ and $\{ e^{*}_{1},\cdots,e^{*}_{n} \}$ be the dual basis.
Then
\begin{equation*}
    r=\sum_{i=1}^{n}( e^{*}_{i}\otimes e_{i}-e_{i}\otimes e^{*}_{i} )
\end{equation*}
is an antisymmetric solution of the RPCA-YBE in the relative PCA algebra
$(A\ltimes_{-\mathcal{L}_{\star}^{*},\mathcal{R}^{*}_{\prec}-\mathcal{L}^{*}_{\succ},\mathcal{R}^{*}_{\prec}}A^{*},P+Q^{*},Q+P^{*})$.
\end{pro}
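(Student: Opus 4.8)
The plan is to deduce the statement directly from Proposition~\ref{pro:oop} together with Theorem~\ref{thm:oop}, so that essentially no new computation is required. By Proposition~\ref{pro:oop}, the sub-adjacent relative PCA algebra $(A,\cdot,\circ,P,Q)$ of $(A,\star,\succ,\prec,P,Q)$ admits $(\mathcal{L}_{\star},\mathcal{L}_{\succ},\mathcal{R}_{\prec},P,Q,A)$ as a representation, and the identity map $\mathrm{id}:A\rightarrow A$ is an $\mathcal{O}$-operator of $(A,\cdot,\circ,P,Q)$ associated to this representation. I would therefore apply Theorem~\ref{thm:oop} with the data $V=A$, $\mu=\mathcal{L}_{\star}$, $l_{\circ}=\mathcal{L}_{\succ}$, $r_{\circ}=\mathcal{R}_{\prec}$, $\alpha=P$, $\beta=Q$ and $T=\mathrm{id}$.

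The one point worth spelling out is the bookkeeping of the identification \eqref{eq:identify}. Under it, the identity map $\mathrm{id}:A\rightarrow A$ corresponds to the element $\sum_{i=1}^{n}e_i^{*}\otimes e_i\in A^{*}\otimes A\subset (A\oplus A^{*})^{\otimes 2}$, so that $r=T-\tau(T)=\sum_{i=1}^{n}(e_i^{*}\otimes e_i-e_i\otimes e_i^{*})$, which is manifestly antisymmetric and coincides with the element in the assertion. Substituting the above data into Theorem~\ref{thm:oop}, the ambient relative PCA algebra $(A\ltimes_{-\mu^{*},r_{\circ}^{*}-l_{\circ}^{*},r_{\circ}^{*}}V^{*},P+\beta^{*},Q+\alpha^{*})$ specializes to $(A\ltimes_{-\mathcal{L}_{\star}^{*},\mathcal{R}_{\prec}^{*}-\mathcal{L}_{\succ}^{*},\mathcal{R}_{\prec}^{*}}A^{*},P+Q^{*},Q+P^{*})$, which is exactly the relative PCA algebra appearing in the proposition.

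With these identifications in place, Theorem~\ref{thm:oop} gives that $r=T-\tau(T)$ is a solution of the RPCA-YBE in $(A\ltimes_{-\mathcal{L}_{\star}^{*},\mathcal{R}_{\prec}^{*}-\mathcal{L}_{\succ}^{*},\mathcal{R}_{\prec}^{*}}A^{*},P+Q^{*},Q+P^{*})$ if and only if $T$ is an $\mathcal{O}$-operator of $(A,\cdot,\circ,P,Q)$ associated to $(\mathcal{L}_{\star},\mathcal{L}_{\succ},\mathcal{R}_{\prec},P,Q,A)$; and the latter holds by Proposition~\ref{pro:oop}. Hence $r$ is an antisymmetric solution of the RPCA-YBE in that relative PCA algebra, as claimed. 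I do not anticipate any genuine obstacle: all of the algebraic content has already been packaged into Proposition~\ref{pro:oop} and Theorem~\ref{thm:oop}, and the proof is essentially a matter of matching the representation data with the operators $-\mathcal{L}_{\star}^{*}$, $\mathcal{R}_{\prec}^{*}-\mathcal{L}_{\succ}^{*}$, $\mathcal{R}_{\prec}^{*}$ occurring in the semi-direct product. If one wanted a self-contained argument, one could instead verify the equations \eqref{eq:of1}--\eqref{eq:of3} of Proposition~\ref{pro:4.8} directly for this particular $r$ using the defining identities \eqref{eq:Zinbiel}--\eqref{eq:pre-sys13} of a relative pre-PCA algebra, but this would merely re-prove the two cited results in the present special case.
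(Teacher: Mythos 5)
Your proposal is correct and follows exactly the paper's own argument: invoke Proposition \ref{pro:oop} to get that $\mathrm{id}$ is an $\mathcal{O}$-operator associated to $(\mathcal{L}_{\star},\mathcal{L}_{\succ},\mathcal{R}_{\prec},P,Q,A)$, identify $\mathrm{id}$ with $\sum_{i}e_i^{*}\otimes e_i$, and apply Theorem \ref{thm:oop} with $T=\mathrm{id}$, $\alpha=P$, $\beta=Q$. No gaps; the bookkeeping of the semi-direct product data matches the paper's proof.
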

\begin{proof}
By Proposition \ref{pro:oop}, $\mathrm{id}$ is an
$\mathcal{O}$-operator of $(A,\cdot,\circ,P,Q)$ associated to
$(\mathcal{L}_{\star},\mathcal{L}_{\succ},\mathcal{R}_{\prec}$,
$P$, $Q$, $A)$. Note that $\mathrm{id}$ is identified as
$\sum\limits_{i=1}^{n}e^{*}_{i}\otimes e_{i}$ in $A^{*}\otimes
A\subset
A\ltimes_{-\mathcal{L}_{\star}^{*},\mathcal{R}^{*}_{\prec}-\mathcal{L}^{*}_{\succ},\mathcal{R}^{*}_{\prec}}A^{*}\otimes
A\ltimes_{-\mathcal{L}_{\star}^{*},\mathcal{R}^{*}_{\prec}-\mathcal{L}^{*}_{\succ},\mathcal{R}^{*}_{\prec}}A^{*}$.
Hence the conclusion follows from Theorem \ref{thm:oop}.
\end{proof}

\begin{ex}
Let $(A,\star,P,-P)$ be an admissible differential Zinbiel algebra
given by Example \ref{ex:ADZA}. Then by Proposition
\ref{pro:ADZA}, there is a relative pre-PCA algebra
$(A,\star,\succ,\prec, P,-P)$ in which $\succ$ and $\prec$ are
defined by \eqref{eq:pro:ADZA}, whose nonzero products are given
by
\begin{equation*}
e_{1}\succ e_{1}=e_{1}\succ e_{2}=-4e_{3},\; e_{1}\prec e_{1}=e_{2}\prec e_{1}=-5e_{3}.
\end{equation*}
Let $\{e^{*}_{1},e^{*}_{2}, e^*_{3}\}$ be the dual basis. By
Proposition \ref{pro:oop}, there is a relative PCA algebra
$$(A\ltimes_{-\mathcal{L}_{\star}^{*},\mathcal{R}^{*}_{\prec}-\mathcal{L}^{*}_{\succ},\mathcal{R}^{*}_{\prec}}A^{*},P-P^{*},-P+P^{*})$$
whose nonzero products are given by
\begin{eqnarray*}
&&e_{1}\cdot e_{1}=2e_{3},\; e_{1}\cdot e_{2}=e_{3},\; e_{1}\circ e_{1}=-9e_{3},\; e_{1}\circ e_{2}=-4 e_{3},\; e_{2}\circ e_{1}=-5e_{3},\\
&&e_{1}\cdot e^{*}_{3}=e^{*}_{1}+e^{*}_{2},\; e_{1}\circ e^{*}_{3}=e^{*}_{1}+e^{*}_{2},\; e^{*}_{3}\circ e_{1}=5e^{*}_{1}+5e^{*}_{2}.
\end{eqnarray*}
By Proposition \ref{pro:r,anti},
\begin{equation*}
    r=\sum_{i=1}^{3}( e^{*}_{i}\otimes e_{i}-e_{i}\otimes e^{*}_{i} )
\end{equation*}
is an antisymmetric solution of the relative PCA-YBE in $(A\ltimes_{-\mathcal{L}_{\star}^{*},\mathcal{R}^{*}_{\prec}-\mathcal{L}^{*}_{\succ},\mathcal{R}^{*}_{\prec}}A^{*},P-P^{*},-P+P^{*})$.
Then by Corollary \ref{cor:cob}, there is a relative PCA bialgebra $(A\ltimes_{-\mathcal{L}_{\star}^{*},\mathcal{R}^{*}_{\prec}-\mathcal{L}^{*}_{\succ},\mathcal{R}^{*}_{\prec}}A^{*},\Delta,\theta,P-P^{*},-P+P^{*})$
with $\Delta$ and $\theta$ given by the following nonzero coproducts:
\begin{eqnarray*}
&& \Delta(e_{1})=\Delta(e_{2})=e_{1}\otimes e^{*}_{1}+e^{*}_{1}\otimes e_{1},\; \Delta(e^{*}_{3})=2e^{*}_{1}\otimes e^{*}_{1}+e^{*}_{1}\otimes e^{*}_{2}+e^{*}_{2}\otimes e^{*}_{1},\\
&& \theta(e_{1})=\theta(e_{2})=5e_{3}\otimes e^{*}_{1}+e^{*}_{1}\otimes e_{3},\;
\theta(e^{*}_{3})=-9e^{*}_{1}\otimes e^{*}_{1}-4e^{*}_{1}\otimes e^{*}_{2}-5e^{*}_{2}\otimes e^{*}_{1}.
\end{eqnarray*}
\end{ex}

\bigskip

 \noindent
 {\bf Acknowledgements.}  This work is supported by
NSFC (11931009, 12271265, 12261131498, 12326319, 12401031, W2412041), 
the Postdoctoral Fellowship Program of CPSF (GZC20240755, 2024T005TJ, 2024M761507),
the Fundamental
Research Funds for the Central Universities and Nankai Zhide
Foundation.
The authors thank the referees for  valuable suggestions to improve the paper.

\noindent
{\bf Declaration of interests. } The authors have no conflicts of interest to disclose.

\noindent
{\bf Data availability. } Data sharing is not applicable as no new data were created or analyzed.


\begin{thebibliography}{99}

\bibitem{Ago} A. Agore and G. Militaru, Jacobi and Poisson
algebras,  \textit{J. Noncomm. Geom.} 9 (2015) 1295-1342.

\bibitem{Agu2000} M. Aguiar, Infinitesimal Hopf algebras, in: New Trends in Hopf Algebra Theory, La Falda, 1999, in: \textit{Contemp. Math.} Vol. 267(Amer. Math. Soc., Providence, RI, 2000), pp.1-29.

\bibitem{Agu2000.0}M. Aguiar, Pre-Poisson Algebras, \textit{Lett. Math. Phys.} 54 (2000) 263-277.

\bibitem{Agu2001} M. Aguiar, On the associative analog of Lie bialgebras,\textit{ J. Algebra} 244 (2001) 492-532.

\bibitem{Agu2004} M. Aguiar, Infinitesimal bialgebras, pre-Lie and dendriform algebras, in: Hopf algebras, in\textit{ Lecture Notes in Pure and Appl. Math.} Vol.237, Marcel Dekker, New York, 2004, pp. 1-33.




\bibitem{Bai2007} C. Bai, A unified algebraic approach to the classical Yang-Baxter equation, \textit{ J. Phys. A: Math. Theor}. 40 (2007) 11073-11082.

\bibitem{Bai2010} C. Bai, Double constructions of Frobenius algebras, Connes cocycles and their duality,  \textit{J. Noncommut. Geom.} 4 (2010)
475-530.


\bibitem{Bai2021.2} C. Bai, An introduction to pre-Lie algebras, in: Algebra and Applications 1: Nonssociative Algebras and Categories, Wiley Online Library (2021) 245-273.

\bibitem{BBGN}
 C. Bai, O. Bellier, L. Guo and X. Ni, Splitting of operations, Manin products and Rota-Baxter operators, \textit{Int. Math. Res. Not.}  (2013) 485-524.




\bibitem{BV1}K. Bhaskara and K. Viswanath, Poisson algebras and Poisson manifolds, Longman Scientific \& Technical, Harlow (1988).




\bibitem{Bur} D. Burde, Left-symmetric algebras, or pre-Lie algebras in geometry and physics,  \textit{Cent. Eur. J. Math}. 4 (2006) 323-357.




\bibitem{Can} N. Cantarini and V. Kac, Classification of linearly compact simple Jordan and generalized Poisson superalgebras, \textit{J. Algebra}  313 (2007) 100-124.


\bibitem{CP1}V. Chari and A. Pressley, A guide to quantum groups, Cambridge University Press, Cambridge (1994).

\delete{\bibitem{Dot} V. Dotsenko, Algebraic structures of $F$-manifolds via pre-Lie algebras,  \textit{Ann. Mat. Pura Appl.} 198 (2019) 517-527.}


\bibitem{Dri} V. Drinfeld, Halmiltonian structure on the Lie groups, Lie bialgebras and the geometric sense of the classical Yang-Baxter equations, \textit{Sov. Math. Dokl}. 27 (1983) 68-71.


\bibitem{Dzh} A. Dzhumadil'daev and P. Zusmanovich, Commutative $2$-cocycles on Lie algebras, \textit{J. Algebra} 324 (2010) 732-748.

\bibitem{HBG} Y. Hong, C. Bai and L. Guo, Deformation families of Novikov bialgebras via differential antisymmetric infinitesimal
bialgebras, arXiv: 2402.16155.















\bibitem{JR} S.A. Joni and G.-C. Rota, Coalgebras and bialgebras in combinatorics,  \textit{Studies in Appl. Math.} 61 (1979) 93-139.





\bibitem{Kan} I.L. Kantor, Jordan and Lie superalgebras determined by a Poisson algebra, \textit{Amer. Math. Soc. Transl.} 151 (1992) 55-80.





\bibitem{Kin} D. King  and M. Kevin, The Kantor construction of Jordan superalgebras, \textit{Comm. Algebra} 20 (1992) 109-126.

\bibitem{Kir} A. Kirillov, Local Lie algebras,  \textit{Russian Math. Surveys} 31 (1976) 55-75.






\bibitem{Li77} A. Lichnerowicz, Les vari\'{e}ti\'{e}s de Poisson et leurs alg\`{e}bras de Lie associ\'{e}es,  \textit{J.
Diff. Geom.} 12 (1977)  253-300.

\bibitem{Lic} A. Lichnerowicz, Les vari\'{e}ti\'{e}s de Jacobi et leurs alg\`{e}bras de Lie associ\'{e}es, \textit{J. Math. Pures Appl.} 57 (1978)
453-488.

\bibitem{LLB2023} Y. Lin, X. Liu and C. Bai, Differential antisymmetric infinitesimal bialgebras, coherent derivations and Poisson bialgebras, \textit{SIGMA} 19 (2023) 018.


\bibitem{LB2022} G. Liu and C. Bai, Anti-pre-Lie algebras, Novikov algebras and commutative $2$-cocycles on Lie algebras, \textit{J.
Algebra} 609 (2022)  337-379.




\bibitem{TPA} G. Liu and C. Bai, A bialgebra theory for transposed Poisson algebras via anti-pre-Lie bialgebras and anti-pre-Lie Poisson bialgebras, \textit{Commun. Contemp. Math} 
26 (2024) no. 8, Paper No. 2350050, 49 pp.

\bibitem{NSP} G. Liu and C. Bai, New splittings of operations of Poisson algebras and transposed Poisson algebras and related algebraic structures, In: Algebra without Borders-Classical and Constructive Nonassociative Algebraic Structures Foundations and Applications, Edited by M.N. Hounkonnou, M. Mitrović, M. Abbas and M. Khan, 49-96, Springer, Switzerland, 2023.

\bibitem{RPA} G. Liu and C. Bai, Relative Poisson bialgebras and Frobenius Jacobi algebras, \textit{J. Noncommut. Geom.} 18 (2024)  995-1039.

\bibitem{Lod} J.-L. Loday, Cup product for Leibniz cohomology and dual Leibniz algebras, in:\textit{ Math. Scand.} Vol.77, Univ. Louis Pasteur, Strasbourg, 1995, pp. 189-196.





\bibitem{Maj} S. Majid, Matched pairs of Lie groups associated to solutions of the Yang-Baxter equation,\textit{ Pac. J. Math}. 141 (1990) 311-332.










\bibitem{NB} X. Ni and C. Bai, Poisson bialgebras, \textit{J. Math. Phys.} 54 (2013) 023515.






\bibitem{SXZ} Y. Su, X. Xu and H. Zhang, Derivation-simple algebras and structures of Lie algebras of Witt type, \textit{J. Algebra} 233 (2000) 642-662.


\bibitem{Wei77} A. Weinstein, Lecture on Symplectic Manifolds, \textit{CBMS Regional Conference Series in Mathematics} 29, Amer.
Math. Soc., Providence, R.I., 1979.


\bibitem{XXu} X. Xu, New generalized simple Lie algebras of Cartan type over a field with characteristic zero, \textit{J. Algebra} 224 (2000) 23-58.




\end{thebibliography}
\end{document}